\def\Yint#1{\mathchoice
    {\YYint\displaystyle\textstyle{#1}}%
    {\YYint\textstyle\scriptstyle{#1}}%
    {\YYint\scriptstyle\scriptscriptstyle{#1}}%
    {\YYint\scriptscriptstyle\scriptscriptstyle{#1}}%
      \!\iint}
\def\YYint#1#2#3{{\setbox0=\hbox{$#1{#2#3}{\iint}$}
    \vcenter{\hbox{$#2#3$}}\kern-.51\wd0}}
\def\longdash{{-}\mkern-3.5mu{-}} 
\def\tiltlongdash{\rotatebox[origin=c]{15}{$\longdash$}}
\def\fiint{\Yint\tiltlongdash}
\definecolor{br}{rgb}{1, 0.4,0}
\newcommand{\res}{\hbox{ {\vrule height .24cm}{\leaders\hrule\hskip.1cm} } }
\numberwithin{equation}{section}
\theoremstyle{plain}
\newtheorem{theorem}[equation]{Theorem}
\newtheorem{prop}[equation]{Proposition}
\newtheorem{corollary}[equation]{Corollary}
\newtheorem{lemma}[equation]{Lemma}
\theoremstyle{definition}
\newtheorem{defn}[equation]{Definition}
\theoremstyle{remark}
\newtheorem{remark}[equation]{Remark}
\numberwithin{equation}{section}
\newcommand{\RR}{{\mathbb{R}}}
\newcommand{\NN}{{\mathbb{N}}}
\newcommand{\CC}{{\mathscr{C}}}
\newcommand{\HH}{\mathfrak{H}}
\newcommand{\E}{\mathcal{E}}
\newcommand{\sH}{~d\mathcal{H}^{d-1}}
\renewcommand{\emptyset}{\mbox{\textup{\O}}}
\DeclareMathOperator{\divg}{div}
\DeclareMathOperator{\spt}{spt}
\DeclareMathOperator{\tr}{tr}
\DeclareMathOperator{\dist}{dist}
\DeclareMathOperator{\Id}{Id}
\DeclareMathOperator{\I}{I}
\DeclareMathOperator{\spn}{span}
\newcommand{\pO}{\partial\Omega}
\newcommand{\pD}{\partial D}
\newcommand{\pR}{\partial \RR^d_+}
\newcommand{\St}{\mathcal{S}}
\newcommand{\Ct}{\mathcal{C}}
\newcommand{\Nt}{\mathcal{N}}
\begin{document}

\allowdisplaybreaks

\title[Boundary unique continuation on Dini domains]{Boundary unique continuation on $C^1$-Dini domains \\ and the size of the singular set}

\author{Carlos Kenig}
\address{Carlos Kenig
\\ 
Department of Mathematics
\\
University of Chicago
\\
Chicago, IL 60637, USA}
\email{cek@math.uchicago.edu}

\author{Zihui Zhao}
\address{Zihui Zhao
\\ 
Department of Mathematics
\\
University of Chicago
\\
Chicago, IL 60637, USA}
\email{zhaozh@uchicago.edu}

\thanks{The first author was supported in part by NSF grant DMS-1800082, and the second author was partially supported by NSF grant DMS-1902756.}
\subjclass[2010]{35J25, 42B37, 31B35.}
\keywords{}

\begin{abstract}
	Let $u$ be a harmonic function in a $C^1$-Dini domain $D$ such that $u$ vanishes on a boundary surface ball $\pD \cap B_{5R}(0)$. We consider an effective version of its singular set (up to boundary) $\St(u):=\{X\in \overline{D}: u(X) = |\nabla u(X)| = 0\} $ and give an estimate of its $(d-2)$-dimensional Minkowski content, which only depends on the upper bound of some modified frequency function of $u$ centered at $0$. Such results are already known in the interior and at the boundary of convex domains, when the standard frequency function is monotone at every point. The novelty of our work on Dini domains is how to compensate for the lack of such monotone quantities at boundary as well as interior points.
\end{abstract}

\maketitle

\tableofcontents

\section{Introduction}



In this paper, we study the size of the singular set when $u$ is a harmonic function in the Dini domain $D\subset \RR^d$, such that $u$ vanishes on an open set of the boundary. (Dini domains are $C^1$ domains given locally by graphs of functions whose gradient has a Dini modulus of continuity, see Definition \ref{def:Dini}.) More precisely, suppose $u = 0$ on the surface ball $B_{5R}(0) \cap \pD $ with $0\in \pD$, we show that inside a smaller ball $B_{\frac{3}{20} R}(0)$, the singular set $\mathcal{S}(u) := \{X \in \overline{D} : u(X) = |\nabla u(X)| = 0 \}$ is $(d-2)$-rectifiable, and moreover its $(d-2)$-dimensional Minkowski content is bounded by a constant depending on the upper bound of $N_0(4R)$, the frequency function of $u$ centered at $0$.

\begin{theorem}\label{thm:main}
	Let $R, \Lambda>0$ be fixed. There exists $r_c>0$ such that for any $(u, D) \in \HH(R, \Lambda)$ (see Definition \ref{def:function}) and any $r_0 \in (0, r_c)$, the effective singular set $\widetilde{\Ct}_{r_0}(u) \cap \Nt(u)$ (see the definitions in Section \ref{sec:prelim}) has the $(d-2)$-dimensional Minkowski bound
	\[ \mathcal{M}^{d-2, *}\left(\widetilde{\Ct}_{r_0}(u) \cap \mathcal{N}(u) \cap B_{\frac{R}{10}}(0) \right) \leq C(d, R, \Lambda). \]
	In particular the singular set satisfies
	\[ \mathcal{M}^{d-2, *}\left(\mathcal{S}(u) \cap B_{\frac{R}{10}}(0) \right) \leq C(d, R, \Lambda), \]
	and $\mathcal{S}(u) \cap B_{\frac{R}{10}}(0)$ is $(d-2)$-rectifiable.
\end{theorem}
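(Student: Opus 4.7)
\quad The plan is to execute a quantitative stratification in the spirit of Naber--Valtorta / Cheeger--Naber, adapted to $C^1$-Dini boundaries. The essential new object is a \emph{modified} Almgren-type frequency $\widetilde N_X(r)$ based at points $X\in\overline D\cap B_{R/10}(0)$ which is almost-monotone in $r$, with an error controlled by the Dini modulus of $\nabla\varphi$ in a graph parametrization of $\pD$. Standard frequency monotonicity fails at boundary points of a merely $C^1$-Dini domain, so constructing $\widetilde N_X$ and proving uniform almost-monotonicity in $X$ (whether interior or boundary) should be the backbone of the argument.

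From the hypothesis $N_0(4R)\le\Lambda$ and almost-monotonicity one obtains a uniform bound $\widetilde N_X(r)\le\Lambda'=\Lambda'(d,R,\Lambda)$ and a finite dyadic budget
\[
\sum_{k\ge 0}\bigl(\widetilde N_X(2^{-k}r)-\widetilde N_X(2^{-k-1}r)\bigr)\le C(d,R,\Lambda).
\]
A pinching/blow-up step then shows that if the frequency drop $\widetilde N_X(r)-\widetilde N_X(\delta r)$ is small, then $u$ on $B_r(X)\cap D$ is $L^2$-close to a homogeneous harmonic polynomial $P_{X,r}$ of degree $\ge 2$: on $\RR^d$ if $X$ is interior, or on the tangent half-space with zero Dirichlet data if $X\in\pD$. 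For $X\in\widetilde{\Ct}_{r_0}(u)\cap\Nt(u)$ the polynomial $P_{X,r}$ cannot be $(d-1)$-symmetric, so its zero set lies in a $(d-2)$-plane $V_{X,r}$; this yields the estimate $r^{d-2}\beta_{2,X,r}^2\lesssim \widetilde N_X(r)-\widetilde N_X(\delta r)$ on the $L^2$ best-plane number of the set.

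With the $\beta_2$-energy bound summed over scales supplied by the budget above, one applies the discrete Reifenberg theorem of Naber--Valtorta to cover $\widetilde{\Ct}_{r_0}(u)\cap\Nt(u)\cap B_{R/10}(0)$ by at most $C(d,R,\Lambda)\,r_0^{-(d-2)}$ balls of radius $r_0$, giving the stated $(d-2)$-Minkowski bound. The bound on $\St(u)\cap B_{R/10}(0)$ is immediate from the inclusion $\St(u)\subset\widetilde{\Ct}_{r_0}(u)\cap\Nt(u)$ valid for every $r_0>0$; sending $r_0\to 0$ and extracting Lipschitz graphs from the discrete Reifenberg packing produces the $(d-2)$-rectifiability.

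The principal obstacle is the tension between the two ingredients of the Naber--Valtorta scheme in the Dini setting. The almost-monotonicity error in $\widetilde N_X$ decays only as fast as the Dini modulus, while the pinching/cone-splitting argument requires sharp rigidity: nearly-constant $\widetilde N_X$ on a scale must force near-homogeneity of $u$ up to a polynomial of integer degree. Absorbing the Dini error into the frequency drop at \emph{every} base point, uniformly in whether $X$ lies in the interior of $D$ or on $\pD$, is where the real work of the paper is expected to lie; once that is in place the combinatorial covering steps should proceed along established lines.
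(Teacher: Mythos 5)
The proposal follows the same overall quantitative-stratification route (almost-monotone frequency $\to$ pinching $\to$ $\beta_2$-number estimate $\to$ discrete Reifenberg) that the paper uses, but several mechanisms that make this route run in a Dini domain are left as black boxes, and those are exactly where the paper does its work.

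\emph{Gap 1: no construction of the almost-monotone frequency.} You recognize this as the backbone, but propose no mechanism. In a $C^1$-Dini domain the Almgren frequency at a boundary point is not monotone, nor almost-monotone in any direct way. The paper fixes this by the Adolfsson--Escauriaza transformation $\Psi_X$: straightening the domain at each fixed boundary point $X$ into an almost-convex domain $\Omega_X$ and replacing $u$ by $v_X=u\circ\Psi_X$, a solution of a divergence-form operator whose frequency \emph{in the intrinsic metric} is almost-monotone. Because $\Omega_X$ depends on $X$, the frequency functions at different base points are not comparable and their spatial variation needs dedicated estimates (Section~\ref{sec:spvar}). For interior points near $\pD$ the standard frequency is monotone only up to a critical scale $r_{cs}(X)$ determined by $\dist(X,\pD)$; beyond that scale the paper substitutes the frequency of the closest boundary point (Section~\ref{sec:interior}). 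Without these, your plan to ``absorb the Dini error into the frequency drop uniformly over base points'' has no implementation.

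\emph{Gap 2: missing dimension reduction.} Your argument jumps from the $\beta$-number bound to a one-shot application of discrete Reifenberg. But a small average frequency drop does not by itself put the pinched set near a $(d-2)$-plane; it may instead cluster near a lower-dimensional affine subspace, in which case Reifenberg is unavailable but a separate re-covering is needed. The paper handles this via the quantitative cone-splitting in Proposition~\ref{prop:tn} and the three-alternative covering Lemma~\ref{lm:covering} (terminal / small dimension / definite frequency drop), then upgrades via Lemma~\ref{lm:covering2}. The inductive packing argument inside this covering is not replaceable by a direct Reifenberg application.

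\emph{Gap 3: ``$P_{X,r}$ has degree $\ge 2$'' is not the right statement.} For $X\in\widetilde{\Ct}_{r_0}(u)\cap\Nt(u)$ (as opposed to $X\in\St(u)$) the blow-up can be linear; the definition \eqref{def:Ctu} is quantitative and only says that at each scale $r\in[r_0,r_c]$ the rescaling $T_{X,r}u$ stays a fixed distance (encoded by $\alpha_0,\beta$) from nondegenerate linear functions. What powers the $\beta$-number estimate is Lemma~\ref{lm:sym}, a compactness argument giving a lower bound on the tangential gradient energy $\sum_{j=1}^{d-1}\iint |\langle\nabla u,w_j\rangle|^2$ at scale $r$; and the $\beta$-number being estimated in Theorem~\ref{thm:L2appx} is that of an arbitrary Radon measure supported on $\Nt(u)$, which is then specialized to the discrete packing measure in the covering step.
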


Let us first give some historical background for the result in Theorem \ref{thm:main}. It seems that such results originate in problems in optimization and control theory, as is described in \cite{Wec, SW}, where the authors proved that when $D$ is a smooth domain $\mathcal{S}(u)\cap \pD$ has zero $(d-1)$-dimensional Hausdorff measure. The authors of \cite{SW} were inspired by the following problems:
\begin{quote}
	Consider a harmonic function $u$ in $D$, with $0\leq u\leq 1$ in $D$. Fix two points $X_0$ and $X_1$ in $D$. We wish to minimize $u(X_1)$ among all such $u$ with $u(X_0) = \frac12$.
\end{quote}
and
\begin{quote}
	\uline{Conjecture (\cite{SW}): The Bang-Bang Property}. There exists a unique minimizer $\tilde{u}$ to the above problem, and moreover $\tilde{u}$ is ``Bang-Bang'', namely $\tilde{u}|_{\pD} = \chi_F$ for some $F\subset \pD$.
\end{quote}
%
%
It can be proven (see \cite{SW, KenLN}) that in order to establish the conjecture in a domain $D$, it suffices to show that, with $u$ as in Theorem \ref{thm:main}, the set $\mathcal{S}(u) \cap \pD \cap B_{\frac{R}{10}}(0)$ has zero $(d-1)$-dimensional Hausdorff measure, as was proved for smooth domains in \cite{Wec, SW}. 

Another problem that arises in control theory, this time in the study of exact boundary controllability, is the following:
\begin{quote}
	Suppose that $u$ is the (Dirichlet) Laplacian eigenfunction in $D$ with eigenvalue $\lambda>0$ (that is to say $-\Delta v =\lambda v$ in $D$ and $v = 0$ on $\pD$). Assume that the normal derivative of $v$ is zero on $E\subset \pD$, with $\mathcal{H}^{d-1}(E) >0$. Is $v$ identically zero?
\end{quote}
By considering the harmonic lifting $u(x,t) = e^{-\sqrt{\lambda} t} v(x)$ on $D\times \RR$ of $v$, the above problem reduces again to the fact that for a non-zero $u$, the set $\mathcal{S}(u) \cap \left( \pD \times \RR \right)$ has zero surface measure on $\pD \times \RR$. So this problem again has a positive answer in smooth domains by \cite{Wec, SW}. The result was extended to $C^{1,1}$ domains by Lin in \cite[Theorem 2.3]{Lin}, using a reflection argument and the interior unique continuation result of \cite{AKS}. In fact, Lin was able to go further, using Federer's dimension reduction argument, and show that $\mathcal{S}(u)  \cap \pD$ has Hausdorff dimension $(d-2)$.

Another historical source for problems of this kind is a classical question of L. Bers. Consider a harmonic function $u$ in the upper half-plane $\RR^d_+$, which is in $C^1(\overline{\RR^d_+})$. Assume that there exists $E\subset \RR^{d-1} = \partial \RR^d_+$ such that $u = 0$ and $\partial_d u = 0$ on $E$. Is it true that if $u$ is not identically zero, the Lebesgue measure of $E$ must be zero? It is not hard to see that for planar domains this is indeed the case because the logarithm of the modulus of the gradient of a harmonic function is sub-harmonic. However for $d>2$, Wolff \cite{Wol} showed this to be false, and Bourgain and Wolff \cite{BW} showed this to be false even under the assumption that $u\in C^{1,\alpha}(\overline{\RR^d_+})$ for some $\alpha>0$.
Given this failure, it then became of interest to study the ``intermediate'' case where, in the notation of Theorem \ref{thm:main}, $u=0$ on an \textit{open} set $B_{5R}(0) \cap \pD$ of the boundary and attempt to show that $\mathcal{S}(u)  \cap \pD \cap B_{\frac{R}{10}}(0)$ has zero $(d-1)$-dimensional Hausdorff measure unless $u\equiv 0$. As mentioned earlier, this was proved in \cite{Wec, SW} in smooth domains and in \cite{Lin} for $C^{1,1}$ domains, where it was also proved that it was a $(d-2)$-dimensional set.

The next progress in these problems (always assuming that $u$ vanishes on a surface ball) was due to Adolfsson, Escauriaza and Kenig \cite{AEK} (see also Kenig-Wang \cite{KW} for an alternative proof) who proved that for convex domains $\mathcal{S}(u) \cap \pD$ has zero $(d-1)$-dimensional Hausdorff measure. This was then followed by works of Adolfsson-Escauriaza \cite{AE} and Kukavica-Nystr\"om \cite{KN}, who proved (using different methods) the result for Dini domains. In fact, Adolfsson and Escauriaza \cite{AE} proved that for Dini domains, the set $\mathcal{S}(u) \cap \pD$ has Hausdorff dimension $(d-2)$, using the Federer dimension reduction argument as in Lin \cite{Lin}. Recently, after many years without progress, Tolsa \cite{Tol} proved that for all $C^1$ domains (and even Lipschitz domains with small constant) and non-identically-zero harmonic functions $u$, the set $\mathcal{S}(u) \cap \pD$ has zero $(d-1)$-dimensional Hausdorff measure. The proof of this result was influenced by very important new methods developed by Logunov and Malinnikova in \cite{Logu, Logl, LM}. 

In the interior, using the expansion of harmonic functions by homogeneous harmonic polynomials, Han \cite{Han} proved the $(d-2)$-rectifiability of $\St(u)$; moreover Naber and Valtorta \cite{NVCS} estimated the $(d-2)$-dimensional Minkowski content of an effective version of $\St(u)$ and $\Ct(u)$.
These two results also hold for elliptic equations with Lipschitz coefficients, by a perturbation of the harmonic case.
The same argument does not seem to apply easily to the boundary. 
Instead, inspired by the quantitative stratification method developed by Naber and Valtorta (see \cite{NVRR, NVAH}) to study 
regularity results in geometric variational problems, McCurdy \cite{Mc} took up the study of $\mathcal{S}(u)$ and proved the analogue of Theorem \ref{thm:main} for convex domains, thus establishing the $(d-2)$-rectifiability of $\mathcal{S}(u)$ and a bound on its $(d-2)$-dimensional Minkowski content. Recall that it had been established before using Federer's dimension reduction argument that $\mathcal{S}(u) \cap \pD$ has Hausdorff dimension at most $(d-2)$ (see \cite{Lin, AE} for $C^{1,1}$ domains and Dini domains, respectively), but it was not even known that $\mathcal{H}^{d-2}(\mathcal{S}(u) \cap \pD) <+\infty$. So it was a big improvement to prove $(d-2)$-rectifiability and find an upper bound on the Minkowski content, not just for the singular set restricted to the boundary, but also as the singular set approaches the boundary from the interior. For the difference between the Minkowski content and Hausdorff measure, see Definition \ref{def:Minkowski}.

Our approach is also based on the techniques introduced by Naber and Valtorta in \cite{NVRR}. These techniques have been found useful in many problems in geometric measure theory and geometric analysis, as long as symmetry in the problem can be somewhat quantified by a monotone quantity (\cite{codim4, NVV, FS, EE} to name just a few examples). For the convenience of the reader, we highlight the main differences between our argument with that of Naber and Valtorta as well as other prior work using the method.
\begin{itemize}[topsep=0pt]
	\item Unlike the interior case (see \cite{NVCS}) or the boundary case for convex domains (see \cite{Mc}), the standard Almgren's frequency function is not monotone for boundary points in Dini domains. Instead fixing each boundary point $X\in \pD$, we use the transformation map $\Psi_X: \Omega_X \to D$ introduced in \cite{AE} so that the domain becomes \textit{almost convex} at that point. This way we get a monotone quantity for $X$, which measures how close the harmonic function $u$ is to be homogeneous near $X$.
	\item Not only do we study the singular set on the boundary, we also want to study how the singular set approaches the boundary from the interior. To bring interior points (which are very close to the boundary) in the picture, we also study Almgren's frequency function $N(X, r)$ for interior points, especially when the scale $r$ is big so that $B_r(X)$ intersects the boundary. The standard frequency function $N(X, r)$ is monotone increasing up to some critical scale $r_{cs}(X)$, which depends on $\dist(X, \pD)$. Outside of this monotonic interval, we replace $N(X, r)$ by the frequency function (of the same scale) centered at a boundary point closest to $X$, and justify that we only need to pay a small price for this modification. This is mainly treated in Section \ref{sec:interior}.
	\item For different points $X, X' \in \overline{D}$, the degrees of homogeneity may be different. (Especially recall that the monotone quantities for boundary points are defined in different domains $\Omega_{X}$ and $\Omega_{X'}$.) This creates a difficulty when we need to quantitatively measure how far $u$ is from being homogeneous (see Section \ref{sec:L2approx}) and when we study off-spinal points in the dimension reduction argument (see Proposition \ref{prop:tn}). So we need to study the variation of the monotone quantity (in small scales) as $X$ changes. This is dealt with mainly in Section \ref{sec:spvar}. (We remark that various degrees of homogeneity also appear in the earlier work of \cite{DMSV, FS}.)
	\item As discussed above, for both interior and boundary points we have found some alternatives in place of Almgren's frequency function to measure how close $u$ is from being homogeneous near that point. For a boundary point $X$ we use a monotone quantity defined in the reduced almost-convex domain $\Omega_X$; for an interior point $X$ we use the frequency function centered at a boundary point closest to $X$. We need to show the errors accrued by these alternatives do not accumulate when we count across possibly an infinite number of dyadic scales, see Theorem \ref{thm:L2appx}.
\end{itemize}

After introducing preliminary notation and definitions in Section \ref{sec:prelim}, we will define the modified frequency function and prove its monotonicity for boundary points in Sections \ref{sec:monotonicity} and \ref{sec:reduce}, and for interior points in Section \ref{sec:interior}. More precisely, for boundary points we first use the transformation map to change the Laplacian operator in a Dini domain $D$ to a divergence-form elliptic operator in an \textit{almost convex} domain $\Omega$ (see Section \ref{sec:reduce}). (Here almost convex is with respect to the elliptic operator and a single boundary point, see \eqref{def:convex}.) Then we use \textit{boundary convexity} to define the frequency function for solutions to a non-constant elliptic operator and prove its monotonicity as well as quantitative rigidity (see Section \ref{sec:monotonicity}). For interior points we establish the monotonicity and quantitative rigidity of the standard Almgren's frequency function up to some scale. We also state a closeness result between the frequency function centered at $X\in D$ and the modified frequency function centered at $\tilde{X}$, which is a boundary point closest to $X$.
In Section \ref{sec:blowup}, we first analyze the frequency function and use quantitative rigidity to characterize the tangent functions as homogeneous harmonic polynomials in the upper half-space. We also use the uniform bound of the frequency function to prove compactness for rescalings of the harmonic function.
Section \ref{sec:qsym} proves quantitative versions of the following two principles for harmonic functions: if the frequency function centered at a point $X$ is constant, then the harmonic function is homogeneous with respect to $X$; if the frequency functions centered at two distinct points $X, X'$ are both constant, then the harmonic function is invariant along the line connecting $X$ and $X'$, and it is homogeneous with respect to any point on the line (with the same degree of homogeneity). These two principles are used extensively throughout the paper.
In Section \ref{sec:spvar} we estimate the spatial variation of the (modified) frequency function for both boundary and interior points.
In the beginning of Section \ref{sec:L2approx}, we state two Reifenberg theorems, which are generalizations of the classical Reifenberg theorem and which connect the estimate of the $L^2$ $\beta$-number with packing estimates and rectifiability. The reason why Reifenberg-type theorem is relevant is that we can relate the $(d-2)$-dimensional $\beta$-number to the change of the frequency functions centered at effective singular points, 
as proven in Theorem \ref{thm:L2appx}.
Finally in Section \ref{sec:covering}, we cover the effective singular set $\Nt(u) \cap \widetilde{\Ct}_r(u)$ inductively, with the stopping criterion being that every point in the ball has a definite frequency drop. By Proposition \ref{prop:tn}, in a fixed ball all such points with small frequency drop  are either clustered to a small set (\textit{small} means of dimension $\leq (d-3)$), or they must lie in a tubular neighborhood of a $(d-2)$-dimensional affine plane. In the second case, we control their volume using the Reifenberg-type theorem.

\section{Preliminaries}\label{sec:prelim}
\begin{defn}\label{def:Dini}
	Let $\theta: [0,+\infty) \to (0, +\infty)$ be a nondecreasing function verifying
	\begin{equation}\label{cond:Dini}
		\int_0^* \frac{\theta(r)}{r} < \infty. 
	\end{equation} 
	A connected domain $D$ in $\RR^d$ is a \textit{Dini domain} with parameter $\theta$ if for each point $X_0$ on the boundary of $D$ there is a coordinate system $X=(x,x_d), x\in \RR^{d-1}, x_d\in \RR$ such that with respect to this coordinate system $X_0=(0,0)$, and there are a ball $B$ centered at $X_0$ and a Lipschitz function $\varphi: \RR^{d-1} \to \RR$ verifying the following
	\begin{enumerate}
		\item $\|\nabla \varphi\|_{L^\infty(\RR^{d-1})} \leq C_0$ for some $C_0>0$;
		\item $|\nabla \varphi(x)- \nabla \varphi(y)| \leq \theta(|x-y|)$ for all $x,y \in \RR^{d-1}$;
		\item $D\cap B=\{(x,x_d)\in B: x_d > \varphi(x) \}$.
	\end{enumerate}
\end{defn}
\begin{remark}
	By shrinking the ball $B$ if necessary, we may modify the coordinate system so that $\nabla \varphi(0)=0$.	
\end{remark}

\begin{defn}\label{def:function}
	Let $R, \Lambda>0$ be finite. For any domain $D$ and any function $u$, we say $(u, D) \in \mathfrak{H}(R, \Lambda)$ if
	\begin{itemize}
		\item $D$ is a Dini domain in $\RR^d$ with parameter $\theta$, and it satisfies $\pD \ni 0$, $D$ is graphical inside the ball $B_{5R}(0)$ and
			\begin{equation}\label{cond:R}
				\theta(8R)< \frac{1}{72}, \quad \int_0^{16R} \frac{\theta(s)}{s} ~ds \leq 1 ;
			\end{equation} 
		\item $u$ is a non-trivial harmonic function in $D\cap B_{5R}(0)$,
		\item $u = 0$ on $\pD \cap B_{5R}(0)$,
		\item the frequency function for $v:= u\circ \Psi_0$ satisfies $N_0(4R)\leq \Lambda<+\infty$ (see Section \ref{sec:reduce} for the definitions of $\Psi_0$ and the frequency function).
	\end{itemize}  
\end{defn}
\begin{remark}\label{rmk:deptheta}
	In the above definition we indicate how $(u,D)$ depends on $R$, but we do not explicitly indicate its dependence on the Dini parameter $\theta$. This is an absuse of notation.	
	 In fact, throughout the paper we allow $\theta$ to vary as long as it is bounded from above by a given Dini function $\theta_0$.
	 We use the quantifier $R$, which is determined by the Dini parameter $\theta$, in the notation above because $R$ will appear explicitly in the statements of lemmas and theorems.
%
\end{remark}

For any compact set $K$ and $\tau>0$, we use
\[ B_\tau(K) := \{X: \dist(X, K) < \tau \} \]
to denote the $\tau$-neighborhood of $K$.
\begin{defn}[Minkowski content]\label{def:Minkowski}
	Let $A$ be a bounded subset of $\RR^d$. The $s$-dimensional upper and lower Minkowski contents of $A$ are defined as
	\[ \mathcal{M}^{s,*}(A) = \limsup_{r \to 0+} (2r)^{s-d} \left|B_r(A) \right|, \]
	\[ \mathcal{M}^{s}_*(A) = \liminf_{r\to 0+} (2r)^{s-d} \left| B_r(A) \right|. \]
	Moreover the upper and lower Minkowski dimensions of $A$ are defined as
	\[ \overline{\dim}_{\mathcal{M}} ~A = \inf\{s>0: \mathcal{M}^{s,*}(A) = 0 \} = \sup\{s>0: \mathcal{M}^{s,*}(A)>0 \}, \]
	\[ \underline{\dim}_{\mathcal{M}} ~A = \inf\{s>0: \mathcal{M}^{s}_*(A) = 0 \} = \sup\{s>0: \mathcal{M}^{s}_*(A)>0 \}. \]
\end{defn}
\begin{remark}
	It is not hard to see that $\mathcal{H}^s(A) \lesssim \mathcal{M}^s_*(A) \leq \mathcal{M}^{s, *}(A)$, where strict inequalities are possible. In other words, bounding the Minkowki content is stronger than bounding the Hausdorff measure of the same dimension. In fact let $\mathbb{Q}^d\subset \RR^d$ be the $d$-dimensional rational lattice, its Hausdorff dimension is $0$ and yet its Minkowski dimension is $d$. See \cite[Chapter 5]{Mat} for more about Hausdorff measure, Minkowski content and packing measure.
\end{remark}

For a harmonic function $u$ in $D \cap B_{5R}(0)$ with vanishing boundary data and $D$ being a Dini domain, we have that $u$ is continuously differentiable up to boundary (see \cite{DEK}).
Let 
\begin{equation}\label{def:Ntu}
	\mathcal{N}(u):= \{ X\in \overline{D} \cap B_{5R}(0): u(X) = 0 \} 
\end{equation} 
be the nodal set of $u$. In particular by the above definition $\pD \cap B_{5R}(0) \subset \mathcal{N}(u)$. We also define the critical set of $u$ by
\[ \mathcal{C}(u) := \{X\in \overline{D} \cap B_{5R}(0): \nabla u(X) = 0\}, \]
and define the singular set of $u$ by
\begin{equation}\label{def:Stu}
	\mathcal{S}(u): = \mathcal{N}(u) \cap \mathcal{C}(u) = \left\{X\in \overline{D} \cap B_{5R}(0): u(X)= |\nabla u(X)|= 0\right\}.
\end{equation} 

\begin{defn}\label{def:Tpr}
Let $u$ be an arbitrary non-trivial $L^2$ function. We use the following notation to denote its rescalings centered at $X\in \overline{D}$ with scale $r>0$:
\[ T_{X,r}u(Y) := \frac{u(X+r Y)-u(X)}{\left( \frac{1}{r^d} \mathlarger{\iint}_{B_{r}(X)\cap D } \left|u-u(X) \right|^2 ~dZ \right)^{\frac12}}, \qquad T_{r}u(Y) := \frac{u(rY) - u(0)}{\left(\frac{1}{r^d} \mathlarger{\iint}_{B_{r}(0)\cap D } \left|u-u(0)\right|^2 ~dZ \right)^{\frac12}}. \]
%
\end{defn}
\begin{remark}
	Clearly the above rescaling satisfies 
\[ \iint_{B_1(0) \cap \frac{D-X}{r}} |T_{X,r} u|^2 ~dY = 1. \]
\end{remark}
\begin{remark}
	We also remark that for any $X \in \mathcal{N}(u)$, the above rescaling satisfies
	\[ T_{X,r}u(Y) = \frac{u(X+r Y)}{\left( \frac{1}{r^d} \mathlarger{\iint}_{B_{r}(X)\cap D } u^2 ~dZ \right)^{\frac12}} = \frac{u(X+r Y)}{\left( \frac{1}{r^d} \mathlarger{\iint}_{B_{r}(X) } u^2 ~dZ \right)^{\frac12}}. \]
	Since $u$ vanishes on the boundary $\pD \cap B_{5R}(0)$, it is convenient to simply extend $u$ by zero outside of $D\cap B_{5R}(0)$. This is what we did in the second equality.
\end{remark}

In this paper we work with a quantitative version of the critical set, defined as follows. Let $\beta, \alpha_0>0$ be two small constants to be determined later\footnote{see Proposition \ref{prop:tn} as well as Lemma \ref{lm:sym}. Heuristically as $r\to 0+$, $T_{X,r} u$ converges to a homogeneous harmonic polynomial $P_N$ of degree $N\in \mathbb{N}$, and it satisfies $\iint_{B_1(0)} |P_N|^2 ~dZ = 1$. In particular it implies that $\iint_{B_1(0)}|\nabla P_N|^2 ~dZ = 2N+d$. In the special case when $N=1$ and $P_N$ is linear, $|\nabla P_N|$ equals a dimensional constant, denoted by $\alpha_d$. Here $\alpha_0$ is chosen to be strictly smaller than $\alpha_d$. On the other hand when the degree $N>1$, by homogeneity $|\nabla P_N(r\omega)| = O(r^{N-1}) $ grows polynomially in the radial direction. Since $\iint_{B_1(0)}|\nabla P_N|^2 ~dZ = 2N+d$ and it has a uniform upper bound when $N\leq C(\Lambda)$, the polynomial growth of $|\nabla P_N|$ implies that we can choose $\beta<1$ so that for any degree $N\leq C(\Lambda)$, $\sup_{B_\beta(0)} |\nabla P_N|$ is also strictly smaller than $\alpha_d$. (It's certainly not the case that $ |\nabla P_N| < \alpha_d$ on all of $B_1(0)$.)} (whose values depend on $d, \Lambda$). For any $r>0$ we set
\begin{align}
	\Ct_r(u) & := \left\{X\in \overline{D}: \inf_{B_{\beta}(0)} |\nabla T_{X, r} u| \leq \alpha_0  \right\} \nonumber \\
	& = \left\{ X \in \overline{D}: \inf_{B_{\beta r}(X)} r^2 |\nabla u|^2 \leq \alpha_0^2 \cdot \frac{1}{r^d} \iint_{B_r(X)} |u-u(X)|^2 ~dZ \right\}. \label{def:Cru}
\end{align}
Clearly $\Ct(u) \subset \Ct_r(u)$ for every $r>0$, and $X\in \overline{D} \setminus \Ct_r(u)$ means that $|\nabla u|$ has a uniform lower bound centered at $X$ at scale $r$. Heuristically, near $X$ one of the following two scenarios happen. Either $u$ blows up to a homogeneous harmonic polynomial of degree $N \geq 2$, in which case $X\in \Ct(u)$ and $\nabla T_{X,r}u(0) = 0$ for any $r>0$. Or $u$ blows up to a homogeneous harmonic polynomial of degree $N=1$, which is just a linear function. Since the gradient of a linear function with unit $L^2$-norm has constant modulus, by choosing $\alpha_0$ sufficiently small $X\in \Ct_r(u)\setminus \Ct(u)$ just means that even though $u$ blows up to a linear function, but at the scale $r$, $u$ is a fixed distance away from linear functions.\footnote{This can be made rigorous in the interior case. By \cite[Theorem 3.1]{Han} the harmonic function has an expansion of the form $u(X+Y) - u(X) = P_N(Y) + \Psi(Y)$, where $P_N$ is a homogeneous harmonic polynomial of degree $N \in \NN$ and $\Psi(Y) = O(|Y|^{N+\epsilon})$; moreover $\|\nabla T_{X,r} u - \nabla P_N\|_{L^p(B_1(0))} = O(r^{\epsilon + \frac{d}{p} - 1})\to 0$ for any $p\in (1, d]$. }
Moreover, we define
\begin{equation}\label{def:Ctu}
	\widetilde{\Ct}_r(u) : = \left\{X \in \overline{D}: \inf_{B_{\beta}(0)} |\nabla T_{X, s} u| \leq \alpha_0 \text{ for any }r\leq s \leq r_c \right\} = \bigcap_{r\leq s\leq r_c} \Ct_s(u)
\end{equation}
to make sure $\widetilde{\Ct}_r(u)$ is monotone with respect to $r$, i.e.
\begin{equation}\label{incl:Ct}
	\Ct(u) \subset \widetilde{\Ct}_{r_1}(u) \subset \widetilde{\Ct}_{r_2}(u), \quad \text{ for any } 0< r_1 \leq r_2.
\end{equation}
The value of $r_c$ is determined (depending on $d, R, \Lambda$) as in the end of Section \ref{sec:covering}.

\begin{lemma}\label{lm:spine}
	Let $h$ be a harmonic function in $\RR^d$. Suppose $h$ is $N_1$-homogeneous with respect to the origin, and it is also $N_2$-homogeneous with respect to $X\in \RR^d \setminus\{0\}$\footnote{For any $X\in \RR^d$, we say $h$ is $N$-homogeneous with respect to $X$ if $h-h(X)$ is $N$-homogeneous, namely $h(X+\lambda Z) - h(X) = \lambda^{N}( h(X+Z) - h(X))$ for any $Z\in \RR^d$ and $\lambda \in \RR_+$}. Then $N_1 = N_2 \in \mathbb{N}$, and 
	\begin{itemize}
		\item either $h$ is linear, i.e. $N_1 = N_2 = 1$;
		\item or $h$ is invariant in the $X$-direction, i.e.
	\[ h(tX + Y) = h(Y), \quad \text{ for any } Y\in \RR^d \text{ and } t\in \RR. \]
	\end{itemize}
\end{lemma}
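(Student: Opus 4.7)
The plan is to upgrade both homogeneity hypotheses to the statement that $h-h(0)$ and $h-h(X)$ are homogeneous harmonic polynomials, reduce to a single polynomial identity, and then promote discrete translation invariance along $\ZZ X$ to continuous invariance along $\RR X$. If $h$ is constant the lemma is trivial, so I would assume $N_1, N_2 \geq 1$.

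First, the function $g(Y) := h(Y) - h(0)$ is harmonic on all of $\RR^d$ and positively $N_1$-homogeneous, so writing $g(r\omega) = r^{N_1}\tilde g(\omega)$ in spherical coordinates and substituting into $\Delta g = 0$ yields $-\Delta_{S^{d-1}}\tilde g = N_1(N_1+d-2)\tilde g$. Since the spectrum of $-\Delta_{S^{d-1}}$ is $\{k(k+d-2):k\in \NN\}$ and $s \mapsto s(s+d-2)$ is strictly increasing on $[0,\infty)$, this forces $N_1\in\NN$ and $g$ to be a homogeneous harmonic polynomial. The identical argument centered at $X$ produces a homogeneous harmonic polynomial $Q$ of degree $N_2 \in \NN$ with $h(Y) = h(X) + Q(Y-X)$. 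Subtracting the two expressions for $h$ yields the polynomial identity $g(Y) - Q(Y-X) \equiv h(X)-h(0) =: c$. As a polynomial in $Y$, $Q(Y-X)$ has total degree $N_2$ with top-degree homogeneous component $Q(Y)$; equating total degrees in this identity forces $N_1 = N_2 =: N$, and equating top-degree components forces $g \equiv Q$. Writing $P := g = Q$ thus reduces the entire lemma to the algebraic identity
\[ P(Y) - P(Y-X) = c \qquad \text{for every } Y\in \RR^d, \]
where $P$ is a homogeneous polynomial of degree $N \in \NN$.

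Next I would extract the dichotomy by a two-point evaluation. Taking $Y=X$ gives $P(X) = c$, and taking $Y = 2X$ combined with $P(2X) = 2^N P(X)$ gives $(2^N-1)P(X) = c = P(X)$, i.e.\ $(2^N-2)P(X) = 0$. Either $N = 1$, in which case $P$ is linear and $h$ is affine (the first alternative), or $N \geq 2$, which forces $P(X) = 0$ and $c = 0$, so $P(Y+X) = P(Y)$ for every $Y$. To promote this $\ZZ X$-periodicity to $\RR X$-invariance, I would iterate to get $P(Y+kX) = P(Y)$ for every $k \in \ZZ$ and then combine with the full $\RR$-valued homogeneity $P(\lambda Z) = \lambda^N P(Z)$, which is available because $N$ is an integer: for $t \neq 0$, write $\lambda := |t| > 0$ and $k := \mathrm{sgn}(t) \in \{\pm 1\}$, so $tX = \lambda\cdot kX$ and
\[ P(Y + tX) = \lambda^N P(Y/\lambda + kX) = \lambda^N P(Y/\lambda) = P(Y). \]
Translating back to $h$ then yields the second alternative. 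The only mildly delicate step is the collapse to a single polynomial identity (the leading-term comparison at the end of the first paragraph); the two-point evaluation trick and the homogeneity-promotion step are both elementary once that identity is in hand, so I do not expect any real obstacle.
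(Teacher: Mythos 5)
Your proposal is correct, and it takes a genuinely different and more elementary route than the paper.

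The paper's proof first establishes the dichotomy between ``$N_1=N_2=1$'' and ``$h(X)=h(0)$'' from the scalar identity $\left[(1-t)^{N_1}+t^{N_2}\right]\cdot\left[h(X)-h(0)\right]=h(X)-h(0)$, then proves $N_1=N_2$ in the second case by a \emph{frequency-function} computation: it shows that $t\mapsto N(tX,r)$ has zero derivative along the segment, so the frequency is constant and equals both $N_1$ and $N_2$. Euler's relations $\langle\nabla h(Z),Z\rangle=N_1[h(Z)-h(0)]$ and $\langle\nabla h(Z),Z-X\rangle=N_2[h(Z)-h(X)]$ then give $\langle\nabla h(Z),X\rangle=(N_1-N_2)[h(Z)-h(0)]\equiv 0$, and invariance follows by integrating. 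You instead never touch the frequency function: you upgrade both $h-h(0)$ and $h-h(X)$ to homogeneous harmonic polynomials via the spherical eigenvalue argument, deduce $N_1=N_2$ and $g\equiv Q=:P$ purely by degree and leading-coefficient comparison in the identity $g(Y)-Q(Y-X)\equiv c$, extract the dichotomy by evaluating $P(Y)-P(Y-X)=c$ at $Y=X$ and $Y=2X$, and upgrade $\ZZ X$-periodicity to $\RR X$-invariance using the fact that an integer-degree homogeneous polynomial scales under \emph{all} real dilations. Your argument is shorter and purely algebraic; the paper's argument carries more analytic overhead but deliberately rehearses the ``differentiate the frequency along a segment joining two vertices'' computation, which is precisely the template that gets quantified in the spatial-variation estimates (Lemma \ref{lm:spvar} and Proposition \ref{prop:spvar}), so it is the thematically natural proof for the paper. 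Both establish the inclusive dichotomy correctly; your caveat that constant $h$ should be excluded (so that $N_1,N_2\geq 1$) is the right way to handle the degenerate case, and your two-point trick and scaling-promotion step both check out.
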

\begin{proof}
	Let $t\in [0,1]$. By the assumption, we have
	\[ h\left( (1-t)X \right) - h(0) = (1-t)^{N_1} \left[ h(X) - h(0) \right], \]
	and
	\[ h\left( (1-t) X \right) - h(X) = t^{N_2} \left[ h(0) - h(X) \right]. \]
	Combining these two equalities we get
	\[ h(0) + (1-t)^{N_1} \left[ h(X) - h(0) \right] = h(X) + t^{N_2} \left[ h(0) - h(X) \right], \]
	or equivalently
	\[ \left[ (1-t)^{N_1} + t^{N_2} \right] \cdot \left[ h(X) - h(0) \right] = h(X) - h(0). \]
	Since $h$ is a harmonic function, the degrees of homogeneity $N_1, N_2$ must be positive integers. Therefore either $N_1 = N_2 = 1$, or $h(X) - h(0) = 0$.
	
	Assume we are in the second case where $h(X) = h(0)$. In particular we have $h(tX) = h(0)$ for every $t\in \RR$.	
	By the assumption $h$ is $N_1$-homogeneous with respect to the origin, so
	\begin{equation}\label{tmp:hom0}
		\langle \nabla h(Z), Z \rangle = N_1 \left[ h(Z) - h(0) \right]. 
	\end{equation} 
	Similarly we also have
	\begin{equation}\label{tmp:hom1}
		\langle \nabla h(Z), Z-X \rangle = N_2 \left[ h(Z) - h(X) \right]. 
	\end{equation} 
	Subtracting \eqref{tmp:hom1} from \eqref{tmp:hom0}, we get
	\begin{equation}\label{tmp:hom2}
		\langle \nabla h(Z), X \rangle = (N_1 - N_2) \left[ h(Z) - h(0) \right].
	\end{equation}
	
	Let $r>0$ be fixed and we define 
	\[ g(t) = \log N(tX, r) = \log \frac{r D(tX, r)}{H(tX, r)}, \]
	where
	\[ D(tX, r) = \iint_{B_r(tX)} |\nabla h|^2 ~dZ, \quad H(tX, r) = \int_{\partial B_r(tX)} \left| h - h(tX) \right|^2 \sH. \]
	We want to show that $g'(t) = 0$.
	To that end, we compute
	\begin{align*}
		\frac{d}{dt} D(tX, r) = \frac{d}{dt} \iint_{B_r(0)} |\nabla h(tX+Y)|^2 ~dY & = 2 \iint_{B_r(tX)} \langle \nabla h(Z), \nabla^2 h(Z)(X) \rangle ~dZ \\
		& = 2\iint_{B_r(tX)} \left\langle \nabla h, \nabla \langle \nabla h, X \rangle \right\rangle  ~dZ \\
		& = 2 \iint_{B_r(tX)} \divg\left(\langle \nabla h, X \rangle \nabla h \right) ~dZ \\
		& = 2 \int_{\partial B_r(tX)} \langle \nabla h, X \rangle \frac{\partial h}{\partial n} ~d\mathcal{H}^{d-1}.
	\end{align*}
	And
	\begin{align*}
		\frac{d}{dt} H(tX, r) & = \frac{d}{dt} \int_{\partial B_r(0)} \left| h(tX+Y) - h(tX) \right|^2 ~d\mathcal{H}^{d-1}(Y) \\
		& = 2\int_{\partial B_r(tX)} \langle \nabla h, X \rangle \left( h-h(tX) \right) ~d\mathcal{H}^{d-1} \\
		& \qquad \qquad \qquad - 2\langle \nabla h(tX), X \rangle \cdot \int_{\partial B_r(tX)} \left( h - h(tX) \right) \sH \\
		& = 2\int_{\partial B_r(tX)} \langle \nabla h, X \rangle \left( h-h(tX) \right) ~d\mathcal{H}^{d-1},
	\end{align*}
	where we use the mean value property in the last equality.
	Combined with \eqref{tmp:hom2}, $h(tX) = h(0)$ and the equality
	\[ \iint_{B_r(tX)} |\nabla h|^2 ~dZ = \int_{\partial B_r(tX)} \left( h-h(tX) \right) \frac{\partial h}{\partial n} ~ d\mathcal{H}^{d-1}, \]
	we obtain
	\begin{align}
		g'(t) & = \frac{\frac{d}{dt} D(tX, r)}{D(tX, r)} - \frac{\frac{d}{dt} H(tX, r)}{H(tX, r)} \nonumber \\
		& = \frac{ 2 \int_{\partial B_r(tX)} \langle \nabla h, X \rangle \frac{\partial h}{\partial n} ~d\mathcal{H}^{d-1}}{\int_{\partial B_r(tX)} \left( h - h(0) \right) \frac{\partial h}{\partial n} ~d\mathcal{H}^{d-1}} - \frac{2\int_{\partial B_r(tX)} \langle \nabla h, X \rangle \left( h - h(0) \right) ~d\mathcal{H}^{d-1}}{\int_{\partial B_r(tX)} \left|h - h(0) \right|^2 ~d\mathcal{H}^{d-1}} \label{eq:spvar_ls} \\
		& = 2(N_1-N_2) - 2(N_1 - N_2) \nonumber \\
		& = 0. \label{eq:spvar_ls0}
	\end{align}
	Therefore the map $t\mapsto N(tX, r)$ is a constant. In particular
	\[ N_2 = N(X, r) = N(tX, r) = N(0, r) = N_1, \]
	and $h$ is $N_1$-homogeneous with respect to the vertex $tX$, for any $t\in \RR$.
	Moreover, by \eqref{tmp:hom2} we have that for every $Y\in \RR^d$ and $t\in \RR$,
	\[ h(tX+Y) - h(Y) = \int_0^t \frac{d}{ds} h(sX+Y) ~ds = \int_0^t \langle \nabla h(sX+Y), X \rangle ~ds = 0  \]
	
\end{proof}

The above theorem also has an analogue in the upper half-space:
\begin{lemma}\label{lm:spinebd}
	Let $h$ be a harmonic function in $\RR^d_+$ and $h=0$ on $\partial \RR^d_+$. Suppose $h$ is $N_1$-homogeneous with respect to the origin, and it is also $N_2$-homogeneous with respect to the vertex $X\in \partial \RR^d_+ \setminus\{0\}$. Then $h$ is invariant in the $X$-direction, i.e.
	\[ h(tX + Y) = h(Y) \quad \text{ for any } Y\in \RR^d \text{ and } t\in \RR. \]
	Moreover $N_1 = N_2 \in \mathbb{N}$.
\end{lemma}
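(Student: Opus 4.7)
The plan is to reduce the statement to Lemma \ref{lm:spine} by odd reflection across $\partial \RR^d_+$. First I would define $\tilde h: \RR^d \to \RR$ by $\tilde h(y', y_d) = h(y', y_d)$ for $y_d \geq 0$ and $\tilde h(y', y_d) = -h(y', -y_d)$ for $y_d < 0$. Since $h$ is harmonic in $\RR^d_+$ with vanishing trace on $\partial \RR^d_+$, Schwarz reflection gives that $\tilde h$ is harmonic on all of $\RR^d$, and $\tilde h(0) = \tilde h(X) = 0$.

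Next I would verify that the two homogeneities transfer to $\tilde h$. Because both $0$ and $X$ lie on $\partial \RR^d_+ = \{y_d = 0\}$, each of the dilations $Y \mapsto \lambda Y$ and $Y \mapsto X + \lambda(Y - X)$ (for $\lambda > 0$) preserves the sign of the last coordinate and commutes with the reflection $(y', y_d) \mapsto (y', -y_d)$; in particular for $Y$ with $y_d < 0$ one has $X + \lambda(Y - X) = \bigl(X + \lambda(Y^* - X)\bigr)^*$ where $Y^* = (y', -y_d)$, since $X_d = 0$. A direct check on each half-space then gives
\[
\tilde h(\lambda Y) = \lambda^{N_1} \tilde h(Y), \qquad \tilde h\bigl(X + \lambda(Y - X)\bigr) = \lambda^{N_2} \tilde h(Y) \quad \text{for all } Y \in \RR^d,\ \lambda > 0,
\]
so $\tilde h$ is $N_1$-homogeneous w.r.t.\ $0$ and $N_2$-homogeneous w.r.t.\ $X$.

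Then I would apply Lemma \ref{lm:spine} to $\tilde h$ to conclude $N_1 = N_2 \in \NN$ and that either (i) $\tilde h$ is linear, or (ii) $\tilde h(tX + Y) = \tilde h(Y)$ for every $Y \in \RR^d$ and $t \in \RR$. In case (ii), restricting back to $\RR^d_+$ gives exactly the desired $X$-invariance. In case (i), write $\tilde h(Z) = a \cdot Z$; the vanishing condition $\tilde h|_{\{y_d = 0\}} = 0$ forces $a = a_d\, e_d$, and since $X \in \partial \RR^d_+$ is orthogonal to $e_d$ we have $a \cdot X = 0$, so $h(tX + Y) = a \cdot (tX + Y) = a \cdot Y = h(Y)$ for all $Y \in \RR^d_+$ and $t \in \RR$ as well.

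The main obstacle is just the careful bookkeeping showing that odd reflection interacts correctly with homogeneity centered at a boundary vertex; once that is in place, everything follows immediately from Lemma \ref{lm:spine}. Alternatively, one can avoid reflection altogether and redo the argument in the proof of Lemma \ref{lm:spine} directly: the key identities \eqref{tmp:hom0}--\eqref{tmp:hom2} still hold (with $h(0) = h(X) = 0$ now automatic from the boundary condition, so the ``linear exception'' does not appear), and the derivative computation \eqref{eq:spvar_ls}--\eqref{eq:spvar_ls0} shows that $t \mapsto N(tX, r)$ is constant, yielding both $N_1 = N_2$ and the $X$-invariance by integration of \eqref{tmp:hom2}.
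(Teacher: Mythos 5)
Your main proof (odd reflection) is correct and takes a genuinely different route from the paper. The paper works directly in the half-space: it re-runs the computation from Lemma \ref{lm:spine}, observing that the extra boundary terms on $B_r(tX)\cap\partial\RR^d_+$ that arise in the integration by parts vanish because $\langle\nabla h, X\rangle=0$ there (since $h\equiv 0$ on $\partial\RR^d_+$ and $X$ is tangent to $\partial\RR^d_+$); it then separately disposes of the linear alternative by noting that the invariant hyperplane of a linear $h$ must coincide with $\partial\RR^d_+$, else $h\equiv 0$. Your reflection argument instead promotes $h$ to a global harmonic function $\tilde h$ via Schwarz reflection, verifies that the two homogeneities transfer (the key being that $X_d=0$ makes both dilations commute with the reflection), and then invokes Lemma \ref{lm:spine} as a black box. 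This is cleaner in that it avoids re-deriving the integration-by-parts identity, at the modest cost of checking compatibility of homogeneity with reflection. Your handling of the linear alternative for $\tilde h$ is also complete: $\tilde h(0)=0$ plus vanishing on $\{y_d=0\}$ forces $\tilde h = a_d y_d$, and $a\cdot X=0$ gives invariance.

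One small caveat on your closing "alternative" remark: you say the linear exception "does not appear" and that \eqref{eq:spvar_ls}--\eqref{eq:spvar_ls0} go through directly. If you were to actually carry out that route, you would need to note that the divergence-theorem computations behind \eqref{eq:spvar_ls} acquire boundary terms on $B_r(tX)\cap\partial\RR^d_+$ when $B_r(tX)$ crosses the boundary, and that these vanish precisely because $\langle\nabla h, X\rangle=0$ on $\partial\RR^d_+$. This is exactly the point the paper's proof singles out; as written your sketch glosses over it. Since your primary (reflection) argument sidesteps this entirely, this is only a remark about the secondary route.
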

\begin{proof}
	Using $h(0) = h(X) = 0$, the proof of $N_1 = N_2$ is similar to that of Lemma \ref{lm:spine}. The only difference is that when we integrate by parts, we have an extra term on the boundary $B_r(tX) \cap \partial \RR^d_+$. However, since $h$ vanishes on the boundary and $X \in \partial \RR^d_+$, we have that $\langle \nabla h, X \rangle = 0$. To prove the invariance in the $X$-direction, it suffices to study the first alternative when $h$ is linear. Suppose $h$ is linear and the invariant hyperplane is denoted by $V$. Either $V = \partial \RR^d_+$, then $h$ is invariant in the $X$-direction since $X\in V$; or $V$ intersects $\partial \RR^d_+$ transversally, in which case $h\equiv 0$ trivially and there is nothing to prove.

\end{proof}

\section{Monotonicity formula for elliptic operators}\label{sec:monotonicity}

Let $\Omega$ be a Lipschitz domain with $0\in \pO$, and let $A$ be a symmetric elliptic matrix defined on $\RR^d$ satisfying the following assumptions (c.f. \cite[Theorem 1.1]{AE}):

\begin{itemize}
	\item $A(0)=\Id$ and 
		\begin{equation}\label{def:convex}
			\langle A(X)X, n(X) \rangle \geq 0, \quad \text{ for all } X \in \Delta_1 = B_1 \cap \pO,
		\end{equation}
 		where $n(X)$ denotes the unit outer normal to $\Omega$ at $X$;
	\item There is a non-decreasing function $\theta: [0, + \infty ) \to [0,+ \infty )$ such that
		\begin{equation}\label{eq:gradA}
			|A(X)-A(0)| \leq \theta(|X|), \quad |\nabla A(X)| \leq \theta(|X|)/|X|
		\end{equation}
		and
		\begin{equation}\label{eq:Dini}
			\int_0^* \frac{\theta(r)}{r} dr < +\infty.
		\end{equation}
\end{itemize}
Let $v$ be a non-trivial solution to $Lv = \divg(A(X)\nabla v)$ satisfying $ v= 0$ on the surface ball $\Delta_1:= B_1(0) \cap \pO$. The assumption \eqref{def:convex} means that the domain $\Omega$ is $L$-convex with respect to the boundary point $0\in \pO$.
Inspired by Almgren's frequency function for harmonic functions, we want to define the frequency function for $v$ centered at $0$, and prove that it is almost monotone.
\bigskip

We first define a metric tensor $\bar g = \bar{g}_{ij}(X) ~ dX_i \otimes dX_j$ by setting 
\[ (\bar{g}_{ij}(X)) := (\det A(X))^{\frac{1}{d-2}} A(X)^{-1}, \quad \text{ where the indices } i, j \text{ are in } \{1, \cdots, d\}; \] and its inverse metric is denoted by $(\bar{g}^{ij}) = (\bar{g}_{ij})^{-1}$. We are interested in a neighborhood of the origin, so we define
\[ r(X)^2 := \bar{g}_{ij}(0) (X-0)_i (X-0)_j \quad \text{and } \quad\eta(X) := \bar{g}^{kl}(X) \frac{\partial r}{\partial x_k} \frac{\partial r}{\partial x_l}. \]
Now we define a new metric tensor $g = g_{ij}(X)~ dX_i \otimes dX_j$ by setting
\[ g_{ij}(X) := \eta(X) \bar{g}_{ij}(X) = (\det A(X))^{\frac{1}{d-2}} \eta(X) \left( A(X)^{-1} \right)_{ij}. \]
The advantage of modifying the metric $\bar{g}$ by $\eta(X)$ is that under the new metric, the geodesic distance from $0$ to any $X$ is the same as $r(X)$; moreover, under polar coordinates $(r,\omega)$ the metric $g$ is simply
\begin{equation}
	g(r,\omega) = dr \otimes dr + r^2 b_{kl}(r, \omega) ~d\omega^i \otimes d\omega^j, \quad \text{where } i,j \in \{1, \cdots, d-1\}.
\end{equation}
We refer interested readers to \cite[Section 3]{AKS}.
We denote the inverse metric of $g$ as $(g^{ij}(X)) = (g_{ij}(X))^{-1}$, and write $|g(X)| = |\det g_{ij}(X)|$. Let $\tilde{\eta}(X) = \left( \det A(X) \right)^{\frac{1}{d-2}} \eta(X)$, and simple computations show
\[ \tilde{\eta}(X) = \dfrac{\langle A(X)X, X \rangle}{|X|^2}, \quad g(X) = \tilde{\eta}(X) A(X)^{-1}. \]
By ellipticity and the assumption \eqref{eq:gradA}, we know that for all $X$ in a small neighborhood of the origin we have
\begin{equation}\label{eq:bdul}
	C_1 \leq \eta(X), ~ |b(r,\omega)| \leq C_2
\end{equation}
\begin{equation}\label{eq:bdgrad}
	|\nabla \eta(X)|, ~ |\nabla g_{ij}(X)|, ~ |\nabla b_{kl}(X)| \lesssim |\nabla A(X)| \leq \theta(|X|)/|X|
\end{equation}
where the constants depend on $n$ and the ellipticity of the matrix $A$.

Under the metric $g$, we can write the divergence-form elliptic operator $Lv = \divg(A(X) \nabla v)$ as
\begin{equation}\label{eq:eqing}
	L_g v = \divg_g(\mu(X) \nabla_g v), \quad \text{ where } \mu(X) = \eta(X)^{-\frac{d-2}{2}}.
\end{equation}
Here we denote by $\nabla_g v$ and $\divg_g \vec{Y}$ the intrinsic gradient of a function $v$ and the intrinsic divergence of a vector field $\vec{Y}$ in the metric $g$, i.e. 
\[ \nabla_g v = g^{ij} \frac{ \partial v}{\partial x_i} \frac{\partial}{\partial x_j}, \quad \divg_g \vec{Y} = \frac{1}{\sqrt{|g|}} \divg\left(\sqrt{|g|} \vec{Y} \right) \]

For a solution $v$ to \eqref{eq:eqing}, we define 
\begin{equation}\label{def:freqv0}
	D_g(r) = \iint_{B_r \cap \Omega} \mu |\nabla_g v|_g^2 dV_g, \quad H_g(r) = \int_{\partial B_r \cap \Omega} \mu v^2 dV_{\partial B_r}.
\end{equation}
Here $B_r$ denotes the geodesic ball centered at $0$ in the metric $g$. But by the above discussion it is the same as the Euclidean ball centered at $0$ of the same radius.
Now we define the frequency function of $u$ as
\begin{equation}\label{def:freqv}
	N_g(r) = \frac{r D_g(r)}{H_g(r)}. 
\end{equation} 
For simplicity we often omit the dependence on the metric $g$ and just write $D(r), H(r)$ and $N(r)$.
We will show the following:

\begin{prop}\label{prop:monotonicity}
	There exists some $r_0<1$, such that the frequency function satisfies
	\begin{equation}\label{eq:derNr}
		N'(r) = O\left( \frac{\theta(r)}{r} \right) N(r) + R_h(r) + R_b(r),
	\end{equation}
	for all $r<r_0$. Here
	\[ R_h(r) = \frac{2r}{H^2(r)} \left[ \left( \int_{\partial B_r \cap\Omega} \mu v^2 ~ dV_{\partial B_r} \right) \cdot \left( \int_{\partial B_r \cap \Omega} \mu \left( v_\rho \right)^2 ~dV_{\partial B_r} \right) - \left( \int_{\partial B_r \cap\Omega} \mu v v_\rho ~ dV_{\partial B_r} \right)^2 \right] \geq 0, \]
	\[ R_b(r) = \frac{1}{H(r)} \int_{\Delta_r} \frac{(\partial_n v)^2}{ \tilde{\eta}(X)} \langle A(X) X, n_\Omega(X) \rangle \langle A(X)n_\Omega(X), n_\Omega(X) \rangle d\mathcal{H}^{d-1} \geq 0, \]
	where $v_\rho = \langle \nabla_g v, X/|X|\rangle$ denotes the radial differentiation.\footnote{Let $\vec{Y}$ be an arbitrary vector, then its projection onto the radial direction can be written as $\left\langle \vec{Y}, \frac{ \frac12 \nabla_g |X|^2}{|X|} \right\rangle_g = \left\langle \vec{Y}, \frac{X}{|X|} \right \rangle $.} (We use $\langle \cdot, \cdot \rangle$ to denote Euclidean inner product, and use $\langle \cdot, \cdot \rangle_g$ to denote inner product with the metric $g$.) 
	
	In particular, there exists a constant $C>0$ depending on $n$ and the ellipticity of the matrix $A$, such that the modified frequency function
	\begin{equation}\label{eq:Nrmn}
		\widetilde{N}(r):= N(r)\exp\left( C\int_0^r \frac{\theta(s)}{s} ds \right) \text{ is monotone increasing with respect to } r.
	\end{equation}
\end{prop}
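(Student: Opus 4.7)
The plan is to prove the almost-monotonicity by a logarithmic derivative computation, following the classical Almgren scheme but performed intrinsically in the metric $g$. The whole point of the metric $g$ introduced before the proposition is that geodesic spheres centered at $0$ coincide with Euclidean spheres and $r(X)$ is the geodesic distance, so $\nabla_g r = X/|X|$ in the $g$-metric sense and the radial vector field $X^i\partial_i$ is geodesic. Consequently, the calculation formally mimics the constant-coefficient (harmonic) case, with all error terms controlled by $|\nabla g_{ij}| + |\nabla \eta| + |\nabla \mu| = O(\theta(r)/r)$ thanks to \eqref{eq:bdgrad}. One key observation is that the equation rewritten as $L_g v = \divg_g(\mu \nabla_g v) = 0$ continues to have $v=0$ on $\Delta_1$, so boundary contributions along $\pO$ only appear when the multiplier itself fails to be tangential.

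First I would differentiate $H(r)$. Writing the $g$-area element on $\partial B_r$ as $r^{d-1}\sqrt{|b(r,\omega)|}\,d\omega$ and differentiating under the integral yields
\[ H'(r) = \frac{d-1}{r}H(r) + 2\int_{\partial B_r \cap \Omega} \mu v v_\rho\, dV_{\partial B_r} + O\!\left(\frac{\theta(r)}{r}\right) H(r), \]
where the error comes from $\partial_r(\mu\sqrt{|b|})$. By coarea, $D'(r) = \int_{\partial B_r \cap \Omega} \mu |\nabla_g v|_g^2\, dV_{\partial B_r}$. Next, multiplying $L_g v = 0$ by $v$ and integrating by parts in the metric $g$ over $B_r \cap \Omega$, and using $v=0$ on $\pO \cap B_r$, gives the ``first variation'' identity
\[ D(r) = \int_{\partial B_r \cap \Omega} \mu v v_\rho\, dV_{\partial B_r}. \]

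The main step is a Pohozaev-type identity obtained by testing $L_g v = 0$ against the multiplier $\rho v_\rho = \langle X, \nabla v\rangle$ and integrating over $B_r \cap \Omega$ in the metric $g$. Expanding $\divg_g(\mu \nabla_g v)\cdot \rho v_\rho$, using that the Hessian of $\rho^2/2$ in $g$ equals $g_{ij}$ up to an $O(\theta(r))$ error (this is where the choice of $\tilde{\eta}$ and $g$ pays off), integrating by parts, and separating the interior sphere $\partial B_r \cap \Omega$ from the boundary piece $\Delta_r = B_r \cap \pO$, one obtains an identity of the schematic form
\[ 2r\!\int_{\partial B_r \cap \Omega}\!\!\mu v_\rho^2\, dV_{\partial B_r} = (d-2)D(r) + rD'(r) + \int_{\Delta_r}\!\!\frac{(\partial_n v)^2}{\tilde\eta}\langle AX,n_\Omega\rangle\langle An_\Omega,n_\Omega\rangle\,d\mathcal{H}^{d-1} + O\!\left(\frac{\theta(r)}{r}\right)\!(rD(r)). \]
Here I would use $v=0$ on $\pO$ to reduce $|\nabla v|^2$ on $\Delta_r$ to $(\partial_n v)^2$, and the boundary term acquires its stated sign from the $L$-convexity hypothesis \eqref{def:convex}; the error terms arise systematically whenever a geometric quantity related to $g$ is differentiated.

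Finally I would assemble these pieces into $N'(r)/N(r) = 1/r + D'(r)/D(r) - H'(r)/H(r)$. The $(d-1)/r$ and $(d-2)$ contributions combine exactly with the $1/r$, leaving precisely the Cauchy--Schwarz gap
\[ R_h(r) = \frac{2r}{H^2(r)}\left[\left(\int \mu v^2\right)\!\left(\int \mu v_\rho^2\right) - \left(\int \mu v v_\rho\right)^{\!2}\right] \geq 0 \]
coming from the difference $D'(r)/D(r) - (H'(r) - (d-1)H(r)/r)/H(r)$ via the identity $D(r) = \int \mu v v_\rho$, plus the boundary term $R_b(r) \geq 0$, plus an $O(\theta(r)/r)\, N(r)$ error. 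This proves \eqref{eq:derNr}. Since $R_h, R_b \geq 0$, one has $(\log N(r))' \geq -C\theta(r)/r$, and the monotonicity of $\widetilde{N}(r)$ in \eqref{eq:Nrmn} follows by multiplying through by the integrating factor $\exp(C\int_0^r \theta(s)/s\, ds)$, which is finite by the Dini condition \eqref{eq:Dini}.

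The hard part is the Pohozaev calculation in Step~4: one must keep careful track of every geometric term produced by commuting $\nabla_g$, $\divg_g$ and the multiplier $\rho v_\rho$ in the variable metric, and in particular verify that (i) the boundary contribution on $\Delta_r$ factors cleanly into $(\partial_n v)^2$ times the non-negative quantity $\langle AX, n_\Omega\rangle \langle A n_\Omega, n_\Omega\rangle/\tilde\eta$, and (ii) all other terms not already accounted for by $R_h$ and $R_b$ are genuinely absorbed into the $O(\theta(r)/r)\,N(r)$ remainder, uniformly for $r$ below some $r_0$.
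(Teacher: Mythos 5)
Your proposal takes essentially the same route as the paper: compute $H'(r)$ and the ``first variation'' identity $D(r)=\int_{\partial B_r\cap\Omega}\mu v v_\rho\,dV_{\partial B_r}$, derive $D'(r)$ by a Rellich/Pohozaev identity built from the radial multiplier in the metric $g$ (the paper writes this as the pointwise divergence identity \eqref{eq:Rellich}, you describe testing against $\rho v_\rho$ --- these are the same computation), split the boundary of $B_r\cap\Omega$ into the sphere piece and $\Delta_r$, use $L$-convexity for the sign of the $\Delta_r$ contribution, and assemble the logarithmic derivative. The outline and all the key mechanisms (role of $g$ and $\tilde\eta$, source of the $O(\theta(r)/r)$ errors, Cauchy--Schwarz giving $R_h\geq 0$) are right.

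One thing you should fix before you carry out ``Step 4'': the schematic Pohozaev identity as you wrote it has the wrong signs. The correct form is
\[
rD'(r) = (d-2+O(\theta(r)))\,D(r) + 2r\!\int_{\partial B_r\cap\Omega}\!\mu v_\rho^2\,dV_{\partial B_r} + \int_{\Delta_r}\frac{(\partial_n v)^2}{\tilde\eta}\langle AX,n_\Omega\rangle\langle An_\Omega,n_\Omega\rangle\,d\mathcal{H}^{d-1},
\]
i.e.\ $(d-2)D(r)$ and the $\Delta_r$-term belong on the $rD'(r)$ side with positive sign (equivalently, with a minus sign if moved to the other side). As written in your proposal, plugging your version into $N'/N = 1/r + D'/D - H'/H$ would leave a spurious $(4-2d)/r$ rather than the cancellation $1 + (d-2) - (d-1) = 0$ that you correctly anticipate; with the sign corrected everything assembles as you describe.
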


\begin{remark}
	\begin{itemize}
		\item Written in Euclidean metric, we have
	\begin{equation}\label{eq:DrHrinEm}
		D(r) = \iint_{B_r \cap \Omega} \langle A(X) \nabla v, \nabla v \rangle dX, \quad H(r) = \int_{\partial B_r \cap \Omega} \tilde{\eta}(X) v^2(X) d\mathcal{H}^{d-1}(X), 
	\end{equation} 
	with $\tilde{\eta}(X) = \langle A(X) X, X \rangle/|X|^2$.	
		\item For convenience, we can also write $R_h(r)$ as
			\begin{equation}\label{eq:Rhr}
				R_h(r) = \frac{2r}{H(r)} \int_{\partial B_r \cap\Omega} \mu \left| v_\rho - N(r)  ~\frac{v}{r} \right|^2 ~dV_{\partial B_r}. 
			\end{equation} 
	\end{itemize}
\end{remark}

\begin{proof}
	
We first compute $H'(r)$. Let $|b(r,\omega)| = |\det \left( b_{kl}(r,\omega) \right) |$, then
\[ dV_g(X) = \sqrt{|g(X)|} ~dX, \quad dV_{\partial B_r}(\omega) = r^{d-1} \sqrt{|b(r,\omega)|} ~d\mathcal{H}^{d-1}(\omega). \]
By extending $u$ by zero in $\Omega^c$, we can rewrite $H(r)$ as
\[ H(r) = \int_{\partial B_r} \mu v^2 ~ dV_{\partial B_r} = r^{d-1} \int_{\partial B_1} \mu(r,\omega) \sqrt{|b(r,\omega)|} v^2(r,\omega) d\mathcal{H}^{d-1}(\omega).  \]
Taking the derivative in $r$, we get
\[ H'(r) = \frac{d-1}{r} H(r) + 2\int_{\partial B_r} \mu v  v_\rho dV_{\partial B_r} + \int_{\partial B_r} \frac{1}{\sqrt{|b|}} \partial_{\rho}\left( \mu \sqrt{|b|} \right) v^2 dV_{\partial B_r}. \]
By \eqref{eq:bdul} and \eqref{eq:bdgrad}, we can bound the last term and get
\begin{equation}\label{eq:derHr}
	H'(r) = \left( \frac{d-1}{r} + O\left( \frac{\theta(r)}{r} \right)  \right) H(r) + 2 \int_{\partial B_r} \mu v  v_\rho ~dV_{\partial B_r}.
\end{equation} 
Moreover, since $\divg_g(\mu\nabla_g v) = 0$ we have
\[ \iint_{B_r \cap\Omega} \divg_g(\mu \nabla_g v^2)~ dV_g = 2 \iint_{B_r \cap\Omega} \mu |\nabla_g v|_g^2 ~dV_g. \]
On the other hand by the divergence theorem and the assumption $v\equiv 0$ on $\Delta_r$, we also have
\[ \iint_{B_r \cap\Omega} \divg_g(\mu \nabla_g v^2)~ dV_g = 2\int_{\partial B_r \cap\Omega} \mu v v_\rho ~ dV_{\partial B_r}. \]
Therefore we have
\begin{equation}\label{eq:DrderHr}
	D(r) = \iint_{B_r \cap \Omega} \mu |\nabla_g v|_g^2 ~dV_g = \int_{\partial B_r \cap\Omega} \mu v v_\rho ~dV_{\partial B_r}.
\end{equation}
\bigskip

Now we compute $D'(r)$. Since $D(r)$ is a solid integral in $B_r$, we easily get the formula
\[ D'(r) = \int_{\partial B_r \cap\Omega} \mu|\nabla_g v|_g^2 ~ dV_{\partial B_r}. \] 
We need the following \textbf{Rellich identity}: if the function $v$ satisfies $\divg_g(\mu \nabla_g v) = 0$, then
\begin{align}
	\divg_g \left(\mu|\nabla_g v|^2_g \cdot \frac{1}{2} \nabla_g |X|^2 \right)  = & ~ 2\divg_g \left( \left\langle \frac12 \nabla_g|X|^2, \nabla_g v \right\rangle_g \cdot \mu \nabla_g v \right) + \divg_g \left(\frac12 \nabla_g |X|^2 \right) \cdot \mu |\nabla_g v|^2_g \nonumber \\
	& - 2\mu |\nabla_g v|_g^2 + O(\theta(|X|)) \cdot \mu|\nabla_g v|^2_g.\label{eq:Rellich}
\end{align}
For comparison we remind the readers of the standard Rellich identity
\[ \divg(|\nabla v|^2 X) = 2\divg(\langle X, \nabla v \rangle \nabla v) + \divg(X)|\nabla v|^2 - 2|\nabla v|^2. \]
(Notice that in Euclidean metric, the position vector $X$ is pointing to the radial direction; but under the metric $g$, the radial direction at the point $X$ is computed by $\frac12 \nabla_g |X|^2$.)
We claim the identity \eqref{eq:Rellich} holds, and show why it helps in the computation of $D'(r)$. We integrate the above identity in the domain $B_r \cap \Omega$, and see what each term becomes.

By the divergence theorem,
\begin{align*}
	I:= \iint_{B_r \cap \Omega} \divg_g \left(\mu|\nabla_g v|^2_g \cdot \frac{1}{2} \nabla_g |X|^2 \right) dV_g = \int_{\partial (B_r \cap \Omega)} \sqrt{|g|} ~ \mu|\nabla_g v|^2_g \left\langle  \frac{1}{2} \nabla_g |X|^2, n \right\rangle d\mathcal{H}^{d-1},
\end{align*}
\begin{align*}
	II & := \iint_{B_r \cap \Omega} \divg_g \left( \left\langle \frac12 \nabla_g|X|^2, \nabla_g v \right\rangle_g \cdot \mu \nabla_g v \right) dV_g \\
	&= \int_{\partial (B_r \cap \Omega)} \sqrt{|g|}  \left\langle \frac12 \nabla_g|X|^2, \nabla_g v \right\rangle_g \left\langle  \mu \nabla_g v, n \right\rangle d\mathcal{H}^{d-1}.
\end{align*}
On $\partial B_r \cap \Omega$, the unit outer normal is $n(X) = X/|X|$ and
\[ \left\langle \frac12 \nabla_g |X|^2, \frac{X}{|X|} \right\rangle = \frac{1}{\tilde{\eta}(X)} \frac{\langle A(X) X, X \rangle}{|X|} = |X| = r,  \]
\[ \left\langle \frac12 \nabla_g |X|^2, \nabla_g v \right\rangle_g = \frac{1}{\tilde{\eta}(X)} \left\langle A(X) \nabla v, X \right\rangle = |X| ~v_\rho = r ~v_\rho, \quad \left\langle \nabla_g v, \frac{X}{|X|} \right\rangle = v_\rho. \]
On $\Delta_r: = B_r \cap \pO$, the unit outer normal vector is that of the domain $\Omega$, denoted by $n_\Omega(X)$. And 
\[ \left\langle  \frac{1}{2} \nabla_g |X|^2, n \right\rangle = \frac{ \langle A(X)X, n_\Omega(X) \rangle }{\tilde{\eta}(X)} \geq 0, \]
\[ \sqrt{|g|} ~ \mu |\nabla_g v|_g^2 = \langle A(X)\nabla v, \nabla v\rangle = \left( \partial_n v \right)^2 \left\langle A(X) n_\Omega(X), n_\Omega(X) \right\rangle \geq 0, \]
\[ \langle \nabla_g v, n_\Omega \rangle = \frac{1}{\tilde{\eta}(X)} \langle A(X) \nabla v, n_\Omega(X) \rangle = \frac{\partial_n v}{\tilde{\eta}(X)} \langle A(X) n_\Omega(X), n_\Omega(X) \rangle, \]
\[ \left\langle \frac12 \nabla_g|X|^2, \nabla_g v \right\rangle_g = \frac{1}{\tilde{\eta}(X)} \left\langle A(X) \nabla v, X \right\rangle = \frac{\partial_n v}{\tilde{\eta}(X)} \langle A(X) X, n_\Omega(X) \rangle. \]
Here we have used $v\equiv 0$ on $\Delta_r$, and thus $\nabla v$ only has normal component and $\nabla v = \partial_n v \cdot n_\Omega$.
Combined we conclude
\begin{equation}\label{eq:Rlk1}
	I = r\int_{\partial B_r \cap\Omega} \mu |\nabla_g v|_g^2 d V_{\partial B_r} + \int_{\Delta_r} \frac{(\partial_n v)^2}{ \tilde{\eta}(X)} \langle A(X) X, n_\Omega(X) \rangle \langle A(X)n_\Omega(X), n_\Omega(X) \rangle d\mathcal{H}^{d-1}, 
\end{equation} 
\begin{equation}\label{eq:Rlk2}
	II = r \int_{\partial B_r \cap \Omega} \mu \left(v_\rho \right)^2 ~dV_{\partial B_r} + + \int_{\Delta_r} \frac{(\partial_n v)^2}{ \tilde{\eta}(X)} \langle A(X) X, n_\Omega(X) \rangle \langle A(X)n_\Omega(X), n_\Omega(X) \rangle d\mathcal{H}^{d-1}. 
\end{equation} 

On the other hand,
\begin{align}
	\divg_g\left( \frac12 \nabla_g |X|^2 \right) & = \frac{1}{\sqrt{|g|}} \divg \left(  \frac{\sqrt{|g|}}{\tilde{\eta}(X)} A(X)X \right) \nonumber \\
	& = \frac{1}{\tilde{\eta}(X)} \divg\left( A(X) X \right) + \frac{1}{\sqrt{|g|}} \left\langle \nabla \left( \frac{\sqrt{|g|}}{\tilde{\eta}(X)} \right), A(X) X \right\rangle \nonumber \\
	& = \frac{\tr A(X)}{\tilde{\eta}(X)} + O\left(\theta(|X|) \right),\label{tmp:div3}
\end{align}
where in the last equality we have used the bounds \eqref{eq:bdul} and \eqref{eq:bdgrad}.
It follows from $|A(X)-\Id| \leq \theta(|X|)$ that
\[ \tr A(X) = d + O(\theta(|X|)), \]
and
\[ \tilde{\eta}(X) = \frac{\langle A(X)X, X \rangle}{|X|^2} = 1 + \frac{\langle ( A(X)-\Id) X, X \rangle}{|X|^2} = 1+O(\theta(|X|)). \]
Plugging into \eqref{tmp:div3} we get
\begin{equation}\label{eq:Rlk3}
	\divg_g\left( \frac12 \nabla_g |X|^2 \right) = d+O(\theta(|X|)). 
\end{equation} 

Combining \eqref{eq:Rlk1}, \eqref{eq:Rlk2}, \eqref{eq:Rlk3} and the Rellich identity \eqref{eq:Rellich}, we obtain
\begin{align*}
	r\int_{\partial B_r \cap\Omega} \mu|\nabla_g v|_g^2 ~dV_{\partial B_r}  = & 2r \int_{\partial B_r \cap\Omega} \mu \left(v_\rho \right)^2 ~ dV_{\partial B_r} + \left( d-2+O(\theta(r)) \right) \iint_{B_r \cap\Omega} \mu|\nabla_g v|_g^2 ~ dV_{g} \\
	& + \int_{\Delta_r} \frac{(\partial_n v)^2}{ \tilde{\eta}(X)} \langle A(X) X, n_\Omega(X) \rangle \langle A(X)n_\Omega(X), n_\Omega(X) \rangle d\mathcal{H}^{d-1}.
\end{align*} 
In other words, we get the desired quantity $D'(r)$:
\begin{align}
	D'(r) = & \frac{d-2+O(\theta(r))}{r} D(r) + 2 \int_{\partial B_r \cap\Omega} \mu \left(v_\rho \right)^2 ~ dV_{\partial B_r} \nonumber \\
	& + \frac{1}{r} \int_{\Delta_r} \frac{(\partial_n v)^2}{ \tilde{\eta}(X)} \langle A(X) X, n_\Omega(X) \rangle \langle A(X)n_\Omega(X), n_\Omega(X) \rangle d\mathcal{H}^{d-1}. \label{eq:derDr}
\end{align} 

Combining \eqref{eq:derHr}, \eqref{eq:derDr} and \eqref{eq:DrderHr}, we get
\begin{align*}
	\frac{N'(r)}{N(r)} & = \frac{1}{r} + \frac{D'(r)}{D(r)} - \frac{H'(r)}{H(r)} \\
	& = O\left(\frac{\theta(r)}{r} \right) + 2 \dfrac{\left( \int_{\partial B_r \cap\Omega} \mu v^2 ~ dV_{\partial B_r} \right) \cdot \left( \int_{\partial B_r \cap \Omega} \mu \left( v_\rho \right)^2 ~dV_{\partial B_r} \right) - \left( \int_{\partial B_r \cap\Omega} \mu v v_\rho ~ dV_{\partial B_r} \right)^2}{D(r)H(r)} \\
	& \qquad +\frac{1}{rD(r)} \int_{\Delta_r} \frac{(\partial_n v)^2}{ \tilde{\eta}(X)} \langle A(X) X, n_\Omega(X) \rangle \langle A(X)n_\Omega(X), n_\Omega(X) \rangle d\mathcal{H}^{d-1}.
\end{align*}	
Notice that the second term is non-negative by the Cauchy-Scharz inequality, and it equals zero if and only if 
\[ v_\rho = \lambda_r v \text{ on } \partial B_r \cap\Omega. \]
The last terms is also non-negative by the assumption on the matrix $A(X)$. And since $\partial_n v \not\equiv 0$, it equals zero if and only if $\langle A(X) X, n_\Omega(X) \rangle \equiv 0$ on $\Delta_r$.
To sum up we get
\[ \frac{N'(r)}{N(r)} \geq -C ~\frac{\theta(r)}{r}, \]
and hence 
\[ \widetilde{N}(r)= N(r) \exp\left( C\int_0^r \frac{\theta(s)}{s} ds \right) \text{ is monotone increasing with respect to } r. \]
This finishes the proof of the proposition.
\bigskip

Now we return to \textit{the proof of the Rellich identity \eqref{eq:Rellich}}. We will use two abstract formulae:
\begin{equation}\label{eq:absdiv}
	\divg_g (a\vec{Y}) = a \divg_g(\vec{Y}) + \langle \nabla_g a, \vec{Y} \rangle_g = a \divg_g(\vec{Y}) + \langle \nabla a, \vec{Y} \rangle,
\end{equation}
\begin{equation}\label{eq:absgrad}
	\langle \nabla_g \langle \vec{A}, \vec{B} \rangle_g, \vec{C} \rangle_g = \langle \nabla \langle \vec{A}, \vec{B} \rangle_g, \vec{C} \rangle = \frac{\partial A_j}{\partial x_l} g_{jk} B_k C_l + A_j g_{jk} \frac{\partial B_k}{\partial x_l} C_l + A_j \frac{\partial g_{jk}}{\partial x_l} B_k C_l.
\end{equation} 
By \eqref{eq:absdiv} the left hand side of \eqref{eq:Rellich} becomes
\begin{align*}
	\divg_g\left( \mu|\nabla_g v|_g^2 \cdot \frac12 \nabla_g |X|^2 \right) = \divg_g \left(\frac12 \nabla_g |X|^2 \right) \cdot \mu |\nabla_g v|^2_g + \left\langle \nabla_g \langle  \mu \nabla_g v, \nabla_g v \rangle_g, \frac12 \nabla_g|X|^2 \right\rangle_g.
\end{align*}
The first term on the right hand side also appears in the right hand side of \eqref{eq:Rellich}, so it suffices to compute the second term. By \eqref{eq:absgrad} we compute
\begin{align*}
	&\left\langle \nabla_g \langle  \mu \nabla_g v, \nabla_g v \rangle_g, \frac12 \nabla_g|X|^2 \right\rangle_g = \frac{\partial \left( \mu \nabla_g v \right)_j}{\partial x_l}  g_{jk} \left(\nabla_g v \right)_k \left( \frac12 \nabla_g |X|^2 \right)_l \\
	& \qquad + \qquad \mu \left( \nabla_g v \right)_j g_{jk} \frac{\partial \left(\nabla_g v \right)_k}{\partial x_l}  \left( \frac12 \nabla_g|X|^2 \right)_l + \mu \left(\nabla_g v\right)_j \frac{\partial g_{jk}}{\partial x_l} \left( \nabla_g v \right)_k \left( \frac12 \nabla_g|X|^2 \right)_l \\
	& = 2\mu \frac{\partial (\nabla_g v)_j}{\partial x_l} g_{jk} (\nabla_g v)_k \left( \frac12 \nabla_g |X|^2 \right)_l + \frac{\partial \mu}{\partial x_l} |\nabla_g v|_g^2 \left( \frac12 \nabla_g |X|^2 \right)_l + \mu (\nabla_g v)_j \frac{\partial g_{jk}}{\partial x_l} (\nabla_g v)_k \left( \frac12 \nabla_g |X|^2 \right)_l \\
	& =: E_0 + E_1 + E_2.
\end{align*}
In the second equality we use the symmetry of the matrix $g$, which follows from the symmetry of the elliptic matrix $A$.
On the other hand, by \eqref{eq:absdiv} and $\divg_g(\mu \nabla_g v) = 0$ we have
\begin{align*}
	 & \divg_g \left( \left\langle \frac12 \nabla_g|X|^2, \nabla_g v \right\rangle_g \cdot \mu \nabla_g v \right) = \left\langle \nabla_g \left\langle \frac12 \nabla_g|X|^2, \nabla_g v \right\rangle_g, \mu\nabla_g u\right\rangle_g \\
	 & \qquad =\frac{\partial \left( \frac12 \nabla_g|X|^2 \right)_j}{\partial x_l} g_{jk} (\nabla_g v)_k \mu(\nabla_g v)_l + \left( \frac12 \nabla_g|X|^2 \right)_j g_{jk} \frac{\partial(\nabla_g v)_k}{\partial x_l} \mu(\nabla_g v)_l \\
	&\qquad \qquad\qquad + \left( \frac12 \nabla_g|X|^2 \right)_j \frac{\partial g_{jk}}{\partial x_l} (\nabla_g v)_k \mu (\nabla_g v)_l \\
	& \qquad =: E_3 + E'_0 + E_4.
\end{align*}
By the symmetry of $g$ it is not hard to see $E_0 = 2\cdot E'_0$. Therefore
\begin{align*}
	& \divg_g\left( \mu|\nabla_g v|_g^2 \cdot \frac12 \nabla_g |X|^2 \right) - 2\divg_g \left( \left\langle \frac12 \nabla_g|X|^2, \nabla_g u \right\rangle_g \cdot \mu \nabla_g v \right) - \divg_g \left(\frac12 \nabla_g |X|^2 \right) \cdot \mu |\nabla_g v|^2_g \\
	& \qquad \qquad = E_1 + E_2 - 2\cdot E_3 - 2\cdot E_4.
\end{align*}
By \eqref{eq:bdul} and \eqref{eq:bdgrad} we can easily get
\[ E_1, E_2, E_4 = O\left( \theta(|X|) \right) \cdot \mu|\nabla_g v|_g^2. \]
On the other hand, the product rule, \eqref{eq:bdul}, \eqref{eq:bdgrad} and the symmetry of $g$ yield
\begin{align*}
	E_3 & = O(\theta(|X|)) \cdot \mu|\nabla_g v|_g^2 + \mu (\nabla_g v)_k \delta_{kl} (\nabla_g v)_l \\
	& = O(\theta(|X|)) \cdot \mu|\nabla_g v|_g^2 + \mu \frac{\langle A \nabla v, A \nabla v \rangle}{\tilde{\eta}^2} \\
	& = \mu |\nabla_g v|_g^2 + O(\theta(|X|)) \cdot \mu|\nabla_g v|_g^2,
\end{align*}
where we use the assumption \eqref{eq:gradA} in the last equality. This finishes the proof of the Rellich identity.
\end{proof}

To get an idea of the quantitative rigidity, we look at the sharp case.
\begin{corollary}[Rigidity of the monotonicity formula]\label{cor:rigidity}
	Suppose $A(X) \equiv \Id$ and $\theta(r) = 0$. Then $N(r)$ is a constant (denoted by $N$) if and only if
	\begin{enumerate}
		\item $v(r,\omega) = r^N v(\omega)$ is a homogeneous harmonic function of degree $N$ in $\Omega$;
		\item $\Omega$ is a cone with vertex at the origin.
	\end{enumerate} 
\end{corollary}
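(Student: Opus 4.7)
The plan is to specialize the identity \eqref{eq:derNr} from the proof of Proposition \ref{prop:monotonicity} to $A \equiv \Id$ and $\theta \equiv 0$, and then to read off both directions of the corollary from the equality case. In this degenerate setting $\mu \equiv 1$, $\tilde{\eta} \equiv 1$, the metric $g$ coincides with the Euclidean metric, and the term $O(\theta(r)/r) N(r)$ vanishes identically, so
\[ N'(r) = R_h(r) + R_b(r), \]
with both $R_h(r)$ and $R_b(r)$ manifestly non-negative (the second by the boundary convexity condition \eqref{def:convex}, which for $A = \Id$ reads $\langle X, n(X) \rangle \geq 0$ on $\Delta_1$).

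For the direction ``(1) and (2) $\Rightarrow N(r) \equiv N$'', assume $v(X) = |X|^N \phi(X/|X|)$ on a cone $\Omega$ with vertex $0$. Using the cone scalings $B_r \cap \Omega = r(B_1 \cap \Omega)$ and $\partial B_r \cap \Omega = r(\partial B_1 \cap \Omega)$ together with the homogeneity of $v$, a direct change of variables gives $H(r) = r^{2N+d-1} H(1)$ and $D(r) = r^{2N+d-2} D(1)$. Combining Euler's identity $v_\rho = Nv/r$ with \eqref{eq:DrderHr} then yields $D(1) = N H(1)$, so $N(r) \equiv N$ for every $r \in (0, r_0)$.

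For the converse direction, $N(r) \equiv N$ forces $N'(r) \equiv 0$, and since both $R_h$ and $R_b$ are non-negative we conclude $R_h(r) \equiv 0$ and $R_b(r) \equiv 0$ on $(0, r_0)$. From $R_h \equiv 0$ and the form \eqref{eq:Rhr}, one has $v_\rho = (N/r) v$ a.e.\ on $\partial B_r \cap \Omega$ for every $r$; along each maximal radial segment contained in $\Omega$ this is the ODE $\partial_s v(s\omega) = (N/s) v(s\omega)$, which integrates to $v(s\omega) = s^N \phi(\omega)$ with $\phi(\omega) := v(1,\omega)$. Hence $v$ is $N$-homogeneous with respect to the origin, establishing (1).

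From $R_b \equiv 0$, the integrand $(\partial_n v)^2 \langle X, n_\Omega(X) \rangle$ vanishes $\mathcal{H}^{d-1}$-a.e.\ on $\Delta_r$ for every $r < r_0$. If $\partial_n v$ were to vanish on a subset of $\partial \Omega \cap B_{r_0}$ of positive surface measure, combining with $v \equiv 0$ on $\partial\Omega \cap B_{5R}(0)$ and invoking boundary unique continuation for harmonic functions on Lipschitz domains (as discussed in the introduction, e.g.\ \cite{AE, AEK}) would force $v \equiv 0$, contradicting nontriviality. Hence $\partial_n v \neq 0$ a.e.\ on $\Delta_{r_0}$, and consequently $\langle X, n_\Omega(X) \rangle = 0$ for $\mathcal{H}^{d-1}$-a.e.\ $X \in \partial\Omega \cap B_{r_0}$. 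Geometrically this says the radial vector field $X \mapsto X$ is tangent to $\partial\Omega$ a.e., so the flow $\phi_t(X) = e^{-t} X$ preserves $\partial \Omega$; it follows that $\Omega \cap B_{r_0}$ is scale-invariant about $0$, i.e.\ a cone with vertex at the origin, establishing (2). The main obstacle is precisely this last passage from $R_b \equiv 0$ to the pure geometric cone condition: isolating the factor $\langle X, n_\Omega\rangle$ requires excluding the pathological scenario in which $\partial_n v$ vanishes on a large portion of $\partial \Omega$, which is exactly the type of boundary unique continuation that this paper's main theorem refines.
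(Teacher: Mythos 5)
Your approach matches the paper's: specialize the derivative identity \eqref{eq:derNr} from Proposition \ref{prop:monotonicity} to $A \equiv \Id$, $\theta \equiv 0$, and extract the rigidity from the nonnegativity of $R_h$ and $R_b$. You supply details the paper's terse proof omits: a scaling computation for the ``(1) and (2) $\Rightarrow$ constant'' direction, and a careful integration of the radial ODE coming from $R_h \equiv 0$ (correctly noting it applies only along maximal radial segments inside $\Omega$, though strictly speaking $\phi$ should be defined by evaluating at some $s_0<r_0$ rather than at $s=1$).

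More substantively, you flag a real subtlety that the paper's one-line assertion (``since $\partial_n v\not\equiv 0$, it equals zero if and only if $\langle A(X)X,n_\Omega(X)\rangle\equiv 0$'') glosses over: $R_b\equiv 0$ gives only $(\partial_n v)^2\langle X,n_\Omega(X)\rangle=0$ pointwise $\mathcal{H}^{d-1}$-a.e., so converting this into $\langle X,n_\Omega(X)\rangle=0$ a.e.\ requires ruling out that $\partial_n v$ vanishes on a positive-measure subset of $\Delta_{r_0}$. Your fix via boundary unique continuation is the right instinct, but the cited results do not squarely cover the hypotheses of Corollary \ref{cor:rigidity}: \cite{AE} is for Dini domains, \cite{AEK} is for convex domains, whereas here $\Omega$ is a Lipschitz domain satisfying only the star-shapedness condition $\langle X,n_\Omega(X)\rangle\ge 0$; the relevant theorem for general $C^1$ or small-Lipschitz domains is Tolsa's \cite{Tol}. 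This does not affect the paper downstream: in the actual applications (Subsection \ref{subsec:blowupofv}, proof of Proposition \ref{prop:tn}) only part (1) of the corollary is invoked, and the limit domain is already a half-space, so $\langle X,n_\Omega(X)\rangle\equiv 0$ is automatic and the issue you identified never bites.
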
 
\begin{remark}
	So far we do not know if $N$ is a positive integer or not, so it is perhaps a misnomer to call it $N$. But in Subsection \ref{subsec:blowupofv} we will show that if the domain $\Omega$ is $C^1$, it blows up to an upper half-space and thus the homogeneity $N$ of a harmonic function must be a positive integer.	
\end{remark}

\begin{proof}
	By \eqref{eq:derNr} and since $\theta(r) = 0$, $N(r)$ is a constant if and only if both $R_h(r)$ and $R_b(r)$ vanish for all $r$. By \eqref{eq:Rhr} $R_h(r)$ equals zero if and only if 
			\[ v_\rho - N(r) \frac{v}{r} \equiv 0 \text{ on } \partial B_r \cap \Omega.  \]
			When $A(X) = \Id$, we have $v_\rho = \langle \nabla_g v, \frac{X}{|X|} \rangle = \langle \nabla v, \frac{X}{|X|} \rangle$. Hence it follows that $v(r,\omega) = r^N v(\omega)$ is a homogeneous harmonic function of degree $N$.
			Recall that we assume $v$ is a non-trivial function, hence $R_b(r)$ equals zero if and only if 
			\[ \langle A(X)X, n_\Omega(X) \rangle \equiv 0, \]
			that is to say, $\Omega$ is a cone (not necessarily convex) with vertex at the origin.
\end{proof}

\begin{corollary}[The doubling property of $H(r)$]\label{cor:Hrbd}
	The quantity 
	\[ \widetilde{H}(r):= \frac{H(r)}{r^{d-1}} \]
	satisfies
	\begin{equation}\label{eq:Hrcmpl}
		\left( \frac{r_2}{r_1} \right)^{2\widetilde{N}(r_1) \exp \left(-C \int_0^{r_2} \frac{\theta(s)}{s} ~ds \right)} \exp\left(-C\int_{r_1}^{r_2} \frac{\theta(s)}{s} ~ds \right) \leq \frac{\widetilde{H}(r_2)}{\widetilde{H}(r_1)} 
	\end{equation}
	\begin{equation}\label{eq:Hrcmpu}
		 \frac{\widetilde{H}(r_2)}{\widetilde{H}(r_1)} \leq \left( \frac{r_2}{r_1} \right)^{2\widetilde{N}(r_2) \exp \left( -C\int_0^{r_1} \frac{\theta(s)}{s} ~ds \right)} \exp \left( C\int_{r_1}^{r_2} \frac{\theta(s)}{s} ~ds \right)
	\end{equation}
	for any two radii $r_1, r_2$ sufficiently small satisfying $0<r_1<r_2$.
	
\end{corollary}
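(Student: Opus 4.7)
The plan is to derive a logarithmic-derivative formula for $\widetilde{H}(r)$, sandwich $N(r)$ between its modified version $\widetilde{N}$ evaluated at the endpoints, and integrate. All the ingredients are already in the proof of Proposition \ref{prop:monotonicity}.

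First I would recall the two identities derived along the way: formula \eqref{eq:derHr} for $H'(r)$, namely
\begin{equation*}
 H'(r) = \left(\frac{d-1}{r} + O\!\left(\frac{\theta(r)}{r}\right)\right) H(r) + 2\int_{\partial B_r \cap \Omega} \mu v v_\rho\, dV_{\partial B_r},
\end{equation*}
and the identity \eqref{eq:DrderHr}, $D(r) = \int_{\partial B_r \cap \Omega} \mu v v_\rho\, dV_{\partial B_r}$. Combined with $N(r) = rD(r)/H(r)$, these give
\begin{equation*}
 \frac{H'(r)}{H(r)} = \frac{d-1 + 2N(r)}{r} + O\!\left(\frac{\theta(r)}{r}\right),
\end{equation*}
so for $\widetilde{H}(r) = H(r)/r^{d-1}$ one obtains
\begin{equation*}
 \frac{d}{dr}\log \widetilde{H}(r) = \frac{2N(r)}{r} + O\!\left(\frac{\theta(r)}{r}\right).
\end{equation*}
Integrating this ODE between $r_1$ and $r_2$ yields
\begin{equation*}
 \log \frac{\widetilde H(r_2)}{\widetilde H(r_1)} = \int_{r_1}^{r_2} \frac{2N(r)}{r}\, dr \;\pm\; C\int_{r_1}^{r_2}\frac{\theta(r)}{r}\, dr,
\end{equation*}
with the $\pm$ capturing upper/lower bounds on the $O$-term.

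Next I would control $N(r)$ on the interval $[r_1,r_2]$ using the monotonicity of $\widetilde N$. By definition, $N(r) = \widetilde N(r)\exp(-C\int_0^r \theta(s)/s\, ds)$. Since $\theta/s \ge 0$ the exponential is decreasing in $r$, and since $\widetilde N$ is monotone nondecreasing (Proposition \ref{prop:monotonicity}), for every $r\in[r_1,r_2]$
\begin{equation*}
 \widetilde N(r_1)\exp\!\Bigl(-C\!\int_0^{r_2}\!\frac{\theta(s)}{s}ds\Bigr) \;\le\; N(r) \;\le\; \widetilde N(r_2)\exp\!\Bigl(-C\!\int_0^{r_1}\!\frac{\theta(s)}{s}ds\Bigr).
\end{equation*}
Inserting these two-sided bounds into $\int_{r_1}^{r_2} 2N(r)\, dr/r$ converts the integral into $2\widetilde N(\cdot)\exp(\cdot)\log(r_2/r_1)$, and exponentiating produces exactly \eqref{eq:Hrcmpl} and \eqref{eq:Hrcmpu}.

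There is no real obstacle here, only bookkeeping: one has to be sure to pair the correct endpoint of $\widetilde N$ with the correct endpoint in the exponential factor so that the bound is valid uniformly in $r\in[r_1,r_2]$; this is exactly the asymmetry visible in the statement ($\widetilde N(r_1)$ paired with $\exp(-C\int_0^{r_2})$ on the lower side, and $\widetilde N(r_2)$ paired with $\exp(-C\int_0^{r_1})$ on the upper side), which is dictated by the monotonicity directions of the two factors. The only mildly nontrivial point is the implicit use of the assumption \eqref{eq:Dini} ($\theta$ is Dini) guaranteeing that $\int_0^{r_2}\theta(s)/s\, ds$ is finite, so that the exponential factors involving $\int_0^{r_i}\theta(s)/s\,ds$ make sense for all $r_i \le r_0$.
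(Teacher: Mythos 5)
Your proposal is correct and follows essentially the same route as the paper: both start from the identity $H'(r)/H(r) = (d-1)/r + 2N(r)/r + O(\theta(r)/r)$ (the paper's \eqref{eq:Hrder}), both sandwich $N(r)$ on $[r_1,r_2]$ by pairing the monotonicity of $\widetilde N$ with the monotonicity of the exponential correction factor, and both conclude by integrating and exponentiating. The only cosmetic difference is that the paper absorbs the $O(\theta/r)$ term into the derivative of $\log(\widetilde H(r)\exp(C\int_0^r\theta(s)/s\,ds))$ before integrating, whereas you integrate the $O$-term separately; the resulting inequalities are identical.
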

\begin{proof}
	Combining \eqref{eq:derHr} and \eqref{eq:DrderHr} we get
	\begin{equation}\label{eq:Hrder}
		\frac{H'(r)}{H(r)} = \frac{d-1}{r} + O\left( \frac{\theta(r)}{r} \right) + \frac{2D(r)}{H(r)} = \frac{d-1}{r} + O\left( \frac{\theta(r)}{r} \right) + \frac{2N(r)}{r}.
	\end{equation} 	
	Therefore there exists a constant $C>0$ such that
	\[ \frac{d}{dr} \log \left( \frac{H(r)}{r^{d-1}} \exp\left( C\int_0^r \frac{\theta(s)}{s} ~ds \right) \right) \geq \frac{2N(r)}{r} = \frac{2\widetilde{N}(r)}{r} \exp\left( -C\int_0^r \frac{\theta(s)}{s} ~ds \right). \]
	Integrating the above differential inequality on the interval $[r_1, r_2]$ and using the monotonicity of $\widetilde{N}(r)$, we obtain
	\begin{align*}
		\log \widetilde{H}(r_2) - \log \widetilde{H}(r_1) & \geq \int_{r_1}^{r_2} \frac{2\widetilde{N}(r)}{r} \exp\left( -C\int_0^r \frac{\theta(s)}{s} ~ds \right) ~dr \\
		& \geq 2\widetilde{N}(r_1) \exp\left(-C\int_0^{r_2} \frac{\theta(s)}{s} ~ds \right) \left( \log r_2 - \log r_1 \right).
	\end{align*} 
	The desired inequality \eqref{eq:Hrcmpl} follows from taking the exponential of the above. The proof of \eqref{eq:Hrcmpu} is similar, except that we use the upper bound $\widetilde{N}(r) \leq \widetilde{N}(r_2)$.
\end{proof}

\section{Reduction from a Dini domain to an almost convex domain}\label{sec:reduce}

In this section, we recall the reduction in \cite{AE} of the Laplacian operator in a Dini domain $D$ to a divergence-form elliptic operator in a Lipschitz domain $\Omega$, so that the elliptic operator satisfies the assumptions in the beginning of Section \ref{sec:monotonicity}. We focus on how the reduction map changes relevant quantities of the solution, and in particular how the frequency function discussed in Section \ref{sec:monotonicity} controls the \textit{symmetry} of the original harmonic function, modulo a Dini-type error term.

Let $D$ be a Dini domain (see Definition \ref{def:Dini}) with parameter $\theta$. We set
\[ \tilde{\theta}(r) = \frac{1}{\log^2 2} \int_r^{2r} \frac1t \int_t^{2t} \frac{\theta(s)}{s} ds ~dt, \quad \text{and } \quad \alpha(r) = 3\frac{d}{dr} \left(r\tilde{\theta}(r) \right). \]
Simple computations show that 
\[ \theta(r) \leq \tilde{\theta}(r) \leq \theta(4r), \quad 3\theta(r) \leq \alpha(r) \leq 13\theta(4r). \]
Let $u$ be a non-trivial solution to $\Delta u=0$ in the domain $D$. Assume that $0 \in \partial D$, $D$ is graphical inside the ball $B_{5R}(0)$ and $\theta(8R)<1/13$ for some $R>0$.\footnote{Throughout the paper we will often require \textit{the scale to be sufficiently small}. Unless otherwise specified it always means that the radius is less or equal to $R$ satisfying the assumption here.} Suppose $u$ satisfies $u=0$ on $ B_{5R}(0) \cap \pD$. We consider the following map
\begin{equation}\label{def:Psi}
	\Psi: X=(x,x_d) \in \RR^{d-1} \times \RR \mapsto (x, x_d + 3|X|\tilde{\theta}(|X|)) \in \RR^d. 
\end{equation} 
Let $X_0=(x_0,\varphi(x_0)) \in B_R(0) \cap \pD $ be arbitrary, and we define $\Psi_{X_0}(X):= X_0 + \Psi(X)$. Then
\begin{equation}\label{eq:DPsi}
	D\Psi_{X_0}(X) = D\Psi(X) = \begin{pmatrix}
	 1 & & & \\	 
	  & \ddots & & \\
	  & & 1 & \\
	\frac{\alpha(|X|)}{|X|} x_1 & \cdots &\frac{\alpha(|X|)}{|X|} x_{n-1} & 1+ \frac{\alpha(|X|)}{|X|} x_d 
\end{pmatrix} = \Id + O(\alpha(|X|)). 
\end{equation} 
In particular
\begin{equation}\label{eq:detDPsi}
	1-\alpha(|X|) \leq \det D\Psi(X) = 1+ \frac{\alpha(|X|)}{|X|} x_d \leq 1+\alpha(|X|), 
\end{equation} 
and hence $\Psi$ is invertible in $B_{2R}(0)$.
Set $v(X) = u(\Psi_{X_0}(X))$ and
\[ \Omega_{X_0} = \left\{X=(x,x_d) \in \RR^d: x_d > \varphi(x+x_0) - \varphi(x_0)-3|X|\tilde{\theta}(|X|) \right\}. \]
We have that $u$ is a solution to $\Delta u = 0$ in $D\cap B_{2R}(X_0)$ if and only if $v$ is a solution to $\divg(A(X)\nabla v)=0$ in $\Omega_{X_0} \cap B_{2R}(0)$, where
\begin{equation}\label{def:A}
	A(X) = \det D\Psi(X) \cdot \left(D\Psi(X) \right)^{-1} \left(\left(D\Psi(X) \right)^{-1}\right)^t 
\end{equation} 
is a symmetric, elliptic matrix whose ellipticity constants are $1\pm O(\alpha(|X|))$. Moreover, following the same computation as in \cite{AE} we know that $A(X)$ satisfies the desired assumption \eqref{eq:gradA} (with a constant multiple of $\theta(4r)$ in place of $\theta(r)$) and that
\[ \langle A(X)X, n_{\Omega_{X_0}}(X) \rangle \gtrsim |X|\theta(|X|) \geq 0. \]

To summarize Section \ref{sec:monotonicity} and the discussion above, we have proven the following. For any $X_0 \in B_R(0) \cap \pD $, after the transformation map $\Psi_{X_0}$ the function $v_{X_0}: = u \circ \Psi_{X_0}$ satisfies the divergence-form elliptic operator with coefficient matrix $A(X)$, and $A(X)$ satisfies the assumptions in the beginning of Section \ref{sec:monotonicity}. Therefore the frequency function for $v_{X_0}$ centered at $0=\Psi_{X_0}^{-1}(X_0)$ is almost monotone. We set the notation
\begin{equation}\label{def:freqDini}
	N_{X_0}: = \lim_{r\to 0+} \widetilde{N}(v_{X_0}, r) \quad \text{ and } \quad N_{X_0}(r): = \widetilde{N}(v_{X_0},r)
\end{equation} 
to emphasize the dependence of the modified frequency function $\widetilde{N}$ on the corresponding base point $X_0\in B_R(0) \cap \pD$. 

Recall that we studied the divergence-form elliptic operator in Section \ref{sec:monotonicity} using an intrinsic metric $g$. We will show that
\begin{equation}\label{eq:uvhom}
	\left| \langle \nabla_g v(Y), Y-0 \rangle - \langle \nabla u(Z), Z-X_0 \rangle \right| \lesssim |Y| \theta(4|Y|) |\nabla v(Y)|.
\end{equation}
where $Z=\Psi_{X_0}(Y) \in D$ is the image of $Y \in \Omega_{X_0}$. This estimate will be used in Section \ref{sec:L2approx}.
By definitions
\begin{equation}\label{eq:etabd}
	\tilde{\eta}(Y) = \frac{\langle A(Y)Y, Y \rangle}{|Y|^2} = \det D\Psi(Y) \frac{\left|\left(\left(D\Psi(Y) \right)^{-1}\right)^t Y \right|^2  }{|Y|^2} = \det D\Psi(Y) \cdot \left( 1+O(\alpha(|Y|)) \right), 
\end{equation} 
\begin{equation}\label{eq:YPsiY}
	|Y-\Psi(Y) | \leq 3|Y|\tilde{\theta}(|Y|), 
\end{equation} 
\[ \nabla v(Y) = \left( D\Psi(Y) \right)^t \nabla u(Z) = \left(\Id + O(\alpha(|Y|)) \right) \nabla u(Z) \]
Finally,
\begin{align*}
	\nabla_g v(Y) & = \frac{1}{\tilde{\eta}(Y)} A(Y)\nabla v(Y) \\
	& = (1+O(\alpha(|Y|))) \cdot \left(D\Psi(Y) \right)^{-1} \left(\left(D\Psi(Y) \right)^{-1}\right)^t \cdot \left(D\Psi(Y) \right)^t \nabla u(Z) \\
	& = (1+O(\alpha(|Y|))) \cdot \left(D\Psi(Y) \right)^{-1} \nabla u(Z) \\
	& = (1+O(\alpha(|Y|))) \cdot (\Id + O(\alpha(|Y|))) \nabla u(Z)
\end{align*}
and thus
\begin{align*}
	\langle \nabla_g v(Y),Y \rangle & =  \left\langle (1+O(\alpha(|Y|))) \cdot (\Id + O(\alpha(|Y|))) \nabla u(Z), \Psi(Y) + O(|Y|\tilde{\theta}(|Y|)) \right\rangle \\
	& = \langle \nabla u(Z), Z-X_0 \rangle + O\left( |Y| \theta(4|Y|)  \right) \cdot \left|\nabla u(Z) \right| \\
	& = \langle \nabla u(Z), Z-X_0 \rangle + O\left( |Y| \theta(4|Y|)  \right) \cdot \left|\nabla v(Y) \right|.
\end{align*}

\section{Blow up analysis and compactness for boundary points}\label{sec:blowup}


The main goal of this section is to show that the frequency function centered at $X_0 \in \pD$ carries a lot of information about the local behavior of the harmonic function $u$ near $X$. We also remark that the same analysis can also be carried out for interior points, after we establish the monotonicity formula of the frequency function in Proposition \ref{prop:intmonotonicity}. 

We first prove the following lemma, which essentially says the frequency function of scale $r$ takes comparable values in an $r$-neighborhood. In particular, we get a upper bound on the frequency function.
\begin{lemma}\label{lm:freqbd}
	There exist constants $C_2, C_3>0$ such that the following holds.
	Let $X_0, X'_0 \in B_{R}(0) \cap \pD $ be arbitrary. Suppose they satisfy $|X_0 - X'_0| \leq \frac{r}{20}$ for some $r$ sufficiently small (or to be precise, $r$ small enough so that $r\leq 3R$ and $\theta(4r)<1/30$). Then
	\[ N_{X_0}\left( \frac{r}{4} \right) \leq C_2 +  C_3 N_{X'_0} \left(\frac{4r}{3} \right), \]
	as long as the right hand side is finite.
	
	In particular, suppose the frequency function at the origin satisfies $N_0(4R)\leq \Lambda <+\infty$, then 
	\begin{equation}\label{eq:freqbdR}
		N_{X_0}\left( \frac34 R \right) \leq C_2 + C_3\Lambda,\quad \text{ for any } X_0 \in B_{\frac{3}{20}R}(0) \cap \pD. 
	\end{equation} 
\end{lemma}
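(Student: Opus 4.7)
The plan is to deduce the inequality from an $L^2$ doubling comparison of $u$ on nested balls in $D$ centered alternately at $X_0$ and $X'_0$. All the necessary tools are already in place: the doubling Corollary~\ref{cor:Hrbd}, the near-identity transformations $\Psi_{X_0}, \Psi_{X'_0}$ with Jacobian $1+O(\alpha)$ (see \eqref{eq:detDPsi}), and the radial distortion $|Y-\Psi(Y)|\lesssim|Y|\tilde\theta(|Y|)$ from \eqref{eq:YPsiY}.

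\textit{Step 1 (from frequency to $L^2$ doubling).} For $X \in B_R(0)\cap\partial D$ and small $s>0$, set
\[
J_X(s):=\iint_{B_s(X)\cap D} u^2, \qquad I_X(s):=\iint_{B_s(0)\cap \Omega_X} v_X^2.
\]
Since $\tilde\eta$ is pinched between positive constants, $I_X(s)\approx\int_0^s H_X(t)\,dt$. Integrating the ODE $(\log H_X)'(t)=(d-1+2N_X(t)+O(\theta(t)))/t$ from \eqref{eq:Hrder} and using monotonicity of $N_X=\widetilde N$, I would derive, for $0<s_1\leq s_2$ small,
\[
C^{-1}\,(s_2/s_1)^{d+2N_X(s_1)}\;\leq\;\frac{I_X(s_2)}{I_X(s_1)}\;\leq\;C\,(s_2/s_1)^{d+2N_X(s_2)},
\]
with $C=C(d,\int_0^*\theta(s)/s\,ds)$ finite by \eqref{cond:R}. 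The change of variables $\Psi_X$ then converts each $I_X(s)$ into $J_X(s(1\pm\delta))$, up to a bounded factor, where $\delta=3\tilde\theta(s)\leq 3\theta(4s)$.

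\textit{Step 2 (lower bound at $X_0$).} Applying the left inequality of Step~1 at $X=X_0$ with $s_1=r/4,\,s_2=r/2$, and converting $I$ to $J$, gives
\[
N_{X_0}(r/4)\;\leq\; C+C\log\frac{J_{X_0}((r/2)(1+\delta_1))}{J_{X_0}((r/4)(1-\delta_2))}, \qquad \delta_i=O(\theta(4r)).
\]

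\textit{Step 3 (ball nesting between base points).} The hypothesis $|X_0-X'_0|\leq r/20$ and $\theta(4r)<1/30$ give $\delta_i<1/10$, whence the arithmetic checks
\[
\frac{r}{2}(1+\delta_1)+\frac{r}{20}\;<\;\frac{3r}{5}, \qquad \frac{r}{6}+\frac{r}{20}\;<\;\frac{r}{4}(1-\delta_2)
\]
yield $B_{(r/2)(1+\delta_1)}(X_0)\subset B_{3r/5}(X'_0)$ and $B_{r/6}(X'_0)\subset B_{(r/4)(1-\delta_2)}(X_0)$, so that
\[
\frac{J_{X_0}((r/2)(1+\delta_1))}{J_{X_0}((r/4)(1-\delta_2))}\;\leq\;\frac{J_{X'_0}(3r/5)}{J_{X'_0}(r/6)}.
\]

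\textit{Step 4 (upper bound at $X'_0$).} Applying the right inequality of Step~1 at $X=X'_0$ with $s_1=r/6,\,s_2=3r/5$ (so $s_2/s_1=18/5$ is an absolute constant), and using $N_{X'_0}(3r/5)\leq N_{X'_0}(4r/3)$ by monotonicity, I obtain
\[
\log\frac{J_{X'_0}(3r/5)}{J_{X'_0}(r/6)}\;\leq\; C+C\,N_{X'_0}(4r/3).
\]
Concatenating Steps~2--4 produces the inequality with explicit $C_2,C_3$.

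\textit{Step 5 (the displayed case).} Apply the bound with $X'_0=0$ and $r=3R$: then $|X_0|\leq 3R/20=r/20$ and $4r/3=4R$, so $N_{X'_0}(4r/3)=N_0(4R)\leq\Lambda$, giving $N_{X_0}(3R/4)\leq C_2+C_3\Lambda$.

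\textit{Main obstacle.} The central difficulty is the bookkeeping of three small parameters simultaneously---the ball radius $r$, the separation $|X_0-X'_0|\leq r/20$, and the map distortion $\delta=O(\theta(4r))$---all of which must fit strictly inside the ball containments of Step~3. The specific ratios $1/4$ and $4/3$ in the lemma are calibrated exactly so this is possible when $\theta(4r)<1/30$. A secondary but more routine task is establishing the two-sided $L^2$ bounds of Step~1 by integrating \eqref{eq:Hrder}; this requires nothing beyond the monotonicity of $\widetilde N_X$ and the Dini hypothesis \eqref{cond:R}.
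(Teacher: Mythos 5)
Your proposal takes essentially the same route as the paper: translate the monotone frequency into an $L^2$ doubling estimate, change base point from $X_0$ to $X'_0$ by geometric nesting of the domains of integration, and translate back. The significant technical difference is that the paper works with annular shells $A_{r_1,r_2}$ rather than solid balls. That choice is not cosmetic: the paper's ratio $\int_{r/2}^{r}H_{X_0}(\rho)\,d\rho\big/\int_{r/4}^{r/2}H_{X_0}(\rho)\,d\rho$ admits the clean pointwise lower bound from the doubling of $H$ because, after the change of variables $\rho\mapsto 2\rho$, every radius that occurs is $\geq r/4$, so $N_{X_0}(\rho)\geq N_{X_0}(r/4)$ throughout. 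Your solid-ball ratio $I_X(s_2)/I_X(s_1)$ is less well behaved, and the lower bound you assert in Step 1 is not literally correct: integrating $H$ over $[0,s_1]$ involves radii near $0$ where $N_X(\cdot)$ may be strictly smaller than $N_X(s_1)$, so what comes out is more like
\[
\frac{I_X(s_2)}{I_X(s_1)}\ \geq\ \frac{\int_{s_1}^{s_2}H_X(t)\,dt}{s_1H_X(s_1)}\ \geq\ c\,\frac{(s_2/s_1)^{d+2c'N_X(s_1)}-1}{d+2c'N_X(s_1)},
\]
with the extra polynomial denominator and a constant $c'\in(0,1]$ coming from the $\exp(-C\int_0^{\cdot}\theta/s)$ factor in \eqref{eq:Hrcmpl}. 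This still yields the logarithmic conclusion $N_X(s_1)\leq C+C\log\bigl(I_X(s_2)/I_X(s_1)\bigr)$ once $s_2/s_1$ is a fixed constant $\geq 2$ (the linear denominator is dominated by the exponential numerator), so Steps 2--5 go through, but the Step 1 display as written is false and should be replaced by this logarithmic consequence. The nesting arithmetic in Step 3, the conversion via $\Psi_X$, and the specialization $r=3R$, $X'_0=0$ in Step 5 all match the structure of the paper's argument, modulo the specific choice of intermediate radii.
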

\begin{proof}
	We claim that
	\begin{equation}\label{eq:uvL2}
		\frac12 \iint_{B_{\frac{s}{2}}(X_0)} u^2 ~dX \leq \iint_{B_s\cap \Omega_{X_0}} \mu v_{X_0}^2 ~dV_g \leq 2 \iint_{B_{2s}(X_0) \cap D} u^2 ~dX, 
	\end{equation} 
for all $X_0 \in B_R(0) \cap \pD$ and all $s>0$ small enough.
	By \eqref{eq:etabd}, a change of variables and \eqref{eq:detDPsi},
	\begin{align}
		\iint_{B_s\cap \Omega_{X_0}} \mu v_{X_0}^2 ~dV_g = \iint_{B_s\cap \Omega_{X_0}} \tilde{\eta} v_{X_0}^2 ~dY & = \left( 1+ O\left( \theta(4s) \right) \right) \iint_{B_s \cap \Omega_{X_0}} u^2(\Psi_{X_0}(Y)) ~dY \nonumber \\
		& = \left( 1+ O\left( \theta(4s) \right) \right) \iint_{\Psi_{X_0}(B_s) \cap D} u^2(X) \left(1+O\left(\theta(4s) \right) \right)^{-1}  ~dX. \label{tmp:doublinguv}
	\end{align}
	Notice that for any $\rho>0$ sufficiently small, the transformation $\Psi$ maps the sphere $\partial B_\rho$ bijectively to the sphere $\partial B_\rho + 3\rho \tilde{\theta}(\rho)e_d$, which is the sphere $\partial B_\rho$ shifted upwards by $3\rho\tilde{\theta}(\rho)$ in the positive $e_d$-direction. Hence
	\[ \Psi_{X_0}(B_s) = X_0 + \Psi(B_s) \subset B_{s+3s\tilde{\theta}(s)}(X_0) \subset B_{\frac{11}{10}s}(X_0) \]
	as long as $s$ is sufficiently small (so that $\theta(4s)<1/30$).
	On the other hand, recall that $\Psi$ is invertible in $B_{2R}(0)$. That is to say for any $X'\in B_{\frac{9}{10}s}$ we can find some $X$ in the neighborhood of the origin such that $\Psi(X)=X'$. Moreover, such $X$ satisfies
	\[ |X| \leq \|\left( D\Psi \right)^{-1}\|_\infty |X'| \leq \left(1+\alpha\left( \frac{9}{10}s \right) \right) \frac{9}{10}s < s. \]	
	That is to say $B_{\frac{9}{10}s} \subset \Psi(B_s)$, and hence 
	\[ B_{\frac{9}{10}s}(X_0) \subset X_0 + \Psi(B_s) = \Psi_{X_0}(B_s). \] 
	Plugging back into \eqref{tmp:doublinguv}, we conclude that
	\begin{equation}\label{eq:doublinguv}
		\frac12 \iint_{A_{\frac{11}{20} r ,\frac{9}{10}r }(X_0) \cap D} u^2 ~dX \leq \iint_{A_{\frac{r}{2},r} \cap \Omega_{X_0}} \mu v_{X_0}^2 ~dV_g \leq 2\iint_{A_{\frac{9}{20}r, \frac{11}{10}r }(X_0) \cap D} u^2 ~dX,
	\end{equation}
	where we use $A_{r_1, r_2}(X_0)$ with $0<r_1<r_2$ to denote the annulus $B_{r_2}(X_0) \setminus B_{r_1}(X_0)$.
	
	Let $\rho>0$ be sufficiently small, and let $a>1$ be a constant to be chosen later.
	By Corollary \ref{cor:Hrbd}, we have that
	\begin{align}
		\frac{H_{X_0}(a\rho)}{H_{X_0}(\rho)} = a^{n-1} \frac{\widetilde{H}_{X_0}(a\rho)}{\widetilde{H}_{X_0}(\rho)} & \leq a^{n-1} a^{2 N_{X_0}(a\rho)\exp \left(-C\int_0^\rho \frac{\theta(4s)}{s} ~ds \right)} \exp \left(C\int_{\rho}^{a\rho} \frac{\theta(4s)}{s} ~ds \right) \nonumber \\
		& \leq C_1 a^{n-1} a^{2 N_{X_0}(a\rho)},\label{eq:doublingsolidHru}
	\end{align}
	where we write $C_1 = \exp\left(C\int_0^{4R} \frac{\theta(4s)}{s} ~ds \right)<+\infty$. Similarly, we also have the lower bound
	\begin{align}
		\frac{H_{X_0}(a\rho)}{H_{X_0}(\rho)} \geq a^{n-1} a^{2N_{X_0}(\rho)\exp \left(-C\int_0^{a\rho} \frac{\theta(4s)}{s} ~ds \right)} \exp \left(-C\int_{\rho}^{2\rho} \frac{\theta(4s)}{s} ~ds \right) \geq C_1^{-1} a^{n-1} a^{2N_{X_0}(\rho)/C_1}.\label{eq:doublingsolidHrl}
	\end{align}
	
	Using \eqref{eq:doublinguv} twice, we get
	\begin{align}
		\frac{ \mathlarger{\iint}_{A_{\frac{r}{2},r} \cap \Omega_{X_0}} \mu v^2_{X_0} ~dV_g}{ \mathlarger{\iint}_{A_{\frac{r}{4},\frac{r}{2}} \cap \Omega_{X_0}} \mu v^2_{X_0} ~dV_g} 
		\leq 4 \frac{ \mathlarger{\iint}_{A_{\frac{9}{20}r, \frac{11}{10}r }(X_0) \cap D} u^2 ~dX}{ \mathlarger{\iint}_{A_{\frac{11}{40}r,\frac{9}{20}r}(X_0) \cap D} u^2 ~dX} 
		& \leq 4 \frac{\mathlarger{\iint}_{A_{\frac{2}{5}r, \frac{23}{20} r}(X'_0) \cap D} u^2 ~dX}{ \mathlarger{\iint}_{A_{\frac{13}{40}r, \frac{2}{5}r}(X'_0) \cap D} u^2 ~dX} \nonumber \\
		& \leq 16  \frac{ \mathlarger{\iint}_{A_{\frac{9}{25}r,\frac{253}{200}r} \cap \Omega_{X'_0}} \mu v^2_{X'_0} ~dV_g}{ \mathlarger{\iint}_{A_{\frac{143}{400}r, \frac{9}{25}r} \cap \Omega_{X'_0}} \mu v^2_{X'_0} ~dV_g}.\label{tmp:uvdiffbase}
	\end{align}
	Using the lower bound in \eqref{eq:doublingsolidHrl} with $a=2$ and the monotonicity of $N_{X_0}(\cdot)$, we have
	\begin{align}
		\frac{ \mathlarger{\iint}_{A_{\frac{r}{2},r} \cap \Omega_{X_0}} \mu v^2_{X_0} ~dV_g}{ \mathlarger{\iint}_{A_{\frac{r}{4},\frac{r}{2}} \cap \Omega_{X_0}} \mu v^2_{X_0} ~dV_g}
		= \frac{\mathlarger{\int}_{\frac{r}{2}}^{r} H_{X_0}(\rho) ~d\rho }{\mathlarger{\int}_{\frac{r}{4}}^{\frac{r}{2}} H_{X_0}(\rho) ~d\rho }
		= \frac{2\mathlarger{\int}_{\frac{r}{4}}^{\frac{r}{2}} H_{X_0}(2\rho) ~d\rho }{\mathlarger{\int}_{\frac{r}{4}}^{\frac{r}{2}} H_{X_0}(\rho) ~d\rho }
		\geq C_1^{-1} 2^{n} 2^{2N_{X_0}(\frac{r}{4})/C_1 }.\label{eq:uvdiffbasel}
	\end{align}
	Similarly, we want to use the upper bound in \eqref{eq:doublingsolidHru} to bound the right hand side of \eqref{tmp:uvdiffbase}. However, since the integral region of the numerator is not exactly a rescaling of the integral region of the denominator, we need to first break down the integral on top. To do that we denote $\ell:= \frac{9}{25}r-\frac{143}{400}r = \frac{1}{400}r$, 
	\[ r_1 = \frac{9}{25}r \quad \text{ and } \quad a_1 = a:=\frac{r_1}{\frac{143}{400}r} = \frac{144}{143}, \]
	and inductively
	\[ r_j = r_1 + a \ell \cdot(j-1) \quad \text{ and } \quad a_j = \frac{r_j}{\frac{143}{400}r} = \frac{144}{143} + \frac{144}{143^2}(j-1) \]
	for $j=1, \cdots, 361$ (since $r_{361}$ is the last one to be smaller than $\frac{253}{200}r$). For each $j$ we can use the upper bound in \eqref{eq:doublingsolidHru} with coefficient $a_j$ to bound
	\begin{align*}
		\frac{ \mathlarger{\iint}_{A_{r_j,r_{j+1}} \cap \Omega_{X'_0}} \mu v^2_{X'_0} ~dV_g}{ \mathlarger{\iint}_{A_{\frac{143}{400}r, \frac{9}{25}r} \cap \Omega_{X'_0}} \mu v^2_{X'_0} ~dV_g} = \frac{\mathlarger{\int}_{r_j}^{r_{j+1}} H_{X'_0}(\rho) ~d\rho }{\mathlarger{\int}_{\frac{143}{400}r}^{\frac{9}{25}r} H_{X'_0}(\rho) ~d\rho }
		\leq C_1 a_j^{n} a_j^{2N_{X'_0}(r_{j+1})} \leq C_1 4^n 4^{2N_{X'_0}(\frac43 r ) }.
	\end{align*}
	 Finally
	\begin{align}
		\frac{ \mathlarger{\iint}_{A_{\frac{9}{25}r,\frac{253}{200}r} \cap \Omega_{X'_0}} \mu v^2_{X'_0} ~dV_g}{ \mathlarger{\iint}_{A_{\frac{143}{400}r, \frac{9}{25}r} \cap \Omega_{X'_0}} \mu v^2_{X'_0} ~dV_g} 
		\leq \sum_{j=1}^{361} \frac{ \mathlarger{\iint}_{A_{r_j,r_{j+1}} \cap \Omega_{X'_0}} \mu v^2_{X'_0} ~dV_g}{ \mathlarger{\iint}_{A_{\frac{143}{400}r, \frac{9}{25}r} \cap \Omega_{X'_0}} \mu v^2_{X'_0} ~dV_g} \leq 361 C_1 4^n 4^{2N_{X'_0}(\frac43 r ) }. \label{eq:uvdiffbaseu}
	\end{align}
	Combining \eqref{tmp:uvdiffbase}, \eqref{eq:uvdiffbasel} and \eqref{eq:uvdiffbaseu}, we conclude
	\[ N_{X_0}\left(\frac{r}{4} \right) \leq C_2 + C_3 N_{X'_0}\left(\frac{4r}{3} \right), \]
	where the constants $C_2, C_3$ depends on $n$ and the value of $C_1$. 	
\end{proof}

Following the proof of Lemma \ref{lm:freqbd} and the bound \eqref{eq:freqbdR} of the frequency function, we get the following lemma which will become useful later.
\begin{lemma}\label{lm:vthinlayer}
	Assume $N_0(4R)\leq \Lambda < +\infty$.
	For any $X_0 \in \pD \cap B_{R}(0)$, we have
	\begin{equation}\label{eq:vthinlayer}
		\frac{\mathlarger{\iint}_{B_{\tau \rho}(\partial B_\rho) \cap \Omega_{X_0}} \mu v_{X_0}^2 ~dV_g }{\mathlarger{\iint}_{B_\rho \cap \Omega_{X_0}} \mu v_{X_0}^2 ~dV_g } \leq C_1 \left[ 1 - \left( \frac{1-\tau}{1+\tau} \right)^{d+2N_{X_0}((1+\tau)\rho)} \right]. 
	\end{equation} 
	In particular if $|X_0| \leq \frac{3}{20} R$ and $\rho< \frac34 R$, 
	\[\text{as } \tau \to 0, \text{ the ratio } \frac{\mathlarger{\iint}_{B_{\tau \rho}(\partial B_\rho) \cap \Omega_{X_0}} \mu v_{X_0}^2 ~dV_g }{\mathlarger{\iint}_{B_\rho \cap \Omega_{X_0}} \mu v_{X_0}^2 ~dV_g } \text{ converges to } 0 \text{ uniformly for } X_0 \text{ and } \rho. \]
\end{lemma}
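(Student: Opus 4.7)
The plan is to convert both solid integrals into one-dimensional integrals of the boundary density $H_{X_0}$ via coarea, and then feed in the doubling estimates of Corollary \ref{cor:Hrbd}. The metric $g$ has the polar form $dr\otimes dr + r^2 b_{k\ell}\, d\omega^k \otimes d\omega^\ell$, so the volume element factorizes as $dV_g = s^{d-1}\sqrt{|b(s,\omega)|}\, ds\, d\omega$; moreover the geodesic ball $B_r$ in $g$ coincides with the Euclidean ball of radius $r$, so $B_{\tau\rho}(\partial B_\rho)$ is exactly the annulus $\{(1-\tau)\rho < |X| < (1+\tau)\rho\}$. Coarea then delivers
\[
\iint_{B_\rho \cap \Omega_{X_0}} \mu\, v_{X_0}^2\, dV_g = \int_0^\rho H_{X_0}(s)\, ds, \qquad \iint_{B_{\tau\rho}(\partial B_\rho) \cap \Omega_{X_0}} \mu\, v_{X_0}^2\, dV_g = \int_{(1-\tau)\rho}^{(1+\tau)\rho} H_{X_0}(s)\, ds.
\]

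Next I anchor at the scale $(1+\tau)\rho$ and apply Corollary \ref{cor:Hrbd} to the pair $r_1 = s \leq r_2 = (1+\tau)\rho$. Using $s/((1+\tau)\rho) \leq 1$ together with the non-negativity of the modified frequency $N_{X_0}(\cdot)$ (which by \eqref{def:freqDini} already denotes the monotone $\widetilde N$), the bounds \eqref{eq:Hrcmpl} and \eqref{eq:Hrcmpu} yield
\[
C_1^{-1} H_{X_0}((1+\tau)\rho)\left(\tfrac{s}{(1+\tau)\rho}\right)^{d-1+2N_{X_0}((1+\tau)\rho)} \leq H_{X_0}(s) \leq C_1\, H_{X_0}((1+\tau)\rho)\left(\tfrac{s}{(1+\tau)\rho}\right)^{d-1}
\]
for every $s \in (0, (1+\tau)\rho]$. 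Integrating the lower bound over $[0,\rho]$ and the upper bound over $[(1-\tau)\rho, (1+\tau)\rho]$, the $H_{X_0}((1+\tau)\rho)$ prefactors cancel in the ratio, and with $N := N_{X_0}((1+\tau)\rho)$ the result simplifies to
\[
\frac{\int_{(1-\tau)\rho}^{(1+\tau)\rho} H_{X_0}(s)\, ds}{\int_0^\rho H_{X_0}(s)\, ds} \leq \frac{C_1^2 (d + 2N)}{d}\,(1+\tau)^{2N} \bigl[(1+\tau)^d - (1-\tau)^d\bigr].
\]

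The closing algebraic step invokes $(1+\tau)^{2N} \geq (1-\tau)^{2N}$, which gives $(1+\tau)^{2N}[(1+\tau)^d - (1-\tau)^d] \leq (1+\tau)^{d+2N} - (1-\tau)^{d+2N}$, and factoring out $(1+\tau)^{d+2N}$ produces the desired form $1 - ((1-\tau)/(1+\tau))^{d+2N}$; renaming the (finite) prefactor as the constant of the statement gives the first inequality. For the second assertion I feed in the a priori frequency bound from Lemma \ref{lm:freqbd} applied to $(X_0,0)$ with $r = R$: when $|X_0| \leq 3R/20$, $\rho < 3R/4$, and $\tau$ is small enough that $(1+\tau)\rho \leq 3R/4$, monotonicity yields $N_{X_0}((1+\tau)\rho) \leq N_{X_0}(\tfrac{3}{4}R) \leq C_2 + C_3 \Lambda$. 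The exponent $d+2N_{X_0}((1+\tau)\rho)$ is thus uniformly capped by some $M=M(d,\Lambda)$, and since $\tau \mapsto 1 - ((1-\tau)/(1+\tau))^m$ vanishes at $\tau = 0$ for each fixed $m \leq M$, the ratio tends to $0$ uniformly in $X_0$ and $\rho$. The only mildly subtle point in the whole argument is the choice of compatible exponents in the upper and lower bounds for $H_{X_0}(s)$ so that, after the ratio is formed, the answer repackages into the required shape; the crude exponent $d-1$ in the upper bound combined with the $(1+\tau)^{2N}$ absorption trick is what accomplishes this. Beyond that piece of bookkeeping, the proof is mechanical once Corollary \ref{cor:Hrbd} and Lemma \ref{lm:freqbd} are in hand.
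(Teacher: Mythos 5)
Your argument is correct in substance but follows a genuinely different route from the paper's, and the difference shows up in the form of the constant. The paper keeps the full solid integral in the denominator, drops it to $\int_0^{(1-\tau)\rho}H_{X_0}(s)\,ds$, and then decomposes $(0,(1-\tau)\rho]$ into the geometric shells $[a^{-(j+1)}(1-\tau)\rho, a^{-j}(1-\tau)\rho]$ with $a=\frac{1+\tau}{1-\tau}$; applying the doubling estimate to each shell against the target annulus and summing a geometric series gives directly the factor $1-a^{-(d+2N)}$ with only the universal $C_1$ out front. You instead anchor $H_{X_0}(s)$ pointwise against $H_{X_0}((1+\tau)\rho)$ from both sides (using the nonnegativity of $\widetilde N$ to discard the exponent in the upper bound), integrate, and then absorb $(1+\tau)^{2N}[(1+\tau)^d-(1-\tau)^d]$ into $(1+\tau)^{d+2N}-(1-\tau)^{d+2N}$. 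Each step is correct, but the prefactor you arrive at is $\frac{C_1^2(d+2N)}{d}(1+\tau)^{d+2N}$, where $N=N_{X_0}((1+\tau)\rho)$: it is not a universal constant and it cannot simply be ``renamed'' to the $C_1$ of \eqref{eq:vthinlayer}, since it depends on $X_0$, $\rho$, and $\tau$ through $N$. This is a real (if minor) mismatch with the letter of the first assertion. It is, however, harmless for the second assertion and for every downstream use: once $|X_0|\le \tfrac{3}{20}R$, $\rho<\tfrac34 R$, and $\tau$ is small, Lemma \ref{lm:freqbd} caps $N$ uniformly in terms of $\Lambda$, so your $N$-dependent constant is bounded and the ratio still tends to $0$ uniformly as $\tau\to0$. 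If you want to recover the precise form of \eqref{eq:vthinlayer}, the geometric-shell decomposition of the paper (which compares $H$ on the $j$-th shell against $H$ on the annulus, picking up exactly $a^{-(j+1)(d+2N)}$ and summing) is the mechanism that avoids the extra $(d+2N)(1+\tau)^{d+2N}$ prefactor; your pointwise-bound-then-integrate strategy necessarily loses a factor comparable to the number of dyadic scales packed into $[0,\rho]$.
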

\begin{proof}
	Using polar coordinates, we have
	\begin{equation}\label{tmp:thinlayer1}
		\frac{\mathlarger{\iint}_{B_{\tau \rho}(\partial B_\rho) \cap \Omega_{X_0}} \mu v_{X_0}^2 ~dV_g }{\mathlarger{\iint}_{B_\rho \cap \Omega_{X_0}} \mu v_{X_0}^2 ~dV_g } \leq \frac{\mathlarger{\int}_{(1-\tau)\rho}^{(1+\tau)\rho} H_{X_0}(s) ~ds}{\mathlarger{\int}_0^{(1-\tau)\rho} H_{X_0}(s) ~ds}.
	\end{equation}
	Let $a=\frac{(1+\tau)\rho}{(1-\tau)\rho} = \frac{1+\tau}{1-\tau}>1$, then 
	\begin{align}
		\frac{\mathlarger{\int}_0^{(1-\tau)\rho} H_{X_0}(s) ~ds}{\mathlarger{\int}_{(1-\tau)\rho}^{(1+\tau)\rho} H_{X_0}(s) ~ds} = \sum_{j=0}^\infty \frac{\mathlarger{\int}_{a^{-(j+1)} (1-\tau)\rho}^{a^{-j} (1-\tau)\rho} H_{X_0}(s) ~ds}{\mathlarger{\int}_{(1-\tau)\rho}^{(1+\tau)\rho} H_{X_0}(s) ~ds} & \geq \sum_{j=0}^\infty \frac{1}{C_1 a^{j\left[ d+2N_{X_0}((1+\tau)\rho \right]}} \nonumber \\
		& = \frac{1}{C_1} \frac{1}{1-\left( \frac{1-\tau}{1+\tau} \right)^{d+2N_{X_0}((1+\tau)\rho)} }. \label{tmp:thinlayer2}
	\end{align}
	Combining \eqref{tmp:thinlayer1} and \eqref{tmp:thinlayer2} we get the desired inequality \eqref{eq:vthinlayer}. 
	
	By the assumption and \eqref{eq:freqbdR}, we have that the exponent
	\[ d+ 2N_{X_0}((1+\tau)\rho) \leq C(d, \Lambda) <+\infty \]
	for $\tau$ sufficiently small. Therefore the ratio tends to zero as $\tau \to 0$.
\end{proof}


In the rest of this section, we first study what the monotonicity formula in Proposition \ref{prop:monotonicity} tells us about the tangent function of $v$. Then we switch gears to the harmonic function $u$ in the Dini domain and study its tangent function, by looking at the transformation maps between $u$ and $v$.

\subsection{Blow up analysis in $\Omega_{X_0}$ using the monotonicity formula}\label{subsec:blowupofv}
Let $X_0 \in B_R(0) \cap \pD$ be fixed. Since $v_{X_0}$ has boundary value zero, we may extend it by zero across $\Omega_{X_0}$. For any $r>0$ we define
\begin{equation}\label{def:vr}
	v_r(Y) = T_r v_{X_0} = \frac{v_{X_0}(rY)}{\left(\frac{1}{r^d} \mathlarger{\iint}_{ B_{r}} v_{X_0}^2 ~dX \right)^{\frac12} }, \quad \text{ for } Y \in \frac{\Omega_{X_0}}{r} \cap B_5. 
\end{equation} 
The normalizing factor in the denominator guarantees that 
\begin{equation}\label{eq:vrL2norm}
	1\leq \iint_{\frac{\Omega_{X_0}}{r} \cap B_5} |v_r|^2 ~dY= \frac{\iint_{B_{5r}} v_{X_0}^2 ~dX}{\iint_{B_r} v_{X_0}^2 ~dX} \leq C_1 5^{d+2N_{X_0}(5r)} \leq C(d, \Lambda) <+\infty, 
\end{equation} 
by \eqref{eq:Hrcmpu} and \eqref{eq:freqbdR}, as long as $|X_0| \leq \frac{3}{20}R$ and $r\leq \frac{3}{20}R$.
In particular $\|v_r\|_{L^2}$ is uniformly bounded. Heuristically if the function $v$ can be written as the sum of homogeneous functions, then as $r\to 0+$ we have $v_r$ approaches the leading order homogeneous function (modulo a normalization factor).
We first look at how the above rescaling by $r$ affects the frequency function.
Notice that not only is $v_r$ defined in the domain $\frac{\Omega}{r}$, which is different from the domain $\Omega$ of $v$, it also satisfies a different divergence-form elliptic operator:
\[ \divg(A_r(Y) \nabla v_r) = 0 \text{ in } \frac{\Omega}{r}, \quad \text{ where the matrix } A_r(Y):= A(rY). \]
So when we write the frequency function for $v_r$ at the scale $s>0$, to be more rigorous we should write $N(\Omega/r, A_r, v_r, s)$. But for the sake of simplicity we will just write $N(v_r, s)$ and keep in mind that the definition depends on the domain and elliptic operator (and thus the corresponding intrinsic metric) for the function $v_r$ . By \eqref{eq:DrHrinEm} and a change of variables, we get
\begin{equation}\label{eq:freqscaleinv}
	N(v_r, s) = \frac{s D(v_r, s)}{H(v_r, s)} = \frac{\frac{s}{r^{d-2}} D(v_{X_0}, rs) }{\frac{1}{r^{d-1}} H(v_{X_0}, rs) } = \frac{rs D(v,rs)}{H(v,rs)} = N(v_{X_0},rs). 
\end{equation} 
Fix the scale $s>0$, we get
\[ N(v_{X_0},rs) = \widetilde{N}(v_{X_0},rs) \exp\left(-C\int_0^{rs} \frac{\theta(4\tau)}{\tau} ~d\tau \right) \rightarrow \lim_{\rho \to 0+} \widetilde{N}(v_{X_0},\rho) =: N_{X_0}, \quad \text{ as } r\to 0. \]
Hence
\begin{equation}\label{eq:freqarescaling}
	N(v_r, s) \rightarrow N_{X_0} \quad \text{ as } r \to 0.
\end{equation} 

On the other hand, we have the following lemma:
\begin{lemma}\label{lm:vrunifgrad}
	Assume that $N_0(4R)\leq \Lambda +\infty$. Then there exists a universal constant $C'>0$ (depending on the values of $d, C_1$ and $\Lambda$), such that for any $X_0 \in \pD$ satisfying $|X_0|< \frac{3}{20}R$ and any $r<\frac{1}{36} R$, we have
	\[ |\nabla v_r(Y)| \leq C', \quad \text{ for all } Y \in \frac{\Omega_{X_0}}{r} \cap B_5. \]	
\end{lemma}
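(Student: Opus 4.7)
The plan is to combine the frequency-based $L^2$ doubling with a boundary $C^1$ regularity estimate for divergence-form elliptic equations on Dini domains. The rescaled function $v_r$ solves $\operatorname{div}(A_r \nabla v_r) = 0$ on $(\Omega_{X_0}/r) \cap B_{R/r}$ with vanishing trace on $\partial(\Omega_{X_0}/r) \cap B_{R/r}$, where $A_r(Y) := A(rY)$. A direct computation shows that $A_r$ has ellipticity constants $1 \pm O(\theta(r|Y|))$ and $|\nabla A_r(Y)| \le \theta(r|Y|)/|Y|$, while the defining function of $\Omega_{X_0}/r$ is $\varphi_{X_0,r}(y) := r^{-1}\bigl(\varphi(x_0+ry) - \varphi(x_0)\bigr) - 3|y|\tilde\theta(r|y|)$, whose gradient has modulus of continuity controlled by $\theta$. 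Since $r<1$ and $\theta$ is nondecreasing, all of these Dini data are dominated by the original Dini parameter of $D$, uniformly in $r$ and $X_0$.

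The first step is to upgrade the $L^2$ bound \eqref{eq:vrL2norm} of $v_r$ on $B_5$ to a comparable bound on the slightly larger ball $B_6$. Applying the doubling estimate \eqref{eq:Hrcmpu} of Corollary \ref{cor:Hrbd} to $v_{X_0}$ between radii $r$ and $6r$, and using that $|X_0| \le \tfrac{3}{20}R$ together with $6r < R/6 < \tfrac34 R$, Lemma \ref{lm:freqbd} yields $N_{X_0}(6r) \le C_2 + C_3\Lambda$. Therefore
\[
\iint_{B_{6r} \cap \Omega_{X_0}} v_{X_0}^2 \,dX \;\le\; C(d,\Lambda) \iint_{B_r \cap \Omega_{X_0}} v_{X_0}^2 \,dX,
\]
which after rescaling gives $\|v_r\|_{L^2((\Omega_{X_0}/r) \cap B_6)} \le C(d,\Lambda)$.

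The second step is to invoke the boundary $C^1$ regularity proved in \cite{DEK} (already cited in the excerpt to justify $C^1$-up-to-the-boundary regularity of $u$) for solutions of divergence-form equations with Dini-continuous coefficients on $C^1$-Dini domains with vanishing boundary data. Together with the standard interior gradient estimate at points of $B_5$ away from $\partial(\Omega_{X_0}/r)$, this provides a constant $C'$, depending only on dimension, the Dini modulus $\theta$, and the ellipticity of $A$, such that
\[
\|\nabla v_r\|_{L^\infty((\Omega_{X_0}/r) \cap B_5)} \;\le\; C' \, \|v_r\|_{L^2((\Omega_{X_0}/r) \cap B_6)}.
\]
Combined with the first step, this yields the uniform bound $|\nabla v_r(Y)| \le C'(d,\Lambda)$ for all $Y \in B_5 \cap (\Omega_{X_0}/r)$, as claimed.

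The main technical point, and the one that requires care, is uniformity of $C'$ as $X_0$ moves along $\partial D \cap B_{3R/20}(0)$ and as $r \to 0$. This uniformity is automatic once one notices that rescaling by $r<1$ never enlarges the Dini moduli of $A_r$ or of $\partial(\Omega_{X_0}/r)$; both remain dominated by the fixed Dini function $\theta$. Hence a single set of structural constants governs the whole family $\{(v_r, A_r, \Omega_{X_0}/r)\}$, and the quantitative boundary estimate of \cite{DEK} applies to every member of it with the same constant.
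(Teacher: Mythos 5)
Your proof is correct and uses the same two essential ingredients as the paper — the $L^2$ doubling of Corollary~\ref{cor:Hrbd} and a quantitative boundary $C^1$ estimate for elliptic operators with Dini data — but organizes them differently. The paper's proof uses the chain rule to write $|\nabla v_r(Y)| \lesssim r|\nabla u(Z)| / \bigl(\tfrac{1}{r^d}\iint_{B_r\cap\Omega_{X_0}} v_{X_0}^2\bigr)^{1/2}$ with $Z = \Psi_{X_0}(rY)$, then invokes the Caffarelli--Kenig gradient estimate \cite[Theorem 1.4.3]{CK} for the original harmonic function $u$ in the Dini domain $D$ (after the standard flattening of $\partial D$), and finally changes variables to relate the resulting $L^2$ bound on $u$ back to the normalizing factor in $v_r$. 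You instead apply a boundary $C^1$ estimate directly to $v_r$ in the rescaled domain $\Omega_{X_0}/r$ with rescaled coefficient $A_r$, which avoids the back-and-forth pullbacks but puts the burden on verifying that the family $\{(A_r, \Omega_{X_0}/r)\}$ has uniformly controlled Dini data as $r \to 0$ and $X_0$ varies. Your argument for this uniformity is correct in spirit: scaling by $r<1$ contracts the modulus of continuity, and $\int_0^{16R}\theta(s)/s\,ds \le 1$ furnishes a uniform Dini bound. Two small imprecisions: (a) the defining function of $\Omega_{X_0}$ is implicit — it is $\phi(x)$ with $\phi(x) = \varphi(x_0+x)-\varphi(x_0) - 3|X|\tilde\theta(|X|)$ and $X=(x,\phi(x))$, not the explicit expression you wrote — so the Dini regularity of the rescaled graph needs a short implicit-function computation, which does go through; (b) the paper cites \cite{DEK} only for qualitative $C^1$-up-to-the-boundary regularity and cites \cite{CK} for the quantitative gradient estimate, so if you want a citable reference for the sup--$L^2$ bound you apply to $v_r$ you should invoke \cite{CK} (after the same flattening) rather than \cite{DEK}. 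With those attributions adjusted, the argument closes the proof as the paper does and reaches the same constant dependence.
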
 
\begin{proof}
	By \eqref{eq:DPsi} and the chain rule,
\begin{equation}\label{tmp:vr1}
	|\nabla v_r(Y)| = \frac{\left|\nabla \left( u(X_0 + \Psi(rY)) \right) \right|}{ \left( \frac{1}{r^d} \mathlarger{\iint}_{ B_{r} \cap \Omega_{X_0}} v_{X_0}^2 ~dX \right)^{\frac12} } \lesssim \frac{r |\nabla u(Z)| }{\left( \frac{1}{r^d} \mathlarger{\iint}_{ B_{r} \cap \Omega_{X_0}} v_{X_0}^2 ~dX \right)^{\frac12} },
\end{equation}
where $Z=X_0 + \Psi(rY)\in B_{\frac{16}{3} r}(X_0)$. (The elliptic variant of \cite[Theorem 1.4.10]{CK} shows that $\nabla u \in C\left(\overline{D} \cap B_{5R}(0)\right)$, with a modulus of continuity depending on the Dini parameter. In fact, the parabolic theorems in \cite{CK} imply the elliptic ones by extending the elliptic solution independent of the $t$-variable.) 
We then use the gradient estimate in \cite[Theorem 1.4.3]{CK}. (In \cite{CK} the authors work with a Dini-type elliptic operator in the upper half-space, and we work with the Laplacian in a Dini domain. These two settings are related by a simple change of variables $(x,x_d )\in D \mapsto (x, x_d - \varphi(x)) \in \RR^d_+$, and the Laplacian in $D$ becomes a divergence-form elliptic operator in $\RR^d_+$ with coefficient matrix $\begin{pmatrix}
	\Id & -\nabla \varphi \\
	(-\nabla \varphi)^t & 1+|\nabla \varphi|^2
\end{pmatrix} $.) After a rescaling, \cite[Theorem 1.4.3]{CK} implies 
\begin{equation}\label{tmp:vr4}
	\sup_{B(X_0, \frac{16}{3} r) \cap D} |\nabla u| \lesssim \frac{1}{r} \left( \fiint_{B(X_0, \frac{17}{3} r) \cap D} u^2 ~dX \right)^{\frac12} \lesssim  \frac{1}{r} \left( \fiint_{B_{6r} \cap \Omega_{X_0} } \mu v_{X_0}^2 ~dV_g \right)^{\frac12}
\end{equation} 
By the doubling property \eqref{eq:Hrcmpu} and \eqref{eq:freqbdR},
\begin{equation}\label{tmp:vr6}
	\frac{\mathlarger{\iint}_{B_{6r} \cap \Omega_{X_0} } \mu v_{X_0}^2 ~dV_g }{\mathlarger{\iint}_{B_{r} \cap \Omega_{X_0} } \mu v_{X_0}^2 ~dV_g } \leq C_1 6^{d+2N_{X_0}(6r)} \leq C(C_1, \Lambda ) < +\infty
\end{equation}
if $|X_0|< \frac{3}{20} R$ and $r< \frac{R}{8}$.
Finally combining \eqref{tmp:vr1}, \eqref{tmp:vr4}, \eqref{eq:etabd} and \eqref{tmp:vr6} we conclude that
\[ |\nabla v_r(Y)| \leq C \left( \frac{ \mathlarger{\iint}_{B_{6r} \cap \Omega_{X_0} } \mu v_{X_0}^2 ~dV_g }{ \mathlarger{\iint}_{B_{r} \cap \Omega_{X_0} } v_{X_0}^2 ~dX } \right)^{\frac12} \leq C'<+\infty, \quad \text{ for all } |Y|\leq 5. \]
\end{proof}

Let $r_i$ be a sequence such that $r_i \to 0+$. Since
\[ \Omega_{X_0} = \{(x,x_d): x_d> \varphi(x+x_0)-\varphi(x_0) - 3|X| \tilde{\theta}(|X|) \}, \]
with $X_0 = (x_0, \varphi(x_0))$, it is not hard to see that inside $B_5$,
\[ \frac{\Omega_{X_0}}{r_i} \text{ converges graphically to the half-space } \Omega_\infty :=\{(y,y_d): y_d > \nabla \varphi(x_0) \cdot y \}. \]
Throughout the paper, we say a sequence of domains $D_j:=\{(x,x_d): x_d > \psi_i(x)\} $ \emph{converges graphically} to a domain $D_\infty$, if $\psi_i$ converges locally uniformly to a function $\psi_\infty$ and $D_\infty$ is the domain above the graph of $\psi_\infty$, i.e. $D_\infty = \{(x,x_d): x_d > \psi_\infty(x) \}$.

Since we also have $v_{r_i}(0)=0$, it follows from the Arzela-Ascoli Theorem that modulo passing to a subsequence
\[ v_{r_i} \text{ converges uniformly to a function } v_\infty \text{ in } B_5, \]
and $v_\infty = 0$ on $B_5 \setminus \Omega_\infty$. Note that $v_\infty \not\equiv 0$ because of \eqref{eq:vrL2norm}. Moreover we also have $\nabla v_{r_i} \rightharpoonup \nabla v_\infty$ in $L^2$ and thus $v_\infty$ is a harmonic function in $\Omega_\infty$.
In fact, we also have that $\nabla v_{r_i}$ converges to $\nabla v_\infty$ strongly in $L^2$. Firstly let $K$ be a compact set in $\Omega_\infty \cap B_5$. Then $K$ is also compactly contained in $\frac{\Omega_{X_0}}{r_i}$ for all $i$ sufficiently large. In $K$, the function $v_{r_i} - v_\infty$ satisfies the differential equation
\[ \divg \left(A_{r_i}(Y) \nabla (v_{r_i} - v_\infty) \right) = \divg \left((\Id - A_{r_i}(Y)) \nabla v_\infty \right), \]
where $A_{r_i}(Y) = A(r_i Y)$.
We apply the interior gradient estimate in \cite[Corollary 1.2.22]{CK}, but this time with non-trivial right hand side (see the general form of the differential equation in \cite[(1.2.1)]{CK}). And we get
\[ \nabla v_{r_i} - \nabla v_\infty \text{ converges to } 0 \text{ uniformly in } K. \]
On the other hand, let $\tau>0 $ and we set $B_\tau(\partial\Omega_\infty) := \{X: \dist(X, \partial\Omega_\infty)<\tau\}$. Since $\nabla v_{r_i}$ is uniformly bounded, we may choose $\tau$ small enough so that
\[ \iint_{B_\tau(\partial\Omega_\infty) \cap B_5} |\nabla v_{r_i}|^2 ~dX < \epsilon. \]
Then for an arbitrary $s\in (0,5)$, we have
\begin{align*}
	\limsup_{r_i\to 0+} \iint_{B_s \cap \frac{\Omega_{X_0}}{r_i}} |\nabla v_{r_i}|^2 ~dX \leq \lim_{r_i\to 0+} \iint_{B_s \setminus B_\tau(\Omega_\infty) \cap \frac{\Omega_{X_0}}{r_i} } |\nabla v_{r_i}|^2 ~dX + \epsilon \leq \iint_{B_s \cap \Omega_\infty} |\nabla v_\infty|^2 ~dX + \epsilon.
\end{align*}
Combined with Fatou's Lemma we conclude that
\[ \iint_{B_s \cap \frac{\Omega_{X_0}}{r_i}} |\nabla v_{r_i}|^2 ~dX \to \iint_{B_s \cap \Omega_\infty} |\nabla v_\infty|^2 ~dX. \]
In particular, it implies that
\[ N(v_{r_i}, s) \to N(v_\infty, s) \text{ as } r_i \to 0. \]
Combined with \eqref{eq:freqarescaling}, we obtain
\[ N(v_\infty, s) \equiv N_{X_0} \quad \text{ for all } 0<s<5. \]
By the rigidity of the monotonicity formula in Corollary \ref{cor:rigidity}, this implies that the harmonic function $v_\infty(r,\omega) = r^{N_{X_0}} v(\omega)$ is homogeneous of degree $N_{X_0}$. Hence $\lambda:= N_{X_0} (N_{X_0} + d-2)$ is an eigenvalue for the Laplace-Beltrami operator of the half-sphere $\Omega_\infty \cap \mathbb{S}^{n-1}$. We notice that any odd extension of a Dirichlet eigenfunction on the half-sphere is an eigenfunction on the entire sphere, which vanishes on the equator. Additionally, recall that eigenfunctions on the entire sphere are restrictions of harmonic polynomials. Therefore we have proven:
\begin{lemma}
	$N_{X_0}: = \lim_{r\to 0+} \widetilde{N}(v_{X_0}, r)$ must take positive integer values.
\end{lemma}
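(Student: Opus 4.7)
The plan is to synthesize the blow-up analysis carried out in Subsection \ref{subsec:blowupofv} and then invoke a standard spherical-harmonics characterization. First I would pick any sequence $r_i\to 0+$ and consider the rescalings $v_{r_i}(Y) = T_{r_i} v_{X_0}(Y)$ defined in Definition \ref{def:vr}. By Lemma \ref{lm:vrunifgrad} the family $\{v_{r_i}\}$ is uniformly Lipschitz on $B_5$, and by \eqref{eq:vrL2norm} it is nontrivial ($\|v_{r_i}\|_{L^2}\geq 1$). Since $\partial\Omega_{X_0}$ is $C^1$ at $0$ with normal $(-\nabla\varphi(x_0),1)/\sqrt{1+|\nabla\varphi(x_0)|^2}$, the rescaled domains $\Omega_{X_0}/r_i$ converge graphically on $B_5$ to the half-space $\Omega_\infty = \{(y,y_d): y_d>\nabla\varphi(x_0)\cdot y\}$. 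Arzel\`a--Ascoli then gives, up to a subsequence, a uniform limit $v_\infty$ on $B_5$ with $v_\infty\not\equiv 0$ and $v_\infty\equiv 0$ on $B_5\setminus\Omega_\infty$.

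Next I would verify that $v_\infty$ is harmonic in $\Omega_\infty\cap B_5$. Since $A_{r_i}(Y)=A(r_iY)\to\Id$ uniformly on $B_5$ and since $\nabla v_{r_i}\rightharpoonup \nabla v_\infty$ in $L^2$, passing to the limit in the weak formulation of $\divg(A_{r_i}\nabla v_{r_i})=0$ yields $\Delta v_\infty=0$ in $\Omega_\infty\cap B_5$. Interior Schauder (or the gradient estimate of \cite[Cor.~1.2.22]{CK} applied to $v_{r_i}-v_\infty$, which solves a divergence-form equation with small right-hand side) gives $\nabla v_{r_i}\to\nabla v_\infty$ locally uniformly on $\Omega_\infty\cap B_5$. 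To upgrade this to strong $L^2$ convergence up to $\partial\Omega_\infty$, I would use the uniform gradient bound from Lemma \ref{lm:vrunifgrad} together with Lemma \ref{lm:vthinlayer} (applied to $v_{r_i}$ after rescaling) to control the boundary layer $B_\tau(\partial\Omega_\infty)$, then let $\tau\to 0$ via a standard Fatou argument. With strong $L^2$ convergence of gradients, the quotients defining $N(v_{r_i},s)$ converge to $N(v_\infty,s)$, and combined with the scale-invariance \eqref{eq:freqscaleinv} and the limit \eqref{eq:freqarescaling} one concludes that $N(v_\infty,s)\equiv N_{X_0}$ for every $s\in(0,5)$.

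Now I would apply the rigidity statement Corollary \ref{cor:rigidity} (the coefficient matrix for $v_\infty$ is $\Id$, $\theta\equiv 0$) to conclude $v_\infty(r,\omega)=r^{N_{X_0}}\psi(\omega)$ for some nontrivial $\psi$ on the hemisphere $\Omega_\infty\cap\mathbb S^{d-1}$, with $\psi=0$ on the equator $\partial\Omega_\infty\cap\mathbb S^{d-1}$. Separation of variables for the spherical Laplacian shows that $\psi$ is a Dirichlet eigenfunction of $-\Delta_{\mathbb S^{d-1}}$ on the hemisphere with eigenvalue $\lambda = N_{X_0}(N_{X_0}+d-2)$. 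Extending $\psi$ by odd reflection across the equator produces an eigenfunction $\widetilde\psi\in C^{0,1}(\mathbb S^{d-1})\cap C^\infty(\mathbb S^{d-1}\setminus\{\text{equator}\})$ of $-\Delta_{\mathbb S^{d-1}}$ on the whole sphere with the same eigenvalue $\lambda$; since $\widetilde\psi$ is a weak eigenfunction on the full sphere, elliptic regularity promotes it to smooth, so $\widetilde\psi$ lies in the spectrum of $-\Delta_{\mathbb S^{d-1}}$. The spectrum of $-\Delta_{\mathbb S^{d-1}}$ is $\{k(k+d-2): k\in\mathbb N\}$, so $N_{X_0}(N_{X_0}+d-2)=k(k+d-2)$ for some $k\in\mathbb N$, forcing $N_{X_0}=k\in\mathbb N$; the strict inequality $N_{X_0}>0$ follows from the fact that $v_\infty\not\equiv 0$ and $v_\infty=0$ on $\partial\Omega_\infty$ (so constants are excluded).

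The only genuinely delicate point in this plan is the strong $L^2$ convergence of gradients across the moving boundaries $\partial(\Omega_{X_0}/r_i)$; this is where the uniform gradient bound of Lemma \ref{lm:vrunifgrad} combined with the thin-layer estimate of Lemma \ref{lm:vthinlayer} (both of which rely in turn on the bound \eqref{eq:freqbdR}) does the essential work. Everything else is a routine application of the monotonicity/rigidity framework already assembled in Sections \ref{sec:monotonicity}--\ref{sec:reduce} and standard facts about spherical harmonics.
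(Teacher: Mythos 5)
Your proof follows the paper's own argument in Subsection~\ref{subsec:blowupofv} almost step for step: rescaling $v_r = T_r v_{X_0}$, the uniform gradient bound of Lemma~\ref{lm:vrunifgrad}, graphical convergence of $\Omega_{X_0}/r_i$ to a half-space, strong $W^{1,2}$ compactness, passage to constant frequency $N(v_\infty,\cdot)\equiv N_{X_0}$, the rigidity of Corollary~\ref{cor:rigidity}, and odd reflection of the Dirichlet eigenfunction across the equator of the hemisphere. The only small inaccuracy is the invocation of Lemma~\ref{lm:vthinlayer} in the boundary-layer step: the paper gets the thin-tube estimate $\iint_{B_\tau(\partial\Omega_\infty)\cap B_5}|\nabla v_{r_i}|^2 < \epsilon$ directly from the uniform bound on $|\nabla v_{r_i}|$ and the small Lebesgue measure of $B_\tau(\partial\Omega_\infty)\cap B_5$, without needing the thin-layer $L^2$ estimate of Lemma~\ref{lm:vthinlayer} (which concerns $\|v\|_{L^2}$ near a sphere $\partial B_\rho$, not $\|\nabla v\|_{L^2}$ near a hyperplane).
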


\subsection{Compactness}\label{sec:compactness}

In this subsection, we will make sense of the following observation: if we blow up at a point in $p \in B_R(0) \cap \pD$, the configuration at the limit is the same as the blow-up limit for the corresponding reduced domain $\Omega_{p}$. In fact the observation also holds true for pseudo blow-ups, i.e. we zoom in not at a fixed point, but at a sequence of points contained in a compact set. To be more precise,

\begin{prop}\label{prop:cpt}
	Let $R, \Lambda>0$ be fixed, and $(u_i, D_i) \in \HH(R, \Lambda)$.
	Let $p_i$ be a sequence of points in $B_{\frac{3}{20}R}(0) \cap \pD_i$ and $r_i$ be a sequence that converges to zero. 
	Modulo passing to a subsequence, 
	\begin{enumerate}
		\item \label{item:CVgeometry} the sequence of domains $\frac{D_i-p_i}{r_i}$ converges graphically to $D_\infty$, where $D_\infty = \{(y,y_d): y_d > \nabla \varphi(x_\infty) \cdot y \}$ is an upper half-space and $p_\infty = (x_\infty, \varphi(x_\infty))$ is a limit point of the sequence $\{p_i\}$;
		\item inside $B_5(0)$ the sequence $T_{p_i, r_i} u_i$ converges uniformly and in $W^{1,2}$ to a function $u_\infty$, which satisfies $\Delta u_\infty = 0$ in $D_\infty \cap B_5(0)$ and $u_\infty=0$ in $B_5(0) \setminus D_\infty$.
	\end{enumerate} 
	
	Moreover, consider the functions $v_{p_i} = u_i \circ \Psi_{p_i}$ in domains $\Omega_{p_i}$ respectively.
	We have that $\frac{\Omega_{p_i}}{r_i}$ also converges graphically to $D_\infty$, and inside $B_5$ the sequence $T_{r_i} v_{p_i}$ also converges uniformly and in $W^{1,2}$ to $u_\infty$.
\end{prop}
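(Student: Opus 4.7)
The plan is to mimic the argument of Subsection \ref{subsec:blowupofv}, which handled the case of a fixed base point $X_0 \in \pD$, and upgrade it to allow simultaneously varying base points $p_i$ and scales $r_i$, using the uniform bounds available in the class $\HH(R, \Lambda)$. I would first handle the geometric convergence, then the functional convergence of the $v$-side where the monotonicity tools apply directly, and then transfer the statement back to the $u$-side via $\Psi_{p_i}$.

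First, I would establish item (\ref{item:CVgeometry}). Each $p_i = (x_i, \varphi_i(x_i))$ for a Lipschitz graph $\varphi_i$ with $\|\nabla\varphi_i\|_\infty \le C_0$ and Dini modulus bounded by the uniform $\theta_0$ (see Remark \ref{rmk:deptheta}). Since the $\varphi_i$ are equi-Lipschitz and the $\nabla\varphi_i$ are equi-continuous, Arzela--Ascoli provides, along a subsequence, $\varphi_i \to \varphi$ and $\nabla\varphi_i \to \nabla\varphi$ locally uniformly, and $x_i \to x_\infty$, $p_i \to p_\infty = (x_\infty, \varphi(x_\infty))$. The rescaled defining functions $\tilde\varphi_i(x) := r_i^{-1}(\varphi_i(x_i + r_i x) - \varphi_i(x_i))$ satisfy $\nabla\tilde\varphi_i(x) = \nabla\varphi_i(x_i + r_i x)$, which converges uniformly on compacta to the constant $\nabla\varphi(x_\infty)$ (using that $\theta_0(r_i\,\cdot\,) \to 0$). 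Hence $\tilde\varphi_i \to \nabla\varphi(x_\infty) \cdot x$, and $(D_i - p_i)/r_i$ converges graphically to the half-space $D_\infty$. The same argument, applied to the defining function of $\Omega_{p_i}$ which differs from that of $D_i - p_i$ by the additional term $-3|X|\tilde\theta(|X|)$, shows that $\Omega_{p_i}/r_i$ also converges graphically to $D_\infty$, because after rescaling this correction becomes $-3|Y|\tilde\theta(r_i|Y|) \to 0$ uniformly on $B_5$.

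Next, I would show convergence of $T_{r_i} v_{p_i}$ on the $v$-side. Since $(u_i, D_i)\in\HH(R,\Lambda)$, the bound $N_0(4R)\le\Lambda$ combined with Lemma \ref{lm:freqbd} gives $N_{p_i}(\tfrac34 R) \le C_2 + C_3\Lambda$ uniformly in $i$. By the normalization of $T_{r_i} v_{p_i}$ and the doubling estimate \eqref{eq:Hrcmpu}, $\|T_{r_i}v_{p_i}\|_{L^2(B_5\cap\Omega_{p_i}/r_i)}$ is uniformly bounded, and then Lemma \ref{lm:vrunifgrad} (whose proof is scale-invariant and uses only the uniform frequency bound) gives a uniform $C^1$ bound on $B_5$. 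Arzela--Ascoli then yields a subsequence converging uniformly to some $u_\infty$ with $u_\infty = 0$ on $B_5 \setminus D_\infty$ and $u_\infty \not\equiv 0$ by \eqref{eq:vrL2norm}. For $W^{1,2}$ convergence I would repeat the two-scale argument from Subsection \ref{subsec:blowupofv}: on compact subsets $K \Subset D_\infty \cap B_5$, the rescaled equations $\divg(A_{p_i,r_i}\nabla T_{r_i}v_{p_i}) = 0$ have coefficients $A_{p_i,r_i}(Y) = A_{p_i}(r_i Y) \to \Id$ in $C^0$, so the interior gradient estimate \cite[Corollary 1.2.22]{CK} applied to $T_{r_i}v_{p_i} - u_\infty$ (with forcing $\divg((\Id - A_{p_i,r_i})\nabla u_\infty)$) gives uniform convergence of gradients on $K$; the contribution of a thin tubular $\tau$-neighborhood of $\partial D_\infty$ is controlled uniformly in $i$ using the uniform $C^1$ bound (or alternatively via Lemma \ref{lm:vthinlayer}, whose hypotheses are met uniformly in $i$), and letting $\tau \to 0$ concludes the $L^2$ convergence of the gradients. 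The harmonicity of $u_\infty$ in $D_\infty$ follows because $A_{p_i,r_i} \to \Id$ and $\divg(A_{p_i,r_i}\nabla T_{r_i}v_{p_i}) = 0$.

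Finally, I would transfer the statement from $T_{r_i}v_{p_i}$ to $T_{p_i,r_i}u_i$. The denominators of the two rescalings are comparable uniformly in $i$: by \eqref{eq:uvL2} and the doubling estimates \eqref{eq:doublingsolidHru}--\eqref{eq:doublingsolidHrl}, their ratio lies in a compact subset of $(0,\infty)$ depending only on $d, \Lambda$. For the numerators, the identity $u_i(p_i + r_i Z) = v_{p_i}(\Psi^{-1}(r_i Z))$ together with $|\Psi^{-1}(X) - X| \lesssim |X|\tilde\theta(|X|)$ (from \eqref{eq:YPsiY}) and the uniform Lipschitz bound on $T_{r_i}v_{p_i}$ yield $\|T_{p_i,r_i}u_i - c_i\, T_{r_i}v_{p_i}\|_{L^\infty(B_5 \cap (D_i-p_i)/r_i)} \to 0$ for a bounded sequence $c_i$ of comparability ratios; by the same argument applied to gradients, convergence also holds in $W^{1,2}$. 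Passing to a further subsequence, we conclude that $T_{p_i,r_i}u_i$ converges uniformly and in $W^{1,2}$ to the same $u_\infty$ (after absorbing any limiting constant into the limit, which must equal $1$ by the $L^2$ normalization). The main technical obstacle is this last step, the careful bookkeeping of how the non-identity map $\Psi$ and the different $L^2$ normalizations interact in the limit, but once the uniform gradient bounds and doubling estimates are in hand, it reduces to routine comparisons.
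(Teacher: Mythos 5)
Your proposal is correct and contains all the essential ingredients, but it organizes the transfer between the $u$-side and the $v$-side in the opposite order from the paper and uses a different mechanism. The paper first establishes compactness (uniform $C^1$ bound via Lemma \ref{lm:vrunifgrad}, then $W^{1,2}$ convergence as in Subsection \ref{subsec:blowupofv}) for $\tilde u_i = T_{p_i,r_i}u_i$, which is harmonic and hence the simpler side, and then shows $\tilde v_i = T_{r_i}v_{p_i}$ converges to the same limit by testing $\divg(A(r_iY)\nabla\tilde v_i)=0$ against a smooth $\psi$, changing variables through $\Psi^{-1}$ to relate the bilinear form of $\tilde v_i$ to that of $\tilde u_i$, and showing both that the modified test functions $\tilde\psi_i(Z)=\psi(\Psi^{-1}(r_iZ)/r_i)$ converge to $\psi$ in $C^1$ and that the ratio of normalizations \eqref{tmp:factor1} tends to $1$. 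You instead run the compactness argument directly for $\tilde v_i$ (which satisfies a variable-coefficient equation, but all the required estimates are available with constants uniform in the base point) and then transfer to $\tilde u_i$ via the pointwise identity $u_i(p_i+r_iZ)=v_{p_i}(\Psi^{-1}(r_iZ))$, the bound $|\Psi^{-1}(X)-X|\lesssim|X|\tilde\theta(|X|)$, and the uniform Lipschitz bound on $\tilde v_i$; this is a valid, somewhat more elementary alternative for the $L^\infty$ identification of the limits, and it also implicitly recovers the fact that the normalization ratio tends to $1$. One caveat: the phrase ``by the same argument applied to gradients'' glosses over a real point. The identity $\nabla\tilde u_i(Z) = c_i\,((D\Psi(r_iY))^t)^{-1}\nabla\tilde v_i(Y)$ with $Y=\Psi^{-1}(r_iZ)/r_i$ handles the Jacobian, but comparing $\nabla\tilde v_i(Y)$ to $\nabla\tilde v_i(Z)$ is not a rescale-invariant Lipschitz bound (one does not have a uniform modulus of continuity for the rescaled gradients), and near $\partial D_\infty$ the two points may straddle the zero-extension interface, where the gradient jumps. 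The cleanest repair is to run the Subsection \ref{subsec:blowupofv} compactness argument on $\tilde u_i$ as well, which your uniform $C^1$ bound already permits, and use your $L^\infty$ comparison only to identify the two subsequential $W^{1,2}$ limits; this replaces the imprecise gradient transfer by a standard uniqueness-of-limits step and brings your argument fully in line with the paper's.
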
 
\begin{remark}\label{rmk:cpt}
	In fact, in the proof we will see that to get compactness, it suffices to assume $r_i$'s are bounded (in that case the limiting domain $D_\infty$ is no longer the upper half-space). We need $r_i \to 0$ to get that $T_{p_i, r_i} u_i$ and $T_{r_i} v_{p_i}$ converge to the same limiting function, and in particular to get that $T_{r_i} v_{p_i}$ converges to a harmonic function. (Note that $T_{r_i} v_{p_i}$ solves an elliptic equation with coefficient matrix $A_{r_i}(Y)$, which does not converge to the identity matrix if $r_i \not\to 0$.)
\end{remark}

\begin{proof}
By the assumption $D_i\cap B_{5R}(0)$ is above the graph of a $C^1$-Dini function $\varphi_i$, which satisfies $\varphi_i(0)=0$, $\nabla\varphi_i(0)=0$ (since $\pD_i$ is tangent to $\RR^{d-1}\times\{0\}$ at the origin) and
\begin{equation}\label{eq:Dinivarphii}
	|\nabla \varphi_i(x) - \nabla \varphi_i(y)| \leq \theta(|x-y|). 
\end{equation} 
By Arzela-Ascoli Theorem $\varphi_i$ converges in $C^1$ to a function $\varphi: \RR^{d-1} \to \RR$, which also satisifes \eqref{eq:Dinivarphii}. We write $p_i=(x_i, \varphi_i(x_i))$. Since each $x_i$ is contained in $B^{d-1}_{\frac{3}{20}R}(0)$, modulo taking a subsequence $x_i$ converges to some $x_\infty \in \RR^{d-1}$. Hence $p_i = (x_i, \varphi_i(x_i))$ converges to some $p_\infty = (x_\infty, \varphi(x_\infty))$ satisfying $|p_\infty| \leq \frac{3}{20} R$. We let $D$ denote the domain above the graph of $\varphi$. Clearly it is also a Dini domain with parameter $\theta$, $\pD \ni 0, p_\infty$ and $\pD$ is tangent to $\RR^{d-1} \times\{0\}$ at the origin.

 It follows that $\frac{D_i-p_i}{r_i}$ is locally above the graph of a function $\tilde{\varphi}_i: \RR^{d-1} \to \RR$, where
\[ \tilde{\varphi}_i(y) = \frac{1}{r_i} \left[ \varphi(x_i+r_i y) - \varphi(x_i) \right], \quad \text{ for } |y|\leq 5. \]
By the properties of $\varphi$ in \eqref{eq:Dinivarphii}, we have
\begin{align*}
	\left| \tilde{\varphi}_i(y) - \langle \nabla\varphi_i(x_\infty) , y \rangle \right| 
	& \leq \left| \tilde{\varphi}_i(y) - \langle \nabla \varphi_i(x_i) , y \rangle \right| + \left| \langle \nabla \varphi_i(x_i), y \rangle - \langle \nabla\varphi_i(x_\infty), y \rangle \right| \\
	& \leq \theta(r_i|y|)|y| + \theta(|x_i-x_\infty|) |y| \\
	& \leq 5\left( \theta(5r_i) + \theta(|x_i-x_\infty|) \right).
\end{align*}
The last term tends to zero uniformly in $|y| \leq 5$. Therefore $\tilde{\varphi}_i(y)$ converges uniformly to a linear function $\varphi_\infty(y) = \langle \nabla \varphi(x_\infty), y \rangle$.
Moreover, inside $B_5(0)$ the domain $\frac{D_i-p_i}{r_i}$ converges graphically to the domain 
\[ D_\infty := \{(y,y_n): y_n > \varphi_\infty(y) = \langle \nabla \varphi (x_\infty), y \rangle \}, \]
which is the region above a hyperplane $\{(y,y_n): y_n = \langle \nabla \varphi(x_\infty), y \rangle\}$. This finishes the proof of \eqref{item:CVgeometry}.

For simplicity of notation we write $\tilde{u}_i = T_{p_i, r_i} u_i$ and $\tilde{v}_i = T_{r_i} v_{p_i}$.
Notice that the uniform gradient bound proved in Lemma \ref{lm:vrunifgrad} is independent of the base point $X_0$, or as in the current notation, the base points $p_i$'s. So we can use similar argument as in the previous subsection to show that $\tilde{u}_i$ converges uniformly and in $W^{1,2}$ to a function $u_\infty$ in $B_5(0)$, such that $u_\infty = 0$ in $B_5(0) \setminus \Omega_\infty$ and $\Delta u_\infty = 0$ in $\Omega_\infty \cap B_5(0)$.

On the other hand, as in Section \ref{sec:reduce} setting
\[ v_{p_i}(X) = u_i(\Psi_{p_i}(X)) = u_i \left(p_i + \Psi(X) \right), \quad \text{ for } X\in \Omega_{p_i}, \]
we know it satisfies $\divg(A(X)\nabla v_i)=0$ where the matrix $A(X)$ is defined as in \eqref{def:A}. Notice that the matrix $A(X)$ is independent of the base point $p_i$. 
Recall that for each $p_i=(x_i, \varphi(x_i))$, the domain $\Omega_{p_i}$ is above the graph of an implicit function $\phi_i: \RR^{d-1} \to \RR$ defined as
\[ \phi_i(x): = \varphi_i(x_i + x)-\varphi_i(x_i) - 3|X| \tilde{\theta}(|X|), \quad \text{ where } X=(x,\phi_i(x)). \]
Therefore $\frac{\Omega_{p_i}}{r_i}$ is locally above the graph of the implicit function
\[ \tilde{\phi}_i(y) : = \frac{1}{r_i} \left[ \varphi_i(x_i+r_i y) - \varphi_i(x_i) - 3r_i|Y| \tilde{\theta}(r_i|Y|) \right], \quad \text{ where } Y= (y,\tilde{\phi}_i(y)). \]
On the compact set $|y|\leq 5$, the function $\tilde{\phi}_i$ converges uniformly to a linear function $\phi_\infty(y) = \langle \nabla \varphi(x_\infty), y \rangle $, which is the same as $\varphi_\infty(y)$. Hence inside $B_5$,
\[ \frac{\Omega_{p_i}}{r_i} \text{ also converges graphically to the domain } D_\infty. \]
Besides, for any test function $\psi$ compactly supported in $D_\infty \cap B_5$, using a change of variables several times we know
\begin{align}
	& \iint_{\frac{\Omega_{p_i}}{r_i}} \left\langle A(r_i Y) \nabla \tilde{v}_i(Y), \nabla \psi(Y) \right\rangle ~dY \nonumber \\
	& = \frac{1}{\left(\frac{1}{r_i^d} \mathlarger{\iint}_{B_{r_i} \cap \Omega_{p_i}} v_{p_i}^2 ~dX \right)^{\frac12}} \frac{1}{r_i^{d-2}} \iint_{\Omega_{p_i}} \langle A(X) \nabla v_{p_i}(X), \nabla \psi_i(X) \rangle ~dX \qquad \psi_i(X): = \psi\left( \frac{X}{r_i} \right) \nonumber \\
	& = \frac{\left( \mathlarger{\iint}_{B_{r_i}(p_i) \cap D_i } u_i^2 ~dX \right)^{\frac12}}{\left( \mathlarger{\iint}_{B_{r_i} \cap \Omega_{p_i}} v_{p_i}^2 ~dX \right)^{\frac12}} \iint_{\frac{D_i-p_i}{r_i}} \langle \nabla \tilde{u}_i(Z), \nabla \tilde{\psi}_i(Z) \rangle ~dZ, \qquad \tilde{\psi}_i(Z) = \psi\left( \frac{\Psi^{-1}(r_i Z) }{r_i} \right). \label{eq:uvblowup}
\end{align}
By the definition of $\Psi$, we know that
\[ B\left(0, \frac{s}{1+3\tilde{\theta}(s)} \right) \subset \Psi^{-1}(B_s(0)) \subset B\left(0, \frac{s}{1-3\tilde{\theta}(s)} \right). \]
Hence
\begin{align*}
	\frac{\mathlarger{\iint}_{\left( \Psi^{-1}(B_{r_i}) \Delta B_{r_i} \right) \cap \Omega_{p_i} } \mu v_{p_i}^2 ~dV_g }{\mathlarger{\iint}_{B_{r_i} \cap \Omega_{p_i} } \mu v_{p_i}^2 ~dV_g } \leq \frac{\mathlarger{\iint}_{B_{\tau_i r_i}(\partial B_{r_i}) \cap \Omega_{p_i} } \mu v_{p_i}^2 ~dV_g }{\mathlarger{\iint}_{B_{r_i} \cap \Omega_{p_i} } \mu v_{p_i}^2 ~dV_g },\quad \text{ where } \tau_i = \frac{3\tilde{\theta}(r_i)}{1-3\tilde{\theta}(r_i)}.
\end{align*}
By Lemma \ref{lm:vrunifgrad}, the right hand side tends to zero as $r_i \to 0$ (and thus $\tau_i \to 0$). Using this and the definitions of $\Omega_{p_i}$ and $v_{p_i}$, we conclude the factor
\begin{equation}\label{tmp:factor1}
	\frac{\left(\mathlarger{\iint}_{B_{r_i}(p_i) \cap D_i } u_i^2 ~dX \right)^{\frac12}}{\left(\mathlarger{\iint}_{B_{r_i} \cap \Omega_{p_i}} v_{p_i}^2 ~dX \right)^{\frac12}} \to 1 \text{ as } r_i \to 0. 
\end{equation} 
Let $Y= \frac{\Psi^{-1}(r_i Z) }{r_i}$, then by the definition \eqref{def:Psi} of the map $\Psi$ we have
\[ Z=(y,y_d - 3|Y| \tilde{\theta}(r_i|Y|)) = Y- 3|Y| \tilde{\theta}(r_i|Y|) e_d. \]
Since $\psi$ is a smooth function whose support is contained in on $B_5$, it follows that
\begin{align*}
	|\nabla \tilde{\psi}_i(Z) - \nabla \psi(Y)| \leq \|\partial_d \tilde{\psi}_i\|_\infty \left( 3\tilde{\theta}(r_i|Y|) + 3r_i|Y| \tilde{\theta}'(r_i|Y|) \right),
\end{align*}
\[ |\nabla \psi(Z) - \nabla \psi(Y) | \leq \|\nabla^2 \psi\|_\infty |Y-Z| \leq 3|Y| \|\nabla^2 \psi\|_\infty \tilde{\theta}(r_i|Y|). \]
Hence
\[ |\nabla \psi(Z) - \nabla \tilde{\psi}_i(Z)| \rightarrow 0 \text{ uniformly for } |Z| \leq 5.  \]
Therefore
\begin{align*}
	& \left| \iint_{\frac{D_i-p_i}{r_i}} \langle \nabla \tilde{u}_i(Z), \nabla \tilde{\psi}_i(Z) \rangle ~dZ - \iint_{D_\infty} \langle \nabla u_\infty(Z), \nabla \psi(Z) \rangle ~dZ \right| \\
	& \qquad \leq \left| \iint_{\frac{D_i-p_i}{r_i}} \langle \nabla \tilde{u}_i(Z), \nabla \tilde{\psi}_i(Z) \rangle ~dZ -\iint_{\frac{D_i-p_i}{r_i}} \langle \nabla \tilde{u}_i(Z), \nabla \psi(Z) \rangle ~dZ \right| \\
	& \qquad \qquad + \left|\iint_{\frac{D_i-p_i}{r_i}} \langle \nabla \tilde{u}_i(Z), \nabla \psi(Z) \rangle ~dZ  -  \iint_{D_\infty} \langle \nabla u_\infty(Z), \nabla \psi(Z) \rangle ~dZ \right|
\end{align*}
also converges to zero. Combined with \eqref{eq:uvblowup} and \eqref{tmp:factor1} we get
\[ \iint_{\frac{\Omega_{p_i}}{r_i}} \left\langle A(r_i Y) \nabla \tilde{v}_i, \nabla \psi(Y) \right\rangle ~dY  \rightarrow \iint_{D_\infty} \langle \nabla u_\infty(Z), \nabla \psi(Z) \rangle ~dZ. \]
Since $A(r_iY)$ converges to the identity map uniformly in $|Y|\leq 5$, it follows that $\tilde{v}_i$ also converges uniformly and in $W^{1,2}$ to the function $u_\infty$. In particular $\tilde{u}_i$ and $\tilde{v}_i$ converge to the same harmonic function $u_\infty$ in $D_\infty \cap B_5$.

\end{proof}

\section{Frequency function for interior points}\label{sec:interior}

\subsection{Monotonicity formula of the frequency function}
For any interior point $p$ in a $C^1$ domain $D$, we will show that the frequency function $N(p, r)$ is monotone increasing for radius $r$ in a finite interval, the length of which depends on $\dist(p, \pD)$.
 More precisely, for any $r>0$ let
\begin{equation}\label{def:DrHrinS}
	D(p, r) = \iint_{B_r(p) \cap D} |\nabla u|^2 ~dX \quad \text{ and } \quad H(p, r) = \int_{\partial B_r(p) \cap D} u^2 \sH; 
\end{equation} 
and let the frequency function centered at $p$ be defined as
\begin{equation}\label{def:NrinS}
	N(p, r) = \frac{rD(p, r)}{H(p, r)}. 
\end{equation} 
For simplicity of notation, when there is no confusion we often drop the dependence on $p$ and simply write $D(r), H(r)$ and $N(r)$.

\begin{prop}\label{prop:intmonotonicity}
	Let $D$ be a $C^1$ domain, in the sense that 
	\[ D\cap B_{5R}(0) = \{(x, x_d) \in \RR^d \times \RR: x_d > \varphi(x) \} \cap B_{5R}(0) \]
	where $\varphi$ is a $C^1$ function and $0\in \pD$.
	Let $u$ be a harmonic function in $D$ such that $u =0 $ on $\pD \cap B_{5R}(0)$. Then for any $p\in D\cap B_{2R}(0)$, the frequency function $N(p, r)$ satisfies
	\[ N'(r) = R_h(r) + R_b(r), \]
	where
	\[ R_h(r):= \frac{2r}{H(r)} \int_{\partial B_r(p) \cap D} \left|\partial_\rho u - \frac{N(r)}{r} u  \right|^2 \sH, \]
	\[ R_b(r):= \frac{1}{H(r)} \int_{B_r(p) \cap \pD} (\partial_n u)^2 \langle X-p, n_D(X) \rangle \sH. \]
	Here $\partial_\rho u$ denotes the radial derivative of $u$ with center $p$, and $\partial_n u$ denotes the normal derivative of $u$ pointing away from $D$.
	
	In particular $N(r)$ is monotone increasing with respect to $r$, as long as 
	\begin{equation}\label{cond:preintmono}
		r\theta(r) \leq \dist(p, \pD),
	\end{equation} 
	where $\theta(\cdot)$ is the modulus of continuity for the function $\nabla \varphi$. 
\end{prop}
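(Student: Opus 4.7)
The plan is to mimic the classical Almgren-type computation for harmonic functions, but carefully tracking the boundary integrals that arise on $B_r(p)\cap \pD$, which are nontrivial once $r>\dist(p,\pD)$. First I would compute $H'(r)$. Writing $H(r)=r^{d-1}\int_{\partial B_1\cap(D-p)/r}u(p+r\omega)^2\,d\mathcal{H}^{d-1}(\omega)$ and differentiating, the boundary-of-domain contribution vanishes because $u\equiv 0$ on $\pD\cap B_{5R}(0)$, giving
\[
H'(r)=\frac{d-1}{r}H(r)+2\int_{\partial B_r(p)\cap D}u\,\partial_\rho u\,d\mathcal{H}^{d-1}.
\]
Next, Green's identity together with $\Delta u=0$ and $u=0$ on $\pD$ yields $D(r)=\int_{\partial B_r(p)\cap D}u\,\partial_\rho u\,d\mathcal{H}^{d-1}$, so $H'(r)=\frac{d-1}{r}H(r)+2D(r)$.

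The computation of $D'(r)$ is the heart of the argument. Starting from the Rellich-type identity (valid pointwise for harmonic $u$)
\[
\divg\bigl(|\nabla u|^2(X-p)\bigr)=2\divg\bigl(\langle X-p,\nabla u\rangle\nabla u\bigr)+(d-2)|\nabla u|^2,
\]
I integrate over $B_r(p)\cap D$ and apply the divergence theorem. On $\partial B_r(p)\cap D$ the outward unit normal is $(X-p)/r$ and one gets $r\int(\partial_\rho u)^2$ terms in the usual way. On $B_r(p)\cap \pD$, since $u\equiv 0$ there we have $\nabla u=(\partial_n u)\,n_D$, so both $|\nabla u|^2$ and $\langle X-p,\nabla u\rangle\langle\nabla u,n_D\rangle$ reduce to expressions involving $(\partial_n u)^2\langle X-p,n_D\rangle$. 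After cancellation these boundary contributions combine to yield exactly the term defining $R_b(r)$:
\[
D'(r)=\frac{d-2}{r}D(r)+2\int_{\partial B_r(p)\cap D}(\partial_\rho u)^2\,d\mathcal{H}^{d-1}+\frac{1}{r}\int_{B_r(p)\cap\pD}(\partial_n u)^2\langle X-p,n_D(X)\rangle\,d\mathcal{H}^{d-1}.
\]
Combining $N'/N=1/r+D'/D-H'/H$, multiplying through by $N=rD/H$, and using the identity $\int(\partial_\rho u)^2-D^2/H=\int|\partial_\rho u-(N/r)u|^2$, I obtain the claimed formula $N'(r)=R_h(r)+R_b(r)$, with $R_h(r)\ge 0$ by Cauchy-Schwarz.

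The main obstacle, and the only genuinely geometric step, is verifying that $R_b(r)\ge 0$ under the hypothesis $r\,\theta(r)\le \dist(p,\pD)$. To this end I would pick the boundary projection $\tilde p\in\pD$ of $p$ and choose coordinates so that $\tilde p=0$, $\nabla\varphi(0)=0$, and $p=(0,\dots,0,p_d)$ with $p_d=\dist(p,\pD)$. For $X=(x,\varphi(x))\in\pD\cap B_r(p)$, the outer normal is $n_D(X)=(\nabla\varphi(x),-1)/\sqrt{1+|\nabla\varphi(x)|^2}$, and
\[
\sqrt{1+|\nabla\varphi(x)|^2}\,\langle X-p,n_D(X)\rangle=p_d+\langle x,\nabla\varphi(x)\rangle-\varphi(x).
\]
The key estimate, via the fundamental theorem of calculus and the Dini bound $|\nabla\varphi(x)-\nabla\varphi(0)|\le\theta(|x|)$, is
\[
\varphi(x)-\langle x,\nabla\varphi(x)\rangle=\int_0^1\bigl(\nabla\varphi(tx)-\nabla\varphi(x)\bigr)\cdot x\,dt\le\int_0^{|x|}\theta(s)\,ds\le |x|\,\theta(|x|)\le r\,\theta(r),
\]
using that $\theta$ is nondecreasing. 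Hence the numerator is at least $p_d-r\,\theta(r)\ge 0$ by hypothesis, giving $\langle X-p,n_D\rangle\ge 0$ on $B_r(p)\cap\pD$, so $R_b(r)\ge 0$ and $N(r)$ is nondecreasing on the stated range. (In particular, when $r\le\dist(p,\pD)$ the surface $\pD$ does not enter $B_r(p)$, $R_b\equiv 0$, and we recover the standard interior monotonicity.)
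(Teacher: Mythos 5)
Your proof follows the same overall scheme as the paper's: compute $H'$ and $D'$ via the divergence theorem and a Rellich-type identity, collect the boundary contribution on $B_r(p)\cap\pD$ into $R_b$, complete the square to obtain $R_h$, and verify that $\langle X-p,n_D(X)\rangle\geq 0$ under the hypothesis by a first-order Taylor estimate on $\varphi$. All the main steps check out, including the identity $\int(\partial_\rho u)^2 - D^2/H = \int|\partial_\rho u-(N/r)u|^2$ and the Rellich computation on the two pieces of $\partial(B_r(p)\cap D)$.

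The one spot that needs tightening is the coordinate choice in the last step. You rotate so that the boundary projection $\tilde p$ of $p$ sits at the origin with $\nabla\varphi(0)=0$, which forces $p=(0,\dots,0,p_d)$ with $p_d=\dist(p,\pD)$. A rotation changes the graph function $\varphi$ (and hence its modulus of continuity) by a factor depending on $\|\nabla\varphi\|_\infty$, so the hypothesis $r\theta(r)\leq\dist(p,\pD)$, which is stated in terms of the original $\theta$, does not transfer verbatim. Moreover, the rotated estimates never actually use $\nabla\varphi(0)=0$; you only use $\varphi(0)=0$ and $p=(0,p_d)$. The paper sidesteps this cleanly by keeping the original coordinates, writing $p=(x_0,z_0)$ where $x_0$ is just the horizontal component of $p$, and using the elementary observation that the vertical distance $z_0-\varphi(x_0)$ dominates $\dist(p,\pD)$; the same Taylor estimate $|\varphi(x_0)-\varphi(x)-\langle\nabla\varphi(x),x_0-x\rangle|\leq\theta(r)\,r$ then finishes it without touching the coordinate frame. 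Replacing the rotation with this choice of $x_0$ makes your argument precise and fully matches the paper's.
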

\begin{proof}
	The proof is similar to the proof of Proposition \ref{prop:monotonicity} for the case of the Laplacian operator, so we will just sketch the proof emphasizing the differences. Using the assumptions that $\Delta u=0$ and $u$ vanishes on the boundary, we get
	\begin{equation}\label{tmp:derHSr}
		H'(r) = \frac{d-1}{r} H(r) + 2 \int_{\partial B_r(p) \cap D} u ~\partial_\rho u \sH = \frac{d-1}{r} H(r) + 2D(r),
	\end{equation}
	and 
	\[
		D'(r) = \frac{d-2}{r} D(r) + 2\int_{\partial B_r(p) \cap D} (\partial_\rho u)^2 \sH + \frac1r \int_{B_r(p) \cap \pD} (\partial_n u)^2 \langle X-p, n_D(X) \rangle \sH.
	\]
	Therefore
	\begin{align*}
		\frac{N'(r)}{N(r)} & = \frac1r + \frac{D'(r)}{D(r)} - \frac{H'(r)}{H(r)} \\
		& = \frac{2}{D(r)} \int_{\partial B_r(p) \cap D} \left|\partial_\rho u - \frac{N(r)}{r} u  \right|^2 \sH
		+ \frac{1}{rD(r)} \int_{B_r(p) \cap \pD} (\partial_n u)^2 \langle X-p, n_D(X) \rangle \sH.
	\end{align*}
	Or equivalently,
	\begin{align}
		N'(r) = \frac{2r}{H(r)} \int_{\partial B_r(p) \cap D} \left|\partial_\rho u - \frac{N(r)}{r} u  \right|^2 \sH + \frac{1}{H(r)} \int_{B_r(p) \cap \pD} (\partial_n u)^2 \langle X-p, n_D(X) \rangle \sH.\label{tmp:intNrder}
	\end{align} 
	
	We claim that if the condition \eqref{cond:preintmono} holds, then
	\[ \langle X-p, n_D(X)\rangle \geq 0, \quad \text{ for every } X\in B_r(p) \cap \pD. \]
	It then follows that $R_b(r) \geq 0$.
	In fact, we denote $p=(x_0, z_0)$, where $x_0 \in \RR^{d-1}$ and $z_0\in \RR$ satisfies $z_0 - \varphi(x_0) \geq \dist(p, \pD)$. Let $X=(x,\varphi(x))$ be an arbitrary boundary point in $B_r(p)$. then
	\begin{align*}
		\langle X-p, n_D(X)\rangle & = \left\langle (x-x_0, \varphi(x)-z_0), \frac{(\nabla \varphi(x), -1) }{\sqrt{1+|\nabla \varphi(x)|^2} } \right\rangle \\
		& = \frac{1}{{\sqrt{1+|\nabla \varphi(x)|^2} }} \left[ \langle \nabla \varphi(x), x-x_0 \rangle + z_0 - \varphi(x) \right] \\
		& = \frac{1}{{\sqrt{1+|\nabla \varphi(x)|^2} }} \left[ \left( z_0 - \varphi(x_0) \right) + \left(\varphi(x_0) - \varphi(x) - \langle \nabla \varphi(x), x_0 - x\rangle \right) \right] \\
		& \geq \frac{1}{{\sqrt{1+|\nabla \varphi(x)|^2} }} \left[ \dist(p,\pD) - \theta(r) r \right] \\
		& \geq 0.
	\end{align*}	
	Since we also have $R_h(r) \geq 0$, by \eqref{tmp:intNrder} we know $N(r)$ is monotone increasing with respect to $r$.
\end{proof}

As in the purely interior case, the limit of the frequency function $N(p,0):=\lim_{r\to 0+} N(p, r)$ measures the vanishing order of $u$ at $p$. 
By the blow up analysis similar to Subsection \ref{subsec:blowupofv}, when $p\in \mathcal{N}(u)$ we know that $N(p,0) \in \mathbb{N}$.
\footnote{If $u(p) \neq 0$, then the leading order of $u$ near $p$ is simply the non-trivial constant $u(p)$. In this case, we need to define the frequency function centered at $p$ using the harmonic function $u-u(p)$, so as to capture the leading order term of $u-u(p)$.}

Since $\tilde{\theta}(\cdot)>0$ is a nondecreasing and continuous function, the map $r\in [0,R] \mapsto r\tilde{\theta}(r) \in [0, R\tilde{\theta}(R)]$ is bijective. In particular it gives a decomposition of the interval $[0, R\tilde{\theta}(R)]$,
and for any real number $x\in [0, R\tilde{\theta}(R)]$ we can find $r\in [0, R]$ such that $x=r\tilde{\theta}(r)$.
Thus for any $p\in D\cap B_{2R}(0)$ sufficiently close to the boundary, we can always find a unique $r>0$ such that 
\begin{equation}\label{def:criticalval}
	 \dist(p, \pD) = r\tilde{\theta}(r). 
\end{equation} 
Thus we will refer to such $r$ as the \textit{critical scale} for $p$ and denote it by $r_{cs}(p)$, and say that $[0, r_{cs}(p)]$ is the \textit{monotonic interval} for $p$. To unify the notation we use the convention that $r_{cs}(p) = 0$ if $p\in \pD$.

\subsection{Doubling property and frequency function beyond the critical scale}

For interior points, similar to Lemma \ref{lm:freqbd} we also get a uniform bound (depending on $d, \Lambda$) on the frequency function.
\begin{lemma}\label{lm:freqbd_in}
	Assume that $N_0(4R) \leq \Lambda<+\infty$. Suppose $X\in D \cap B_{\frac{R}{10}}(0)$ is such that $\dist(X, \pD) \leq \frac{3}{5}R \tilde{\theta}(\frac35 R)$. Then for any $r\leq  \frac{3}{5}R$, we have
	\begin{equation}\label{eq:freqbdR_in}
		N(X, r) \leq C(\Lambda) N_{\tilde{X}} \left( \frac34 R \right) \leq C'(\Lambda),
	\end{equation} 
	where $\tilde{X} \in \pD$ satisfies $|X-\tilde{X}| = \dist(X, \pD)$.
%
%
\end{lemma}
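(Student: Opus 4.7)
The plan is to mimic the scheme of Lemma \ref{lm:freqbd}, treating $X$ as an \emph{interior} base point and $\tilde{X}$ as its boundary comparison. The hypothesis, together with the monotonicity of $s \mapsto s\tilde{\theta}(s)$, forces $r_{cs}(X) \leq 3R/5$; consequently $|X - \tilde{X}|/r \leq 8\tilde{\theta}(3R/5)$ whenever $r \geq r_{cs}(X)/8$, a small parameter by the Dini assumption, so the balls $B_r(X)$ and $B_r(\tilde{X})$ are near-translates of each other at every such scale.

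First I would treat the range $r \leq r_{cs}(X)/8$, where Proposition \ref{prop:intmonotonicity} makes $N(X, \cdot)$ monotone on the full interval $[0, 4r] \subset [0, r_{cs}(X)]$. Here the argument of Lemma \ref{lm:freqbd} goes through verbatim: the upper bound on the annulus ratio $\iint_{B_r(X) \setminus B_{r/2}(X) \cap D} u^2 / \iint_{B_{r/2}(X) \setminus B_{r/4}(X) \cap D} u^2$ follows from the geometric inclusion $B_\rho(X) \subset B_{\rho + |X-\tilde{X}|}(\tilde{X})$, the $L^2$-reduction \eqref{eq:uvL2} from $u$ to $v_{\tilde{X}}$, and iterating the upper bound \eqref{eq:doublingsolidHru} along a dyadic sequence of radii, yielding $\leq C \cdot 4^{2N_{\tilde{X}}(3R/4)}$; a matching lower bound at $X$ uses interior monotonicity to replicate \eqref{eq:doublingsolidHrl} without any Dini correction factor (since $u$ is harmonic, no modified frequency function is needed), giving $\gtrsim 2^{2N(X, r/4)}$. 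Taking logarithms converts this exponential gap into the linear bound $N(X, r) \leq C_2 + C_3 N_{\tilde{X}}(3R/4)$.

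For the remaining range $r_{cs}(X)/8 < r \leq 3R/5$, interior monotonicity at scale $r$ is unavailable, which is the main obstacle. I would bypass it by directly combining Caccioppoli's inequality $D(X, r) \leq (C/r^2) \iint_{B_{2r}(X) \cap D} u^2$ with the lower bound $H(X, r) \geq r^{-1} \iint_{B_r(X) \cap D} u^2$---the latter valid regardless of the critical scale, since $H(X, \cdot)$ is monotone increasing as $D(X, r) \geq 0$. The resulting estimate
\[
N(X, r) \lesssim \frac{\iint_{B_{2r}(X) \cap D} u^2}{\iint_{B_r(X) \cap D} u^2}
\]
is bounded via the same $L^2$-comparison at $\tilde{X}$ and Corollary \ref{cor:Hrbd} by a constant of the form $C \cdot 2^{d + 2N_{\tilde{X}}(3R/4)}$ (choosing the comparison radii so that they remain below $3R/4$). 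A priori this is exponential in $N_{\tilde{X}}$ rather than linear; but since \eqref{eq:freqbdR} applies to $\tilde{X}$ (as $|\tilde{X}| \leq |X| + |X-\tilde{X}| < 3R/20$) and yields $N_{\tilde{X}}(3R/4) \leq C'(\Lambda)$, the exponential is a bounded function of $\Lambda$, and can be absorbed into the form $C(\Lambda) N_{\tilde{X}}(3R/4)$ claimed by the lemma.
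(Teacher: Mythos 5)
Your plan correctly separates the two radius regimes and gets the geometric observation $|X-\tilde X|\le r/9$ right, but the treatment of each regime is problematic in a different way, and the second one has a genuine quantitative gap as written.

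For $r\leq r_{cs}(X)/8$ the detour through a Lemma~\ref{lm:freqbd}-style annulus comparison is unnecessary: Proposition~\ref{prop:intmonotonicity} already makes $N(X,\cdot)$ nondecreasing on all of $[0,r_{cs}(X)]$, so $N(X,r)\leq N(X,r_{cs}(X))$, and you are reduced to the other regime in one line. (Also note that Lemma~\ref{lm:freqbd} is stated for two \emph{boundary} base points via the maps $\Psi_{X_0}$, so it cannot be invoked ``verbatim'' with the interior $X$; some adaptation is needed even in your version.) The paper does exactly this one-line reduction and never revisits this range.

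The real issue is the range $r$ near $3R/5$. Your Caccioppoli step gives $N(X,r)\lesssim \iint_{B_{2r}(X)}u^2/\iint_{B_{r}(X)}u^2$. Transferring centers to $\tilde X$ then requires controlling the doubling ratio from roughly $r-|X-\tilde X|$ up to $2r+|X-\tilde X|$. Even with $|X-\tilde X|\le r/9$, the outer radius is at least $19r/9$, which for $r$ near $3R/5$ exceeds $3R/4$. Since $\widetilde N(v_{\tilde X},\cdot)$ is \emph{increasing}, the doubling exponent produced by Corollary~\ref{cor:Hrbd} is $N_{\tilde X}(19r/9)\ge N_{\tilde X}(3R/4)$, i.e.\ the wrong direction; and Lemma~\ref{lm:freqbd}, which you would invoke to re-anchor $\tilde X$ at $0$, requires $r\le 3R$ and so does not cover those scales either. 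The parenthetical ``(choosing the comparison radii so that they remain below $3R/4$)'' is not achievable with the factor $2$ in your Caccioppoli inequality: to keep the outer ball inside $B_{3R/4}(\tilde X)$ for all $r\le 3R/5$ one must inflate by a factor $\sigma < 5/4 - 1/9$, so you would need to re-run Caccioppoli on $B_{\sigma r}$ for such $\sigma$ and absorb the $\sigma$-dependent constant. That repair exists but is not made in your proposal, so the argument as written does not close.

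For comparison, the paper's route avoids both the radius slack and the exponentiation entirely: it bounds $H(X,r)$ from below by $H(\tilde X, 5r/4)$ (sub-harmonicity of $u^2$ to pass to a sphere integral, then the doubling of $H(\tilde X,\cdot)$), bounds $D(X,r)\le\iint_{B_{5r/4}(\tilde X)}|\nabla u|^2$ by simple inclusion, and concludes $N(X,r)\lesssim N_{\tilde X}(5r/4)\le N_{\tilde X}(3R/4)$. The ratio $5/4 = (3R/4)/(3R/5)$ is tight by design, so the monotone upper bound closes the estimate exactly at $r=3R/5$. Your route, even once the $\sigma$ issue is fixed, would still yield $N(X,r)\lesssim \exp\left(c\,N_{\tilde X}(3R/4)\right)$ and recover the linear bound $C(\Lambda)N_{\tilde X}(3R/4)$ only by first using \eqref{eq:freqbdR} to cap $N_{\tilde X}(3R/4)$ by a constant, which is a somewhat circular way of establishing the first inequality in \eqref{eq:freqbdR_in}. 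The paper's trick of comparing $D$ and $H$ separately is the cleaner mechanism to remember.
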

\begin{remark}
	Notice that the above upper bound holds even outside of the monotonic interval $[0,r_{cs}(X)]$, as long as we have $r\leq \frac{3}{5}R$.
\end{remark}
\begin{proof}
	Since the frequency function $N(X, \cdot)$ is monotone increasing on the interval $[0, r_{cs}(X)]$, it suffices to prove \eqref{eq:freqbdR_in} for $r_{cs}(X) \leq r \leq \frac{3}{5} R$. By \eqref{cond:R}, we have
	\[ |X-\tilde{X}| = \dist(X, \pD) < \frac{r_{cs}(X)}{4} \leq \frac{r}{4}. \]
	By the sub-harmonicity of $u^2$ and the doubling property of $H(\tilde{X}, \cdot)$, we have
	\begin{align*}
		H(X,r) \gtrsim \frac{1}{r} \iint_{A_{\frac{r}{4}, r}(X)} u^2 ~dZ \geq \frac{1}{r} \iint_{A_{\frac{r}{2}, \frac34 r}(\tilde{X})} u^2 ~dZ \gtrsim \int_{\partial B_{\frac{r}{2}}(\tilde{X})} u^2 \sH \gtrsim_{\Lambda} H\left(\tilde{X}, \frac54 r \right).
	\end{align*}
	Therefore
	\begin{align*}
		N(X, r) = \frac{r \mathlarger{\iint}_{B_r(X)} |\nabla u|^2 ~dZ}{ H(X, r)} \lesssim \frac{r \mathlarger{\iint}_{B_{\frac54 r}(\tilde{X}) } |\nabla u|^2 ~dZ }{H(\tilde{X}, \frac54 r)} \leq N_{\tilde{X}}\left(\frac54 r \right) \lesssim N_{\tilde{X}}\left(\frac34 R \right).
	\end{align*}
%
%
%
\end{proof}

The above frequency bound yields the doubling property for the $L^2$ norm, similar to the boundary case in Corollary \ref{cor:Hrbd}.
\begin{corollary}\label{cor:doublingu_in}
	Assume that $N_0(4R) \leq \Lambda<+\infty$. Then for any $X\in D\cap B_{\frac{R}{10}}(0)$ such that $\dist(X, \pD) \leq \frac35 R \tilde{\theta}(\frac35 R)$,
	and any $\rho>0$, $a>1$ such that $a\rho \leq \frac35 R$, we have
	\begin{equation}\label{eq:doublingin}
		\frac{\mathlarger{\iint}_{B_{a\rho}(X)} u^2 ~dZ}{\mathlarger{\iint}_{B_{\rho}(X)} u^2 ~dZ} \leq a^{d+C(\Lambda)}.
	\end{equation}
\end{corollary}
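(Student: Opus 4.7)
The plan is straightforward given Lemma \ref{lm:freqbd_in}: integrate the logarithmic-derivative formula for the surface quantity $H(X,r)$ using the uniform frequency bound, then convert to the solid $L^2$ integral via the co-area formula.

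First, I would recall from the proof of Proposition \ref{prop:intmonotonicity} (see equation \eqref{tmp:derHSr} together with $D(r) = \int_{\partial B_r(X) \cap D} u \, \partial_\rho u \sH$) that for $u$ harmonic and vanishing on $\pD \cap B_{5R}(0)$,
\[ \frac{H'(X,r)}{H(X,r)} = \frac{d-1}{r} + \frac{2N(X,r)}{r}. \]
By Lemma \ref{lm:freqbd_in}, under the hypotheses of the corollary, $N(X, r) \leq C'(\Lambda)$ for every $0 < r \leq \frac{3}{5}R$. Setting $\alpha := d - 1 + 2C'(\Lambda)$ and integrating on any interval $[r_1,r_2] \subset (0, \frac{3}{5}R]$ yields the surface doubling estimate
\[ H(X, r_2) \leq \left( \frac{r_2}{r_1} \right)^{\alpha} H(X, r_1). \]

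To pass to the solid integral, I would extend $u$ by zero outside $D$ (legitimate since $u|_{\pD \cap B_{5R}(0)}=0$), so that the co-area formula gives
\[ \iint_{B_r(X)} u^2 \, dZ = \int_0^r H(X,s) \, ds. \]
The change of variables $s = a t$ converts the integral on $B_{a\rho}(X)$ into an integral over $[0,\rho]$, and combined with the surface bound (applied with $at \le a\rho \le \frac{3}{5}R$) it produces
\[ \iint_{B_{a\rho}(X)} u^2 \, dZ = a\int_0^\rho H(X, a t) \, dt \leq a \cdot a^{\alpha} \int_0^\rho H(X, t) \, dt = a^{\,d + 2C'(\Lambda)} \iint_{B_\rho(X)} u^2 \, dZ, \]
which is the desired conclusion with $C(\Lambda) := 2C'(\Lambda)$.

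The only conceptual obstacle in this corollary is the need to control the frequency function on the full range $r \le \frac{3}{5}R$, which in general extends well beyond the critical (i.e.\ monotonic) scale $r_{cs}(X)$ — the standard monotonicity formula of Proposition \ref{prop:intmonotonicity} breaks down outside that interval. That obstacle has already been resolved by Lemma \ref{lm:freqbd_in}, which compares $N(X,r)$ with the modified boundary frequency $N_{\tilde X}(\tfrac34 R)$ at a nearest boundary point $\tilde X$ and thereby yields a uniform $\Lambda$-dependent bound past $r_{cs}(X)$. Once that bound is available, the present corollary reduces to the bookkeeping integration sketched above; no case distinction between $r \lessgtr r_{cs}(X)$ is needed.
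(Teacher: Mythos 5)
Your proof is correct and follows essentially the same route as the paper: differentiate $\log(H(X,s)/s^{d-1})$ to get $2N(X,s)/s$ via \eqref{tmp:derHSr}, invoke Lemma \ref{lm:freqbd_in} for the uniform frequency bound on $[0,\tfrac{3}{5}R]$, integrate to get the doubling of $H$, and then convert to the solid $L^2$ doubling by the co-area change of variables. You have also correctly noted that the ODE for $H$ holds for all $r$ (it does not require $r\le r_{cs}(X)$, only harmonicity and vanishing boundary data), so no case split is needed — which is exactly the point the paper is relying on.
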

\begin{proof}
Recall we have computed the derivative of $H(X,s)$ in \eqref{tmp:derHSr}, which can be reformulated into
\begin{align*}
	\left(\log \frac{H(X,s)}{s^{d-1}} \right)' = \frac{2N(X,s)}{s}.
\end{align*}
Integrating the above equality and by the upper bound of the frequency function, we get
\begin{equation}\label{eq:doublingnH}
	\frac{H(X,as)}{H(X,s)} = a^{d-1}\exp\left[ \int_s^{as} \frac{2N(X,\tau)}{\tau} ~d\tau \right] \leq a^{d-1+C(\Lambda)}. 
\end{equation} 
Therefore
\begin{align*}
	\frac{\mathlarger{\iint}_{B_{a\rho}(X)}u^2 ~dZ}{\mathlarger{\iint}_{B_{\rho}(X)} u^2 ~dZ} = \frac{a\mathlarger{\int}_0^{\rho} H(X,as) ~ds}{\mathlarger{\int}_0^{\rho} H(X,s) ~ds } \leq a^{d+C(\Lambda)}.
\end{align*}

\end{proof}

\begin{lemma}\label{lm:urunifgrad}
	Assume that $N_0(4R)\leq \Lambda +\infty$. For any $X \in D \cap B_{\frac{R}{10}}(0) \cap \mathcal{N}(u)$ such that $\dist(R,\pD)\leq \frac35 R \tilde{\theta}(\frac35 R)$, and any
	$r\leq \frac{3}{35} R$, we have
	\[ \left|\nabla T_{X, r} u(Y) \right| \leq C, \quad \text{ for all } Y \in \frac{D-X}{r} \cap B_5. \]
\end{lemma}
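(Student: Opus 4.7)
The plan is to adapt the proof of Lemma \ref{lm:vrunifgrad} to interior points, replacing the use of the transformation map $\Psi_{X_0}$ by a direct application of the Cheng--Kukavica gradient estimate in the Dini domain $D$, and using the interior doubling property (Corollary \ref{cor:doublingu_in}) in place of the boundary doubling. First I would write out the rescaling: since $X \in \mathcal{N}(u)$ gives $u(X) = 0$, and $u$ is extended by $0$ outside $D$, one has
\[
|\nabla T_{X,r}u(Y)| \;=\; \frac{r\,|\nabla u(Z)|}{\left(\frac{1}{r^d}\iint_{B_r(X)\cap D} u^2\,dZ\right)^{1/2}}, \qquad Z = X + rY,
\]
so the task reduces to bounding the numerator by the denominator uniformly for $Z \in B_{5r}(X)\cap D$.

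Next I would apply the boundary gradient estimate of \cite[Theorem 1.4.3]{CK} (exactly as in \eqref{tmp:vr4} in the proof of Lemma \ref{lm:vrunifgrad}, but without first passing through $\Psi$): after rescaling it yields
\[
\sup_{B_{5r}(X)\cap D}|\nabla u| \;\lesssim\; \frac{1}{r}\left(\fint_{B_{6r}(X)\cap D} u^2\,dZ\right)^{1/2}.
\]
The hypothesis $r \leq \tfrac{3}{35}R$ ensures $6r \leq \tfrac{18}{35}R < \tfrac{3}{5}R$, so the enlarged ball $B_{6r}(X)$ lies well inside $B_{5R}(0)$, where the Dini regularity of $\partial D$ with parameter $\theta$ and the condition $\theta(8R) < 1/72$ are available; this is what the Cheng--Kukavica estimate requires.

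The third step is to absorb the ratio $\iint_{B_{6r}(X)} u^2 / \iint_{B_r(X)} u^2$ into a constant depending only on $d$ and $\Lambda$. The hypotheses $X \in D \cap B_{R/10}(0)$ and $\dist(X,\partial D) \leq \tfrac{3}{5}R\,\tilde{\theta}(\tfrac{3}{5}R)$ are precisely those of Corollary \ref{cor:doublingu_in}, and since $6r \leq \tfrac{3}{5}R$ the corollary with $a = 6$, $\rho = r$ gives
\[
\iint_{B_{6r}(X)} u^2 \,dZ \;\leq\; 6^{d+C(\Lambda)} \iint_{B_r(X)} u^2 \,dZ.
\]
Combining the three steps,
\[
|\nabla T_{X,r}u(Y)| \;\lesssim\; \left(\frac{\iint_{B_{6r}(X)\cap D} u^2\,dZ}{\iint_{B_r(X)\cap D} u^2\,dZ}\right)^{1/2} \;\leq\; 6^{(d+C(\Lambda))/2} \;=:\; C,
\]
which is the desired uniform bound.

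The only delicate point, and the one I would treat most carefully, is that Corollary \ref{cor:doublingu_in} rests on the interior frequency upper bound (Lemma \ref{lm:freqbd_in}), which is valid \emph{beyond} the monotonic interval $[0, r_{cs}(X)]$ only by comparison of $N(X, \cdot)$ with the modified frequency function at a nearest boundary point $\tilde X$. Since we are applying the doubling at scales $r$ and $6r$ that may exceed $r_{cs}(X)$, it is precisely this extension that makes the argument go through; once it is invoked, no further new ingredient is needed and the rest is numerical bookkeeping of the scales.
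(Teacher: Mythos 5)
Your proposal is essentially the paper's argument, though you collapse the paper's two--case analysis into a single step. The paper first treats the purely interior case $6r < \dist(X,\partial D)$ exactly as you do (interior gradient estimate plus Corollary~\ref{cor:doublingu_in}), and then in the remaining case $\dist(X,\partial D)\le 6r$ it re-centers the gradient estimate and part of the denominator comparison at the nearest boundary point $\tilde X$, applying \cite[Theorem 1.4.3]{CK} at $\tilde X$ and the boundary doubling from Corollary~\ref{cor:Hrbd}. You instead apply the gradient bound and the doubling inequality both centered at $X$, without the shift to $\tilde X$, which is slightly cleaner; the caveat is that \cite[Theorem 1.4.3]{CK} as stated is a boundary gradient estimate for balls centered on $\partial D$, so applying it to the ball $B_{6r}(X)$ centered at an interior point that may cut the boundary requires the (standard but unstated) covering step that combines the interior Schauder estimate on $\{Z: \dist(Z,\partial D)\gtrsim r\}$ with the boundary estimate near $\partial D$. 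Your closing observation that the real work lies in the validity of Corollary~\ref{cor:doublingu_in} beyond the monotonic interval is correct and is the point the paper's argument is built to handle.
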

\begin{proof}
	Recalling the definition of $T_{X,r}u$ in Definition \ref{def:Tpr}, we have
	\[ |\nabla T_{X,r}u(Y)|^2 = \frac{\left( r|\nabla u(X+rY)| \right)^2}{\frac{1}{r^d} \mathlarger{\iint}_{B_r(X)} u^2 ~dZ }. \]
	We first consider the purely interior case when $6r < \dist(X, \pD)$.
	In this case, by the interior gradient estimate, $u(X)=0$ and the doubling property in Corollary \ref{cor:doublingu_in}, we have
	\begin{align*}
		\left|\nabla T_{X, r} u(Y) \right| \leq \frac{\sup_{B_{5r}(X)} \left( r|\nabla u| \right)^2}{\frac{1}{r^d} \mathlarger{\iint}_{B_r(X)} u^2 ~dZ } \lesssim \frac{ \mathlarger{\iint}_{B_{6r}(X)} u^2 ~dZ}{\mathlarger{\iint}_{B_r(X)} u^2 ~dZ } \leq 6^{d+C(\Lambda)}
	\end{align*}
	is uniformly bounded. Now assume $\dist(X, \pD) \leq 6r$.	
	Let $\tilde{X}\in \pD$ such that $|X-\tilde{X}| = \dist(X,\pD)$.
	We bound the numerator by the boundary gradient estimate of $u$:
	\begin{equation}\label{tmp:numbd}
		\left( r|\nabla u(X+rY)| \right)^2 \leq \sup_{B_{6r}(\tilde{X}) \cap D} \left( r|\nabla u|\right)^2 \lesssim \frac{1}{r^d} \iint_{B_{7r}(\tilde{X})} u^2 ~dZ.
	\end{equation}
	To bound the denominator, we use the doubling property \eqref{eq:doublingnH} and $|X-\tilde{X}| \leq 6r$, and get
	\begin{align}
		\frac{1}{r^d} \iint_{B_r(X)} u^2 ~dZ \gtrsim_{\Lambda} \frac{1}{r^d} \iint_{B_{7r}(X)} u^2 ~dZ \gtrsim \frac{1}{r^d} \iint_{B_r(\tilde{X})} u^2 ~dZ. \label{tmp:denbd}
	\end{align}
	Therefore combining \eqref{tmp:numbd} and \eqref{tmp:denbd}, it follows from Corollary \ref{cor:Hrbd} and the bound on the frequency function that
	\[ |\nabla T_{X,r} u|^2 \lesssim \frac{\mathlarger{\iint}_{B_{7r}(\tilde{X})} u^2 ~dZ }{\mathlarger{\iint}_{B_{r}(\tilde{X})} u^2 ~dZ } \leq 7^{d+2N_{\tilde{X}}(7r)} \leq C(\Lambda). \]
	
%
%
\end{proof}

For an interior point $X\in D$, since the frequency function $N(X, \cdot)$ is only monotone increasing in the interval $[0, r_{cs}(X)]$, for large radius we will replace $N(X, r)$ by the corresponding frequency function centered at a boundary point $\tilde{X} \in \pD$, which satisfies $|X-\tilde{X}| = \dist(X, \pD)$. The following lemma justifies this choice.

\begin{lemma}\label{lm:spvarin_far}
	Let $R, \Lambda>0$, $\rho \in (0, 1/6]$ and $\delta_{in}>0$ be fixed. There exists $r_{in} = r_{in}(\delta_{in}, \rho)>0$ such that the following holds for any $(u,D) \in \HH(R,\Lambda)$. Suppose $p\in D\cap B_{\frac{R}{10}}(0) \cap \mathcal{N}(u)$.
	Let $q\in \pD $ satisfy $|q-p| \leq 2 \dist(p,\pD)$. Then for any radius $\rho r_{cs}(p) \leq r \leq r_{in}$\footnote{Notice that if this condition on $r$ is not vacuous, i.e. if $\rho r_{cs}(p) \leq r_{in}$, then $\dist(p, \pD)\leq \frac{r_{in}}{\rho} \tilde{\theta}(\frac{r_{in}}{\rho})$, which is in particular bounded above by $\frac35 R \tilde{\theta} (\frac35 R)$.}
	we have
	\[ \left| N(p, r) - N_q( r) \right| \leq \delta_{in}. \]
%
\end{lemma}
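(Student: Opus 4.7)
The plan is to prove the lemma by compactness and contradiction, using Proposition \ref{prop:cpt} as the main tool. I would assume no such $r_{in}$ works and extract sequences $(u_i, D_i) \in \HH(R,\Lambda)$, interior nodal points $p_i \in D_i \cap B_{R/10}(0) \cap \mathcal{N}(u_i)$, boundary points $q_i \in \pD_i$ with $|q_i - p_i| \leq 2\dist(p_i,\pD_i)$, and scales $r_i \to 0$ satisfying $\rho\, r_{cs}(p_i) \leq r_i$, such that $|N(p_i, r_i) - N_{q_i}(r_i)| > \delta_{in}$. The constraint $\rho\, r_{cs}(p_i) \leq r_i \to 0$ forces $r_{cs}(p_i) \to 0$, hence $\dist(p_i,\pD_i) = r_{cs}(p_i)\, \tilde{\theta}(r_{cs}(p_i))$ tends to zero, and the rescaled point $p_i' := (p_i - q_i)/r_i$ satisfies $|p_i'| \leq 2\tilde{\theta}(r_{cs}(p_i))/\rho \to 0$.

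Next I would apply Proposition \ref{prop:cpt} with boundary base points $q_i$ and scales $r_i$. Modulo subsequences, both $\tilde u_i := T_{q_i, r_i} u_i$ and $\tilde v_i := T_{r_i} v_{q_i}$ converge uniformly and in $W^{1,2}(B_5)$ to a common nontrivial harmonic function $u_\infty$ on some upper half-space $D_\infty$, with $u_\infty \equiv 0$ outside $D_\infty$. (Nontriviality follows from the $L^2$ normalization built into Definition \ref{def:Tpr}, which passes to the limit.) A direct change of variables gives the scale invariance $N(p_i, r_i) = N(\tilde u_i, p_i', 1)$, and \eqref{eq:freqscaleinv} gives $N(v_{q_i}, r_i) = N(\tilde v_i, 1)$. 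Using the strong $W^{1,2}$ convergence of $\tilde u_i$ and $\tilde v_i$, the uniform gradient bounds from Lemmas \ref{lm:urunifgrad} and \ref{lm:vrunifgrad}, and the no-concentration estimate in Lemma \ref{lm:vthinlayer}, I would pass to the limit in both the numerators $D(\cdot,1)$ and the denominators $H(\cdot,1)$. Since $p_i' \to 0$, since $A_{r_i}(Y) = A(r_i Y) \to \Id$ uniformly on $B_5$, and since the rescaled domains converge graphically to $D_\infty$, both $N(\tilde u_i, p_i', 1)$ and $N(\tilde v_i, 1)$ will converge to the standard Euclidean frequency $N(u_\infty, 0, 1)$. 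The factor $\exp\bigl(C \int_0^{r_i} \theta(4s)/s\, ds\bigr)$ relating $N_{q_i}(r_i)$ to $N(v_{q_i}, r_i)$ tends to $1$, so $N_{q_i}(r_i) \to N(u_\infty, 0, 1)$ as well, contradicting $|N(p_i, r_i) - N_{q_i}(r_i)| > \delta_{in}$.

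The hard part will be obtaining convergence of the frequency functions \emph{up to the boundary} of $D_\infty$, not merely on compact subsets. I need $\iint_{B_1 \cap D_i'} |\nabla \tilde u_i|^2 \to \iint_{B_1 \cap D_\infty} |\nabla u_\infty|^2$ and the analogous statement for the spherical integrals in $H$, where $D_i' := (D_i - q_i)/r_i$ is a moving domain. Since Proposition \ref{prop:cpt} already provides strong $W^{1,2}$ convergence on $B_5$, the boundary-layer contributions should be controlled by an application of Lemma \ref{lm:vthinlayer} to the tubular neighborhoods of $\partial D_i' \cap B_1$, whose thickness tends to zero, combined with the uniform $C^1$ bounds on $\tilde u_i$ and $\tilde v_i$. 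A secondary subtlety is that $p_i'$ lies in the interior of $D_i'$ while its limit $0$ lies on $\partial D_\infty$; but after extending both $\tilde u_i$ and $u_\infty$ by zero the integrals over $B_1(p_i')$ depend continuously on the center, so the limit at $p_i' \to 0$ is unaffected. Finally, $r_{in}$ will be chosen (depending on $\delta_{in}$ and $\rho$) after this argument to be small enough that $2\tilde{\theta}(r_{in}/\rho)/\rho$ and $\exp\bigl(C \int_0^{r_{in}} \theta(4s)/s\, ds\bigr) - 1$ are as small as required to run the quantitative version of the compactness above.
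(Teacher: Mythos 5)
Your proposal is correct and structurally the same compactness-and-contradiction argument as the paper's, built on Proposition~\ref{prop:cpt}. The one genuine point of divergence is how you relate $N(p_i, r_i)$ to $N_{q_i}(r_i)$. The paper forms two separate rescalings $T_{p_i,r_i}u_i$ (centered at the interior point) and $T_{q_i,r_i}u_i$ (centered at the boundary point), then proves the explicit comparison claim $|a_i\,T_{p_i,r_i}u_i - T_{q_i,r_i}u_i|\to 0$ uniformly on $B_5$ (with $a_i\approx 1$) via the \cite{CK} boundary gradient estimate and the doubling property at $q_i$; this identifies the two blow-up limits and closes the argument. You instead form only the $q_i$-centered rescaling $\tilde u_i$, observe the scale invariance $N(u_i,p_i,r_i)=N(\tilde u_i,p_i',1)$ with $p_i'=(p_i-q_i)/r_i\to 0$, and then pass to the limit using continuity of $D(\cdot,1)$ and $H(\cdot,1)$ in the center point. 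This avoids the multiplicative constant $a_i$ and the explicit pointwise comparison, at the cost of having to verify that the Almgren frequency depends continuously on its center uniformly along the sequence; you correctly flag the delicate point (the moving center $p_i'$ crossing into the boundary of the limiting half-space) and correctly identify the tools that handle it (the uniform $C^1$ bound of Lemma~\ref{lm:urunifgrad}, no-concentration \`a la Lemma~\ref{lm:vthinlayer}, and extension by zero). One small caveat: nontriviality of $u_\infty$ does not follow just from the $L^2$ normalization ``passing to the limit'' — the mass could a priori escape into the shrinking boundary layer — but since you invoke Lemma~\ref{lm:vthinlayer} to rule out such concentration, the argument closes. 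Both routes ultimately rest on the same gradient estimate; yours packages it as continuity of $N$ in the center, the paper's as a direct $L^\infty$ comparison of two rescalings.
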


\begin{proof}
	We argue by contradiction. To that end we assume there exist sequences $(u_i, D_i) \in \HH(R, \Lambda)$, $p_i \in D_i\cap B_{\frac{R}{10}}(0) \cap \mathcal{N}(u_i)$ with $\rho r_{cs}(p_i) \leq r_i \to 0$ and 
	\begin{equation}\label{assp:distr}
		d_{p_i} := \dist(p_i, \pD_i) = r_{cs}(p_i) \tilde{\theta}(r_{cs}(p_i)) \leq \frac{r_i}{\rho} \tilde{\theta}\left(\frac{r_i}{\rho} \right), 
	\end{equation}  
	and $q_i \in \pD_i$ with $|p_i - q_i| \leq 2d_{p_i}$, such that
	\begin{equation}\label{cl:freqbin}
		\left| N(p_i, r_i) - N_{q_i}(r_i) \right| > \delta_{in}>0. 
	\end{equation}
	
	By assumption 
	\[ D_i \cap B_{5R}(0) \subset \{(x, x_d) \in \RR^{d-1} \times \RR: x_d > \varphi_i(x)\} \]
	for some $C^1$ function $\varphi_i$ with Dini parameter $\theta$.
	Without loss of generality we assume $\varphi_i(0) = 0$ and $\nabla \varphi_i(0) = 0$. Hence 
	\[ |\nabla \varphi_i(x)| = |\nabla \varphi_i(x) - \nabla \varphi_i(0) | \leq \theta(|x|) \leq \theta(5R) \]
	is uniformly bounded, and by Arzela-Ascoli $\varphi_i$ converges uniformly to a function $\varphi$ which satisfies the same properties as $\varphi_i$.
	
	We denote $p_i = (x_i, z_i) \in \RR^{d-1} \times \RR$. By assumption we have $|p_i| < \frac{R}{10}$. Simple geometry shows that
	\begin{equation}\label{tmp:diffvertdist}
		d_{p_i} \leq z_i - \varphi_i(x_i) \leq d_{p_i} \sqrt{ 1+\left|\theta\left(R \right) \right|^2} + \theta(R) d_{p_i} \leq 2d_{p_i}.
	\end{equation} 
	Simple computations show that $\frac{D_i - p_i}{r_i} \cap B_5(0)$ corresponds to the region above the graph of the function $\psi_i: \RR^{d-1} \to \RR$, defined as
	\[ \psi_i(y) := \frac{1}{r_i} \left[ \varphi_i(x_i+r_i y) - \varphi_i(x_i) - (z_i - \varphi_i(x_i)) \right]. \]
	By \eqref{tmp:diffvertdist} and the assumption \eqref{assp:distr}, we have that $\psi_i(y)$, modulo passing to a subsequence, converges uniformly to a linear function $\varphi_\infty(y):= \langle \nabla \varphi(x_\infty), y \rangle$, where $x_\infty$ is a cluster point for $\{x_i\} \subset B_R(0)$. In other words, inside $B_5(0)$ the sequence of domains
	\[ \frac{D_i-p_i}{r_i} \text{ converges to the upper half space } D_\infty:= \{(y,y_d)\in \RR^{d-1} \times \RR: y_d > \varphi_\infty(y) \}.  \]
	
	By Lemma \ref{lm:urunifgrad}, $T_{p_i,r_i}u_i(0) = 0$
	and compactness, we get that the sequence 
	\[ T_{p_i, r_i} u_i(Y): = \frac{u_i(p_i + r_i Y) }{ \left( \frac{1}{r_i^d} \mathlarger{\iint}_{B_{r_i}(p_i)} u_i^2 ~dZ \right)^{\frac12} } \] 
	converges uniformly and in $W^{1,2}$ to a harmonic function $u_\infty$ in $D_\infty \cap B_5(0)$. 
	On the other hand, by Proposition \ref{prop:cpt} the sequence 
	\[ T_{q_i, r_i} u_i(Y) := \frac{u_i(q_i+r_i Y)}{\left(\frac{1}{r_i^d} \mathlarger{\iint}_{B_{r_i}(q_i)} u_i^2 ~dZ \right)^{\frac12} } 
	\]
	(and the sequence $T_{r_i} v_{q_i}$) also converges uniformly and in $W^{1,2}$ to a harmonic function $\tilde{u}_\infty$ in the same upper half space $D_\infty$.
	Moreover we claim there exists some $a_i \approx 1$ such that
	\begin{equation}\label{claim:changebin}
		|a_i T_{p_i,r_i}u_i(Y) - T_{q_i, r_i} u_i(Y)| \to 0 \text{ uniformly for } Y \in B_5(0).
	\end{equation}
	Assuming the claim is true, then $\tilde{u}_\infty = u_\infty$. (A priori $\tilde{u}_\infty$ is a constant multiple of $u_\infty$, and the constant must be $1$ since they both have unit $L^2$ norm on $B_1(0)$.)
	Hence
	\begin{align*}
		\left| N(u_i, p_i, r_i) - \widetilde{N}(v_{q_i}, r_i) \right| & = \left| N(T_{p_i, r_i} u_i, 0, 1)  - N(T_{r_i} v_{q_i}, 0, 1) \exp \left(C\int_0^{r_i} \frac{\theta(4s)}{s} ~ds \right) \right| \\
		& \to \left| N(u_\infty, 0, 1) - N(\tilde{u}_\infty, 0,1) \right| = 0,
	\end{align*}
	which contradicts the assumption \eqref{cl:freqbin}.
%
%
	
	\textit{Proof of the claim \eqref{claim:changebin}}.
	By the assumption \eqref{assp:distr} and $|q_i - p_i| \leq 2d_{p_i} \ll r_i$, we know
	\[ \frac{1}{r_i^d} \iint_{B_{r_i}(p_i)} u_i^2 ~dZ \approx \frac{1}{r_i^d} \iint_{B_{r_i}(q_i)} u_i^2 ~dZ. \]
	We denote
	\[ a_i =  \left(\frac{ \frac{1}{r_i^d} \mathlarger{\iint}_{B_{r_i}(p_i)} u_i^2 ~dZ}{\frac{1}{r_i^d} \mathlarger{\iint}_{B_{r_i}(q_i)} u_i^2 ~dZ } \right)^{\frac12} \approx 1. \]
	By the boundary gradient estimate in \cite[Theorem 1.4.3]{CK}, the doubling property \eqref{eq:Hrcmpu} at $q_i$, and the assumption \eqref{assp:distr}, we get
	\begin{align}
		\frac{\left| u_i(p_i+r_i Y) - u_i(q_i + r_i Y) \right|}{\left(\frac{1}{r_i^d} \mathlarger{\iint}_{B_{r_i}(q_i)} u_i^2 ~dZ \right)^{\frac12} } & \leq \frac{\left| p_i - q_i \right| }{\left(\frac{1}{r_i^d} \mathlarger{\iint}_{B_{r_i}(q_i)} u_i^2 ~dZ \right)^{\frac12} } \sup_{B_{\frac{11}{2} r_i}(q_i) \cap D_i } |\nabla u_i| \nonumber \\
		&\leq \frac{\left| p_i - q_i \right| }{\left(\frac{1}{r_i^d} \mathlarger{\iint}_{B_{r_i}(q_i)} u_i^2 ~dZ \right)^{\frac12} } \cdot \frac{1}{r_i} \left( \frac{1}{r_i^d} \mathlarger{\iint}_{B_{6r_i}(q_i)} u_i^2 ~dZ \right)^{\frac12} \nonumber \\
		& \lesssim \frac{|p_i-q_i|}{r_i} \left(\frac{ \frac{1}{r_i^d} \mathlarger{\iint}_{B_{6r_i}(q_i)} u_i^2 ~dZ  }{\frac{1}{r_i^d} \mathlarger{\iint}_{B_{r_i}(q_i)} u_i^2 ~dZ } \right)^{\frac12} \nonumber \\
		& \lesssim \frac{d_{p_i}}{r_i} \cdot 6^{N(q_i, 6r_i)} \nonumber \\
		& \lesssim \tilde{\theta}\left(\frac{r_i}{\rho} \right) \cdot 6^{C(\Lambda)} \to 0.\label{tmp:bint2}
	\end{align}
	This finishes the proof of \eqref{claim:changebin}.
\end{proof}

The above lemma means that for any interior point $p\in D$ close enough to the boundary, whenever $r$ is out of the monotonic interval for $p$, we can always replace the frequency function $N(p,r)$ by that of a nearby boundary point $q$, with a small price to pay. Therefore we define
\[
	N_p(r) := \left\{\begin{array}{ll}
		N(p, r), & r\leq r_{cs}(p) \\
		N_q(r) = \widetilde{N}(v_{q}, r), & r> r_{cs}(p)
	\end{array} \right.
\]
where $q\in \pD$ is such that $|q-p| = \dist(p, \pD)$. Hence $N_p(r)$ is monotone increasing in the disjoint intervals $[0, r_{cs}(p)]$ and $(r_{cs}(p), +\infty)$, with a possible jump of $\delta_{in}$ since
\[ 
 N_q(r_{cs}(p)) - \delta_{in} \leq N(p, r_{cs}(p)) \leq N_q(r_{cs}(p)) + \delta_{in} \leq N_q(r_{cs}(p)+) + \delta_{in}. \]

To summarize Sections \ref{sec:reduce} and \ref{sec:interior}, for any $(u,D)\in \mathfrak{H}(R,\Lambda)$ we have defined the frequency function for any point $X\in \overline{D}$ as follows:
\begin{equation}\label{def:Npr}
	N_X(r) =N_X^u(r) = \left\{\begin{array}{ll}
	 	\widetilde{N}(v_X, r), & \text{ if } X \in \pD \\
	 	N(X, r), & \text{ if } X\in D \text{ and } r\leq r_{cs}(X) \\
	 	N_{\tilde{X}}(r) = \widetilde{N}(v_{\tilde{X}}, r), & \text{ if } X\in D \text{ and } r>r_{cs}(X)
	\end{array} \right.
\end{equation}
where $\tilde{X}\in \pD$ is such that $|X-\tilde{X}| = \dist(X, \pD)$.


\section{Quantitative symmetry}\label{sec:qsym}
In this section, we use compactness to prove 
quantitative cone-splitting and dimension reduction.
Heuristically, the intuition for dimension reduction is the following simple observation.
Suppose $C$ is a cone in $\RR^d$ which is translation-invariant along a $k$-dimensional linear subspace $V$. In other words we may identify $V$ with $\{0\} \times \RR^k$ and identify $C$ with a cylindrical cone $C_0 \times \RR^k$, where $C_0 $ is a cone in $\RR^{d-k}$. Then any point $X$ away from the spine $\{0\} \times \RR^k$ is symmetric in $(k+1)$ directions (i.e. along the direction of the spine $V$ and the radial direction in $C_0$).
To prove quantitative cone-splitting and dimension reduction in our setting requires two non-trivial adaptations of the above observation. Firstly at each boundary point $X_0$ the frequency function is only monotone for $v_{X_0} = u\circ \Psi_{X_0}$, and we need to combine the information of $v_{X^j}$'s with different base points $X_j$'s to produce an invariant subspace. Secondly, along the radial direction the limit function is not constant, but grows polynomially. So we need a different approach to distinguish points on the spine and away from the spine.

\begin{defn}
	Let $Y_0, \cdots, Y_k$ be arbitrary points in $B_r(p) \subset \RR^k$. If for all $i=1, \cdots, k$, they satisfy
	\[ Y_i \notin B_{\tau r}\left( Y_0 + \spn\{Y_1-Y_0, \cdots, Y_{i-1}-Y_0\} \right), \]
	we say that these points $\tau r$-effectively span the $k$-dimensional affine subspace $V:= Y_0 + \spn\{Y_1-Y_0, \cdots, Y_k - Y_0\}$ in $B_r(p)$.
	
	Given a set $F\subset B_r(p)$, we say that $F$ $\tau r$-effectively spans a $k$-dimensional affine subspace if there exists a $(k+1)$-tuple $\{Y_0, \cdots, Y_k\} \subset F$ which $\tau r$-effectively spans a $k$-dimensional affine subspace.
\end{defn} 

\begin{remark}
 Unlike the notion of \textit{linear independence}, the above effective notion is preserved under limits.
\end{remark}


\begin{prop}\label{prop:tn}
	Let $R, \Lambda>0$ and $\delta_0, \rho, \tau \in (0,1)$ be fixed. There exist $\delta>0$, $\beta>0$ and $r_{tn} \leq r_{in}(\delta)$ (where $r_{in}(\delta)$ denotes the radius in Lemma \ref{lm:spvarin_far} if we take $\delta_{in} = \delta$ and $\rho= 1/6$)
	such that the following holds for any $(u, D) \in \mathfrak{H}(R, \Lambda)$ and $r\leq r_{tn}$.
	Suppose
	\begin{equation}\label{asmp:supNXr}
		\sup_{X\in B_{2r}(0) \cap \Nt(u)} N_X(r) \leq \tilde{\Lambda} \leq C(\Lambda).  
	\end{equation} 
	If the set 
	\[ F= \{X\in B_{2r}(0) \cap \Nt(u): N_X(\rho r) \geq \tilde{\Lambda} - \delta \} \]
	$2\tau r$-effectively spans a $(d-2)$-dimensional affine subspace $V$, then
	\begin{equation}\label{eq:delta0pinch}
		\text{for every }X\in B_{2\beta r}(V) \cap B_{2r}(0) \cap \Nt(u), \quad 
		N_X(\rho r) \geq \tilde{\Lambda} - \delta_0;
	\end{equation} 
	and
	\begin{equation}\label{eq:tn}
		\Ct_{r}(u) \cap B_{2r}(0) \cap \Nt(u) \subset B_{2\beta r}(V). 
	\end{equation} 
\end{prop}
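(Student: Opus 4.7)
The proof is by compactness and contradiction; write $\beta_\ast$ and $\alpha_0$ for the fixed scale and gradient threshold in \eqref{def:Cru}. We must produce $\delta, \beta, r_{tn} > 0$; the tube radius $\beta$ will be specified at the end in terms of $\delta_0, \rho, \alpha_0, \Lambda$, while $\delta$ and $r_{tn}$ are produced by contradiction. If no such constants exist, extract sequences $\delta_i \to 0$, $r_i \to 0$ (with $\beta$ still to be chosen) and counterexamples: $(u_i, D_i) \in \HH(R, \Lambda)$ with $\tilde{\Lambda}_i \to \tilde{\Lambda}_\infty \leq C(\Lambda)$, effectively spanning tuples $\{Y_0^i, \ldots, Y_{d-2}^i\} \subset F_i$ satisfying the hypotheses, and points $X_i$ violating \eqref{eq:delta0pinch} or \eqref{eq:tn}. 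By Proposition \ref{prop:cpt} and Remark \ref{rmk:cpt}, after a subsequence $T_{0, r_i} u_i$ converges uniformly and in $W^{1,2}_{loc}(B_5)$ to a non-trivial harmonic function $u_\infty$ on a half-space $D_\infty$ vanishing on $\partial D_\infty$, and $Z_j^i := Y_j^i/r_i \to Z_j^\infty$ with $\{Z_0^\infty, \ldots, Z_{d-2}^\infty\}$ remaining $2\tau$-effectively spanning and determining a $(d-2)$-dimensional affine subspace $V_\infty$.

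Next I extract the rigid structure of $u_\infty$. The lower bound $N_{Y_j^i}(\rho r_i) \geq \tilde{\Lambda}_i - \delta_i$, combined with \eqref{asmp:supNXr}, monotonicity, and Lemma \ref{lm:spvarin_far} (for those $Z_j^\infty$ arising as limits of interior points), yields in the limit $N^{u_\infty}(Z_j^\infty, s) \equiv \tilde{\Lambda}_\infty$ for $s \in [\rho, 1]$. Corollary \ref{cor:rigidity} then gives that $u_\infty$ is $\tilde{\Lambda}_\infty$-homogeneous with respect to each $Z_j^\infty$. Extending $u_\infty$ by odd reflection across $\partial D_\infty$ to a harmonic function on all of $\RR^d$ and iterating Lemmas \ref{lm:spine} and \ref{lm:spinebd} along pairs $(Z_0^\infty, Z_j^\infty)$, we conclude $N := \tilde{\Lambda}_\infty \in \NN$ and invariance of $u_\infty$ along the linear subspace $W_0 := V_\infty - Z_0^\infty$. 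A short planar argument forces $V_\infty \subset \partial D_\infty$: invariance along any direction transverse to $\partial D_\infty$ would combine with boundary vanishing to give $u_\infty \equiv 0$; and if $Z_0^\infty$ were interior, the quotient of $u_\infty$ by $W_0$ (after odd reflection) would be a harmonic polynomial on $\RR^2$ homogeneous with respect to two distinct points, which by Lemma \ref{lm:spine} and the boundary condition $u_\infty(Z_0^\infty) = 0$ forces $u_\infty \equiv 0$. Choosing coordinates with $V_\infty = \RR^{d-2} \times \{0\} \times \{0\}$ and $\partial D_\infty = \RR^{d-1} \times \{0\}$, we obtain $u_\infty(y, s, t) = u_2(s, t) = c\,\rho^N \sin(N\theta)$ in polar coordinates, with $c \neq 0$ fixed by the $L^2$-normalization.

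With this structure in hand, we choose $\beta$ to derive both contradictions. For failure of \eqref{eq:delta0pinch}: the limit $Z_\infty$ of $X_i/r_i$ satisfies $\dist(Z_\infty, V_\infty) \leq 2\beta$ and $u_\infty(Z_\infty) = 0$; continuity of the frequency function in the base point at fixed scale $\rho$, together with $N^{u_\infty}(Z^\ast,\rho) = N$ at the closest $Z^\ast \in V_\infty$, gives $|N^{u_\infty}(Z_\infty,\rho) - N| \lesssim_{N,\rho} \beta$, contradicting the drop $\delta_0$ when $\beta \leq c\,\delta_0 \rho$. For failure of \eqref{eq:tn}: $\dist(Z_\infty, V_\infty) \geq 2\beta$, so by the explicit $u_2 = c\rho^N \sin(N\theta)$ and homogeneity, $|\nabla u_\infty| \geq c_N\,\beta^{N-1}$ on $B_{\beta_\ast}(Z_\infty)$ (taking $\beta_\ast \leq \beta$), while $\|u_\infty\|_{L^2(B_1(Z_\infty))} \leq C(N)$; so $\inf_{B_{\beta_\ast}(0)} |\nabla T_{X_i,r_i} u_i| \geq c_N'\,\beta^{N-1}$ in the limit, contradicting $X_i \in \Ct_{r_i}(u_i)$ when $\beta \geq c_N''\,\alpha_0^{1/(N-1)}$. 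Since $N \leq C(\Lambda)$ is bounded and $\alpha_0, \beta_\ast$ were chosen sufficiently small at the outset, the compatible range of $\beta$ is nonempty for any fixed $\delta_0, \rho$. The principal obstacle is the middle paragraph --- passing from pointwise frequency data at mixed interior/boundary points $Y_j^i$ to the rigid two-dimensional structure of $u_\infty$ via Lemma \ref{lm:spvarin_far}, Corollary \ref{cor:rigidity}, and the spine Lemmas \ref{lm:spine}/\ref{lm:spinebd}; once $u_\infty$ is pinned down, the simultaneous calibration of $\beta$ reduces to finitely many elementary polynomial inequalities indexed by $N = 1, \ldots, C(\Lambda)$.
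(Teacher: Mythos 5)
Your proposal follows the same overall strategy as the paper: extract contradicting sequences, pass to a blow-up limit, show the limit is a $V_\infty$-invariant 2d-cylindrical homogeneous harmonic polynomial, and contradict either the frequency drop or the $\Ct_r$-condition. Within that framework you make three genuine departures. First, you blow up only at the origin and read off the frequency of $u_\infty$ at the translated points $Z_j^\infty$, whereas the paper blows up separately at each spanning point $X_i^j$ and then relates the resulting limits via the change-of-base identity \eqref{eq:homdiffvt}; your route is cleaner in spirit, but it silently identifies the transformed frequency $N_{Y_j^i}(\cdot)$ (built from $\Psi_{Y_j^i}$) with the raw Almgren frequency of $T_{0,r_i}u_i$ at $Z_j^i$, and that identification is exactly what Proposition \ref{prop:cpt} plus Lemma \ref{lm:spvarin_close} are needed to justify in the paper — you should cite them rather than take the convergence for granted. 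Second, you extend $u_\infty$ by odd reflection and invoke Lemma \ref{lm:spine} on $\RR^d$, where the paper works directly with Lemma \ref{lm:spinebd} in the half-space; both are fine, though your "planar argument ... homogeneous with respect to two distinct points" for ruling out interior $Z_0^\infty$ is murky — the paper's cleaner observation is that a nontrivial homogeneous function (with center $Z_0^\infty$ interior) vanishing on a hyperplane not through $Z_0^\infty$ must vanish identically by unique continuation. Third, and most substantively, you calibrate $\beta$ quantitatively via a Lipschitz-in-base-point bound for the frequency of $u_\infty$, in place of the paper's compactness trick of sending $\beta_i\to 0$. This can work because the possible $u_\infty$'s form a discrete family indexed by $N\le C(\Lambda)$ once $L^2$-normalized, so the Lipschitz constant is uniform over $N\le C(\Lambda)$; but that uniformity is a claim that needs to be stated and verified, not just asserted, and the constraint $\beta\ll\rho$ for the bound $|N^{u_\infty}(Z_\infty,\rho)-N|\lesssim_{N,\rho}\beta$ to hold should be made explicit.

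Two concrete rough edges remain. The final calibration $\beta\ge c''_N\,\alpha_0^{1/(N-1)}$ is undefined at $N=1$; the linear case has to be treated separately (as the paper does: there $|\nabla u_\infty|\equiv\alpha_d^1$, so one simply takes $\alpha_0<\alpha_d^1$). And the phrase that $\alpha_0,\beta_*$ were ``chosen sufficiently small at the outset'' is circular as written, since the compatibility $c''_N\,\alpha_0^{1/(N-1)}\le c\,\delta_0\rho$ forces $\alpha_0$ to depend on $\delta_0,\rho,\Lambda$; that dependence is exactly the order in which the paper fixes constants at the end of Section \ref{sec:covering}, and you should phrase it the same way rather than pretend $\alpha_0$ can be frozen a priori.
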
 

\begin{proof}
	\textit{Proof of \eqref{eq:delta0pinch}.} We remark that $B_{2\beta r}(V) \cap B_{2r}(0) \cap \Nt(u)$ can not be empty, since it contains at least $(d-1)$ points generating the subspace $V$. We also remark that 
	\begin{equation}\label{tmp:chgF}
		F\subset \left\{X\in B_{2r}(0) \cap \Nt(u): \left|N_X(r) - N_X(\rho r) \right| \leq \delta \right\}. 
	\end{equation}  
	Indeed it is clearly true if $X\in \pD$, since 
	\[ 0 \leq N_X(r) - N_X(\rho r) \leq \tilde{\Lambda} - \left( \tilde{\Lambda} - \delta \right) =\delta. \]
	When $X\in D$, \eqref{tmp:chgF} is true if $N_X(r)\geq N_X(\rho r)$. If not (which can only happen when $\rho r \leq r_{cs}(X) < r$), by the definition \eqref{def:Npr}, $r_{cs}(X) < r\leq r_{in}(\delta)$ and Lemma \ref{lm:spvarin_far} we have
	\[ N_X(r) - N_X(\rho r) = N_{\tilde{X}}(r) - N(X, \rho r) \geq N_{\tilde{X}}(r) - \left(N_{\tilde{X}}(\rho r) + \delta \right) \geq -\delta, \]
	where $\tilde{X} \in \pD$ is such that $|X-\tilde{X}| = \dist(X, \pD)$.
	By the assumption \eqref{asmp:supNXr} and the definition of $F$,
	\[ N_X(r) - N_X(\rho r) \leq \tilde{\Lambda} - (\tilde{\Lambda}-\delta) = \delta. \] Hence \eqref{tmp:chgF} is true.
	
	We argue by contradiction. That is, we assume there are $(u_i, D_i) \in \mathfrak{H}(R, \Lambda)$ and $\delta_i, r_i, \beta_i \to 0$ satisfying $r_i \leq r_{in}(\delta_i)$, such that
	\[ \sup_{X\in B_{2r_i}(0) \cap \Nt(u_i)} N^{u_i}_{X}(r_i) \leq \tilde{\Lambda}_i \leq C(\Lambda), \] 
	the set 
	\[ F_i = \{X\in B_{2r_i}(0) \cap \Nt(u_i): N^{u_i}_{X_i}(\rho r_i) \geq \tilde{\Lambda}_i - \delta_i  \} \]
	$2\tau r_i$-effectively spans a $(d-2)$-dimensional affine subspace, denoted by 
	\[ V_i = X_i^0 + \spn\{ X_i^1 - X_i^0, \cdots, X_i^{d-2} - X_i^0 \}, \quad \text{ with } X_i^j \in F_i \text{ for all } j\in\{0, \cdots, d-2\}; \]
	and yet there exists $X_i \in B_{2\beta_i r_i}(V_i) \cap B_{2r_i}(0) \cap \Nt(u_i)$ such that 
	$N^{u_i}_{X_i}(\rho r_i) < \tilde{\Lambda}_i - \delta_0$.
	
	
	\uline{Step $1$}. Since $0< \tilde{\Lambda}_i \leq C(\Lambda)$, modulo passing to a subsequence $\tilde{\Lambda}_i$ converges to some $\tilde{\Lambda} \in [0, C(\Lambda)]$. By Proposition \ref{prop:cpt} for each $j\in \{0, \cdots, d-2\}$ fixed, the sequence $T_{X_i^j, r_i} u_i$ converges uniformly and in $W^{1,2}$ to some harmonic function $u_\infty^j$ in $D_\infty^j \cap B_5(0)$, which vanishes on the boundary. With $j$ fixed, we consider two cases. \uline{Case (i): there are infinitely many boundary points in the sequence $\{X_i^j\}_i$}. Then we throw away all interior points in the sequence. By Proposition \ref{prop:cpt} $T_{r_i} v_{X_i^j}$ also converges to the same function $u_\infty^j$. Since $X^j_i \in B_{2r_i}(0)$, we have
	\[ X_i^j \to 0 \text{ as } i \to +\infty, \quad \text{ for each } j \text{ fixed. } \] 
	Hence the domain $D_\infty^j$ is in fact the upper half-space $\RR^d_+$.
	Let $\rho \in (0,1)$ be arbitrary. Then for any $i$ sufficiently large so that $\rho_i \leq \rho$, by the scale-invariance of the frequency function we have
	\begin{align*}
		& N(T_{r_i} v_{X_i^j}, 1) - N(T_{r_i} v_{X_i^j}, \rho) \\
		& = N(v_{X_i^j}, r_i) - N(v_{X_i^j}, \rho r_i) \\
		& = \widetilde{N}(v_{X_i^j}, r_i) \exp\left(-C\int_0^{r_i} \frac{\theta(4\tau)}{\tau} ~d\tau \right) - \widetilde{N}(v_{X_i^j},\rho r_i) \exp\left(-C\int_0^{\rho r_i} \frac{\theta(4\tau)}{\tau} ~d\tau \right) \\
		& =  \widetilde{N}(v_{X_i^j}, r_i) \left[ \exp\left(-C\int_0^{r_i} \frac{\theta(4\tau)}{\tau} ~d\tau \right) - \exp\left(-C\int_0^{\rho r_i} \frac{\theta(4\tau)}{\tau} ~d\tau \right) \right] \\
		& \qquad + \left[ \widetilde{N}(v_{X_i^j}, r_i) - \widetilde{N}(v_{X_i^j},\rho r_i) \right] \exp\left(-C\int_0^{\rho r_i} \frac{\theta(4\tau)}{\tau} ~d\tau \right) \\
		& \to 0, \qquad \text{ as } r_i \to 0. 
	\end{align*} 
	In the last line we used the assumption $X_i^j \in F_i$, the fact that $N_{X_i^j}(r_i)$'s are uniformly bounded by $C(\Lambda)$ and $\int_0^{r_i} \frac{\theta(4\tau)}{\tau}~d\tau \to 0$ (see Remark \ref{rmk:deptheta}).	
	Hence
	\[ N(u_\infty^j, 1) - N(u_\infty^j, \rho) = \lim_{r_i \to 0} N(T_{r_i} v_{X_i^j}, 1) - N(T_{r_i} v_{X_i^j}, \rho) = 0. \]
	Since $u_\infty^j$ is a harmonic function in the upper half-space, by the rigidity of the monotonicity formula in Corollary \ref{cor:rigidity} $u_\infty^j$ is a homogeneous harmonic polynomial in $B_1(0)$. Thus by the unique continuation theorem $u_\infty^j$ is a homogeneous harmonic polynomial in the upper half-space.
	
	Otherwise, we fall into \uline{Case (ii): there are infinitely many interior points in the sequence $\{X_i^j\}_i$}. Then we throw away the finitely many boundary points (if they exist) in the sequence. Since $X_i^j \in B_{2r_i}(0)$, we know that
	\[ \dist(X_i^j, \pD_i) \leq |X_i^j-0| < 2r_i. \]
	In particular, modulo passing to a subsequence, the sequence of real numbers $\dist(X_i^j, \pD_i)/r_i$ converges to some $d_j \in [0, 2]$. A simple computation then shows that $\frac{D_i- X_i^j}{r_i}$ converges graphically to the domain
	\[ D_\infty^j = \{(y, y_d) \in \RR^{d-1} \times \RR: y_d > -d_j \}, \]
	which is the upper half-space (when $d_j=0$), or the upper half-space shifted down by $d_j$ in the direction of $-e_d$. 
	
	We claim that 
	\begin{equation}\label{claim:homfreqin}
		\left| N(T_{X_i^j, r_i} u_i, 0, 1) - N(T_{X_i^j, r_i} u_i, 0, \rho) \right| = \left| N( u_i, X_i^j, r_i) - N(u_i, X_i^j, \rho r_i) \right| \to 0.
	\end{equation}
	This does not follow obviously from the assumption $X_i^j \in F_i$ and the observation \eqref{tmp:chgF}: Recalling the definition \eqref{def:Npr}, the frequency function $N^{u_i}_{X_i^j}(\cdot)$ that appears in $F_i$ is not always the same as $N(u_i, X_i^j, \cdot)$ straightforwardly; it is defined as follows
	\begin{equation}\label{tmp:defNpr}
		N^{u_i}_{X_i^j}(r) = \left\{\begin{array}{ll}
		N(u_i, X_i^j, r), & r \leq r_{cs}(X_i^j) \\
		N^{u_i}_{\tilde{X}_i^j}(r) = \widetilde{N}\left(v_{\tilde{X}_i^j} = \Psi_{\tilde{X}_i^j} \circ u_i, r \right) & r > r_{cs}(X_i^j)
	\end{array} \right. 
	\end{equation} 
	where $r_{cs}(X_i^j)$ denotes the critical scale for $u_i$ at the interior point $X_i^j$, and $\tilde{X}_i^j \in \pD$ is such that $\left| X_i^j - \tilde{X}_i^j \right| = \dist(X_i^j, \pD_i)$. 

	In fact, if $r_i \leq r_{cs}(X_i^j)$, then by the definition \eqref{tmp:defNpr}, $X_i^j \in F_i$ and the observation \eqref{tmp:chgF}, 
	we have
	\begin{align*}
		\left| N( u_i, X_i^j, r_i) - N(u_i, X_i^j, \rho r_i) \right| \leq \left|N^{u_i}_{X_i^j}(r_i) - N^{u_i}_{X_i^j}(\rho r_i) \right| 
		\leq \delta_i;
	\end{align*}
	if $\rho r_i \leq r_{cs}(X_i^j) < r_i$, then by the definition \eqref{tmp:defNpr}, $r_{cs}(X_i^j) < r_i \leq r_{in}(\delta_i)$, Lemma \ref{lm:spvarin_far}, $X_i^j \in F_i$ and the observation \eqref{tmp:chgF}, 
	we have
	\begin{align*}
		\left| N( u_i, X_i^j, r_i) - N(u_i, X_i^j, \rho r_i) \right| 
		& = \left| N( u_i, X_i^j, r_i) - N^{u_i}_{X_i^j}( \rho r_i) \right| \\
		&\leq \left|N(u_i, X_i^j, r_i) - N^{u_i}_{\tilde{X}_i^j}(r_i) \right| + \left| N^{u_i}_{\tilde{X}_i^j}(r_i) - N^{u_i}_{X_i^j} (\rho r_i) \right| \\
		& \leq 2\delta_i;
	\end{align*}
	if $\rho r_i > r_{cs}(X_i^j)$, then by $r_{cs}(X_i^j) < \rho r_i < r_i \leq r_{in}(\delta_i)$, Lemma \ref{lm:spvarin_far}, the definition \eqref{tmp:defNpr}, $X_i^j \in F_i$ and the observation \eqref{tmp:chgF} we have
	\begin{align*}
		& \left| N( u_i, X_i^j, r_i) - N(u_i, X_i^j, \rho r_i) \right| \\
		 & \leq \left|N( u_i, X_i^j, r_i) - N^{u_i}_{\tilde{X}_i^j}(r_i) \right| + \left|N^{u_i}_{\tilde{X}_i^j}(r_i) - N^{u_i}_{\tilde{X}_i^j}(\rho r_i) \right| + \left| N^{u_i}_{\tilde{X}_i^j}(\rho r_i) - N(u_i, X_i^j, \rho r_i) \right| \\
		 & \leq 2\delta_i + \left|N^{u_i}_{X_i^j}(r_i) - N^{u_i}_{X_i^j}(\rho r_i) \right| \\
		 & \leq 3\delta_i.
	\end{align*}
	So for each $i$ we always have
	\begin{equation}\label{tmp:frequiriin}
		\left| N( u_i, X_i^j, r_i) - N(u_i, X_i^j, \rho r_i) \right| \leq 3\delta_i \to 0, 
	\end{equation} 
	which finishes the proof of the claim \eqref{claim:homfreqin}.
	Therefore
	\begin{equation}\label{tmp:rigidityin}
		\left|N(u_\infty^j, 0, 1 ) - N(u_\infty^j, 0, \rho) \right| = \lim_{i\to \infty} \left| N(T_{X_i^j, r_i} u_i, 0, 1) - N(T_{X_i^j, r_i} u_i, 0, \rho) \right| = 0. 
	\end{equation} 
	If $d_j=0$ and thus $0\in \pD_\infty^j$, we appeal to the rigidity of the monotonicity formula for boundary points (Corollary \ref{cor:rigidity}) to conclude that $u_\infty^j$ is homogeneous with respect to the origin. Alternatively if $d_j>0$, then $0\in D_\infty^j$ and moreover $\dist(0, \pD_\infty^j)=d_j$.
	Since $u_\infty^j$ vanishes on the boundary of $D_\infty^j$, by
	 the rigidity of the monotonicity formula for interior points (see Proposition \ref{prop:intmonotonicity}) we conclude that $u_\infty^j$ is homogeneous with respect to $0$. 
	 Since $u_\infty^j(0) = 0$, by homogeneity and the unique continuation property $u_\infty^j \equiv 0$, which is impossible since $\iint_{B_1(0)} |u_\infty^j|^2 ~dZ = 1$. Therefore we can only have $d_j = 0$ and $D_\infty^j = \RR^d_+$.
	
	Moreover, in both cases, since $\tilde{\Lambda}_i \to \tilde{\Lambda}$ and $\delta_i \to 0$ we can show that
	\[ N(u_\infty^j, 0, 1) = \lim_{i\to \infty} N(u_i, X_i, r_i) \leq \tilde{\Lambda}, \]
	and
	\[ N(u_\infty^j, 0, \rho) = \lim_{i \to \infty} N(u_i, X_i, \rho r_i) \geq \tilde{\Lambda}. \]
	Therefore the degree of homogeneity of $u_\infty^j$ with respect to the origin satisfies
	\begin{equation}\label{eq:determinefreqlimit}
		N(u_\infty^j, 0, 0) = N(u_\infty^j, 0, 1) = N(u_\infty^j, 0, \rho) = \tilde{\Lambda}. 
	\end{equation}

	\uline{Step $2$}.
	Let 
	\[ Y_i^j = \frac{X_i^j-X_i^0}{r_i} \in \frac{\overline{D}_i -X_i^0}{r_i} .\] 
	By the assumption of effective spanning, we have
	\[ 2 \tau \leq |Y_i^j| \leq 4 \quad \text{ and } \quad \left| Y_i^j - Y_i^{j'} \right| = \frac{|X_i^j - X_i^{j'}| }{r_i} \geq 2\tau \text{ if } j\neq j'. \]
	Modulo passing to a subsequence we have that $\{Y_i^0, Y_i^1, \cdots, Y_i^{d-2}\}$ converges to $\{0, Y^1, \cdots, Y^{d-2}\}$, which $2 \tau$-effectively spans a $(d-2)$-dimensional linear subspace $V_\infty$. 
	Moreover, since
	\[ \dist\left(Y_i^j, \frac{\pD_i- X_i^0}{r_i} \right) = \frac{\dist(X_i^j, \pD_i)}{r_i} \to d_j, \]
	it follows that
	\begin{equation}\label{eq:distlimitdomain}
		\dist(Y^j, \pD_\infty^0) = \lim_{i\to \infty}\dist\left(Y_i^j, \frac{\pD_i- X_i^0}{r_i} \right) = d_j=0,
	\end{equation}
	i.e. $Y^j \in \pD_\infty^0$. 
	
	We claim that for any $j\in \{1, \cdots, d-2\}$, there is a constants $a_j \approx 1$ (depending on the values of $d, R$ and $\Lambda$) 
	such that
	\begin{equation}\label{eq:homdiffvt}
		u_\infty^0(Y^j + Y) = a_j ~u_\infty^j(Y), \quad \text{ for every } Y \in D_\infty^j. 
	\end{equation} 
	For simplicity of notation we just write down the proof of the claim when $j=1$. Recall that
\[ T_{X_i^1, r_i} u_i(Y) = \frac{u_i(X_i^1+ r_i Y)}{ \left( \frac{1}{r_i^d} \mathlarger{\iint}_{B_{r_i}(X_i^1)} u_i^2 ~dX \right)^{\frac12}} , \]
	and
\[ T_{X_i^0, r_i} u_i(Y_i^1 + Y) = \frac{u_i(X_i^0 + r_i Y_i^1 + r_i Y)}{ \left( \frac{1}{r_i^d} \mathlarger{\iint}_{B_{r_i}(X_i^0) } u_i^2 ~dX \right)^{\frac12}} = \frac{u_i\left(X_i^1 + r_i Y\right)}{ \left( \frac{1}{r_i^d} \mathlarger{\iint}_{B_{r_i}(X_i^0) } u_i^2 ~dX \right)^{\frac12}}. \]
	Since $|X_i^1 - X_i^0| < 4r_i$, using the assumption $X_i^0, X_i^1 \in \Nt(u_i)$
	 and the doubling property in \eqref{eq:Hrcmpu} (for boundary points) and \eqref{eq:doublingin} (for interior points), 
	we get
	\begin{equation}\label{tmp:chgnHtin}
		\iint_{B_{r_i}(X_i^1)} u_i^2 ~dX \approx \iint_{B_{r_i}(X_i^0) } u_i^2 ~dX, 
	\end{equation}
	where the constant depends on $d, \Lambda$. Hence modulo passing to a subsequence their ratio converges to a positive number.
	This implies the claim \eqref{eq:homdiffvt}. 
	Recall that each $u_\infty^j$ is a homogeneous polynomial with respect to the origin. The claim then implies that $u_\infty^0$ is a homogeneous polynomial with respect to $Y^1, \cdots, Y^{d-2}$. Therefore either $u_\infty^0$ is linear, or $u_\infty^0$ is invariant along the $(d-2)$-dimensional linear subspace $V_\infty = 0 + \spn\{ Y^1, \cdots, Y^{d-2}\} \subset \pD_\infty^0$. 
	
	\uline{Step $3$}.
	On the other hand, let 
	\[ Y_i = \frac{X_i- X_i^0}{r_i} \in \frac{\overline{D}_i - X_i^0}{r_i}. \] 
	Then modulo passing to a subsequence $Y_i$ converges to some point $Y_\infty \in \overline{\RR^d_+} \cap \overline{B_4(0)}$. By compactness we have $T_{X_i, r_i} u_i$ converges to some harmonic function (not necessarily homogeneous) $u_\infty$ in $\RR^d_+ - Y_\infty \cap B_5(0)$.
	Moreover, let
	\[ \widetilde{V}_i : = 0+\spn\{Y_i^1, \cdots, Y_i^{d-2}\} \]
	be a $(d-2)$-dimensional linear subspace. Since $X_i \in B_{2\beta_i r_i}(V_i)$, we have
	\[ \dist(Y_i, \widetilde{V}_i) = \frac{\dist(X_i, V_i)}{r_i} < 2\beta_i \to 0. \]
	Since $\widetilde{V}_i \to V_\infty$, this implies that $Y_\infty \in V_\infty \subset \partial \RR^d_+$. Hence the domain $\RR^d_+ - Y_\infty$ is exactly the upper half-space $\RR^d_+$. This in particular implies that $\dist(X_i, \pD_i)/r_i \to 0$, and similar to the proof of \eqref{eq:homdiffvt}, we can show there is a constant $a\approx 1$ such that
	\begin{equation}\label{tmp:tslinvplane}
		u_\infty^0(Y_\infty + Y ) = a ~u_\infty(Y). 
	\end{equation} 
	
	Since $u_\infty^0$ is invariant along the plane $V_\infty$ and $Y_\infty \in V_\infty$, it follows from \eqref{tmp:tslinvplane} that $u_\infty$ is homogeneous with respect to the origin, and moreover
	\begin{equation}\label{tmp:uinftyhom}
		N(u_\infty, 0, \rho) = N(u_\infty, 0, 0) = N(u_\infty^0, Y_\infty, 0) = N(u_\infty^0, 0, 0) = \tilde{\Lambda},
	\end{equation}  
	where we use \eqref{eq:determinefreqlimit} in the last equality.
	However, by the assumption on $X_i$ and Lemma \ref{lm:spvarin_far}
	\[ N(u_i, X_i, \rho r_i) \leq N^{u_i}_{X_i}(\rho r_i) + \delta_i < \left( \tilde{\Lambda}_i - \delta_0 \right) + \delta_i, \]
	and thus
	\[ N(u_\infty, 0, \rho) = \lim_{i\to \infty} N(T_{X_i, r_i} u_i,0, \rho) = \lim_{i\to \infty} N(u_i, X_i, \rho r_i) \leq \tilde{\Lambda} - \delta_0. \]
	This is in contradiction with \eqref{tmp:uinftyhom}.
	Therefore we have proven \eqref{eq:delta0pinch}.
	
	\bigskip
	
	\textit{Proof of \eqref{eq:tn}.} Notice that if the set on the left hand side is empty,
	there is nothing to prove. So we assume that is not the case. We argue by contradiction as before, but this time with $\beta$ fixed to be the value we just found. That is, we assume there are $(u_i, D_i) \in \mathfrak{H}(R, \Lambda)$ and $\delta_i,  r_i\to 0$ and $r_i \leq r_{in}(\delta_i)$ such that
	\[ \sup_{X\in B_{2r_i}(0) \cap \Nt(u_i)} N_X^{u_i}(r_i) \leq \tilde{\Lambda}_i \leq C(\Lambda), \] 
	the set 
	\[ F_i = \{X\in B_{2r_i}(0) \cap \Nt(u_i): N_X^{u_i}(\rho r_i) \geq \tilde{\Lambda}_i - \delta_i \} \]
	$2\tau r_i$-effectively spans a $(d-2)$-dimensional linear subspace $V_i$,
	and yet there exists some $X'_i \in \Ct_{r_i}(u_i) \cap B_{2r_i}(0) \cap \Nt(u_i) $ which satisfies $\dist(X'_i, V_i) \geq 2\beta r_i$.
	
	The first part of the proof is the same as in \uline{Step $1$} and \uline{Step $2$} above, and we obtain that $T_{X_i^0, r_i} u_i$ converges to $u_\infty^0$, which is a harmonic homogeneous polynomial and is invariant along a $(d-2)$-dimensional linear subspace $V_\infty$.

	Similarly to \uline{Step $3$} above, we let 
	\[ Y'_i = \frac{X'_i - X_i^0 }{r_i} \in \frac{ \overline{D}_i - X_i^0}{r_i}. \]
	Recall we have shown in \uline{Step $1$} in the proof of \eqref{eq:delta0pinch} that the sequence of domains $\frac{\overline{D}_i - X_i^0}{r_i} \to \overline{\RR^d_+}$.
	Then modulo passing to a subsequence $Y'_i$ converges to some point $Y'\in \overline{\RR^d_+} \cap \overline{B_4(0)}$; and $T_{X'_i, r_i} u_i$ converges to some harmonic function (not necessarily homogeneous) $u'_\infty$ in $\left( \RR^d_+ - Y' \right) \cap B_5(0)$.
	Additionally, since $\dist(X'_i, V_i) \geq 2\beta r_i$, we have
	\[ \dist(Y'_i, \widetilde{V}_i)  = \frac{\dist(X'_i, V_i)}{r_I} \geq 2\beta, \]
	and thus 
	\begin{equation}\label{eq:offspine}
		\dist(Y', V_\infty) \geq 2\beta>0.
	\end{equation}
%
	Moreover, similarly to the proof before
	we can show there is a constant $a'\approx 1$ such that $u_\infty^0(Y' + Y ) = a' ~u'_\infty(Y)$, and thus
	\begin{equation}\label{cl:cpderlimit}
		\nabla u_\infty^0(Y'+Y) = a' \nabla u'_\infty(Y).
	\end{equation}

	Since $u^0_\infty$ is invariant along the $(d-2)$-dimensional linear subspace $V_\infty\subset \partial \RR^d_+$, by identifying the coordinate system in $V_\infty^\perp \times V_\infty$ with $\RR^2 \times \RR^{d-2}$ we have
	\begin{itemize}
		\item either $u^0_\infty$ is a harmonic function in one variable, and then necessarily $u^0_\infty$ is linear in that variable and (modulo a change of coordinates in $V_\infty^\perp$) 
			\[ u^0_\infty (x_1, x_2, y) = c(x_1)_+, \quad \text{ where }x_1 \in V_\infty^\perp \cap \RR^d_+, x_2 \in V_\infty^\perp \cap \partial\RR^d_+ \text{ and } y\in V_\infty. \]
		\item or $u^0_\infty$ is a harmonic function in two variables, and then necessarily we have (modulo a change of coordinates in $V_\infty^\perp$) in polar coordinates
			\begin{equation}\label{eq:2varhhp}
				u_\infty^0(r,\omega, y) = \tilde{c} r^N \sin(N\omega), \quad \text{ where } r\geq 0, \omega \in [0, \pi], y\in V_\infty.   
			\end{equation} 
			Here the degree of homogeneity is determined as	$N = N(u^0_\infty, 1)= N(u^0_\infty, 0)$ and $N \geq 2$. 
	\end{itemize}
	Since $u_\infty^0$ satisfies $\iint_{B_1(0)} |u_\infty^0|^2 ~dY = 1$, the constant $c=\alpha_d^1 $ only depends on the dimension $d$, where $\alpha_d^1$ satisfies $\omega_{d-1} |\alpha_d^1|^2  \int_0^1 t^2 (1-t^2)^{\frac{d-1}{2}} ~dt = 1$ with $\omega_{d-1}$ being the volume of $(d-1)$-dimensional unit ball. And the constant $\tilde{c}$ satisfies
	\[ \frac{\pi}{2} |\tilde{c}|^2 \omega_{d-2} \int_0^1 t^{2N} (1-t^2)^{\frac{d-2}{2}} ~dt = 1, \] 
	and thus it depends on the dimension as well as the degree $N$. However $\tilde{c}$ has a uniform lower bound independent of $N$, which we denote by $|\tilde c|\geq \alpha_d^2$.
	
	The assumption $X'_i \in \Ct_{r_i}(u_i)$ implies that 
	\[ \inf_{B_\beta(0)} |\nabla T_{X'_i, r_i} u_i| \leq \alpha_0. \]
	Recall that $\nabla T_{X'_i, r_i }u_i \to \nabla u'_\infty $ in $L^2$, then modulo passing to a subsequence we have almost everywhere convergence. This, combined with the continuity of $\nabla T_{X'_i, r_i} u_i$ and $\nabla u'_\infty$, implies that
	\[ \inf_{B_\beta(0)} |\nabla u'_\infty| \leq \alpha_0. \]
	By \eqref{cl:cpderlimit}, it follows that
	\begin{equation}\label{tmp:smallgrad}
		\inf_{B_\beta(Y')} |\nabla u_\infty^0| = |a'| \inf_{B_\beta(0)} |\nabla u'_\infty| \leq  \tilde{\alpha}_0,
	\end{equation} 
	where $\tilde{\alpha}_0$ is $\alpha_0$ multiplied by the upper bound of $|a'|$, which is a dimensional constant.
	In the first case we have $|\nabla u_\infty^0| \equiv \alpha_d^1$. So by choosing $\tilde{\alpha}_0< \alpha_d^1$, we can guarantee the first case does not happen. In the second case, we have that for any $(r, \omega) \in \RR_+ \times [0,\pi]$
	\[ |\nabla u^0_\infty|^2 = \tilde{c}^2 \left[|Nr^{N-1} \sin(N\omega) |^2 + |N r^{N-1} \cos(N\omega)|^2 \right] = \tilde{c}^2 N^2 r^{2(N-1)}. \]	
	We identify $Y' \in \overline{\RR^d_+}$ with $(r_0, \omega_0, y_0)$. (The angle $\omega_0 = 0$ or $\pi$ if $Y' \in \partial \RR^d_+$, and $\omega_0 \in (0, \pi)$ if $Y' \in \RR^d_+$.) Then by \eqref{eq:offspine}, we have
	\[ r_0 = \dist(Y', V_\infty) \geq 2\beta>0. \]
	Therefore
	\[ \inf_{B_\beta(Y')} |\nabla u_\infty^0| = |\tilde{c}| N (r_0 - \beta)^{N-1} |\geq 2\alpha_d^2 \cdot \beta^{C(\Lambda)-1}. \]
	By choosing $\tilde{\alpha}_0 < 2\alpha_d^2 \cdot \beta^{C(\Lambda)-1}$, we get a contradiction with the bound in \eqref{tmp:smallgrad}.
\end{proof}

\section{Spatial variations of the frequency function}\label{sec:spvar}

The following lemma gives an estimate of the spatial variation of the frequency function (at the same scale) for harmonic functions on the upper half space. It can be viewed as a quantitative version of Lemma \ref{lm:spinebd}.

\begin{lemma}\label{lm:spvar}
	Let $u$ be a harmonic function in $\RR^d_+$ such that $u$ vanishes on $\partial \RR^d_+$. Let $N(X, r)$ denote the frequency function of $u$ centered at $X\in \pR$ and at scale $r>0$. Suppose $N(0,3) \leq \Lambda$. Then there exists a constant $C$ (depending on $d$ and $\Lambda$), such that for any $0<r<1$ and any $X_1, X_2\in B_1(0) \cap \pR$ with $|X_1 - X_2| \leq r/2$, we have
	\begin{equation}\label{eq:spvar}
		|N(X_1, r) - N(X_2, r)| \leq C \left( W^{\frac12}(X_1, r) + W^{\frac12}(X_2, r) \right),
	\end{equation}
	where $W(X_j, r) = N(X_j, 3r/2) - N(X_j, r/2)$ for $j=1,2$.
\end{lemma}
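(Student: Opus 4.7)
The plan is to reflect $u$ oddly across $\partial\RR^d_+$ so that it extends to a harmonic function on all of $\RR^d$; then $N(X_j,r)$ becomes the standard Almgren frequency at $X_j\in\partial\RR^d_+$. Because $X_j$ lies on the reflection hyperplane, the boundary integral in the Rellich identity vanishes by symmetry, giving the clean form
\[
N'(p,r)\;=\;\frac{2r}{H(p,r)}\int_{\partial B_r(p)}\Bigl|\partial_\rho u-\tfrac{N(p,r)}{r}u\Bigr|^2\sH.
\]
Setting $X:=X_2-X_1$ and $p(t):=X_1+tX$ for $t\in[0,1]$, I would differentiate $N(p(t),r)=rD(p(t),r)/H(p(t),r)$ in $t$. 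Using $\Delta u=0$ and $u(p(t))=0$, and integrating by parts exactly as in the computation of $g'(t)$ in \eqref{eq:spvar_ls}, this produces
\[
\frac{d}{dt}N(p(t),r)\;=\;\frac{2r}{H(p(t),r)}\int_{\partial B_r(p(t))}(\nabla u\cdot X)\Bigl(\partial_\rho u-\tfrac{N(p(t),r)}{r}u\Bigr)\sH,
\]
where $\partial_\rho$ denotes the radial derivative based at $p(t)$.

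Next, I would apply Cauchy--Schwarz to this identity. By the monotonicity formula, the $L^2(\partial B_r(p(t)))$-norm of $\partial_\rho u-(N/r)u$ equals $\sqrt{H(p(t),r)N'(p(t),r)/(2r)}$. For the other factor, subharmonicity of $|\nabla u|^2$ yields the sub--mean value bound $\int_{\partial B_r(p(t))}|\nabla u|^2\leq(C/r)\iint_{B_{3r/2}(p(t))}|\nabla u|^2$, and then the doubling property of $H$ (the flat-case analogue of Corollary \ref{cor:Hrbd}) combined with a uniform frequency bound $N(p(t),3r/2)\leq C(d,\Lambda)$ (inherited from $N(0,3)\leq\Lambda$ via monotonicity and a Lemma \ref{lm:freqbd}--style comparison between nearby centers) gives $\int_{\partial B_r(p(t))}|\nabla u|^2\leq C(d,\Lambda)H(p(t),r)/r^2$. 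Together with $|\nabla u\cdot X|\leq|X|\,|\nabla u|$, this yields
\[
\Bigl|\tfrac{d}{dt}N(p(t),r)\Bigr|\;\leq\;C(d,\Lambda)\,\frac{|X|}{\sqrt{r}}\,\sqrt{N'(p(t),r)}.
\]
Integrating over $t\in[0,1]$ and applying Cauchy--Schwarz once more,
\[
|N(X_1,r)-N(X_2,r)|\;\leq\;C(d,\Lambda)\,\frac{|X|}{\sqrt{r}}\left(\int_0^1 N'(p(t),r)\,dt\right)^{1/2}.
\]

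The main obstacle will be to show $\int_0^1 N'(p(t),r)\,dt\leq C(d,\Lambda)\,W(X_j,r)/r$ for $j\in\{1,2\}$. Both quantities can be rewritten as solid integrals of $|\partial_\rho u-(N/\rho)u|^2$, centered at the respective base points, supported in (a subset of) the annulus $A_{r/2,3r/2}(X_j)$: the left-hand side via a Fubini/coarea change of variables on the pencil of spheres $\{\partial B_r(p(t))\}_{t\in[0,1]}$, which lies inside $A_{r/2,3r/2}(X_j)$ because $|X|\leq r/2$; the right-hand side by unrolling $W(X_j,r)=\int_{r/2}^{3r/2}N'(X_j,s)\,ds$ in polar coordinates around $X_j$. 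The weights $\rho/H(\cdot,\rho)$ are uniformly comparable to $r/H(X_j,r)$ on this range by doubling, and the discrepancy from swapping the center of radial differentiation from $p(t)$ to $X_j$ introduces a term bounded by $C(|X|/r)|\nabla u|$ whose squared $L^2$ contribution is absorbed by the same subharmonic gradient estimate used above. Once this comparison is in hand, $|X|\leq r/2$ produces $|N(X_1,r)-N(X_2,r)|\leq C\sqrt{W(X_j,r)}$ for each $j$, and \eqref{eq:spvar} follows by keeping both choices.
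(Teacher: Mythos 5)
Your identity for $\frac{d}{dt}N(p(t),r)$ and the Cauchy--Schwarz step giving $\bigl|\tfrac{d}{dt}N(p(t),r)\bigr|\lesssim\tfrac{|X|}{\sqrt r}\sqrt{N'(p(t),r)}$ are both correct and agree with the paper's \eqref{tmp:spvarderNr} after regrouping. The gap is in what you call the main obstacle. The claim that $\int_0^1 N'(p(t),r)\,dt\leq C\,W(X_j,r)/r$ for \emph{each} $j\in\{1,2\}$, and with it the conclusion $|N(X_1,r)-N(X_2,r)|\leq C\sqrt{W(X_j,r)}$ for each $j$, is simply false. Take $u$ a degree-$N\geq 2$ harmonic homogeneous polynomial (odd across $\pR$) homogeneous with respect to $X_1$ but not invariant along $X_2-X_1$. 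Then $W(X_1,r)=0$ identically, while $N'(p(t),r)>0$ for $t>0$ and $N(X_2,r)<N(X_1,r)$, so your asserted bound with $j=1$ has $0$ on the right and a positive number on the left. The lemma survives only because $W(X_2,r)\sim(|X|/r)^2>0$; but this shows the bound cannot be established one $j$ at a time.

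Even for the symmetrized version $\int_0^1 N'\lesssim\bigl(W(X_1,r)+W(X_2,r)\bigr)/r$, your sketch is incomplete in a way that matters. The integrand of $N'(p(t),r)$ is $|\partial_{\rho_{p(t)}}u-\tfrac{N(p(t),r)}{r}u|^2$, centered at $p(t)$, whereas $W(X_j,r)$ is built from $|\partial_{\rho_{X_j}}u-\tfrac{N(X_j,\rho)}{\rho}u|^2$ on spheres around $X_j$. You only discuss swapping the radial direction; the frequency multiplier $N(p(t),r)/r\to N(X_j,\rho)/\rho$ must also be swapped, and unwinding that produces the term $|N(p(t),r)-N(X_j,r)|$ --- exactly the spatial variation you are trying to estimate, so the argument is circular. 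One can evade the circularity by observing that $N(p(t),r)/r$ is the $L^2(\partial B_r(p(t)))$-optimal multiplier (so $N'(p(t),r)$ is bounded above if you replace it by $N(X_j,r)/r$), but the remaining center-swap error $\langle\nabla u,X_j-p(t)\rangle$ then contributes an additive $C\,|X|^2/r^2$ to $|N(X_1,r)-N(X_2,r)|$, which does not vanish as $W\to 0$ and cannot be absorbed into $\sqrt W$. The paper sidesteps both issues by \emph{not} applying Cauchy--Schwarz to $\langle\nabla u,X\rangle$ first: it decomposes $\langle\nabla u,Y\rangle=\E_1-\E_2+(\mathcal{S}_1+\mathcal{S}+\mathcal{S}_2)u$ with $\E_j,\mathcal{S}_j$ already centered at $X_j$, and then invokes the exact orthogonality $rD(X_t,r)-N(X_t,r)H(X_t,r)=0$ to annihilate the constant middle term $\mathcal{S}=N(X_1,r)-N(X_2,r)$. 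After this algebraic cancellation the remaining pieces are bounded directly by $W$-type quantities with no center swap. That cancellation, not the coarea geometry, is the key device your proposal is missing.
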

\begin{proof}
	Let $Y = X_2 - X_1$. Any point on the line segment $[X_1, X_2]$ can be written as $X=X_1 + t Y\in \pR $, where $t\in [0,1]$. Let 
	\[ \rho_X(Z) = \frac{Z-X}{|Z-X|} \]
	denote the radial direction with center $X$. Similarly to \eqref{eq:spvar_ls}, we have
	\begin{align}
		& \frac{d}{dt} N(X_1 + tY, r) \nonumber \\
		& = \frac{d}{dt} \log N(X_1 + tY, r) \cdot N(X_1 + tY, r) \nonumber \\
		& = \frac{2}{H(X, r)} \left[ r \int_{\partial B_r(X)} \langle \nabla u(Z), Y \rangle \partial_{\rho_X} u(Z)  ~d\mathcal{H}^{d-1} - N(X, r) \cdot \int_{\partial B_r(X)} \langle \nabla u(Z), Y \rangle u(Z) ~d\mathcal{H}^{d-1} \right].\label{tmp:spvarderNr}
	\end{align} 
	Recall that $u\in C^1 (\overline{\RR^d_+})$ and we extend $u$ by zero in its complement, so for simplicity we drop $\RR^d_+$ in the integration region.
	Let 
	\[ \E_j(Z) = \langle \nabla u(Z), Z-X_j \rangle - N(X_j, |Z-X_j|) \cdot u(Z), \quad \text{ for } j=1,2.  \]
	Then
	\begin{align*}
		\langle \nabla u(Z), Y\rangle = \E_1(Z) - \E_2(Z) + N(X_1, |Z-X_1|) \cdot u(Z) - N(X_2, |Z-X_2|) \cdot u(Z).
	\end{align*}
	Inserting this into the equality \eqref{tmp:spvarderNr} we get
	\begin{align}
		& \frac{d}{dt} N(X_1 + tY, r) \nonumber \\
		& = \frac{2}{H(X, r)} \int_{\partial B_r(X)} \left( \E_1(Z) - \E_2(Z) \right) \left[ r\partial_{\rho_X} u(Z) - N(X,r) \cdot u(Z) \right] d\mathcal{H}^{d-1} \nonumber \\
		& + \frac{2}{H(X,r)} \int_{\partial B_r(X)} \left( N(X_1, |Z-X_1|) - N(X_2, |Z-X_2|) \right)\left[ r\partial_{\rho_X} u(Z) - N(X,r) \cdot u(Z) \right] u(Z) d\mathcal{H}^{d-1} \nonumber \\
		& = : \I_1(X) + \I_2(X) \label{tmp:spvar}
	\end{align}
	We write
	\begin{align*}
		& N(X_1, |Z-X_1|) - N(X_2, |Z-X_2|) \\
		& = \left( N(X_1, |Z-X_1|) - N(X_1, r) \right) + \left( N(X_1, r) - N(X_2, r) \right) + \left( N(X_2, r) - N(X_2, |Z-X_2|) \right) \\
		& =: \mathcal{S}_1(Z) + \mathcal{S} + \mathcal{S}_2(Z),
	\end{align*}
	where the middle term $\mathcal{S} = N(X_1, r) - N(X_2, r)$ is independent of $Z$.
	For any $X$ on the line segment $[X_1, X_2]$ and any $Z\in \partial B_r(X)$, we have
	\[ \frac{r}{2} \leq |Z-X_j| \leq \frac32 r, \quad \text{ for } j=1,2. \]
	Hence
	\[ \left| \mathcal{S}_j(Z) \right| =  \left| N(X_j, |Z-X_j|) - N(X_j, r) \right| \leq W(X_j, r). \]
	On the other hand, plugging $\mathcal{S}$ into the second term of \eqref{tmp:spvar} we get
	\begin{align*}
		& \mathcal{S} \left[  \int_{\partial B_r(X)} r u \partial_{\rho_X} u ~d\mathcal{H}^{d-1}(Z) - N(X,r) \int_{\partial B_r(X)} u^2 ~d\mathcal{H}^{d-1}(Z) \right] \\
		& \qquad \qquad = \mathcal{S} \left[ r D(X, r) - N(X,r) H(X, r) \right] = 0.
	\end{align*}
	Notice that the above argument is exactly the same as the proof \eqref{eq:spvar_ls0}, with $\mathcal{S} = N_1 - N_2$.
	We can estimate the remaining terms of $\I_2(X)$ as follows:
	\begin{align}
		| \I_2(X) | & \leq \frac{2}{H(X,r)} \left(  W(X_1, r) + W(X_2, r) \right) \left[ \left( r + N(X, r)\right) \int_{\partial B_r(X)} u^2 ~d\mathcal{H}^{d-1} + r \int_{\partial B_r(X)} |\nabla u|^2 ~d\mathcal{H}^{d-1} \right] \nonumber \\
		& \leq  2\left(  W(X_1, r) + W(X_2, r) \right)\left[ C(\Lambda) + \frac{r\int_{\partial B_r(X)} |\nabla u|^2 ~\mathcal{H}^{d-1} }{H(X,r) } \right]. \label{tmp:I2}
	\end{align}
	We estimate $\I_1(X)$ by Cauchy-Schwartz inequality, and obtain
	\begin{equation}\label{tmp:I1}
		|\I_1(X)| \lesssim \left[ r \left( \frac{ \int_{\partial B_r(X)} |\nabla u|^2 ~d\mathcal{H}^{d-1}}{H(X,r)} \right)^{\frac12} + N(X,r) \right]\cdot  \left( \frac{1}{H(X,r)} \int_{\partial B_r(X)} |\E_1(Z)|^2 + |\E_2(Z)|^2 ~d\mathcal{H}^{d-1} \right)^{\frac12}.
	\end{equation}
	
	To further estimate $\I_1(X)$ and $\I_2(X)$, we make the following observation. Since $u$ is a harmonic function which vanishes on the boundary, we have $u^2$ is sub-harmonic in $\RR^d$. Hence
	\[ \fint_{\partial B_\rho(X)} u^2 \sH \text{ is increasing with respect to }\rho. \]
	It follows that
	\begin{align*}
		H(X_j, r) = \int_{\partial B_r(X_j)} u^2 \sH & \leq \frac2r \iint_{A_{r, \frac32 r}(X_j)} u^2 ~dZ \\
		& \leq \frac2r \iint_{A_{\frac{r}{2}, 2r}(X)} u^2 ~dZ \\
		& \leq 3\int_{\partial B_{2r}(X)} u^2 \sH = 3H(X, 2r).
	\end{align*}
	On the other hand by the monotonicity formula (or \eqref{eq:Hrcmpu} for the Laplacian) we have
	\[ \frac{H(X,2r)}{H(X,\frac32 r)} \leq \left( \frac43 \right)^{d-1+N(X,2r)} \leq C(d,\Lambda). \]
	In the second inequality we use Lemma \ref{lm:freqbd} (for the Laplacian operator in the upper half-space) to bound $N(X,2r)$ from above by $N(0,3)$.
	Therefore 
	\begin{equation}\label{eq:Hchangecenter}
		H(X,r) \geq c(d,\Lambda) ~H\left(X_j, \frac32 r \right) \quad \text{ for }j=1,2.
	\end{equation} 
	
	Now we allow $X=X_t$ to move in the line segment $[X_1, X_2]$, as  $t$ varies in $[0,1]$. Since $|X_1 - X_2| \leq r/2$, the integration region satisfies
	\[ \bigcup_{t\in [0,1]} \partial B_r(X_t) \subset A_{\frac{\sqrt{3}}{2} r, \frac32 r}(X_j). \] 
	By \eqref{tmp:I2} and \eqref{eq:Hchangecenter}, we have
	\begin{align}
		\int_0^1 |\I_2(X_t)| ~dt & \leq 2(W(X_1, r) + W(X_2, r)) \left[ C_1 + \frac{C_2 r}{H(X_1, r)} \int_0^1 \int_{\partial B_r(X_t)} |\nabla u|^2 \sH ~dt \right] \nonumber \\
		& \leq 2(W(X_1, r) + W(X_2, r)) \left[ C_1+  C_2 \frac{r D(X_1, \frac32 r)}{H(X_1, \frac32 r)}  \right] \nonumber \\
		& \leq C(\Lambda) \left( W(X_1, r) + W(X_2, r) \right).\label{tmp:I2f}
	\end{align}
	Next we use the Cauchy-Schwarz inequality to estimate
	\begin{align*}
		\int_0^1 |\I_1(X_t)| ~dt &\lesssim \left\{ \int_0^1 \left[ \frac{r^2}{H(X_t, r)} \int_{\partial B_r(X_t)} |\nabla u|^2 \sH + N^2(X_t,r) \right] ~dt \right\}^{\frac12}  \\
		& \qquad \times \left\{ \int_0^1 \frac{1}{H(X_t, r)} \int_{\partial B_r(X_t)} |\E_1(Z)|^2 + |\E_2(Z)|^2 \sH ~dt \right\}^{\frac12} \\
		& \lesssim \left[ \frac{r^2}{H(X_1, \frac32 r)} \cdot  D(X_1, \frac32 r) + C(\Lambda) \right]^{\frac12} \\
		& \qquad \times \left[ \sum_{j=1}^2 \frac{1}{H(X_j, \frac32 r)} \iint_{A_{\frac{\sqrt{3}}{2} r, \frac32 r}(X_j)} |\E_j(Z)|^2 ~dZ \right]^{\frac12}.
	\end{align*}
	Recall Proposition \ref{prop:monotonicity} (for the Laplace operator), in particular \eqref{eq:derNr} and \eqref{eq:Rhr}. We have
	\begin{align*}
		& \frac{1}{H(X_j, \frac32 r)} \iint_{A_{\frac{\sqrt{3}}{2} r, \frac32 r}(X_j)} |\E_j(Z)|^2 ~dZ \\
		& = \frac{1}{H(X_j, \frac32 r)} \iint_{A_{\frac{\sqrt{3}}{2} r, \frac32 r}(X_j)} \left|\langle \nabla u(Z), Z-X_j \rangle - N(X_j, |Z-X_j|) \cdot u(Z) \right|^2 ~dZ \\
		& \lesssim r \int_{\frac{\sqrt{3}}{2} r}^{\frac32 r} \frac{\rho}{H(X_j, \rho)} \int_{\partial B_\rho(X_j)} \left|\partial_{\rho_{X_j}} u(Z) - \frac{N(X_j, \rho)}{\rho} u(Z)  \right|^2 \sH ~d\rho \\
		& \lesssim r \int_{\frac{\sqrt{3}}{2} r}^{\frac32 r} R_h(X_j, \rho) ~d\rho \\
		& \lesssim W(X_j, r).
	\end{align*}
	Therefore
	\begin{equation}\label{tmp:I1f}
		\int_0^1 |\I_1(X_t)| ~dt \leq C(\Lambda) \left( W^{\frac12}(X_1, r) + W^{\frac12}(X_2, r) \right). 
	\end{equation} 
	
	Combining \eqref{tmp:spvar}, \eqref{tmp:I2f} and \eqref{tmp:I1f}, we get
	\begin{align*}
		|N(X_2, r) - N(X_1, r)| \leq \int_0^1 |\I_1(X_t)| + |\I_2(X_t)| ~dt \leq C_{sv} \left(W^{\frac12}(X_1, r) + W^{\frac12}(X_2, r) \right),
	\end{align*}
	where the constant $C_{sv}$ depends on $d$ and $\Lambda$.
\end{proof}

Next we estimate the spatial variation of the frequency function for harmonic functions in Dini domains. Firstly, recall that for boundary points we define the frequency function $N_X(r)$ using the transformation $\Psi_X$ in Section \ref{sec:reduce}, i.e.
\[ N_X(r):= \widetilde{N}(v_X, r) = N(v_X, r) \exp\left( C\int_0^r \frac{\theta(4s)}{s} ~ds \right); \]
 and the modified frequency function $\widetilde{N}(v_X, r)$
is monotone increasing with respect to $r$. We define the frequency drop accordingly as
\[ W_X(r) := N_X(3r/2) - N_X(r/2). \]

\begin{prop}\label{prop:spvar}
	Let $R, \Lambda>0$ be fixed. There exists a constant $C_{sv}>0$ such that the following holds. Suppose $(u,D) \in \mathfrak{H}(R, \Lambda)$ and $X_1, X_2$ are two points in $ B_R(0) \cap \pD$ with $|X_1 - X_2| \leq r/3$ and $0<r\leq 2R$. 
	Then we have
	\begin{equation}\label{eq:spvar_sharp}
		\left|N(v_{X_1}, r) - N(v_{X_2}, r) \right| \leq C_{sv} \left( W_{X_1}^{\frac12}(r) + W_{X_2}^{\frac12}(r) + \theta(4r) \right).
	\end{equation}
\end{prop}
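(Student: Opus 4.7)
The plan is to mimic the argument of Lemma~\ref{lm:spvar} while absorbing the two features distinguishing the Dini setting from the flat case—the $X$-dependent intrinsic metric $g_X$ from Section~\ref{sec:reduce}, and the non-flat boundary $\partial D$—into a single additive $O(\theta(4r))$ error. As a first step, I would reduce to a Euclidean quantity computed directly on $D$. Using \eqref{eq:DrHrinEm}, the change-of-variables formulas of Section~\ref{sec:reduce}, and the estimates \eqref{eq:detDPsi}--\eqref{eq:YPsiY}, one obtains
\[ N(v_X, r) = \hat{N}(X, r) + O(\theta(4r)), \qquad \hat{N}(X, r) := \frac{r \iint_{B_r(X) \cap D} |\nabla u|^2 \, dZ}{\int_{\partial B_r(X) \cap D} u^2 \, d\mathcal{H}^{d-1}}, \]
where the absolute nature of the error uses the a priori bound \eqref{eq:freqbdR} together with the doubling property \eqref{eq:Hrcmpu}. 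This reduces the problem to estimating $|\hat{N}(X_2, r) - \hat{N}(X_1, r)|$.

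Next, I would parametrize $X_t = (1-t) X_1 + t X_2$ and compute $\frac{d}{dt} \hat{N}(X_t, r)$ along this chord. Because $u$ vanishes on $\partial D$, integration by parts in $B_r(X_t) \cap D$ produces the same decomposition $\mathcal{I}_1(X_t) + \mathcal{I}_2(X_t)$ that appears in \eqref{tmp:spvar}, together with an extra boundary contribution $\mathcal{B}(X_t)$ stemming from the curvature of $\partial D$. The factor $\langle Z - X_t, n_D(Z) \rangle$ appearing in $\mathcal{B}(X_t)$ for $Z \in B_r(X_t) \cap \partial D$ is bounded by $r\theta(r)$ via the Dini-graph property of $D$, and combined with standard boundary gradient estimates for $u$ this gives $\int_0^1 |\mathcal{B}(X_t)| \, dt = O(\theta(4r))$. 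The interior terms $\mathcal{I}_1$ and $\mathcal{I}_2$ are then estimated exactly as in Lemma~\ref{lm:spvar} but using Proposition~\ref{prop:monotonicity} in place of the flat monotonicity formula; the time-integrated intrinsic Rellich remainder satisfies $\int_{r/2}^{3r/2} R_h(v_{X_j}, s) \, ds \lesssim W_{X_j}(r) + O(\theta(4r))$, where the $\theta$-error arises from converting between the un-modified $N(v_X, \cdot)$ and the monotone $\widetilde{N}(v_X, \cdot) = N_X(\cdot)$ via \eqref{eq:Nrmn}.

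Combining the two steps yields \eqref{eq:spvar_sharp}. The main obstacle I anticipate is the careful bookkeeping of the $\theta$-errors so that they stay additive at the $O(\theta(4r))$ scale and do not compound through the many conversions—Euclidean vs.\ intrinsic geometry, $N$ vs.\ $\widetilde{N}$, and between base points $X_1$ and $X_2$. This hinges on the uniform frequency bound \eqref{eq:freqbdR}, which keeps all doubling-derived constants controlled by $C(\Lambda)$, and on the Dini integrability \eqref{cond:Dini}, which ensures that the accumulated Dini moduli never blow up.
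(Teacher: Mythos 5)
Your overall plan—reduce the intrinsic frequency $N(v_X,r)$ to a Euclidean quantity on $D$, then differentiate along a path joining $X_1$ to $X_2$, and budget all Dini errors into a single $O(\theta(4r))$—matches the paper's strategy. However, there are two concrete problems with the execution.

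The first and more serious gap is the boundary term. When you differentiate $\hat{N}(X_t,r)$ in $t$ and integrate by parts via the Rellich-type identity $\divg(|\nabla u|^2\,Y)=2\divg(\langle\nabla u,Y\rangle\nabla u)$ with $Y=\frac{d}{dt}X_t$, the contribution from $B_r(X_t)\cap\pD$ is $\int (\partial_n u)^2\,\langle n_D(Z),\,\tfrac{d}{dt}X_t\rangle\,d\mathcal{H}^{d-1}$; it does \emph{not} carry the factor $\langle Z-X_t,\,n_D(Z)\rangle$ you wrote. That factor belongs to $R_b$, the boundary term of the \emph{radial} ($r$-)derivative in Propositions \ref{prop:monotonicity} and \ref{prop:intmonotonicity}, not to the spatial ($t$-)derivative. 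You appear to have pattern-matched the wrong Rellich identity. Both quantities happen to be $O(r\theta(r))$ on Dini boundaries, so the final numerology coincides, but the mechanism is different: what actually makes the $t$-derivative boundary term small is the near-tangentiality of $\frac{d}{dt}X_t$. The paper exploits this cleanly by parametrizing $X_t$ along $\pD$ (not along the straight chord), so $\frac{d}{dt}X_t=(y,\langle\nabla\varphi(x_t),y\rangle)$ is \emph{exactly} tangential at $X_t$, giving $|\langle n_D(Z),\frac{d}{dt}X_t\rangle|\le\theta(|x_Z-x_t|)\,|y|=O(\theta(r))|X_2-X_1|$ for $Z\in B_r(X_t)\cap\pD$. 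Your straight chord $X_t=(1-t)X_1+tX_2$ could also be made to work (for two boundary points on a Dini graph the chord direction $X_2-X_1$ is almost tangential, with error $O(\theta(r))|X_2-X_1|$), but that is the observation you need to state and use, not the factor $\langle Z-X_t,n_D(Z)\rangle$.

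The second gap is the reduction $N(v_X,r)=\hat{N}(X,r)+O(\theta(4r))$ with $\hat{N}$ defined over $B_r(X)\cap D$. The exact change of variable under $\Psi_X$ gives $D(v_X,r)=\iint_{\Psi_X(B_r)\cap D}|\nabla u|^2\,dZ$ with $\Psi_X(B_r)=B_r\bigl(X+3r\tilde\theta(r)e_d\bigr)$ a \emph{shifted} ball, not $B_r(X)$; the paper defines $\widehat{D},\widehat{H},\widehat{N}$ accordingly (see \eqref{eq:PsiBr} and the line above \eqref{eq:NvX}), which makes $N(v_X,r)=(1+O(\theta(4r)))\widehat{N}(X,r)$ an identity. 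Passing instead to your unshifted $B_r(X)$ requires a thin-shell comparison (as in Lemma \ref{lm:vthinlayer}) which is doable but which you do not supply and which the paper sidesteps by its choice of $\widehat{N}$. Your treatment of the interior terms $\mathcal{I}_1,\mathcal{I}_2$ via Lemma \ref{lm:spvar} and the Rellich remainder matches the paper's; it is the boundary term and the exact Euclidean proxy that need repair.
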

\begin{remark}
	In fact, for the purpose of this paper, it suffices to use the rough estimate
	\begin{equation}\label{eq:spvar_rough}
		\left|N(v_{X_1}, r) - N(v_{X_2}, r) \right| \leq C \left( r^{\frac12} + \theta(4r) \right).
	\end{equation}
	The proof of \eqref{eq:spvar_rough} is similar to that of Lemma \ref{lm:spvarin_close}, where we simply bound each term in \eqref{tmp:tderNr_} by taking absolute values inside. 
	Here we include the proof of the sharper estimate \eqref{eq:spvar_sharp}.
\end{remark}
\begin{proof}
	Notice that unlike the case of the upper half space in Lemma \ref{lm:spvar}, here $\partial D$ is not flat. So we need to be more careful in defining the line in $\pD$ which connects $X_1$ to $X_2$. Assume $X_1 = (x_1, \varphi(x_1))$ and $X_2 = (x_2, \varphi(x_2))$. Let $y = x_2 - x_1 \in \RR^{d-1}$. We define for any $t\in [0,1]$
	\[ X_t = (x_t, \varphi(x_t)) := (x_1 + ty, \varphi(x_1 + ty)) \in \pD. \]
	We also remark that
	\begin{equation}\label{eq:tangderpD}
		\frac{d}{dt} X_t = \left(y, \langle \nabla\varphi(x_t), y \rangle \right) \text{ points in the tangential direction of $\pD$ at $X_t$}.
	\end{equation} 
	
	Recall that for any $X\in \pD$, we define
	\[ D(v_X, r) = \iint_{B_r \cap \Omega_X} \mu |\nabla_g v_X|^2_g ~dV_g = \iint_{\Psi_{X}(B_r) \cap D} |\nabla u|^2 ~dZ =: \widehat{D}(X, r); \] 
	and
	\[ H(v_X, r) = \int_{\partial B_r \cap \Omega_X} \mu v_X^2 \sH = \left( 1+ O(\theta(4r)) \right) \int_{\Psi_X(\partial B_r) \cap D} u^2 \sH = \left( 1+ O(\theta(4r)) \right) \widehat{H}(X, r), \]
	where we introduce the definition
	\[ \widehat{H}(X, r) := \int_{\Psi_X(\partial B_r) \cap D} u^2 \sH = \int_{\Psi_X(\partial B_r)} u^2 \sH. \]
	Let
	\[ \widehat{N}(X, r) := \frac{r \widehat{D}(X, r) }{\widehat{H}(X, r) }, \]
	then the frequency function of $v_X$ satisfies
	\begin{equation}\label{eq:NvX}
		N(v_X, r) = \frac{r D(v_X, r)}{H(v_X, r)} = \left( 1+O(\theta(4r)) \right) \frac{r\widehat{D}(X, r) }{\widehat{H}(X, r) } = \left( 1+O(\theta(4r)) \right) \widehat{N}(X,r),
	\end{equation}

	We claim that
	\begin{equation}\label{eq:PsiBr}
		\partial \Psi_X(B_r) = \partial B_r(X+ 3r\tilde{\theta}(r) e_d) = \Psi_X(\partial B_r).
	\end{equation} 
	In fact by the definition of the transformation map $\Psi$ (see \eqref{def:Psi}), it is clear that
	\[ \Psi(\partial B_r) = \partial B_r + 3r\tilde{\theta}(r) e_d = \partial B_r(3r\tilde{\theta}(r)e_d). \]
	To understand what the set $\partial \Psi(B_r)$ is, we first study the set $\Psi(B_r)$. Clearly
	\[ \Psi(B_r) = \bigcup_{\rho\in [0,r)} \Psi(\partial B_\rho) = \bigcup_{\rho\in [0,r)}  \partial B_\rho \left( 3\rho \tilde{\theta}(\rho) e_d \right). \]
	Consider the function 
	\[ f: \rho \in [0,r) \mapsto -\rho + 3\rho\tilde{\theta}(\rho), \]
	which corresponds to the height of the lower-most point of the (shifted) ball $\partial B_\rho + 3\rho \tilde{\theta}(\rho) e_d$. A simple computation shows that $f$ is a continuous function, and
	\begin{align*}
		f'(\rho) = -1+3\tilde{\theta}(\rho) + 3\rho \tilde{\theta}'(\rho) & = -1 + 3\tilde{\theta}(\rho) + \frac{3}{\log^2 2} \int_{\rho}^{2\rho} \frac{\theta(2s)-\theta(s)}{s} ~ds \\
		& \leq -1 + 3\theta(4\rho) + \frac{3}{\log 2} \theta(4\rho) \\
		& \leq -1 + 13 ~\theta(4r).
	\end{align*} 
	By choosing $r$ sufficiently small so that $\theta(4r)< 1/26$, we can guarantee that $f$ is a monotone decreasing function. In particular, this implies that the balls $\Psi(\partial B_\rho) = \partial B_{\rho}\left( 3\rho\tilde{\theta}(\rho) e_d \right)$ with $\rho \in [0,r)$ are nested, i.e.
	\[ B_{\rho}(3\rho \tilde{\theta}(\rho) e_d)  \subset B_{\rho'}(3\rho' \tilde{\theta}(\rho') e_d), \quad \text{ if } \rho \leq \rho'. \]
	In fact, let $X\in B_{\rho}(3\rho \tilde{\theta}(\rho) e_d)$ be arbitrary. Then
	\begin{align*}
		|X- 3\rho'\tilde{\theta}(\rho') e_d| & \leq |X- 3\rho\tilde{\theta}(\rho) e_d| + \left( 3\rho'\tilde{\theta}(\rho') - 3\rho \tilde{\theta}(\rho) \right) \\
		& < \rho + f(\rho') + \rho' - (f(\rho) + \rho)) \\
		& = \rho' + (f(\rho') - f(\rho)) \\
		& \leq \rho'.
	\end{align*}
	Hence $X\in B_{\rho'}(3\rho' \tilde{\theta}(\rho') e_d)$.
	Moreover by the intermediate value theorem $f(\rho)$ assumes all values between $\lim_{\rho \to r-} f(\rho) = -r+3r\tilde{\theta}(r)$ and $\lim_{\rho \to 0+} f(\rho) = 0$. 
	This finishes the proof of the claim \eqref{eq:PsiBr}.

	As in \eqref{tmp:spvarderNr} in the proof of Lemma \ref{lm:spvar}, we have
	\begin{align}
		& \frac{d}{dt} \widehat{ N}(X_t, r) \nonumber \\
		& = \frac{d}{dt} \log \widehat{ N}(X_t, r) \cdot \widehat{ N}(X_t, r) \nonumber \\
		& = \frac{1}{\widehat{H}(X_t, r)} \left[ r \frac{d}{dt} \widehat{D}(X_t, r) - \widehat{N}(X_t, r)\cdot \frac{d}{dt} \widehat{H}(X_t, r) \right] \nonumber \\
		& = \frac{2}{\widehat{H}(X_t, r)} \left[ r \int_{X_t + \partial\Psi (B_r)} \left\langle \nabla u(Z), \frac{d}{dt} X_t \right\rangle ~ \partial_{n} u(Z)  \sH \right. \nonumber \\
		& \qquad \qquad + r\int_{X_t + \Psi(B_r) \cap \pD } \left\langle \nabla u(Z), \frac{d}{dt} X_t \right\rangle ~ \partial_{n} u(Z) \sH  \nonumber \\
		& \qquad \qquad \left. - ~ \widehat{N}(X_t, r) \cdot \int_{X_t + \Psi(\partial B_r)} \left\langle \nabla u(Z), \frac{d}{dt} X_t \right\rangle u(Z) \sH \right] \nonumber \\
%
		& = \frac{2}{\widehat{H}(X_t, r)} \left[ r \int_{X_t + \Psi (\partial B_r)} \left\langle \nabla u(Z), \frac{d}{dt} X_t \right\rangle ~ \partial_{n} u(Z)  \sH \right. \nonumber \\
		& \qquad \qquad + r\int_{X_t + \Psi(B_r) \cap \pD } \left\langle \nabla u(Z), \frac{d}{dt} X_t \right\rangle ~ \partial_{n} u(Z) \sH  \nonumber \\
		& \qquad \qquad \left. - ~ \widehat{N}(X_t, r) \cdot \int_{X_t + \Psi(\partial B_r)} \left\langle \nabla u(Z), \frac{d}{dt} X_t \right\rangle u(Z) \sH \right] \nonumber \\
		& =: \I_1(X_t) + \I_2(X_t) + \I_3(X_t).\label{tmp:tderNr_}	
	\end{align}
	In the second to last equality we simply use \eqref{eq:PsiBr}.
	
	Since $u$ vanishes on the boundary and at any $Z=(x, \varphi(x)) \in \pD$, the vector $(y, \langle \nabla \varphi(x), y \rangle)$ points in the tangential direction of $\pD$, we have
	\[ \left\langle \nabla u(Z), (y, \langle \nabla \varphi(x), y \rangle) \right\rangle = 0. \]
	Hence by \eqref{eq:tangderpD}, we know
	\begin{align*}
		\left\langle \nabla u(Z), \frac{d}{dt} X_t \right\rangle = \left\langle \nabla u(Z), \frac{d}{dt} X_t - (y, \langle \nabla \varphi(x), y \rangle) \right\rangle = \left\langle \nabla u(Z), (0, \langle \nabla \varphi(x_t) - \nabla \varphi(x), y \rangle) \right\rangle.
	\end{align*}
	Thus
	\begin{align*}
		\left|\left\langle \nabla u(Z), \frac{d}{dt} X_t \right\rangle \right| \leq |\partial_d u(Z)| \cdot \theta(|x_t -x|) |y| \leq |\partial_d u(Z)| \cdot \theta(r)|X_2 - X_1|.
	\end{align*}
	Therefore we can estimate $\I_2(X_t)$ as follows
	\begin{align}
		\left| \I_2(X_t) \right| & \leq \frac{2r}{ \widehat{H}(X_t, r)} \int_{\Psi_{X_t}(B_r) \cap \pD} |\partial_d u|^2 \theta(r) |X_2-X_1| \sH \nonumber  \\
		& \lesssim \frac{r^2 \theta(r)}{ \widehat{H}(X_t, r)} \int_{\Psi_{X_t}(B_r) \cap \pD} |\nabla u|^2 \sH \nonumber \\
		& \lesssim \frac{\theta(r)}{\widehat{H}(X_t, r)} \mathcal{H}^{d-1} ( \Psi_{X_t}(B_r) \cap \pD) \cdot \fiint_{\Psi_{X_t}(B_{3r/2})} u^2 ~dZ \nonumber \\
		& \lesssim \theta(r).\label{tmp:I2Xt}
	\end{align}
	
	Since $u^2$ is subharmonic (after extending by zero) and 
	\[ |X_t - X_1| \leq (1+\theta(R))|X_2 -X_1| < \frac{5}{12}r, \] we have
	\begin{align}
		\widehat{H}(X_t,r) = \int_{\partial B_r(X_t+3r\tilde{\theta}(r) e_d)} u^2 \sH & \geq \frac{12}{11r} \iint_{A_{\frac{r}{12}, r}(X_t+3r\tilde{\theta}(r)e_d) } u^2 ~dZ \nonumber \\
		& \geq \frac{12}{11r} \iint_{A_{\frac{5}{48}r, \frac{47}{48} r}(X_t) } u^2 ~dZ \nonumber \\
		& \geq \frac{12}{11r} \iint_{A_{\frac{25}{48}r, \frac{27}{48}r}(X_1)} u^2~dZ \nonumber \\
		& \geq \frac{1}{22} \int_{\partial B_{\frac{r}{2}}(X_1)} u^2 \sH. \label{eq:whHrlb}
	\end{align}
	An elementary computation shows that
	\begin{align*}
		X_2 - X_1 - (y, \langle \nabla\varphi(x_t), y \rangle) & = (x_2-x_1, \varphi(x_2)-\varphi(x_1)) - (x_2-x_1, \langle \nabla\varphi(x_t), x_2-x_1 \rangle) \\
		& = (x_2 - x_1, \langle \nabla \varphi(x) - \nabla\varphi(x_t) , x_2-x_1 \rangle),
	\end{align*}
	where $x$ is some point on the line segment $[x_1, x_2]\subset \RR^{d-1}$. Therefore by the assumption of $\varphi$ in Definition \ref{def:Dini}, we have
	\[ \left|X_2 - X_1 - (y, \langle \nabla\varphi(x_t), y \rangle) \right|\leq \theta(|x_2-x_1|)\cdot |x_2-x_1|^2. \]
	Hence
	\begin{align*}
		 \langle \nabla u(Z), (y, \langle \nabla\varphi(x_t), y\rangle) \rangle & = \langle \nabla u(Z), X_2 -X_1 \rangle - \langle \nabla u(Z), X_2 - X_1 - (y, \langle \nabla\varphi(x_t), y\rangle) \rangle \\
		& = \langle \nabla u(Z), X_2 -X_1 \rangle + O(r^2 \theta(r))\cdot |\nabla u(Z)|
	\end{align*}
	Inserting $\langle \nabla u(Z), X_2 - X_1\rangle$ into the equality \eqref{tmp:tderNr_}, we can use the same argument as in the proof of Lemma \ref{lm:spvar} (as well as \eqref{eq:whHrlb}) for the estimate, so we will not repeat the argument here.
	
	The remaining term satisfies
	\begin{align*}
		\E(X_t) \leq & r^2 \theta(r) \left[ \frac{2r}{\widehat{H}(X_t, r) } \cdot \int_{X_t + \Psi(\partial B_r)}  |\nabla u|^2 \sH \right. \\
		& \qquad + \left. \frac{C(\Lambda) }{\widehat{H}(X_t, r)} \left( \int_{X_t + \Psi(\partial B_r)} |\nabla u|^2 \sH \right)^{\frac12} \left( \int_{X_t + \Psi(\partial B_r)} u^2 \sH \right)^{\frac12} \right] \\
		& \leq r^{\frac32} \theta(r) \left[  \frac{ 2r \mathlarger{\int}_{X_t + \Psi(\partial B_r)}  |\nabla u|^2 \sH  }{\mathlarger{\int}_{\partial B_{\frac{r}{2}}(X_1)} u^2 \sH }  + C(\Lambda) \left(\frac{r \mathlarger{\int}_{X_t + \Psi(\partial B_r)} |\nabla u|^2 \sH }{\mathlarger{\int}_{\partial B_{\frac{r}{2}}(X_1)} u^2 \sH  } \right)^{\frac12} \right].
	\end{align*}
	Hence by integrating on the interval $t\in [0,1]$, we get
	\begin{align}
		\int_0^1 \E(X_t)~dt & \leq r^{\frac32}\theta(r) \left[ \frac{ 2r \mathlarger{\iint}_{A_{\frac{r}{2}, \frac32 r}(X_1)}  |\nabla u|^2 \sH  }{\mathlarger{\int}_{\partial B_{\frac{r}{2}}(X_1)} u^2 \sH }  + C(\Lambda) \left( \frac{r \mathlarger{\iint}_{A_{\frac{r}{2}, \frac32 r}(X_1)} |\nabla u|^2 \sH }{\mathlarger{\int}_{\partial B_{\frac{r}{2}}(X_1)} u^2 \sH  } \right)^{\frac12} \right] \nonumber \\
		& \leq C'(\Lambda) r^{\frac32} \theta(r).\label{tmp:Et}
	\end{align}
	Finally, combining \eqref{tmp:tderNr_}, \eqref{tmp:I2Xt} and \eqref{tmp:Et} we conclude that
	\begin{align*}
		|\widehat{N}(X_2, r) - \widehat{N}(X_1, r)| & \leq C\left(W_{X_1}^{\frac12}(r) + W_{X_2}^{\frac12}(r) + \theta(r) \right) + \int_0^1 \E(X_t) ~dt \\
		& \leq C'\left(W_{X_1}^{\frac12}(r) + W_{X_2}^{\frac12}(r) + \theta(r) \right).
	\end{align*}
	Therefore by \eqref{eq:NvX}, we get
	\begin{align*}
		|N(v_{X_2},r) - N(v_{X_1}, r)| &\leq |\widehat{N}(X_2, r) - \widehat{N}(X_1, r)| + C\theta(4r) \left(N(v_{X_1}, r) + N(v_{X_2}, r)\right) \\
		& \leq C''(\Lambda) \left(W_{X_1}^{\frac12}(r) + W_{X_2}^{\frac12}(r) + \theta(4r) \right).
	\end{align*}
	

\end{proof}

Next we estimate the spatial variation for interior points. Compared to the spatial variation for boundary points in Proposition \ref{prop:spvar}, here we use a similar idea but it suffices to get a rough estimate.
\begin{lemma}\label{lm:spvarin_close}
	Let $R, \Lambda>0$ be fixed. Suppose $(u,D) \in \mathfrak{H}(R, \Lambda)$ and $p\in D\cap B_{\frac{R}{10}}(0) \cap \Nt(u)$ satisfies $\dist(p,\pD) \leq \frac35 R \tilde{\theta}(\frac35 R)$.
	Then for any $r\leq \frac{12}{25} R$ we have 
	\begin{equation}
		\left|N(v_{q}, r) - N(p, r) \right| \leq C\left( r^{\frac12} + \theta(4r) \right), \quad \text{ for every } q\in B_{\frac{r}{5}}(p) \cap \pD \cap \Nt(u);
	\end{equation}
	and 
	\begin{equation}
		\left|N(q, r) - N(p, r) \right| \leq C r^{\frac12}, \quad \text{ for every } q\in B_{\frac{r}{5}}(p) \cap D \cap \Nt(u).
	\end{equation}
	Here the constant $C>0$ depends on $d$ and $\Lambda$.
%
\end{lemma}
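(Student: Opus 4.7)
The strategy is to mimic the proof of Proposition \ref{prop:spvar}, working directly in Euclidean coordinates and replacing the sharp Cauchy--Schwarz extraction of $W^{1/2}$ factors by crude absolute value bounds, as already anticipated in the remark following \eqref{eq:spvar_rough}.

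For the boundary case $q \in \pD$, the first step is to reduce to an interior--interior comparison. In the proof of Proposition \ref{prop:spvar} it was observed that $\Psi_q(B_r) = B_r(q^*)$ with $q^* := q + 3r\tilde{\theta}(r) e_d \in D$, and consequently
\[
N(v_q, r) \;=\; (1 + O(\theta(4r))) \, \widehat{N}(q, r) \;=\; (1 + O(\theta(4r))) \, N(q^*, r),
\]
the last equality viewing $N(q^*, r)$ as the standard Euclidean frequency function. Since $|p - q^*| \le |p-q| + 3r\tilde\theta(r) \le r/3$ for $r$ small, the first bullet reduces to the interior--interior estimate applied to the pair $(p, q^*)$, at the cost of an additive $O(\theta(4r))$ error obtained from $N(v_q, r) \le C(\Lambda)$.

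For the interior--interior estimate with $p, q \in D \cap \Nt(u)$ and $|p-q| \le r/3$, I would interpolate along $X_t = (1-t)p + tq$, $t \in [0,1]$, setting $Y = q - p$. Combining $\Delta u = 0$, the vanishing of $u$ on $\pD$ (so that $\nabla u = (\partial_n u)\, n_D$ there), and the Rellich identity $\divg\bigl(|\nabla u|^2 Y - 2 \langle \nabla u, Y\rangle \nabla u\bigr) = 0$, one computes the exact Euclidean analogue of \eqref{tmp:tderNr_}:
\begin{align*}
\frac{d}{dt} N(X_t, r) \;=\; & \frac{2r}{H(X_t,r)} \int_{\partial B_r(X_t) \cap D} \langle \nabla u, Y \rangle \left(\partial_{n_B} u - \tfrac{N(X_t,r)}{r} u\right) \sH \\
& + \frac{r}{H(X_t,r)} \int_{B_r(X_t) \cap \pD} |\partial_n u|^2 \langle Y, n_D \rangle \sH .
\end{align*}
Following the prescription in the remark after Proposition \ref{prop:spvar}, the plan is to bound each integrand by absolute values: Cauchy--Schwarz together with the elementary estimate $|\partial_{n_B} u - (N/r) u| \le |\nabla u| + (N/r)|u|$ reduces the first integral to sphere $L^2$-norms of $|\nabla u|$ and $u$; the boundary contribution is controlled by a second Rellich identity, now tested against the fixed vertical vector field $e_d$, which (since the Dini hypothesis $\theta(8R) < 1/72$ forces $|\langle e_d, n_D\rangle| \ge 1/2$) yields the sign-free bound
\[
\int_{B_r(X_t) \cap \pD} |\partial_n u|^2 \sH \;\lesssim\; \int_{\partial B_r(X_t) \cap D} |\nabla u|^2 \sH .
\]
The uniform frequency bound of Lemma \ref{lm:freqbd_in}, the doubling property of Corollary \ref{cor:doublingu_in}, and the sphere-center comparison \eqref{eq:Hchangecenter} then render every $H(X_t, r)$ in a denominator comparable to $H(p,r)$. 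After integrating in $t$ and applying a Fubini/coarea inequality of the form $\int_0^1 \int_{\partial B_r(X_t)} f \sH\, dt \lesssim |Y|^{-1} \iint_{\mathrm{tube}} f\, dZ$ on the $|Y|$-thick tube around $\partial B_r(p)$, all remaining quantities assemble into a bound of the advertised shape $C(d, \Lambda)\, r^{1/2}$, using $|Y| \le r/5$ and the smallness $r \le \tfrac{12}{25} R$ to absorb a leftover factor into the constant.

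The main obstacle is the boundary contribution. Unlike in Proposition \ref{prop:intmonotonicity}, where we worked inside the monotonic range and $\langle X - p, n_D\rangle \ge 0$ automatically, here $X_t$ may lie beyond its monotonic interval $[0, r_{cs}(X_t)]$, so the sign of the boundary integrand is indefinite. Testing the Rellich identity against a fixed vertical vector field rather than a radial one sidesteps this sign issue, and the near-flatness of $\pD$ afforded by the Dini hypothesis delivers the uniform lower bound on $|\langle e_d, n_D\rangle|$ needed to convert the sign-free Rellich bound into usable control of $\int |\partial_n u|^2$.
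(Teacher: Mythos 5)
Your proposal follows essentially the same route as the paper's proof: reduce the boundary case to an interior comparison via $q^* = q + 3r\tilde\theta(r)e_d$ and the identity $N(v_q,r) = (1+O(\theta(4r)))N(q^*,r)$, interpolate $N(X_t,r)$ along a short path from $p$ to $q$, use a Rellich identity tested against the vertical field $e_d$ to control the indefinite boundary term, and finish with the doubling property and uniform frequency bound. The only divergence is that you take the unmodified straight segment $X_t = (1-t)p + tq$ while the paper explicitly reroutes the path to stay inside $D$ (projecting onto $\pD$ and lifting by $3r\tilde\theta(r)e_d$ on the portions that would exit), but this is harmless since the $t$-derivative identity for $N(X_t,r)$ continues to hold even when $X_t\notin D$ because the divergence theorem is applied over $B_r(X_t)\cap D$ with $u$ extended by zero.
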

\begin{proof}
	The idea of the proof is similar to that of Proposition \ref{prop:spvar}, with $p \in D$ playing the role of $X_1$ and 
	\[ \tilde{q}:= \left\{\begin{array}{ll}
		q, & \text{ if } q\in D \\
		q+3r\tilde{\theta}(r) e_d, & \text{ if } q\in \pD
	\end{array}\right.\] playing the role of $X_2$.
	This way it suffices to estimate $|N(\tilde{q}, r) - N(p, r)|$ for both the interior and boundary cases.
	
	For $t\in [0,1]$ we first define $ X^0_t := p + t(\tilde{q}-p)$. However, the line segment $[\tilde{q}, p]$ may not be completely contained in $D$. Let $A = \{t\in [0,1]: X_t \notin D\}$. Since $\pD$ is a $C^1$ graph, $A$ is a finite union of closed intervals and can be written as 
	\[ A= \bigcup_{\ell=1}^{m} ~[t_{2k-1}, t_{2k}]. \] 
	To unify the notation at the endpoints, we also set $t_0 = 0$ and $t_{2m+1} = 1$. 
	We revise the definition of $X^0_t$ for all $t\in A$ by replacing $X_t^0$ by its projection onto $\pD$, i.e.
	\[ X^0_t: = \left(x_1 + t(x_2 - x_1), \varphi(x_1 + t(x_2 - x_1)) \right) \in \pD, \]
	where we denote $p=(x_1, z_1)\in D$ and $\tilde{q} = (x_2, z_2)\in D$. Notice that for every $t, t' \in [0,1]$, we have
	\begin{align*}
		|X_t^0 - X_{t'}^0| 	& \leq \left|\left((t-t')(x_2-x_1), \varphi(x_1 + t(x_2-x_1)) - \varphi(x_1 + t'(x_2-x_1)) \right) \right| \\
		& \leq |t-t'| \sqrt{|x_2 - x_1|^2 + (\theta(R)\cdot |x_2 -x_1|)^2 } \\
		& \leq \frac{10}{9} |t-t'| |x_2-x_1| \\
		& \leq \frac{10}{9} |t-t'| |q-p|.
	\end{align*} 
	Finally we let
	\[ X_t := \left\{\begin{array}{ll}
		X_t^0, & t=0 \text{ or } 1 \\
		X_t^0 + 3r\tilde{\theta}(r) e_d, & t\in A \\
		\text{the line segment connecting $X_{t_{2k-2}}$ and $X_{2k-1}$}, & t\in [t_{2k-2}, t_{2k-1}].
	\end{array} \right. \]
	After such modification we guarantee that $X_t \in D$.
	It is not hard to see that the total length $\ell$ of the curve $\{X_t\}_{t\in [0,1]}$ satisfies 
	\[ \ell \leq \frac{10}{9} |q-p| + 2 \cdot 3r\tilde{\theta}(r) < \frac{r}{4}.  \]
	We then reparametrize it (still denoted by $\{X_t\}_{t\in [0,1]}$) so that it has unit speed, and thus
	\begin{equation}\label{eq:dtXt}
		\left| \frac{d}{dt} X_t \right| \leq \ell < \frac{r}{4}.  
	\end{equation} 
	 Moreover, for every $t, t'\in [0,1]$ we have
	\begin{equation}
		|X_t - X_{t'}| \leq \ell < \frac{r}{4}.
	\end{equation}	
	
	As in \eqref{tmp:tderNr_}, we have
	\begin{align}
		& \frac{d}{dt} N(X_t, r) \nonumber \\
		& = \frac{1}{H(X_t, r)} \left[ r \frac{d}{dt} D(X_t, r) - N(X_t, r) \cdot \frac{d}{dt} H(X_t, r) \right] \nonumber \\
		& = \frac{2}{H(X_t, r)} \left[ r \int_{\partial B_r(X_t)} \left\langle \nabla u, \frac{d}{dt} X_t \right\rangle \partial_n u \sH + r \int_{B_r(X_t) \cap \pD} \left\langle \nabla u, \frac{d}{dt} X_t \right\rangle \partial_n u \sH \right. \nonumber \\
		& \qquad \qquad - \left. N(X_t, r) \cdot \int_{\partial B_r(X_t) } \left\langle \nabla u, \frac{d}{dt} X_t \right\rangle u \sH \right] \nonumber \\
		& =: \I_1(X_t) + \I_2(X_t) + \I_3(X_t).\label{tmp:tderNrin}
	\end{align}
	(There is a slight abuse of notation in the last equality, where $\partial_n u$ denotes the derivative pointing away from the sphere $\partial B_r(X_t)$ in the first integral, and $\partial_n u$ denotes the normal derivative pointing away from $\pD$ in the second integral.)
	We remark that the main difference with the boundary case \eqref{tmp:tderNr_} is that \eqref{tmp:tderNr_} does not have the second term $\I_2(X_t)$ since there $\frac{d}{dt} X_t$ is in the tangential direction on $\pD$ and thus $\langle \nabla u, \frac{d}{dt} X_t \rangle = 0$. Here, on the other hand, the vector $\frac{d}{dt} X_t$
	does not always point in the tangential direction of $\pD$. 
	We first estimate this term using an idea similar to the Rellich identity. 
	
	It is not hard to see that
	\begin{equation}\label{eq:Rellichnormal}
		\divg(|\nabla u|^2 e_d) = 2 \langle \nabla (\partial_d u), \nabla u \rangle = 2 \left(  \divg(\partial_d u \cdot \nabla u) -\partial_d u \cdot \Delta u \right) = 2\divg(\partial_d u \cdot \nabla u).
	\end{equation}
	Integrating both sides of \eqref{eq:Rellichnormal} on the region $B_r(X_t) \cap D$ and using the divergence theorem, we get
	\begin{align}
		& \int_{\partial B_r(X_t)} |\nabla u|^2 \left\langle e_d, \frac{ Z-X_t}{|Z-X_t|} \right\rangle \sH + \int_{B_r(X_t) \cap \pD} |\nabla u|^2 \langle e_d, n_D(Z) \rangle \sH \nonumber \\
		& = 2\int_{\partial B_r(X_t)} \partial_d u \left\langle \nabla u, \frac{ Z-X_t}{|Z-X_t|} \right\rangle \sH + 2 \int_{B_r(X_t) \cap \pD} \partial_d u \cdot \partial_n u \sH.\label{tmp:Rellichnormal}
	\end{align} 
	On every boundary point $Z=(x,\varphi(x)) \in \pD$, we have $\nabla u(Z) = (\partial_n u) n_D(Z)$, where 
	\[ n_D(Z) = \frac{(\nabla \varphi(x), -1) }{\sqrt{1+|\nabla \varphi(x)|^2} } \text{ is the unit outer normal vector.} \]
	 Hence
	\[ \partial_d u \cdot \partial_n u = \langle \nabla u, e_d \rangle \partial_n u =  |\partial_n u|^2 \langle n_D(Z), e_d \rangle = \frac{-(\partial_n u)^2}{\sqrt{1+|\nabla \varphi(x)|^2}},  \]
	and \eqref{tmp:Rellichnormal} is simplified to
	\begin{align}
		& \int_{B_r(X_t) \cap \pD} \frac{(\partial_n u)^2}{ \sqrt{1+|\nabla \varphi(x)|^2}} \nonumber \\
		& = 2\int_{\partial B_r(X_t)} \partial_d u \left\langle \nabla u, \frac{ Z-X_t}{|Z-X_t|} \right\rangle \sH - \int_{\partial B_r(X_t)} |\nabla u|^2 \left\langle e_d, \frac{ Z-X_t}{|Z-X_t|} \right\rangle \sH \nonumber \\
		& \leq 3\int_{\partial B_r(X_t)} |\nabla u|^2 \sH.
	\end{align}
	On the other hand by \eqref{eq:dtXt}, for every $Z\in \pD$ we have
	\begin{align*}
		\left| \left\langle \nabla u, \frac{d}{dt} X_t \right\rangle \right| = \left| \partial_n u\left\langle n_D(Z), \frac{d}{dt} X_t \right\rangle \right| & = \frac{\left| \partial_n u \right|}{\sqrt{1+|\nabla \varphi(x)|^2}} \left| \left\langle (\nabla \varphi(x), -1), \frac{d}{dt} X_t \right\rangle \right| \\
		& \leq \frac{r}{ 4} \frac{\left| \partial_n u \right|}{\sqrt{1+|\nabla \varphi(x)|^2}}.
	\end{align*}
	Therefore
	\begin{align}
		\left| \I_2(X_t) \right| & = \frac{2r}{H(X_t, r)} \int_{B_r(X_t) \cap \pD} \left| \left\langle \nabla u, \frac{d}{dt} X_t \right\rangle \partial_n u \right| \sH \nonumber \\
		& \leq \frac{r^2}{2H(X_t, r)} \int_{B_r(X_t) \cap \pD} \frac{(\partial_n u)^2}{\sqrt{1+|\nabla \varphi(x)|^2}} \sH \nonumber \\
		& \leq \frac{2r^2}{H(X_t, r)} \int_{\partial B_r(X_t) } |\nabla u|^2 \sH.\label{tmp:I2Xt}
	\end{align}

	Notice that
	\begin{align*}
		|X_t -p| = |X_t - X_1| \leq \ell < \frac{r}{4}.
	\end{align*}
	By the sub-harmonicity of $u^2$, we have
	\begin{align*}
		H(X_t, r) \gtrsim \frac{1}{r} \iint_{A_{\frac{r}{4}, r}(X_t)} u^2 ~dZ \geq \frac{1}{r} \iint_{A_{\frac{r}{2}, \frac34 r}(p)} u^2 ~dZ \gtrsim H\left(p, \frac{r}{2} \right).
	\end{align*}
	Therefore it follows from \eqref{eq:dtXt}, \eqref{tmp:I2Xt} and H\"older's inequality that
	\[ |\I_1(X_t)|,  |\I_2(X_t)| \lesssim \frac{r^2}{ H(p, \frac{r}{2} )} \int_{\partial B_r(X_t)} |\nabla u|^2 \sH, \]
	and
	\begin{align*}
		|\I_3(X_t)| & \leq \frac{C(\Lambda)r}{ H(X_t, r)} \left(\int_{\partial B_r(X_t)} |\nabla u|^2 \sH \right)^{\frac12} \left(\int_{\partial B_r(X_t)} u^2 \sH \right)^{\frac12} \\
		& \leq C(\Lambda) r^{\frac12} \left( \frac{r\mathlarger{\int}_{\partial B_r(X_t)} |\nabla u|^2 \sH }{H\left(p, \frac{r}{2} \right) } \right)^{\frac12}.
	\end{align*}
	Integrating on the interval $t\in [0,1]$ and using the fact that
	\[ \bigcup_{t\in [0,1]} \partial B_r(X_t) \subset B_{\frac54 r}(p), \] 
	we get
	\begin{align*}
		& \left| N(\tilde{q}, r) - N(p, r) \right| \\
		& \leq \int_0^1 |\I_1(X_t)| + |\I_2(X_t)| + |\I_3(X_t)| ~dt \\
		& \lesssim \frac{r^2}{H(p, \frac{r}{2})} \iint_{B_{\frac54 r}(p)} |\nabla u|^2 ~dZ + C(\Lambda) r^{\frac12} \left( \frac{r\mathlarger{\iint}_{B_{\frac54 r}(p)} |\nabla u|^2 \sH }{H\left(p, \frac{r}{2} \right) } \right)^{\frac12} \\
		& \lesssim C'(\Lambda) r^{\frac12},
	\end{align*}
	where we use the doubling property and the upper bound of $N(p, \frac54 r)$ in the last inequality.
	
	In the case of $q\in \pD$, it remains to estimate $|N(\tilde{q}, r) - N(v_q,r)|$. By \eqref{eq:NvX} and the properties of the transformation map $\Psi_X$, we know
	\[ N(v_q, r) = \left( 1+ O(\theta(4r)) \right) \widehat{N}(q,r) = \left( 1+ O(\theta(4r)) \right) N(\tilde{q}, r). \]
	Therefore
	\begin{align*}
		|N(\tilde{q},r) - N(v_q,r)| \lesssim_{\Lambda} \theta(4r),
	\end{align*}
	and it follows that
	\[ |N(v_q, r) - N(p,r)| \lesssim_\Lambda r^{\frac12} + \theta(4r). \]
\end{proof}

\section{$L^2$-best approximation theorem and the frequency function}\label{sec:L2approx}
Let $\mu$ be a Radon measure.
 We define the $k$-dimensional $L^2$ beta-number of $\mu$ in $B_r(p)$ as
\begin{equation}\label{def:beta}
	\beta_{\mu}^k(p,r) = \inf_{L^k} \left( \frac{1}{r^{2+k}} \int_{B_r(p)} \dist^2(Y, L^k) d\mu(Y) \right)^{\frac12}, 
\end{equation} 
where the infimum is taken over all $k$-dimensional affine planes. We remark that the scaling factor $r^{2+k}$ is to make sure the above definition is scale-invariant, when the measure $\mu$ is \textit{$k$-dimensional}.
In Section \ref{sec:covering} we will use the following Reifenberg theorem with dimension $k=d-2$ to give uniform volume bounds on $\Ct_{r}(u)\cap \Nt(u)$. 
\begin{theorem}[Discrete Reifenberg Theorem \cite{NVRR} ]\label{thm:Reifenberg}
	There exist constants $\eta_{dr}>0$ and $C_{dr}>0$ depending only on the dimension such that the following holds. Let $\{B_{r_X/5}(X)\}_{X\in \CC} \subset B_3(0) \subset \RR^d$ be a collection of pairwise disjoint balls with their centers $X\in B_1(0)$, and let $\mu=\sum_{X\in \CC} \omega_k r_X^k \delta_X$ be the associated packing measure. Assume that for each $B_s(p_0) \subset B_2(0)$
	\[ \iint_{B_s(p_0)} \int_0^s \left|\beta_\mu^k(p,r ) \right|^2 ~\frac{dr}{r} d\mu(p) \leq \eta_{dr} s^k.   \]
	Then we have the uniform estimate
	\begin{equation}
		\sum_{X\in \CC} r_X^k \leq C_{dr}.
	\end{equation}
\end{theorem}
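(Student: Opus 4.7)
The plan is to follow the stopping-time / multi-scale covering framework of Naber-Valtorta, combined with a Reifenberg-type parametrization. The starting observation is that the Carleson-type energy bound on $\beta^2$ prevents the packing measure $\mu$ from deviating far from a $k$-dimensional affine plane at too many scales; once this geometric flatness is established, the packing estimate reduces to a counting argument on a $k$-plane.

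First, I would fix a small threshold $\delta>0$ (depending on dimension) and at each dyadic scale $r_j = 2^{-j}$ construct a Vitali-type cover $\{B_{r_j}(q_{j,\alpha})\}$ of a neighborhood of $\mathrm{supp}(\mu)\cap B_1(0)$, together with a family of best $L^2$ $k$-planes $L_{j,\alpha}$ realizing (up to a multiplicative constant) the infimum defining $\beta_\mu^k(q_{j,\alpha},r_j)$. Using the integrated hypothesis on each sub-ball $B_{r_j}(q_{j,\alpha})\subset B_2(0)$ together with Chebyshev's inequality, at scale $r_j$ all but a $\mu$-small fraction of centers satisfy $\beta_\mu^k(q_{j,\alpha},r_j)\leq \delta$. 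For this ``good'' family, a direct $L^2$ computation shows that the tilt between $L_{j,\alpha}$ and $L_{j+1,\alpha'}$ (for sub-balls) is bounded by the $\beta$-numbers at the two scales. The bound $\sum |\beta|^2 \leq \eta_{dr}$ applied through Cauchy-Schwarz then yields a summable control on the total tilt, so the planes $L_{j,\alpha}$ form a Cauchy-like sequence and produce a uniformly bi-Lipschitz graph parametrization of the portion of $\mathrm{supp}(\mu)$ traced by the ``continuation'' branch of the covering tree.

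The ``stopping'' branch, i.e.\ balls where $\beta>\delta$ at some scale, contributes a total packing mass bounded by $\delta^{-2}\eta_{dr}$ via the energy bound, which is at most a dimensional constant after absorbing $\delta$. On the continuation branch, once we have the bi-Lipschitz parametrization by a subset of a $k$-plane, the pairwise disjointness assumption on $\{B_{r_X/5}(X)\}$ forces the images of the balls on the approximating plane to be essentially disjoint with comparable radii, and a standard $k$-dimensional packing count gives $\sum_X r_X^k \leq C_{dr}$. The main obstacle in executing this plan is the construction of the Reifenberg parametrization for a genuinely discrete, inhomogeneous packing measure rather than for a closed set: one must carefully balance the choice of $\delta$ and $\eta_{dr}$ so that the Cauchy-Schwarz estimate on plane tilts beats the accumulation of errors at infinitely many dyadic scales, and this is precisely the technical core of the Naber-Valtorta discrete Reifenberg theorem that is being invoked here.
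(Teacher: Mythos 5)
The paper does not prove this statement; it is Theorem 3.4 of Naber--Valtorta \cite{NVRR}, and the paper simply cites it (see the remark following the theorem, which points to \cite{NVRR}, \cite{NVAH}, and \cite{ENV}). So there is no in-paper proof to compare against, and your sketch should be judged on its own terms as an outline of the cited argument.

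Your sketch captures the broad shape of the Naber--Valtorta argument (dyadic covering tree, best $L^2$ planes, tilt control, stopping/continuation dichotomy) but misses the structural device that resolves the central circularity. The $\beta$-number in \eqref{def:beta} is normalized by $r^{-(k+2)}$, so a small value of $\beta_\mu^k(p,r)$ encodes geometric flatness of $\spt\mu$ only if one already knows an upper Ahlfors bound $\mu(B_r(p))\lesssim r^k$; without it, $\beta$ could be small simply because there is little mass, or a large value of $\mu(B_r(p))$ could swamp the Chebyshev step you invoke to declare ``most scales good.'' But $\mu(B_1)\lesssim 1$ is precisely the conclusion. In \cite{NVRR} this is broken by a genuine induction on scales: one proves $\mu(B_t(x))\leq C t^k$ for $t = r_{\min}, 2r_{\min}, 4r_{\min},\dots$ in increasing order, using the bound at scales $<t$ (together with the trivial disjointness bound at the atom scale $r_X$) to legitimize the Reifenberg/bi-Lipschitz construction at scale $t$ and hence to close the estimate at scale $t$. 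Your one-shot Chebyshev-then-parametrize plan does not make this bootstrap visible, and the claim that the stopping branch carries total mass $\lesssim \delta^{-2}\eta_{dr}$ likewise presupposes a mass bound at the stopped scales. This inductive structure is also exactly what the present paper replicates when it \emph{applies} the theorem: see the induction on $t=r_0, r_0/\rho,\dots,r_*/6$ in the proof of the packing estimate \eqref{claim:pkest} in Lemma \ref{lm:covering}, where the rough estimate \eqref{eq:roughest} at one scale is fed into the hypotheses of the discrete Reifenberg theorem at the next.
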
 
\begin{remark}
	For the discrete measure above associated to the covering, we notice that for any $X\in \CC \subset \spt\mu$, if $r\leq r_X/5$ then $B_r(X) \cap \spt\mu = \{X\}$, and thus by definition $\beta_{\mu}^k(X, r) = 0$.
\end{remark}

We will also need a variant of it, the Rectifiable Reifenberg Theorem.
\begin{theorem}[Rectifiable Reifenberg Theorem \cite{NVRR}]\label{thm:RReifenberg}
	There exist constants $\eta_{rr}>0$ and $C_{rr}>0$ depending only on the dimension such that the following holds. Let $E \subset B_2(0)$ be a $\mathcal{H}^k$-measurable set, and assume that for each $B_s(p_0) \subset B_2(0)$
	\[ \iint_{B_s(p_0)} \int_0^s \left|\beta_{\mathcal{H}^k \res_{E}}^k(p,r ) \right|^2 ~\frac{dr}{r} d\mathcal{H}^k\res_{E} \leq \eta_{rr} s^k.   \]
	Then $E$ is $k$-rectifiable, and for each $p_0\in E\cap B_1(0)$ and $0<s<1$ we have $\mathcal{H}^k(B_s(p_0)) \leq C_{rr} s^k $.
\end{theorem}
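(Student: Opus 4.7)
The plan is to follow the strategy of Naber--Valtorta, proving simultaneously $k$-rectifiability and the packing bound $\mathcal{H}^k(E \cap B_s(p_0)) \lesssim s^k$ by constructing, at each dyadic scale, a Lipschitz $k$-dimensional manifold that closely approximates $E$. The $L^2$-smallness hypothesis on $\beta$-numbers provides a quantitatively summable error budget across scales, which is the essential improvement over the classical Reifenberg theorem and the mechanism that yields finite $\mathcal{H}^k$-measure and rectifiability rather than just a H\"older graph.

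First, set $\mu := \mathcal{H}^k \res_E$. For each $p\in \spt\mu$ and each $r>0$, select a $k$-dimensional affine plane $L(p,r)$ attaining (up to a factor of $2$) the infimum in the definition of $\beta_\mu^k(p,r)$. The key analytic tool is a \emph{tilt estimate}: if $q\in B_{r/2}(p)\cap \spt\mu$, then the Hausdorff distance between $L(p,r)\cap B_r(p)$ and $L(q,r/2)\cap B_r(p)$ is controlled by $r\bigl(\beta_\mu^k(p,r) + \beta_\mu^k(q,r/2)\bigr)$, at least whenever $\mu$ has $k$-dimensional content $\gtrsim r^k$ in $B_r(p)$. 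Its proof is a pigeonhole on $\mu$-mass: the two planes cannot be simultaneously good $L^2$-approximators of $\mu$ and far apart, or some portion of $\mu$ would be forced to lie far from both.

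Next, fix $p_0 \in E \cap B_1(0)$ and iterate. Define $L_j := L(p_0, 2^{-j}s)$, and use telescoping tilt estimates together with the integral hypothesis (applied via Fubini over the tent $B_s(p_0) \times (0,s)$) to show that for $\mu$-a.e.\ base point the planes $L_j$ form a Cauchy sequence whose defects are bounded in $\ell^2$, hence by Cauchy--Schwarz also in $\ell^1$. This allows one to construct a bilipschitz map $\phi \colon L_\infty \cap B_s(p_0) \to E\cap B_s(p_0)$ as a uniform limit of compositions of plane-to-plane corrections; rectifiability follows since $\phi$ is bilipschitz and $L_\infty$ is a $k$-plane, and the packing estimate $\mathcal{H}^k(E\cap B_s(p_0)) \le C_{rr}\, s^k$ follows by pushing forward Lebesgue measure on $L_\infty \cap B_s(p_0)$ through $\phi$.

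The main obstacle is the tilt estimate in regions where $\mu$ concentrates on a strictly lower-dimensional subset of $B_r(p)$: there the minimizing plane becomes non-unique and the naive tilt bound degenerates to $O(1)$. Naber--Valtorta circumvent this via a stopping-time decomposition of scales into \textbf{good} balls (where the $k$-dimensional content is at least $cr^k$ and the tilt estimate holds with the expected constant) and \textbf{bad} balls (where one cuts out and treats separately), together with a packing argument that uses the $L^2$-smallness of $\beta$ on the parent ball to bound the total $k$-content contributed by the bad balls. Threading this dichotomy through the bilipschitz construction while preserving summability of errors, and then extracting both rectifiability and a clean packing constant $C_{rr}$ depending only on dimension, is the delicate technical heart of the proof.
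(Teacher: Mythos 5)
The paper does not prove this theorem: it quotes it directly from Naber--Valtorta \cite{NVRR} (their Theorem~3.3; see also the remark immediately following, which points to \cite{NVAH}), so there is no internal argument of the paper against which to compare your sketch. Read on its own terms, your outline captures the broad strategy of \cite{NVRR}: tilt estimates for best $L^2$-approximating $k$-planes controlled by $\beta$-numbers, the observation that the integral hypothesis makes the plane defects $\ell^2$-summable across dyadic scales, and the need for a good/bad-ball dichotomy to handle scales at which the measure $\mu=\mathcal{H}^k\res_{E}$ has too little mass for the best plane to be well determined. These are indeed the right ingredients.

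The step that would not go through as written is the claim of a single surjective bi-Lipschitz map $\phi\colon L_\infty\cap B_s(p_0)\to E\cap B_s(p_0)$, from which you extract both rectifiability and the packing bound. There is no reason for such a global parametrization to exist: $E$ may be disconnected, may have gaps over $L_\infty$, and may carry positive measure escaping any tubular neighborhood of $L_\infty$ at fine scales; rectifiability only asserts a countable family of Lipschitz images covering $E$ up to an $\mathcal{H}^k$-null set. In \cite{NVRR} the two conclusions are obtained by partly separate routes. The Ahlfors-type mass bound $\mathcal{H}^k(E\cap B_s(p_0))\le C_{rr}\,s^k$ is deduced from the discrete theorem (here Theorem~\ref{thm:Reifenberg}) applied to scale-$r$ discretizations of $\mu$ and letting $r\to 0$; rectifiability comes from an inductive construction of approximating $k$-submanifolds $T_i$ at scale $2^{-i}$ with uniformly bounded Lipschitz constants (this is where the $\ell^2$ budget is spent), together with the fact that $\mu$-a.e.\ point lies at vanishing distance from $T_i$. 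Moreover, the tilt estimate you invoke and the packing of bad balls at the next scale both presuppose an a priori upper mass bound $\mu(B_r)\lesssim r^k$ at the current scale, which is not an assumption of the theorem; \cite{NVRR} closes this loop by an induction on scales that alternately proves and uses the mass bound. Your sketch gestures at the good/bad dichotomy but does not account for this bootstrap, and without it neither the tilt estimate nor the packing estimate for bad balls is available.
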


\begin{remark}
	There have been many generalizations of the classical Reifenberg theorem, see for instance \cite{Tol, DT}.
	Reifenberg theorems of the form in Theorems \ref{thm:Reifenberg} and \ref{thm:RReifenberg} first appear in \cite[Theorems 3.4 and 3.3]{NVRR} (see also \cite[Theorems 40 and 42]{NVAH}). We refer interested readers to \cite{ENV} for a review and generalization of Reifenberg-type theorems.
\end{remark}

The main goal of this section is to control the $(d-2)$-dimensional beta number $\beta_\mu^{d-2}(p,r)$ by the drop of the frequency function inside the ball $B_r(p)$.
\begin{theorem}\label{thm:L2appx}
	Let $R, \Lambda>0$ and $\delta_{in} \in (0,1)$ be fixed. Let $r_b := \min\{r_{in}, r_s, \frac{R}{10}\}>0$ where $r_{in}$ is given in Lemma \ref{lm:spvarin_far} with parameters $\delta_{in}$ and $\rho=\frac16$, and $r_s$ is given in Lemma \ref{lm:sym}. Then for any $(u,D) \in \HH(R, \Lambda)$, any $p\in \Ct_{r}(u) \cap \mathcal{N}(u) \cap B_{\frac{R}{10}}(0)$ with $\dist(p,\pD) \leq 6r_{in} \tilde{\theta}(6r_{in})$,
	 and any $0<r\leq r_b$, 
	the $(d-2)$-dimensional $L^2$ $\beta$-number in $B_r(p)$ satisfies
	\begin{align}
		\left|\beta_\mu^{d-2}(p, r) \right|^2 & \leq C_b\left[ \frac{1}{r^{d-2}} \int_{B_r(p)} \left( \widetilde{W}_X(r) + \delta_{in} \cdot \chi_{r_{cs}(X)/6 < r \leq r_{cs}(X) } \right) ~d\mu(X) \right. \nonumber \\
		& \qquad \qquad + \left. \frac{\mu(B_r(p))}{r^{d-2}} \left( r + \theta(24 r) \right) \right], \label{eq:L2appx}
	\end{align}
	where $\mu$ is an arbitrary Radon measure such that
	\[ \spt\mu \subset \mathcal{N}(u) \cap B_{6r_{in} \tilde{\theta}(6r_{in})} (\pD), \] 
	and $\widetilde{W}_X(r) := N_X(6r)-N_X(r)$. The constant $C_b$ depends on $d$, the uniform frequency bound $C(\Lambda)$ and $C_s$ in Lemma \ref{lm:sym}. (In particular it is independent of the choice of $\delta_{in}$.)
\end{theorem}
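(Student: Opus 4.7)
The plan is to reduce the estimate on the $\beta$-number to an eigenvalue bound for the second-moment matrix of $\mu$, and then bound each relevant eigenvalue by an average frequency drop via the Rellich identity of Section \ref{sec:monotonicity}.

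\textbf{Reduction to the second moment matrix.} Set $\bar p := \mu(B_r(p))^{-1}\int_{B_r(p)} X\, d\mu(X)$ and
\[ M := \int_{B_r(p)}(X - \bar p)\otimes(X - \bar p)\, d\mu(X), \]
with eigenvalues $\lambda_1 \le \cdots \le \lambda_d$. The optimal $(d-2)$-plane passes through $\bar p$ and is spanned by the top $d-2$ eigenvectors of $M$, so $|\beta_\mu^{d-2}(p,r)|^2 = r^{-d}(\lambda_1 + \lambda_2)$. By the variational characterization $\lambda_1 + \lambda_2 \le \langle Mv_1, v_1\rangle + \langle Mv_2, v_2\rangle$ for any orthonormal pair $v_1, v_2$, so it suffices to produce two such directions whose $\langle Mv_i, v_i\rangle$ is controlled by $r^2$ times the bracket on the right-hand side of \eqref{eq:L2appx}. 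Because $p \in \Ct_r(u)$, our choice of $\alpha_0$ (see the footnote after \eqref{def:Cru}) rules out linear tangents, so any approximate tangent has degree $\ge 2$; by Lemmas \ref{lm:spine} and \ref{lm:spinebd} its invariance subspace has dimension at most $d-2$, leaving at least two orthogonal directions $v_1, v_2$ transverse to every approximate invariance direction of $u$ near $p$.

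\textbf{Controlling $\langle Mv, v\rangle$ via frequency drops.} For any unit vector $v$, polarization gives
\[ \mu(B_r(p))\,\langle Mv, v\rangle \;=\; \tfrac12 \iint \bigl\langle X - X', v\bigr\rangle^2 \, d\mu(X)\, d\mu(X'), \]
so it suffices to bound $\langle X - X', v\rangle^2$ in $L^1(\mu\otimes\mu)$. For each $X \in \spt\mu \cap B_r(p)$, integrating the identity \eqref{eq:derNr} from $r$ to $6r$ and using the doubling property of Corollary \ref{cor:Hrbd} yields
\[ \int_{A_{r,6r}(X)\cap D}\bigl|\langle \nabla u(Y), Y - X\rangle - N_X(|Y-X|)\, u(Y)\bigr|^2 \, dY \;\lesssim\; \bigl(\widetilde W_X(r) + \theta(24r)\bigr) \int_{B_{6r}(X)} u^2\, dY, \]
where the identity \eqref{eq:uvhom} absorbs the discrepancy between the intrinsic gradient $\nabla_g v_X$ and the Euclidean gradient $\nabla u$ into the $\theta(24r)$ error. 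When $X \in D$ and $r$ straddles the critical scale $r_{cs}(X)$, Lemma \ref{lm:spvarin_far} permits replacing $N(X, \cdot)$ with $N_{\tilde X}(\cdot)$ at a nearest boundary point $\tilde X$, contributing the additive term $\delta_{in}\chi_{r_{cs}(X)/6 < r \le r_{cs}(X)}$. Subtracting the corresponding identity centered at $X'$ and using $u(X) = u(X') = 0$ gives the key pointwise-in-$Y$ relation
\[ \langle \nabla u(Y), X - X'\rangle \;=\; \bigl(N_X(|Y-X|) - N_{X'}(|Y-X'|)\bigr)\, u(Y) + E_{X,X'}(Y), \]
with $E_{X,X'}$ controlled in $L^2$ on the overlapping annulus by $(\widetilde W_X(r) + \widetilde W_{X'}(r) + \theta(24r))^{1/2}\,\|u\|_{L^2}$, and the spatial variation bounds of Proposition \ref{prop:spvar} (boundary pairs) and Lemma \ref{lm:spvarin_close} (interior pairs, which is where the $\sqrt{r}$-cost arises) ensure the coefficient $N_X - N_{X'}$ is of the same order.

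\textbf{Closing the argument.} Projecting the identity onto a direction $v$ transverse to every approximate invariance direction of $u$ near $p$, quantitative rigidity (Section \ref{sec:qsym}) provides an effective lower bound $\int_{B_{6r}(p)} |\langle \nabla u, v\rangle|^2 \gtrsim c(\Lambda)\int_{B_{6r}(p)} u^2$, so dividing out gives
\[ |\langle X - X', v\rangle|^2 \;\lesssim\; r^2 \bigl(\widetilde W_X(r) + \widetilde W_{X'}(r) + \delta_{in}\chi_{r_{cs}(X)/6 < r \le r_{cs}(X)} + \delta_{in}\chi_{r_{cs}(X')/6 < r \le r_{cs}(X')} + r + \theta(24r)\bigr). \]
Averaging over $(X, X') \in (\spt\mu \cap B_r(p))^2$ and summing over two orthogonal transverse $v_1, v_2$ gives exactly \eqref{eq:L2appx}.

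\textbf{Main obstacle.} The principal technical difficulty is the uniform bookkeeping of three disparate sources of error: (i) the Dini contribution $\theta(24r)$, which accumulates from the non-constant coefficient operator $\divg(A\nabla v_X)$ being a perturbation of the Laplacian, from the intrinsic metric rotating direction vectors, and from the $\theta(4r)$-term in Proposition \ref{prop:spvar}; (ii) the linear-in-$r$ contribution, which is the squared cost of the $r^{1/2}$-spatial variation in Lemma \ref{lm:spvarin_close} for nearby interior points; and (iii) the $\delta_{in}$-jump from Lemma \ref{lm:spvarin_far} when the scale crosses $r_{cs}(X)$. These must combine without cross-amplification and uniformly in the relative position of $X$ to the boundary; crucially the final bound must remain at the scale of the frequency drop $\widetilde W_X$ rather than the frequency itself, which is why the sharp square-root form of Proposition \ref{prop:spvar} (rather than a bound by $|N_X|$) is used throughout.
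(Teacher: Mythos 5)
Your high-level skeleton matches the paper's: set up the second-moment quadratic form, express the $\beta$-number as the sum of the two smallest eigenvalues, and feed in the Rellich-identity control of $\langle\nabla u(Y),Y-X\rangle - N_X u(Y)$ together with the spatial-variation estimates of Section \ref{sec:spvar} and Lemma \ref{lm:spvarin_far}. The decomposition of the error budget into $\widetilde W_X$, $r$, $\theta(24r)$, and the $\delta_{in}$-jump is also the right one.

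However, the closing step has a genuine gap. After polarizing to $\mu(B_r(p))\langle Mv,v\rangle = \tfrac12\iint\langle X-X',v\rangle^2\,d\mu\,d\mu$, you claim a \emph{pointwise-in-$(X,X')$} bound $|\langle X-X',v\rangle|^2 \lesssim r^2(\widetilde W_X + \widetilde W_{X'} + \cdots)$ by ``projecting the identity onto $v$'' and ``dividing out'' a lower bound on $\int|\langle\nabla u,v\rangle|^2$. This does not follow: the frequency identity controls the scalar quantity $\langle\nabla u(Y),X-X'\rangle$, which is a contraction of $\nabla u(Y)$ against the \emph{full} vector $X-X'$, not against its component along a fixed $v$. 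When the blowup is (approximately) a harmonic polynomial in two variables spanned by $v_1, v_2$, $\langle\nabla u(Y),X-X'\rangle$ mixes $\langle X-X',v_1\rangle\,\partial_{v_1}u$ and $\langle X-X',v_2\rangle\,\partial_{v_2}u$ with $Y$-dependent, generically non-orthogonal coefficients, so there is no clean division by a lower bound on $\int|\langle\nabla u,v\rangle|^2$ that isolates a single component of $X-X'$. Moreover, the lower bound you invoke for a \emph{single} specific direction transverse to the invariance subspace is itself delicate to make quantitative; the paper sidesteps this by proving Lemma \ref{lm:sym}, which gives a uniform lower bound on the \emph{sum} $\sum_{j=1}^{d-1}\iint_{A_{3r,4r}(p)}|\langle\nabla u,w_j\rangle|^2$ over \emph{arbitrary} $(d-1)$-orthonormal frames—easy by compactness, since an approximate blowup can be invariant along at most $d-2$ directions.

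The paper's algebraic mechanism avoids the gap entirely. Rather than polarizing, it uses the eigenvector relation directly: for the eigenvectors $w_j$ of $Q$,
$\lambda_j\langle w_j,\nabla u(Z)\rangle = Q(w_j,\nabla u(Z)) = \frac{1}{r^{2+k}}\int_{B_r(p)}\langle w_j,X-C_\mu\rangle\,[\langle\nabla u(Z),X-Z\rangle + c\,u(Z)]\,d\mu(X)$,
then Cauchy--Schwarz and $\lambda_j = \frac{1}{r^{2+k}}\int|\langle w_j,X-C_\mu\rangle|^2 d\mu$ give
$\lambda_j|\langle\nabla u(Z),w_j\rangle|^2 \le \frac{1}{r^{2+k}}\int_{B_r(p)}|\langle\nabla u(Z),Z-X\rangle - c\,u(Z)|^2\,d\mu(X)$.
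Integrating over the annulus $A_{3r,4r}(p)$, summing over $j=1,\dots,d-1$, and dividing by the lower bound in Lemma \ref{lm:sym} yields the bound on $\lambda_{d-1}+\lambda_d$. The quantity that carries the frequency information here is $\langle\nabla u(Z),X-Z\rangle$ (one anchor at a time, integrated over $Z$), never $\langle X-X',v\rangle$ itself. To salvage your writeup you would need to replace the polarization/division step by this eigenvector-pairing argument and replace the ``specific transverse direction'' lower bound by the frame-summed Lemma \ref{lm:sym}.
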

\begin{remark}
	Heuristically, the above estimate \eqref{eq:L2appx} says the following. Suppose the right hand side is small, that is to say, (on an average sense) points on the supports of the measure $\mu$ have small frequency drop at scale $r$ (i.e. if $\widetilde{W}_X(r) \ll 1$ when $X\in \spt\mu$). 
	 Then $\spt\mu$ must be distributed in a small neighborhood of a $(d-2)$-dimensional affine subspace. (In particular $\spt\mu$ can be distributed near a $k$-dimensional affine subspace with $k< d-2$.)
	 Intuitively, the reason is that if we look at the tangent function centered at any singular point $p\in \Ct_r(u) \cap \Nt(u)$, it can be almost invariant along an at most $(d-2)$-dimensional affine subspace.
\end{remark}

\uline{Step 1}.
Let $C_\mu$ denote the center of mass of the measure $\mu$ on $B_r(p)$, i.e.
\[ C_\mu := \fint_{B_r(p)} X ~d\mu(X). \]
To compute the beta-number $\beta_\mu^k(p,r)$ we consider the bilinear quadratic form $Q(v,w)$ associated to $\mu\res_{B_r(p)}$, which is defined as
\[ Q(v,w):= \frac{1}{r^{2+k}} \int_{B_r(p)} \langle v, Y-C_\mu \rangle \langle w, Y-C_\mu \rangle ~d\mu(Y). \]
Let $\lambda_1 \geq \cdots \geq \lambda_d \geq 0$ be the eigenvalues of $Q$ in decreasing order, and let $w_1, \cdots, w_d$ be the corresponding eigenvectors with unit norm. Then
\begin{equation}\label{eq:ev}
	\lambda_j = \sup \left\{\frac{1}{r^{2+k}} \int_{B_r(p)} \left| \langle w, Y-C_\mu \rangle \right|^2 ~d\mu(Y) \text{ s.t. } |w|^2=1 \text{ and } \langle w, w_i \rangle = 0 \text{ for all } i < j \right\}. 
\end{equation} 
Then it is not hard to see that the affine $k$-plane $L^k:= C_\mu + \spn\{w_1, \cdots, w_k\}$ achieves the infimum in the definition \eqref{def:beta} of the beta number, and moreover
\begin{equation}\label{eq:betaabsev}
	\left| \beta_\mu^k(p,r) \right|^2 = \frac{1}{r^{2+k}} \int_{B_r(p)} \dist^2(Y, L^k) d\mu(Y) = \lambda_{k+1} + \cdots + \lambda_d. 
\end{equation} 
\medskip

\uline{Step 2}. Let $(u,D) \in \HH(R,\Lambda)$ and $p\in \spt\mu$. Without loss of generality we assume $u$ is normalized (with respect to the ball $B_r(p)$) so that 
\begin{equation}\label{eq:nmlzpr}
	\frac{1}{r^d} \iint_{B_r(p)} u^2 ~dZ = 1. 
\end{equation} 
Since a constant multiple of a function does not change its frequency function, this normalization does not affect \eqref{eq:L2appx}.
%
\begin{itemize}
	\item For any $X\in B_{2r}(p) \cap \pD$ and any $\rho \in [r,6r]$, we have
\begin{align}
	H(v_X, \rho) & \approx \frac{1}{\rho} \iint_{A_{\frac{\rho}{2}, \frac32 \rho}(0) \cap \Omega_X} \mu v_X^2 ~dV_g \quad \text{ by } \eqref{eq:Hrcmpu} \nonumber \\
	& \lesssim \frac{1}{\rho} \iint_{A_{\frac{\rho}{4}, 2\rho}(X) \cap D} u^2 ~dZ \nonumber \\
	& \lesssim \frac{1}{r} \iint_{B_{14r}(p) \cap D} u^2 ~dZ \nonumber \\
	& \lesssim r^{d-1}, \quad \text{ by } \eqref{eq:nmlzpr} \text{ and } \eqref{eq:Hrcmpu}.\label{eq:Hrhobd_Xb}
\end{align}

	\item Suppose $X\in B_r(p) \cap D$. We consider two cases: either $10r\leq r_{cs}(p)$ so that we can use the monotonicity formula for $X$, or $10r>r_{cs}(p)$. In the first case, as above we use the monotonicity formula for $X$, and get
				\begin{equation}\label{eq:Hrhobd_Xinclose}
					H(X, \rho) \leq \frac{2}{\rho} \iint_{A_{\rho, \frac32\rho}(X) \cap D} u^2 ~dZ \lesssim \frac1r \iint_{B_{10r}(p) \cap D} u^2 ~dZ \lesssim r^{d-1}.
				\end{equation}
		In the second case, let $q_X \in \pD$ so that $|q_X-X| = \dist(X, \pD)$. By the definition of the critical scale we know $\dist(X, \pD) \ll r_0<6r$. Then by the monotonicity formula centered at $q_X$, we have
		\begin{align}
			H(X, \rho)\leq \frac{2}{\rho} \iint_{A_{\rho, \frac32\rho}(X) \cap D} u^2 ~dZ & \lesssim \frac1r \iint_{B_{10r}(q_X) \cap D} u^2 ~dZ \nonumber \\
			& \lesssim \frac1r \iint_{B_{r/2}(q_X) \cap D} u^2 ~dZ \nonumber \\
			& \leq \frac1r \iint_{B_{r}(p) \cap D} u^2 ~dZ = r^{d-1}.\label{eq:Hrhobd_Xinfar}
		\end{align}
\end{itemize}
The constants in the above inequalities depend on $\Lambda$ (in fact, the constant grows exponentially as $\Lambda$ grows).

For any $Z\in D$ and $j = 1, \cdots, d$ we have
\begin{equation}\label{eq:tmpQ}
	\lambda_j \langle w_j, \nabla u(Z) \rangle = Q(w_j, \nabla u(Z)) = \frac{1}{r^{2+k}} \int_{B_r(p)} \langle w_j, X-C_\mu \rangle \langle \nabla u(Z) , X-C_\mu \rangle ~d\mu(X).
\end{equation}
Since
\[ \int_{B_r(p)} \left( X - C_\mu \right) ~d\mu(X) = 0,  \]
we may add the term 
\[ \frac{1}{r^{2+k}} \left( \int_{B_r(p)} \langle w_j, X-C_\mu \rangle ~d\mu(X) \right) \cdot \left( \langle \nabla u(Z), C_\mu - Z \rangle + c u(Z) \right) \]
to the right hand side of \eqref{eq:tmpQ} without changing the equality:
\[ \lambda_j \langle w_j, \nabla u(Z) \rangle = \frac{1}{r^{2+k}} \int_{B_r(p)} \langle w_j, X-C_\mu \rangle \left[ \langle \nabla u(Z) , X-Z \rangle + c u(Z)  \right] ~d\mu(X). \]
Here $c$ is a constant whose value is to be determined later. Thus by H\"older's inequality and \eqref{eq:ev},
\begin{align*}
	& \left| \lambda_j \langle w_j, \nabla u(Z) \rangle  \right|^2  \\
	& \leq \left( \frac{1}{r^{2+k}} \int_{B_r(p)} \left| \langle w_j, X-C_\mu \rangle \right|^2 ~d\mu(X) \right)  \cdot \left( \frac{1}{r^{2+k}} \int_{B_r(p)} \left|  \langle \nabla u(Z) , Z-X \rangle - c u(Z) \right|^2 ~d\mu(X) \right) \\
	& = \lambda_j \left( \frac{1}{r^{2+k}} \int_{B_r(p)} \left|  \langle \nabla u(Z) , Z-X \rangle - c u(Z) \right|^2 ~d\mu(X) \right).
\end{align*} 
Assume $\lambda_j>0$, otherwise there is nothing to prove.
It then simplifies to
\begin{align*}
	\lambda_j \left| \langle \nabla u(Z), w_j \rangle \right|^2 \leq &  \frac{1}{r^{2+k}} \int_{B_r(p)} \left|  \langle \nabla u(Z) , Z-X \rangle - c u(Z) \right|^2 ~d\mu(X).
\end{align*} 



Next we integrate this inequality on the annulus region $A_{3r,4r}(p) \cap D = \{Z\in D: 3r<|Z-p|<4r \}$. 
We get
\begin{align}
	& \frac{\lambda_j}{r^{d-2}} \iint_{A_{3r, 4r}(p) \cap D} \left| \langle \nabla u(Z), w_j \rangle \right|^2 ~dZ \nonumber \\
	& \leq \frac{1}{r^{d-2}} \iint_{A_{3r, 4r}(p) \cap D} \frac{1}{r^{2+k}} \int_{B_r(p)} \left|  \langle \nabla u(Z) , Z-X \rangle - c u(Z) \right|^2 ~d\mu(X)  ~dZ \nonumber \\
	& \leq \frac{1}{r^{k}} \int_{B_r(p)} \frac{1}{r^{d}} \iint_{A_{2r,5r}(X) \cap D } \left|  \langle \nabla u(Z) , Z-X \rangle - c u(Z) \right|^2 ~dZ ~d\mu(X). \label{tmp:beta1}
\end{align}

Now we fix the constant $c$ as
\begin{equation}\label{def:c}
	c= \left\{\begin{array}{ll}
		N(v_p, 6r), & \text{ if } p\in \pD \\
		N(p, 6r), & \text{ if } p\in D \text{ and } 6r\leq r_{cs}(p) \\
		N(v_q, 6r), & \text{ if } p\in D \text{ and } 6r> r_{cs}(p)
	\end{array} \right.
\end{equation}
where in the last line $q\in \pD$ is such that $|p-q| = \dist(p, \pD)$.
(Notice that $c$ can not depend on $X \in \spt\mu \cap B_r(p)$, but it can depend on the fixed point $p$.)

We start with the more complicated \uline{case (i): when $X\in \pD$.} 
Recall \eqref{eq:uvhom}, we have
\begin{align*}
	\left|  \langle \nabla u(Z) , Z-X \rangle - cu(Z) \right| \leq \left| \langle \nabla_g v_X (Y), Y \rangle - cv_X(Y) \right| + C|Y|\theta(4|Y|) \cdot |\nabla v_X(Y)|
\end{align*}  
where $Y = \Psi_{X}^{-1}(Z)\in \Omega_X$ is the preimage of $Z\in D$ and $v_X(Y) = u(\Psi_X(Y)) = u(Z)$. (The constant $C$ is independent of the choice of $X \in \pD$.) Hence by a change of variable and \eqref{eq:detDPsi}, \eqref{eq:YPsiY}, the integrand becomes
\begin{align}
	& \frac{1}{r^{d}} \iint_{A_{2r,5r}(X) \cap D } \left|  \langle \nabla u(Z) , Z-X \rangle - cu(Z) \right|^2 ~dZ \nonumber \\
	& \leq \left(1+O\left(\theta(24r) \right) \right) \frac{1}{r^{d}} \iint_{A_{r,6r}(0) \cap \Omega_{X} } \left|  \langle \nabla_g v_X(Y), Y \rangle - cv_X(Y) \right|^2 ~dY \nonumber \\
	& \qquad \qquad + C \frac{\theta(24 r)}{r^{d-2}} \iint_{A_{r,6r}(0) \cap \Omega_X} |\nabla v_X(Y)|^2 ~dY.\label{eq:changeutov}
\end{align}
We denote the second term as $\E$, and we have
\begin{equation}\label{tmp:beta2}
	\E \lesssim \frac{\theta(24 r)}{r^{d-2}} D(v_X, 6r) \lesssim \frac{\theta(24 r)}{r^{d-1}} N_X(6r) \cdot H(v_X, 6r)  \lesssim \theta(24 r),
\end{equation}
with constants depending on $d, r_*$ and $\Lambda$. We have used \eqref{eq:Hrhobd_Xb} in the last inequality.

We rewrite the leading order term in the right hand side of \eqref{eq:changeutov} as follows
\begin{align}
	& \frac{1}{r^{d}} \iint_{A_{r,6r}(0) \cap \Omega_{X} } \left|  \langle \nabla_g v_X(Y), Y \rangle - cv_X(Y) \right|^2 ~dY \nonumber \\
	& = \frac{1}{r^{d}} \int_{r}^{6r} \rho^2 \int_{\partial B_\rho \cap \Omega_{X} } \left| \partial_\rho v_X - c \frac{v_X}{\rho} \right|^2 ~d\mathcal{H}^{d-1}(Y) ~d\rho \nonumber \\
	& \lesssim \left(1+O(\theta(24r)) \right)  \int_r^{6r} \frac{1}{\rho^{d-2}} \int_{\partial B_\rho \cap \Omega_{X}} \mu \left|\partial_\rho  v_X - c \frac{v_X}{\rho} \right|^2 ~dV_{\partial B_\rho} ~d\rho \label{eq:tmpRhr} 
\end{align}
Recall we have shown in \eqref{eq:Rhr} that
\begin{equation}\label{tmp:Rhrho}
	R_h(v_X, \rho) = \frac{2\rho}{H(v_X, \rho)} \int_{\partial B_\rho \cap \Omega_{X}} \mu \left| \partial_\rho v_X - N(v_X, \rho) ~\frac{v_X}{\rho} \right|^2 ~dV_{\partial B_\rho} \leq \left. \frac{d}{dr}\right|_{r=\rho}  N_X( r). 
\end{equation} 

\uline{If $p\in \pD$}, we fix the constant $c$ to be $N(v_p, 6r)$. We need to estimate the difference
\begin{equation}\label{tmp:triangle}
	|N(v_X,\rho) - c| \leq |N(v_X, \rho) - N(v_X, 6r)| + |N(v_X, 6r) - N(v_p, 6r)|.
\end{equation}
By \eqref{eq:spvar_rough} in Proposition \ref{prop:spvar}, we have
\begin{equation}\label{tmp:Ncmp1}
	|N(v_p, 6r) - N(v_X, 6r)| \leq C \left( r^{\frac12} + \theta(24r) \right),
\end{equation}
where the constant $C$ depends on $\Lambda$. On the other hand, by the monotonicity of the modified frequency function, for any $\rho \in [r, 6r]$ we have
\begin{align}
	& |N(v_X, \rho) - N(v_X, 6r)| \nonumber \\
	& = \left|N_X(\rho) \exp\left(-C\int_0^{\rho}\frac{\theta(4\tau)}{\tau} ~d\tau \right) - N_X(6r) \exp\left(-C\int_0^{6r}\frac{\theta(4\tau)}{\tau} ~d\tau \right) \right| \nonumber \\
	& \leq \left|N_X(\rho) - N_X(6r) \right| + N_X(6r) \left|\exp\left(-C\int_0^{\rho}\frac{\theta(4\tau)}{\tau} ~d\tau \right) - \exp\left(-C\int_0^{6r}\frac{\theta(4\tau)}{\tau} ~d\tau \right) \right| \nonumber \\
	& \leq N_X(6r) - N_X(r) + C(\Lambda) \int_{r}^{6r} \frac{\theta(4\tau)}{\tau} ~d\tau \nonumber \\
	& \leq N_X(6r) - N_X(r) + C'(\Lambda) \theta(24 r).\label{tmp:Ncmp2} 
\end{align}
Combining \eqref{tmp:triangle}, \eqref{tmp:Ncmp1}, \eqref{tmp:Ncmp2} and using the monotonicity of the modified frequency function, we get
\begin{equation}\label{eq:Xpb}
	|N(v_X, \rho) - c| \leq \widetilde{W}_X(r) + C \left( r^{\frac12} + \theta(24r) \right),
\end{equation}
where we use the notation $\widetilde{W}_X(r) := N_X(6r) -N_X(r)$ to denote the frequency drop. 

\underline{If $p\in D$ and $r\leq r_{cs}(p)/6$}, recall that we have fixed $c=N(p, 6r)$ in \eqref{def:c}. By Lemma \ref{lm:spvarin_close} we have
\[ 
| N(v_X, 6r) - N(p, 6r) | \leq C(r^{\frac12} + \theta(24 r)). \]
Therefore
\begin{align}
	|N(v_X, \rho) - c| & \leq |N(v_X, \rho) - N(v_X, 6r) | + |N(v_X, 6r) - N(p, 6r)| \nonumber \\
	 & \leq N(v_X, 6r) - N(v_X, r) + C(r^{\frac12} + \theta(24 r)) \nonumber \\
	& \leq \widetilde{W}_X(r) + C(r^{\frac12} + \theta(24 r)).\label{eq:Xbpin_close}
\end{align}

\underline{If $p\in D$ and $r> r_{cs}(p)/6$}, recall by \eqref{def:c} and \eqref{def:Npr} we have fixed $c=N(v_q, 6r)$,
where $q\in \pD$ is such that $|p-q| = \dist(p, \pD)$. Since every $X\in B_r(p)$ satisfies
\[ |X-q| \leq |X-p| + |p-q| < 2r, \]
by \eqref{eq:spvar_rough} in Proposition \ref{prop:spvar} we have
\[ |N(v_X, 6r) - N(v_q, 6r)| \leq C\left(r^{\frac12} + \theta(24r) \right). \]
Hence 
\begin{align*}
	|N(v_X, \rho) - c| \leq |N(v_X, \rho) - N(v_X, 6r)| + |N(v_X, 6r) - N(v_q, 6r)| \leq \widetilde{W}_X(r) + C(r^{\frac12} + \theta(24 r)).
\end{align*}

Combining any one of \eqref{eq:Xpb}, \eqref{eq:Xbpin_close}
with \eqref{eq:tmpRhr} and \eqref{tmp:Rhrho}, we conclude
\begin{align}
	& \frac{1}{r^{d}} \iint_{A_{r,6r}(0) \cap \Omega_{X} } \left|  \langle \nabla_g v_X(Y), Y \rangle - cv_X(Y) \right|^2 ~dY \nonumber \\
	& \lesssim   \int_r^{6r} \frac{1}{\rho^{d-2}} \int_{\partial B_\rho \cap \Omega_{X}} \mu \left|\partial_\rho  v_X - N(v_X, \rho) \frac{v_X}{\rho} \right|^2 ~dV_{\partial B_\rho} ~d\rho + \int_r^{6r} |N(v_X, \rho) - c|^2 \frac{H(v_X, \rho)}{\rho^d} ~d\rho \nonumber \\
	& \lesssim \int_r^{6r} R_h(v_X, \rho) \frac{H(v_X,\rho)}{\rho^{d-1}} ~d\rho + \left( \widetilde{W}_X(r) + r  + \theta(24 r) \right) \int_r^{6r} ~\frac{d\rho}{\rho} \nonumber \\
	& \lesssim \widetilde{W}_X(r) + r + \theta(24 r).\label{tmp:beta3}
\end{align}
We used any one of \eqref{eq:Hrhobd_Xb}, \eqref{eq:Hrhobd_Xinclose} or \eqref{eq:Hrhobd_Xinfar} in the last two inequalities.

Finally, combining \eqref{tmp:beta1}, \eqref{eq:changeutov}, \eqref{tmp:beta2} and \eqref{tmp:beta3}, we get
\begin{align}
	& \frac{\lambda_j}{r^{d-2}} \iint_{A_{3r, 4r}(p) \cap D} |\langle \nabla u(Z), w_j \rangle |^2 ~dZ \nonumber \\
	& \lesssim \frac{1}{r^k} \int_{B_r(p)} \left[ \frac{1}{r^d} \iint_{A_{r,6r}(0) \cap \Omega_X} \left| \langle \nabla_g v_X(Y), Y \rangle - N_p(2r) \cdot v_X(Y) \right|^2 ~dY + \theta(24 r) \right] ~d\mu(X) \nonumber \\
	& \lesssim \frac{1}{r^k} \int_{B_r(p)} \widetilde{W}_X(r) ~d\mu(X) + \frac{\mu(B_r(p))}{r^k} \left( r + \theta(24 r) \right).\label{eq:betaintm}
\end{align}

\underline{Case (ii): when $X\in D$ and $r\leq r_{cs}(X)/6$}. We bound the integrand in \eqref{tmp:beta1} by
\begin{equation}\label{eq:Xin_close}
	\frac{1}{r^{d}} \iint_{A_{2r,5r}(X) \cap D } \left|  \langle \nabla u(Z) , Z-X \rangle - cu(Z) \right|^2 ~dZ \lesssim \int_{2r}^{5r} \frac{1}{\rho^{d-2}} \int_{\partial B_\rho(X) \cap D} \left|\partial_{\rho_X} u - c ~\frac{u}{\rho} \right|^2 \sH ~d\rho
\end{equation}
Recall we have shown in Proposition \ref{prop:intmonotonicity} that for any $\rho\leq r_{cs}(X)$, we have
\begin{equation}\label{tmp:Rhrho}
	R_h(X, \rho) = \frac{2\rho}{H(X, \rho)} \int_{\partial B_\rho(X) \cap D} \left| \partial_{\rho_X} u - N(X, \rho) ~\frac{u}{\rho} \right|^2 \sH \leq \left. \frac{d}{dr}\right|_{r=\rho}  N(X, r). 
\end{equation} 
Hence to estimate the right hand side of \eqref{eq:Xin_close} it suffices to bound the difference $|N(X, \rho) - c|$.

\underline{If $p\in \pD$}, the constant $c=N(v_p, 6r)$ by \eqref{def:c}. Hence by Lemma \ref{lm:spvarin_close} and the assumption $6r \leq r_{cs}(X)$, we have
\begin{align}
	|N(X, \rho) - c| & \leq |N(X, \rho) - N(X, 6r)| + |N(X, 6r) - N(v_p, 6r)| \nonumber \\
	& \leq N(X, 6r)- N(X, r) + C(r^{\frac12} + \theta(24r)) \nonumber \\
	& = \widetilde{W}_X(r) + C(r^{\frac12} + \theta(24r)).\label{eq:Xinpb}
\end{align}
\underline{If $p\in D$ and $r\leq  r_{cs}(p)/6$}, the constant $c=N(p, 6r)$ by \eqref{def:c}. By Lemma \ref{lm:spvarin_close} we have
\begin{align}
	|N(X, \rho) - c| & \leq |N(X, \rho) - N(X, 6r)| + |N(X, 6r) - N(p, 6r)| \nonumber \\
	& \leq N(X, 6r)- N(X, r) + C(r^{\frac12} + \theta(24r)) \nonumber \\
	& = \widetilde{W}_X(r) + C(r^{\frac12} + \theta(24r)).\label{eq:Xpin_close}
\end{align}
On the other hand \underline{if $p\in D$ and $r>  r_{cs}(p)/6$}, by \eqref{def:c} and \eqref{def:Npr} the constant is fixed at $c = N(v_q, 6r)$, 
where $q\in \pD$ is such that $|p-q| = \dist(p, \pD)$. 
By Lemma \ref{lm:spvarin_close}, we have
\begin{align}
	|N(X, \rho) - c| & \leq |N(X, \rho) - N(X, 6r)| + |N(X, 6r) - N(v_q, 6r)| \nonumber \\
	& \leq \widetilde{W}_X(r) + C(r^{\frac12} + \theta(24r)).\label{eq:Xpin_far1}
\end{align}
Plugging back any one of \eqref{eq:Xpin_close}, \eqref{eq:Xinpb}, \eqref{eq:Xpin_far1} into \eqref{eq:Xin_close}, we obtain
\begin{align}
	& \frac{1}{r^{d}} \iint_{A_{2r,5r}(X) \cap D } \left|  \langle \nabla u(Z) , Z-X \rangle - cu(Z) \right|^2 ~dZ \nonumber \\
	& \lesssim \int_{2r}^{5r} \frac{1}{\rho^{d-2}} \int_{\partial B_\rho(X) \cap D} \left|\partial_{\rho_X} u - N(X, \rho) ~\frac{u}{\rho} \right|^2 \sH ~d\rho \nonumber \\
	& \qquad \qquad + \int_{2r}^{5r} \frac{|N(X, \rho) - c|^2}{\rho^d} \int_{\partial B_\rho(X) \cap D} u^2 \sH ~d\rho \nonumber \\
	& \lesssim \int_{2r}^{5r} R_h(X, \rho) \frac{H(X, \rho)}{\rho^{d-1}} ~d\rho + \int_{2r}^{5r} |N(X, \rho) - c|^2 \frac{H(X, \rho)}{\rho^{d}} ~d\rho \nonumber \\
	& \lesssim \widetilde{W}_X(r) + r + \theta(24 r).\label{eq:betacase2}
\end{align}

\underline{Case (iii): when $X\in D$ and $r>  r_{cs}(X)/6$}. Let $\tilde{X} \in \pD$ be such that $|X-\tilde{X}| = \dist(X, \pD)$. By the definition of the critical value, we know that
\begin{equation}\label{tmp:dXrcs}
	\dist(X, \pD) = r_{cs}(X) \tilde{\theta}(r_{cs}(X)) \ll r_{cs}(X) < \frac{15}{2} r. 
\end{equation} 
In particular since $\tilde{\theta}(6r) \leq \tilde{\theta}(\frac35 R) $ is chosen sufficiently small, we can guarantee
\[ |X-\tilde{X}| = \dist(X, \pD) < \frac{ r}{5}, \]
and thus 
\begin{equation}\label{eq:pXtilde}
	|p-\tilde{X}| \leq |p-X| + |X-\tilde{X}| < \frac65 r. 
\end{equation}  

This time we bound the integrand in \eqref{tmp:beta1} by
\begin{align}
	& \frac{1}{r^{d}} \iint_{A_{2r,5r}(X) \cap D } \left|  \langle \nabla u(Z) , Z-X \rangle - cu(Z) \right|^2 ~dZ \nonumber \\
	& \leq \frac{1}{r^{d}} \iint_{A_{r,6r}(\tilde{X}) \cap D } \left|  \langle \nabla u(Z) , Z-X \rangle - cu(Z) \right|^2 ~dZ \nonumber \\
	& \leq \frac{1}{r^{d}} \iint_{A_{r,6r}(\tilde{X}) \cap D } \left|  \langle \nabla u(Z) , Z- \tilde{X} \rangle - cu(Z) \right|^2 ~dZ + \frac{|X-\tilde{X}|^2}{r^d} \iint_{A_{r, 6r}(\tilde{X})} |\nabla u|^2 ~dZ \nonumber \\
	& =: \I_1 + \I_2.\label{eq:betaXin_far}
\end{align}
We can bound the error term $\I_2$ very easily as follows
\begin{align}
	\I_2 \leq  \frac{|X-\tilde{X}|^2 }{r^d} \iint_{B_{6r}(\tilde{X})} |\nabla u|^2 ~dZ & \leq C(\Lambda) \frac{|X-\tilde{X}|^2 }{r^{d+1}} \int_{\partial B_{6r}(\tilde{X})} u^2 \sH \nonumber \\
	& \lesssim C'(\Lambda) \frac{|X-\tilde{X}|^2 }{r^2} \nonumber \\
	& \lesssim \theta(24r),\label{eq:betaI2bd}
\end{align}
where we have used \eqref{eq:Hrhobd_Xb} in the second to last inequality, and we used \eqref{tmp:dXrcs}, the assumption $r_{cs}(X) < 6 r$ and $\tilde{\theta}(s) \leq \theta(4s)<1$ in the last inequality.
To bound the main term $\I_1$, we need to estimate $|N(v_{\tilde{X}}, \rho) - c|$ for $\rho \in [r, 6r]$. Again we divide into three cases.

\uline{If $p\in \pD$}, then the constant $c=N(v_p, 6r)$ by \eqref{def:c}. By \eqref{eq:spvar_rough} in Proposition \ref{prop:spvar}, we have
\begin{align}
	|N(v_{\tilde{X}}, \rho) - c| & \leq |N(v_{\tilde{X}}, \rho) - N(v_{\tilde{X}}, 6r)| + |N(v_{\tilde{X}}, 6r) - N(v_p, 6r)| \nonumber \\
	& \leq \widetilde{W}_{\tilde{X}}(r) + C(r^{\frac12} + \theta(24r)).\label{tmp:Ncg1}
\end{align}
\underline{If $p\in D$ and $r\leq  r_{cs}(p)/6$}, the constant $c=N(p, 6r)$ by \eqref{def:c}. By Lemma \ref{lm:spvarin_close}, we have
\begin{align}
	|N(v_{\tilde{X}}, \rho) - c| & \leq |N(v_{\tilde{X}}, \rho) - N(v_{\tilde{X}}, 6r) | + |N(v_{\tilde{X}}, 6r) - N(p, 6r)| \nonumber \\
	& \leq \widetilde{W}_{\tilde{X}}(r) + C(r^{\frac12} + \theta(24r)).\label{tmp:Ncg2}
\end{align}
\underline{If $p\in D$ and $r> r_{cs}(p)/6$}, we fix the constant $c$ to be $N(v_q, 6r)$,
where $q\in \pD$ is such that $|p-q| = \dist(p, \pD)$. Hence
\[ |q-p| = \dist(p, \pD) \ll r_{cs}(p) < 6r. \]
In particular by choosing sufficiently small radius we can guarantee that
\[ |q-\tilde{X}| \leq |q-p| + |p-\tilde{X}| < \frac45 r + \frac65 r= 2r. \]
Hence by \eqref{eq:spvar_rough} in Proposition \ref{prop:spvar}, we have
\begin{align}
	|N(v_{\tilde{X}}, \rho) - c| & \leq |N(v_{\tilde{X}}, \rho) - N(v_{\tilde{X}}, 6r) | + |N(v_{\tilde{X}}, 6r) - N(v_q, 6r)| \nonumber \\
	& \leq \widetilde{W}_{\tilde{X}}(r) + C(r^{\frac12} + \theta(24r)).\label{tmp:Ncg3}
\end{align}

Lastly, we claim that
\begin{equation}\label{eq:WXcg}
	\widetilde{W}_{\tilde{X}}(r) \leq \widetilde{W}_X(r) + \delta_{in},
\end{equation} 
with a constant only depending on $\Lambda$.
In fact, if $r> r_{cs}(X)$, by the definition \eqref{def:Npr} we have
\begin{equation}\label{tmp:c1}
	\widetilde{W}_{\tilde{X}}(r) = \widetilde{N}(v_{\tilde{X}}, 6r) - \widetilde{N}(v_{\tilde{X}}, r) = N_X(6r) - N_X(r) = W_X(r); 
\end{equation} 
if $r_{cs}(X)/6 < r \leq r_{cs}(X)$, by the definition \eqref{def:Npr} and Lemma \ref{lm:spvarin_far}
\begin{align}
	\widetilde{W}_{\tilde{X}}(r) & = N_{\tilde{X}}(6r) - N_{\tilde{X}}(r) \leq N_X(6r) - \left(N(X,r) - \delta_{in} \right) = \widetilde{W}_X(r) + \delta_{in}. \label{tmp:c2}
\end{align}

Similar to \eqref{eq:changeutov} in Case (i), we have
\begin{align}
	\I_1 & = \frac{1}{r^{d}} \iint_{A_{r,6r}(\tilde{X}) \cap D } \left|  \langle \nabla u(Z) , Z- \tilde{X} \rangle - cu(Z) \right|^2 ~dZ \nonumber \\
	& \leq \left(1+O\left(\theta(24r) \right) \right) \frac{1}{r^{d}} \iint_{A_{r,6r}(0) \cap \Omega_{\tilde{X}} } \left|  \langle \nabla_g v_{\tilde{X}}(Y), Y \rangle - cv_{\tilde{X}}(Y) \right|^2 ~dY \nonumber \\
	& \qquad \qquad + C \frac{\theta(24 r)}{r^{d-2}} \iint_{A_{r,6r}(0) \cap \Omega_{\tilde{X}}} |\nabla v_{\tilde{X}}(Y)|^2 ~dY \nonumber \\
	& \lesssim \frac{1}{r^{d}} \iint_{A_{r,6r}(0) \cap \Omega_{\tilde{X}} } \left|  \langle \nabla_g v_{\tilde{X}}(Y), Y \rangle - cv_{\tilde{X}}(Y) \right|^2 ~dY + \theta(24r).\label{eq:betaI1bd}
\end{align}
Moreover
\begin{align}
	& \frac{1}{r^{d}} \iint_{A_{r,6r}(0) \cap \Omega_{\tilde{X}} } \left|  \langle \nabla_g v_{\tilde{X}}(Y), Y \rangle - cv_{\tilde{X}}(Y) \right|^2 ~dY \nonumber \\
	& \lesssim \int_{r}^{6r} \frac{1}{\rho^{d-2}} \int_{\partial B_\rho \cap \Omega_{\tilde{X}} } \mu \left|\partial_{\rho} v_{\tilde{X}} - N(v_{\tilde{X}}, \rho) ~\frac{v_{\tilde{X}}}{\rho} \right|^2 ~dV_{\partial B_\rho} ~d\rho \nonumber \\
	& \qquad \qquad + \int_{r}^{6r} \frac{|N(v_{\tilde{X}}, \rho) - c|^2}{\rho^d} \int_{\partial B_\rho \cap \Omega_{\tilde{X}} } \mu v_{\tilde{X}}^2 ~dV_{\partial B_\rho} ~d\rho \nonumber \\
	& \lesssim \int_{r}^{6r} R_h(v_{\tilde{X}}, \rho) \frac{H(v_{\tilde{X}}, \rho)}{\rho^{d-1}} ~d\rho + \int_{r}^{6r} |N(v_{\tilde{X}}, \rho) - c|^2 \frac{H(v_{\tilde{X}}, \rho)}{\rho^{d}} ~d\rho \nonumber \\
	& \lesssim \widetilde{W}_{\tilde{X}}(r)  + r + \theta(24 r) \nonumber \\
	& \lesssim \widetilde{W}_X(r) + r + \theta(24r) + \delta_{in} \cdot \chi_{ r_{cs}(X)/6 < r \leq r_{cs}(X) }.\label{eq:betaI1bd_}
\end{align}
where we used \eqref{eq:Hrhobd_Xb} in the penultimate inequality and \eqref{eq:WXcg} in the last inequality.
Here we include the characteristic function to make sure when we sum them up in dyadic scales of $r$, the small constant $\delta_{in}$ appears at most finitely many times.
Finally, combining \eqref{eq:betaXin_far}, \eqref{eq:betaI2bd}, \eqref{eq:betaI1bd} and \eqref{eq:betaI1bd_}, we obtain
\begin{align}
	& \frac{1}{r^{d}} \iint_{A_{2r,5r}(X) \cap D } \left|  \langle \nabla u(Z) , Z-X \rangle - cu(Z) \right|^2 ~dZ \nonumber \\
	& \qquad \lesssim \widetilde{W}_X(r) + r + \theta(24r) + \delta_{in} \cdot \chi_{ r_{cs}(X)/6 < r \leq r_{cs}(X) }.\label{eq:betacase3}
\end{align}

Plugging \eqref{eq:betacase2} (in case (ii)) or \eqref{eq:betacase3} (in case (iii)) back into \eqref{tmp:beta1}, we get
\begin{align}
	& \frac{\lambda_j}{r^{d-2}} \iint_{A_{3r, 4r}(p) \cap D} \left| \langle \nabla u(Z), w_j \rangle \right|^2 ~dZ \nonumber \\
	& \leq \frac{1}{r^{k}} \int_{B_r(p)} \frac{1}{r^{d}} \iint_{A_{2r,5r}(X) \cap D } \left|  \langle \nabla u(Z) , Z-X \rangle - c u(Z) \right|^2 ~dZ ~d\mu(X) \nonumber \\
	& \lesssim \frac{1}{r^k} \int_{B_r(p)} \left( \widetilde{W}_X(r) + \delta_{in} \cdot \chi_{ r_{cs}(X)/6 < r \leq r_{cs}(X) } \right) ~d\mu(X) + \frac{\mu(B_r(p))}{r^k} \left( r + \theta(24 r) \right).\label{eq:betacase23}
\end{align}
\medskip

\uline{Step 3}.
The inequalities \eqref{eq:betaintm} and \eqref{eq:betacase23} are helpful in our estimate of the beta number, because of the expression \eqref{eq:betaabsev} and the following lemma.
\begin{lemma}\label{lm:sym}
	Let $R, \Lambda>0$ be fixed. There exist $C_s>0$ and $r_s>0$ such that the following holds for all $0< r\leq r_s$. For any $(u, D) \in \HH(R, \Lambda)$ and any $p\in \Ct_{r}(u)\cap \mathcal{N}(u) \cap B_{\frac{R}{10}}(0)$ satisfying $\dist(p, \pD) \leq \frac35 R \tilde{\theta}(\frac35 R)$, assume $u$ is normalized so that
	\begin{equation}\label{as:nmlz}
		\frac{1}{r^d} \iint_{B_r(p)} u^2 ~dZ = 1.
	\end{equation}
	Then we have
	\[ \frac{1}{r^{d-2}} \iint_{A_{3r, 4r}(p) \cap D} \sum_{j=1}^{d-1} \left| \langle \nabla u(Z), w_j \rangle \right|^2 ~dZ \geq C_s \]
	 for any $(d-1)$-orthonormal vectors $\{w_1, \cdots, w_{d-1}\}$.
\end{lemma}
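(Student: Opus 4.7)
The plan is a blow-up argument by contradiction. Suppose the conclusion fails; then there exist a sequence $(u_i, D_i) \in \HH(R, \Lambda)$, points $p_i \in \Ct_{r_i}(u_i) \cap \Nt(u_i) \cap B_{R/10}(0)$ with $\dist(p_i, \pD_i) \leq \frac{3}{5} R \tilde\theta(\frac{3}{5} R)$, radii $r_i \to 0^+$, and orthonormal $(d-1)$-frames $\{w_1^i, \ldots, w_{d-1}^i\}$ (which I extend to orthonormal bases by adding a unit vector $w_d^i$) such that, with the normalization $r_i^{-d} \iint_{B_{r_i}(p_i)} u_i^2 = 1$,
\[
\frac{1}{r_i^{d-2}} \iint_{A_{3r_i, 4r_i}(p_i) \cap D_i} \sum_{j=1}^{d-1} \left|\langle \nabla u_i, w_j^i \rangle \right|^2 dZ \longrightarrow 0.
\]
Passing to a subsequence, $w_j^i \to w_j^\infty$ for every $j$. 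By Proposition \ref{prop:cpt} together with its interior counterpart for points close to the boundary (as used in Lemma \ref{lm:spvarin_far} and in Step 1 of the proof of Proposition \ref{prop:tn}), modulo a further subsequence, $(D_i - p_i)/r_i$ converges graphically, inside $B_5(0)$, to a limiting domain $D_\infty$ which is either an upper half-space (when $\dist(p_i, \pD_i)/r_i$ stays bounded) or all of $\RR^d$ (when it diverges to $+\infty$). Correspondingly, $T_{p_i, r_i} u_i \to u_\infty$ uniformly and in $W^{1,2}$ locally in $B_5(0)$, with $u_\infty$ harmonic in $D_\infty$, vanishing on $\partial D_\infty$, satisfying $u_\infty(0) = 0$ (since $p_i \in \Nt(u_i)$), and normalized by $\iint_{B_1(0) \cap D_\infty} u_\infty^2 \, dY = 1$.

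The strong $L^2$ convergence of the gradients on compact subsets, combined with the hypothesis above, yields
\[
\iint_{A_{3, 4}(0) \cap D_\infty} \sum_{j=1}^{d-1} \left| \langle \nabla u_\infty, w_j^\infty \rangle \right|^2 dY = 0.
\]
Each $\langle \nabla u_\infty, w_j^\infty \rangle$ is a partial derivative of a harmonic function, hence harmonic on $D_\infty$; vanishing on the open set $A_{3,4}(0) \cap D_\infty$, it must vanish identically on $D_\infty$ by unique continuation. Thus $\nabla u_\infty$ is parallel to $w_d^\infty$ throughout $D_\infty$, so $u_\infty(Y) = f(Y \cdot w_d^\infty)$ with $f$ harmonic in one variable and therefore linear; together with $u_\infty(0) = 0$ this gives $u_\infty(Y) = a (Y \cdot w_d^\infty)$ for some $a \in \RR$. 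The requirements that $u_\infty$ vanish on $\partial D_\infty$, that $u_\infty(0) = 0$, and that $u_\infty \not\equiv 0$ force the limiting geometry into one of two cases: either $D_\infty = \RR^d$ (purely interior blow-up, $\dist(p_i, \pD_i)/r_i \to +\infty$), or $D_\infty$ is a half-space with $0 \in \partial D_\infty$ and $w_d^\infty$ normal to $\partial D_\infty$ (boundary blow-up with $\dist(p_i, \pD_i)/r_i \to 0$). The intermediate case $0 < \dist(0, \partial D_\infty) < \infty$ is excluded, since a non-zero linear function cannot simultaneously vanish at $0$ and on an affine hyperplane not through $0$.

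In either surviving case, the normalization $\iint_{B_1(0) \cap D_\infty} \left| a(Y \cdot w_d^\infty) \right|^2 dY = 1$ pins $|a|$ down to a strictly positive purely dimensional constant, say $|a| \geq \alpha_d^{*} > 0$. On the other hand, $p_i \in \Ct_{r_i}(u_i)$ gives $\inf_{B_\beta(0)} |\nabla T_{p_i, r_i} u_i| \leq \alpha_0$. Here lies the main technical point: to pass this inequality to the limit I need $\nabla T_{p_i, r_i} u_i \to \nabla u_\infty$ uniformly on $\overline{B_\beta(0)} \cap \overline{D_\infty}$, which in the boundary blow-up case rests on the Dini-type boundary gradient estimates of \cite{DEK, CK} to ensure equicontinuity of the rescaled gradients up to $\partial D_\infty$ (in the interior case the standard interior $C^{1,\theta}$ bound suffices). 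Granted this, selecting $Y_i \in \overline{B_\beta(0)}$ realizing (up to $o(1)$) the infimum, passing to a convergent subsequence $Y_i \to Y_\infty$, and using uniform convergence, we deduce $|\nabla u_\infty(Y_\infty)| \leq \alpha_0$. But $|\nabla u_\infty| \equiv |a| \geq \alpha_d^{*}$ on $D_\infty$, and $\alpha_0 < \alpha_d^{*}$ by our choice of $\alpha_0$ (see the footnote preceding \eqref{def:Cru}); this contradiction completes the proof.
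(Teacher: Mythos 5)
Your proof is correct and follows essentially the same route as the paper: blow-up by contradiction via the compactness of Proposition \ref{prop:cpt} (and its interior counterpart from Lemma \ref{lm:urunifgrad}), unique continuation forcing $u_\infty$ to be linear, exclusion of the intermediate blow-up geometry, and contradiction with $p_i\in\Ct_{r_i}(u_i)$ by choosing $\alpha_0<\alpha_d^1/\sqrt{2}$. The only deviation is cosmetic: you make explicit the uniform-up-to-boundary gradient convergence needed to pass the infimum to the limit (via the Dini $C^1$ estimates), whereas the paper elides this step in Lemma \ref{lm:sym} and, in the parallel argument inside Proposition \ref{prop:tn}, disposes of it more quickly via $L^2$-to-a.e.\ convergence plus continuity.
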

\begin{proof}
	Assume the statement is false. Then there exist sequences $\delta_i, r_i \to 0$, $(u_i, D_i) \in \HH(R, \Lambda)$, $p_i \in \Ct_{r_i}(u_i) \cap \mathcal{N}(u_i) \cap \overline{D}_i$ and $(d-1)$-orthonormal vectors $\{\omega_i^1, \cdots, \omega_i^{d-1} \}$, such that
	\begin{equation}\label{cl:sym}
		\frac{1}{r_i^{d-2}} \iint_{A_{3r_i, 4r_i}(p_i) \cap D_i} \sum_{j=1}^{d-1} |\langle \nabla u_i(Z), \omega_i^j \rangle |^2 ~dZ < \delta_i. 
	\end{equation} 
	We consider the rescalings $T_{p_i, r_i} u_i$ of $u_i$'s, see Definition \ref{def:Tpr}. Then it follows from \eqref{cl:sym} and the normalization \eqref{as:nmlz} that
		\begin{equation}\label{cl:rescale} 
			\iint_{A_{3, 4}(0) \cap \frac{D_i-p_i}{r_i}} \sum_{j=1}^{d-1} |\langle \nabla T_{p_i, r_i} u_i(Z), w_i^j \rangle |^2 ~dZ < \delta_i. 
		\end{equation}
	We claim that modulo passing to a subsequence
   \[ T_{p_i, r_i} u_i \text{ converges uniformly and in } W^{1,2} \text{ to a harmonic function } u_\infty \text{ in } D_\infty \cap B_5(0). \] 
   In fact, if infinitely many $p_i$'s are boundary points (i.e. $p_i \in \pD_i$) we appeal to Proposition \ref{prop:cpt}, and $D_\infty$ is an (possibly tilted) upper half-space.
   If infinitely many $p_i$'s are interior points, then compactness follows from the estimate in Lemma \ref{lm:urunifgrad} and the assumption 
   \[ T_{p_i, r_i} u_i(0) = u(p_i)=0. \]
   Moreover, the limit domain $D_\infty$ is an upper half-space with $0\in \pD_\infty$, an upper half-space with $0\in D_\infty$, or the entire space $\RR^d$, depending on
   \[ \frac{\dist(p_i, \pD_i)}{r_i} \text{ converges to } 0, \text{ a positive number, or }\infty, \text{ respectively.} \]
	On the other hand, since for each $i$ the vectors $\{w_i^1, \cdots, w_i^{d-1}\}$ form a $(d-1)$-orthonormal basis, modulo passing to a subsequence, they converge to a $(d-1)$-tuple of orthonormal vectors, denoted by $\{w_1, \cdots, w_{d-1}\}$.
	
	 Therefore passing the inequality \eqref{cl:rescale} to the limit, we get
	\begin{equation}
		\iint_{A_{3,4}(0) \cap D_\infty} \sum_{j=1}^{d-1} |\langle \nabla u_\infty(Z), w_j \rangle|^2 ~dZ = 0.
	\end{equation}
	This implies that $\langle \nabla u(Z), w_j \rangle \equiv 0$ in the annulus $A_{3, 4}(0)$, for every $j=1, \cdots, d-1$. Since $\langle \nabla u(Z), w_j\rangle$ is also a harmonic function in $D_\infty \cap A_5(0)$, by unique continuation theorem it vanishes everywhere. In other words $u_\infty$ is invariant in the $w_j$-direction, for every $j=1, \cdots, d-1$. Since $u_\infty$ is a harmonic function in $D_\infty$ with $\iint_{B_1(0)} u_\infty^2 ~dZ = 1$, it follows that $u_\infty$ is a (non-trivial) linear function. In particular $u_\infty$ is homogeneous with respect to the origin of degree $1$. The second case where $D_\infty$ is an upper half-space with $0\in D_\infty$ is impossible. Indeed $u_\infty$ vanishes on $B_5(0) \cap \partial D_\infty$ and $u_\infty(0) = 0$ at an interior point $0$ imply that $u_\infty$ must be a trivial function. In the first case where $D_\infty$ is an upper half-space with $0\in \pD_\infty$, by the same computation as in the proof of \eqref{eq:tn} we have $|\nabla u_\infty| \equiv \alpha_d^1$. In the third case where $D_\infty = \RR^d$, by $u_\infty(0) = 0$ and symmetry we similarly get $|\nabla u_\infty| = \frac{ \alpha_d^1}{\sqrt{2}}$.
	
	On the other hand, recall that $p_i \in \Ct_{r_i}(u_i)$ means that
	\[ \inf_{B_\beta(0)} |\nabla T_{p_i, r_i} u_i| \leq \alpha_0. \]
	By choosing $\alpha_0$ sufficiently small so that $\alpha_0 < \frac{\alpha_d^1}{\sqrt{2}} < \alpha_d^1$, we get a contradiction.
	
	
\end{proof}

Finally, combining \eqref{tmp:beta1}, Lemma \ref{lm:sym} and (depending on whether $X\in \pD$ or $X\in D$) \eqref{eq:betaintm} or \eqref{eq:betacase23}, we obtain
\begin{align*}
	\left|\beta_\mu^{d-2}(p, r) \right|^2 & = \lambda_{d-1} + \lambda_{d} \\
	& \leq \frac{2}{d-1} \frac{1}{C_s} \sum_{j=1}^{d-1} \frac{\lambda_j }{r^{d-2}} \iint_{A_{3r, 4r}(p) \cap D} |\langle \nabla u(Z), w_j \rangle |^2 ~dZ \\
	& \lesssim \frac{1}{r^{d-2}} \int_{B_r(p)} \left( \widetilde{W}_X(r) + \delta_{in} \cdot \chi_{ r_{cs}(X)/6 < r \leq \frac{1}{6} r_{cs}(X) } \right) ~d\mu(X) + \frac{\mu(B_r(p))}{r^{d-2}} \left( r + \theta(24 r) \right)
\end{align*}
which finishes the proof of Theorem \ref{thm:L2appx}.

\section{Covering of the quantitative stratum}\label{sec:covering}
\begin{lemma}\label{lm:covering}
	Let $R, \Lambda >0 $ and $ \epsilon, \rho\in (0, 1/2)$ be fixed. There exist $\delta, r_{c}>0$ such that the following holds. 
	For any $(u, D) \in \mathfrak{H}(R, \Lambda) $, any radii $r_0, r_*$ satisfying $0<r_0<r_* \leq r_{c}$, and any subset $\St \subset \widetilde{\Ct}_{r_0}(u) \cap \mathcal{N}(u)\cap B_{r_{c}\tilde{\theta}(r_{c})}(\pD)$, let $\Lambda_* := \sup_{X\in \St \cap B_{2r_*}(0)} N_X(r_*)$.\footnote{By Lemmas \ref{lm:freqbd} and \ref{lm:freqbd_in} we know the bound $\Lambda_* \leq C(\Lambda)$. } There exists a finite covering of $\St \cap B_{r_*}(0)$ such that
	\[
		\St \cap B_{r_*}(0) \subset \bigcup_{X\in \mathscr{C}} B_{r_X}(X), \quad \text{ with } r_X \geq r_0; 
	\] 
	\begin{equation}\label{eq:disjoint}
		B_{r_X/5}(X) \cap B_{r_{X'}/5}(X') = \emptyset, \quad \text{ for every distinct pair } X, X' \in \CC;
	\end{equation}
	and 
	\begin{equation}\label{eq:pkest}
		\sum_{X\in \mathscr{C}} r_X^{d-2} \leq C_1 C_V \cdot r_*^{d-2}.
	\end{equation}
	Here $C_V>0 $ is a dimensional constant, and $C_1>0$ depends on the dimension as well as $\rho$.
	Moreover, for each center point $X\in \mathscr{C}$, one of the following alternatives holds:
	\begin{enumerate}[label=(\roman*)]
		\item \label{item:terminal} \underline{terminal ball:} $r_X= r_0$;
		\item \label{item:smalldim} \underline{small dimension}: $r_X>r_0$ and moreover, the set of points 
			\[ F_X = \{Y\in \St \cap B_{2r_X}(X): N_Y(\rho r_X/10) \geq \Lambda_* - \delta \} \]
			is contained in $B_{\rho r_X/5}(V_X)$, where $V_X$ is some $(d-3)$-dimensional affine subspace;
		\item \label{item:deffreqdrop} \underline{definite frequency drop}:
			\begin{equation}\label{eq:deffreqdrop}
				N_Y(r_X/10) < \Lambda_* -\delta, \quad \text{ for any } Y \in \St \cap B_{2r_X}(X).
			\end{equation}
	\end{enumerate}
\end{lemma}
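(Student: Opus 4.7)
My plan is to follow the standard Naber--Valtorta style inductive covering scheme, iterating on dyadic scales and classifying balls at each step according to whether they satisfy one of the three stopping alternatives.

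I would set up the induction as follows. Start with a single ball $B_{r_*}(0)$ covering $\St\cap B_{r_*}(0)$. Given a current ball $B_r(X)$ in the covering with $X\in \St\cap B_{r_*}(0)$ and $r\geq r_0$, check in order whether alternatives \ref{item:terminal}, \ref{item:smalldim}, or \ref{item:deffreqdrop} hold. If any holds, declare it a \emph{stopping ball} and freeze it. If none holds, then $r>r_0$, there exists $Y\in \St\cap B_{2r}(X)$ with $N_Y(r/10)\geq \Lambda_*-\delta$, and the set $F_X=\{Y\in\St\cap B_{2r}(X):N_Y(\rho r/10)\geq \Lambda_*-\delta\}$ is \emph{not} contained in the $(\rho r/5)$--tubular neighborhood of any $(d-3)$--dimensional affine plane. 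A simple iterative selection (pick any point of $F_X$, then at each step pick a point of $F_X$ not contained in the $(\rho r/5)$--neighborhood of the affine span of previously picked points) then produces a $(d-2)$--tuple $\{Y_0,\dots,Y_{d-2}\}\subset F_X$ which $(\rho r/5)$--effectively spans a $(d-2)$--dimensional affine subspace $V_X$. At this point I invoke Proposition~\ref{prop:tn} (applied at the scale $r/10$, with parameters $\tau=\rho/2$ and $\rho$ replaced by $\rho/10$, after checking $r/10\leq r_{tn}$ and the uniform bound $\sup N_X\leq \Lambda_*$ coming from Lemma~\ref{lm:freqbd}/\ref{lm:freqbd_in}) to conclude both that $\St\cap B_{2r}(X)\subset B_{2\beta r}(V_X)$ and that $N_Z(\rho r/10)\geq \Lambda_*-\delta$ for \emph{every} $Z\in B_{2\beta r}(V_X)\cap\St$. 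I then cover $\St\cap B_r(X)$ by Vitali-type balls of radius $\rho r$ centered on $\St$, restricted to the tubular neighborhood $B_{2\beta r}(V_X)$; since $V_X$ has dimension $d-2$, a volume count gives at most $C_V\rho^{-(d-2)}$ such subballs.

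To obtain the packing estimate \eqref{eq:pkest} I sum over generations. The crude geometric bound from the preceding paragraph propagates the factor $C_V$ once per refinement step, which would be fatal if iterated naively. The Reifenberg mechanism is what tames this. Consider the packing measure $\mu=\sum_{X\in\CC_{\text{good}}}\omega_{d-2}r_X^{d-2}\delta_X$ associated with a generation of non-stopping centers together with their descendant terminals. Theorem~\ref{thm:L2appx} applied on a ball $B_s(p_0)\subset B_{2r_*}(0)$ with $p\in\spt\mu$ bounds
\[
|\beta_\mu^{d-2}(p,s)|^2 \lesssim \frac{1}{s^{d-2}}\int_{B_s(p)}\bigl(\widetilde W_X(s)+\delta_{in}\chi_{r_{cs}(X)/6<s\leq r_{cs}(X)}\bigr)\,d\mu(X)+\frac{\mu(B_s(p))}{s^{d-2}}(s+\theta(24s)).
\]
Integrating $ds/s$ from $0$ to $s_0$ and using Fubini, the $\widetilde W_X$ term telescopes (because $N_X$ is monotone with total variation bounded by $\Lambda_*\leq C(\Lambda)$ in each good ball--restricted to scales where no definite drop has been registered yet the drop is at most $\delta$), while the $\delta_{in}$ term contributes only over $\log 6$ dyadic scales per $X$, and the $s+\theta(24s)$ term contributes $r_c+\int_0^{r_c}\theta(24s)/s\,ds$ which we make small by the choice of $r_c$. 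The right-hand side is therefore at most $\eta_{dr}\,s_0^{d-2}\mu(B_{s_0}(p_0))$ provided we first choose $\delta$ small (to bound the telescoping sum by $\eta_{dr}/3$), then $\delta_{in}$ small, then $r_c$ small. An application of the Discrete Reifenberg Theorem~\ref{thm:Reifenberg}, after rescaling $B_{r_*}(0)$ to $B_1(0)$, yields $\sum r_X^{d-2}\leq C_{dr}r_*^{d-2}$ for the disjointified subcollection $\{B_{r_X/5}(X)\}$ guaranteed by a standard Vitali selection (giving \eqref{eq:disjoint}), and the non-disjointified count loses only a bounded multiplicative constant depending on $\rho$.

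The one substantive obstacle I anticipate is checking the Reifenberg smallness hypothesis uniformly in the base ball $B_s(p_0)$, not just in the top ball. The key observation is that the hypothesis on $\St\subset\widetilde\Ct_{r_0}(u)$ combined with Lemmas~\ref{lm:freqbd}--\ref{lm:freqbd_in} gives a uniform upper bound $N_X(r)\leq \Lambda_*+o(1)$ for \emph{every} $X\in\St\cap B_{2r_*}(0)$ at every relevant scale, so the telescoping sum $\sum_{j}\widetilde W_X(2^{-j}s_0)\leq N_X(6s_0)-N_X(0)\leq C(\Lambda)$ is uniform in $p_0$. Combined with $\mu(B_s(p_0))\lesssim \mathcal{H}^{d-2}$--type control, which itself is an output of Reifenberg applied inductively over generations (a bootstrap that only requires a small constant on one scale to close, thanks to the smallness of $\delta, \delta_{in}, r_c$), this closes the argument. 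The alternatives \ref{item:terminal}--\ref{item:deffreqdrop} account exactly for the three ways a ball can be frozen; in all three cases the ball is not refined further, and the total packing of frozen centers is controlled by $C_1 C_V r_*^{d-2}$ with $C_V$ coming from the Reifenberg constant $C_{dr}$ and $C_1$ from the factor $\rho^{-(d-2)}$ absorbed into the Vitali counts.
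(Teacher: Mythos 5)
Your overall strategy is the right one: it is exactly the Naber--Valtorta inductive-covering-plus-Discrete-Reifenberg argument that the paper uses, with Proposition~\ref{prop:tn} supplying the cone-splitting and Theorem~\ref{thm:L2appx} supplying the $\beta$-number control. However, there is a genuine gap in the way you verify the Reifenberg smallness hypothesis, and it concerns the central mechanism of the whole argument.

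In your final paragraph you write that the telescoping sum satisfies $\sum_j \widetilde W_X(2^{-j}s_0)\leq N_X(6s_0)-N_X(0)\leq C(\Lambda)$, uniformly in the base ball. But a uniform bound by $C(\Lambda)$ is \emph{not} enough: after Fubini and the inductive Ahlfors-regularity estimate $\mu(B_r(Y))\lesssim r^{d-2}$, this bound delivers $\int_{B_s(p_0)}\int_0^s |\beta_\mu^{d-2}|^2\,\frac{dr}{r}\,d\mu \lesssim C(\Lambda)\,s^{d-2}$, which is useless since $\eta_{dr}$ is a fixed dimensional constant independent of $\Lambda$. The point the paper makes --- and which you do not articulate --- is that the packing measure is supported \emph{only} on centers $X$ (terminal balls and precursors, the paper's set $\CC'$) for which the pinching conclusion \eqref{eq:delta0pinch} of Proposition~\ref{prop:tn} has already been recorded: $N_X(\tilde r_X/10)\geq \Lambda_*-\delta_0$ (the paper's inequality \eqref{eq:sfd}). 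Because the $r$-integral in the Reifenberg hypothesis effectively starts at $\tilde r_X/5$ (for smaller $r$ the $\beta$-number vanishes by disjointness), the telescoping sum for such an $X$ is bounded by $N_X(6s_0)-N_X(\tilde r_X/10)\leq \Lambda_*-(\Lambda_*-\delta_0)=\delta_0$, which is small precisely because $\delta_0$ is a free parameter. Your middle paragraph gestures at this (``provided we first choose $\delta$ small to bound the telescoping sum by $\eta_{dr}/3$'') but the mechanism is left unexplained and is in tension with your later $C(\Lambda)$ claim. A related issue is that you conflate $\delta$ and $\delta_0$: in Proposition~\ref{prop:tn} one fixes $\delta_0$ \emph{first}, and the proposition then produces a (generally smaller) $\delta$ for which the effectively-spanning hypothesis implies the $\delta_0$-pinching. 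The covering uses $\delta$ to define the stopping sets $F_X$ and the alternatives, but uses $\delta_0$ to control the telescoping sum; keeping these separate is essential, and writing the conclusion of Proposition~\ref{prop:tn} with $\delta$ in place of $\delta_0$ makes the argument circular. Once the small-frequency-drop property of $\spt\mu$ is made explicit, the rest of your outline (base case $t=r_0$, one-scale rough estimate, Discrete Reifenberg, induction up in dyadic scales of $\rho$) matches the paper's proof.
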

\begin{remark}\label{rmk:steprho}
	For convenience and without loss of generality, we assume that both $r_0, r_*$ are some integer power of $\rho$, i.e.
	\[ r_0 = \rho^{j_0}, \quad r_* = \rho^{j_*}, \quad \text{ where } j_0 > j_* \text{ are positive integers}. \]
\end{remark}
\begin{proof}
	We choose the parameters in the following order:
	\begin{itemize}
		\item Let $\delta_0>0$ be fixed whose value is to be determined later in \eqref{def:delta0}.
		\item Let $\delta, r_{tn}, \beta$ be the values determined in Proposition \ref{prop:tn} with given values of $R, \Lambda$ and $\delta_0, \rho/10, \rho/10$ (that is, we take $\rho/10$ in place of $\rho$ and $\tau$ in the statement).
		\item Let $\delta_{in} \in (0, \delta_0/2]$ be fixed whose value is to be determined later in \eqref{def:deltain}, and $r_{in}$ as given in Lemma \ref{lm:spvarin_far} with parameters $\rho/10$ and $\delta_{in}$.
		\item Let $r_b= r_b(\epsilon, \delta_{in})$ be the radius determined in Theorem \ref{thm:L2appx}.
		\item Finally, let 
	\begin{equation}\label{def:rct}
		\tilde{r}_c  := \min\left\{r_{tn}, r_{in}, r_b, \frac{1}{80}R \right\},
	\end{equation}
	and we will determine the value of $r_c \leq \tilde{r}_c$ later in \eqref{def:rc}.
	\end{itemize}
	By Lemmas \ref{lm:freqbd} and \ref{lm:freqbd_in}, the assumption $2\tilde{r}_c \leq \frac{R}{40}$ guarantees the uniform upper bound
	\[ N_{X_0}(\tilde{r}_c) \leq C(\Lambda, R), \quad \text{ for any } X_0 \in B_{2\tilde{r}_c}(0) \cap \pD. \]
	
	\textbf{Construction of the covering}.
	To start let
	\[ F_0 := \{X\in \St \cap B_{2r_*}(0): N_X(\rho r_*/10) \geq \Lambda_* -\delta\}. \]
	The case when $F_0 = \emptyset$ is trivial: We just cover $\St \cap B_{r_*}(0)$ by a finite number of balls centered at $X\in \St \cap B_{r_*}(0)$ and with radius $r_X= \rho r_*$, and label the collection of centers as $\CC_{\varnothing}$. Clearly
	\begin{equation}\label{eq:countep1}
		\sum_{X\in \CC_{\varnothing}} r_X^{d-2} = (\rho r_*)^{d-2} \cdot \#\CC_{\varnothing} \leq 10^d\rho^{-2} r_*^{d-2} =: C_1 r_*^{d-2}; 
	\end{equation} 
	and for any $Y\in \St \cap B_{2\rho r_*}(X) \subset \St \cap B_{2r_*}(0)$,
	\[ N_Y(r_X/10) = N_Y(\rho r_*/10) < \Lambda_* - \delta.  \]
	
	Now we assume $F_0 \neq \emptyset$. If $F_0$ is contained in $B_{\rho r_*/5}(V_0) \cap B_{2r_*}(0)$, where $V_0$ is some $(d-3)$-dimensional affine subspace, then the trivial cover $B_{r_*}(0)$ satisfies alternative \ref{item:smalldim}. We say the center $0\in \CC_{sd}$, where the subindex is to indicate the \textit{small dimension} alternative. If not, then $F_0$ 
	$\rho r_*/5$-effectively span a (at least) $(d-2)$-dimensional affine subspace (denoted by $L_0$) in $B_{2r_*}(0)$. Then by Proposition \ref{prop:tn} as well as the inclusion $\widetilde{\Ct}_{r_0}(u) \subset \Ct_{r}(u)$ for any $r_0 \leq r\leq r_c$, we get
	\begin{equation}\label{eq:tnstep0}
		\St \cap B_{2r_*}(0) \subset B_{2\beta r_* }(L_0),  
	\end{equation} 
	and 
	\begin{equation}\label{eq:freqdropstep0}
		N_X(\rho r_*/10) \geq \Lambda_* - \delta_0
		\quad \text{ for every } X\in B_{2\beta r_*}(L_0) \cap B_{2r_*}(0) \cap \overline{D}. 
	\end{equation}
	We cover $\St \cap B_{r_*}(0)$ by a collection of balls centered at $X\in \St \cap B_{r_*}(0)$ with radius $r_X^{1} = \rho r_*$ and label the collection of centers $ \CC_g^{(1)}$:
	\begin{equation}\label{eq:gc0}
		\St \cap B_{r_*}(0) \subset \bigcup_{X\in \CC_g^{(1)}} B_{\rho r_*}(X), \quad \text{ with } X\in \St \cap B_{r_*}(0); 
	\end{equation} 
	and they satisfy
	\[ B_{\rho r_*/5}(X) \cap B_{\rho r_*/5}(X') = \emptyset, \quad \text{ for any distinct } X, X'\in \mathscr{C}_g^{(1)}.  \] 
	By \eqref{eq:tnstep0} and \eqref{eq:freqdropstep0} each $X\in \CC_g^{(1)}$ has small frequency drop, in the sense that
	\[ N_X(\rho r_*/10) \geq \Lambda_* - \delta_0. \]
	Because of this we call these balls the \textit{good balls} of level $1$.

	Now we state the construction inductively. Let $i$ be an integer with $0\leq i < j_0 - j_*$ (see Remark \ref{rmk:steprho}). Let $X\in \CC_g^{(i)}$ be fixed, and we consider the \textit{bad ball} $B_{r_X^i}(X)$ with $r_X^i = \rho^i r_*> r_0$ and the set
	\[ F_X^{(i)}:= \{Y\in \St \cap B_{2r_X^i}(X): N_Y(\rho r_X^i/10) \geq \Lambda_* - \delta \}. \]
	(We use the convention that $\CC_g^{(0)} = \{0\}$ and thus $B_{r_*}(0)$ is the ball in the base case.) Our goal is to cover $\St \cap B_{r_X^i}(X)$. 
	\begin{itemize}
		\item $\CC_{\varnothing}$: If the ball $B_{r_X^i}(X)$ satisfies that $F_X^{(i)} = \emptyset$, then we cover $\St \cap B_{r_X^i}(X)$ trivially by finitely many balls $\{B_{r_Y}(Y)\}$ centered at $Y \in \St \cap B_{r_X^i}(X)$ with radius $r_Y = \rho r_X^i = \rho^{i+1} r_*$, and label the collection of centers $\CC_{\varnothing}^{(i+1)}(X)$. These balls satisfy the \textit{definite frequency drop} alternative \ref{item:deffreqdrop}. Moreover, similarly to \eqref{eq:countep1} we have
			\begin{equation}
				\sum_{Y\in \CC_{\varnothing}^{(i+1)}(X)} (r_Y)^{d-2} \leq C_1 (r_X^i)^{d-2}.
			\end{equation}
		\item $\CC_{sd}$: If the ball $B_{r_X^i}(X)$ is such that $F_X^{(i)}$ satisfies the \textit{small dimension} alternative (i.e. $F_X^{(i)}$ is contained in a $\rho r_X^i/5$-tubular neighborhood of a $(d-3)$-dimensional affine subspace), then we simply cover $\St \cap B_{r_X^i}(X)$ by $B_{r_X^i}(X)$. In this case we say 
			\[ X\in \CC_{sd}^{(i+1)} \quad \text{ and } \quad r_X = r_X^i = \rho^i r_*. \]
		\item $\CC_g$: Alternatively, the non-empty set $F_X^{(i)}$ $\rho r_X^i/5$-effectively span a $(d-2)$-dimennsional affine subspace, denoted by $L_X$. Then as in \eqref{eq:gc0} we cover $\St \cap B_{r_X^i}(X)$ by a collection of balls centered at $Y\in \CC_g^{(i+1)}(X)$ with radius $r_Y^{i+1} = \rho r_X^i = \rho^{i+1} r_*$:
			\begin{equation}\label{eq:gci}
				\St \cap B_{r_X^i}(X) \subset \bigcup_{Y \in \CC_g^{(i+1)}(X)} B_{\rho r_X^i}(Y), \quad \text{ with } Y\in \St \cap B_{r_X^i}(X); 
			\end{equation} 
	and they satisfy
	\[ B_{r_Y^{i+1}}(Y) \cap B_{r_{Y'}^{i+1}}(Y)  = \emptyset, \quad \text{ for any distinct } Y, Y'\in \mathscr{C}_g^{(i+1)}(X).  \] 
	Moreover, each $Y\in \CC_g^{(i+1)}(X)$ has small frequency drop, in the sense that
	\[ N_X(\rho r_X^i/10) \geq \Lambda_* - \delta_0. \]	
%
		If $r_Y^{i+1} = r_0$, we call these balls the \textit{terminal balls} and denote $Y\in \CC_0 $ and $ r_Y = r_Y^{i+1} = r_0$. Otherwise, for any $Y\in \CC_g^{(i+1)}(X)$ we call $B_{r_Y^{i+1}}(Y)$ a \textit{good ball} of level $(i+1)$. We repeat this argument and keep subdividing the \textit{good balls}. 
	\end{itemize}
	
	By the above construction, after at most $(j_0-j_*)$ levels all the remaining balls $B_{r_X}(X)$ fall into one of the three categories:
	\begin{itemize}
		\item $X\in \CC_{\varnothing}^{(i)}$ for some $i \in \{1, \cdots, j_0 - j_*\} $, $r_X = \rho^i r_*$ and $B_{r_X}(X)$ satisfies \ref{item:deffreqdrop}.
		\item $X\in \CC_{sd}^{(i)}$ for some $i\in \{1, \cdots, j_0 - j_*\}$, $r_X =\rho^{i-1} r_*$ and $B_{r_X}(X)$ satisfies \ref{item:smalldim}. 
		\item $X\in \CC_0$ with $r_X = r_0$, and thus $B_{r_X}(X)$ satisfies \ref{item:terminal}.
	\end{itemize}
	At each level, by considering all the coverings as a whole, we can guarantee they are almost disjoint (as in \eqref{eq:disjoint}); by taking $\rho < 2/3$ and removing redundant new balls if necessary, we can guarantee they are also almost disjoint from all the previous balls with centers in $\CC_{\varnothing}$ or $\CC_{sd}$.
	It remains to prove the packing estimate \eqref{eq:pkest}.
	We denote
	\[ \CC_{\varnothing} = \bigcup_{i=1}^{j_0-j_*} \CC_{\varnothing}^{(i)}, \quad \CC_{sd} = \bigcup_{i=1}^{j_0-j_*} \CC_{sd}^{(i)}, \quad \CC = \CC_{\varnothing} \bigcup \CC_{sd} \bigcup \CC_0,  \]
	and we denote the intermediate balls
	\[ \CC_g = \bigcup_{i=1}^{j_0 - j_* -1} \CC_g^{(i)}. \]
	Moreover, for every point $Y \in \CC_{\varnothing}$ or $Y \in \CC_{sd}$, there must exist a precursor $X\in \CC_{g}^{(i-1)}$ with $i-1 = 0, \cdots, j_0-j_*$, such that $Y \in \CC_{\varnothing}^{(i)}(X)$ or $Y\in \CC_{sd}^{(i)}(X)$, respectively. In that case we denote $\tilde{r}_X = r_X^i>r_0$. We let $\CC_{pre}$ be the collection of all such $X$'s and let $\CC' := \CC_{pre} \cup \CC_0$. We denote $\tilde{r}_X = r_0$ if $X\in \CC_0$. In particular we have the sets $B_{\tilde{r}_X/5}(X)$ with $X\in \CC'$ are pairwise disjoint. Moreover, every $X\in \CC'$ satisfies
	\begin{equation}\label{eq:sfd}
		N_X(\tilde{r}_X/10 ) \geq \Lambda_* - \delta_0.
	\end{equation} 
	
	By the above construction, we have
	\begin{align*}
		\sum_{X\in \CC} (r_X)^{d-2} & \leq \sum_{X\in \CC_{\varnothing}} (r_X)^{d-2} + \sum_{X\in \CC_{sd}} (r_X)^{d-2} + \sum_{X\in \CC_0} (r_X)^{d-2} \\
		& \leq \max\{C_1, 1\} \sum_{X\in \CC_{pre}} (\tilde{r}_X)^{d-2} + \sum_{X\in \CC_0} (\tilde{r}_X)^{d-2} \\
		& \leq C_1 \sum_{X\in \CC'} (\tilde{r}_X)^{d-2}.
	\end{align*}
	Therefore it suffices to show 
	\begin{equation}\label{eq:pkestp}
		\sum_{X\in \CC'} (\tilde{r}_X)^{d-2} \leq C_V \cdot (r_*)^{d-2}
	\end{equation}
	for some universal constant $C_V$.
	This way, we reduce the packing estimates to center points with small frequency drop, which are good points for our purpose. 

	\textbf{Packing estimate}.
	 We first make the following remark: for any $X\in \CC_g^{(i)}$, we have the rough estimate
	\begin{equation}\label{est:crude}
		\sum_{Y\in \CC_g^{(i+1)}} (r_Y^{i+1})^{d-2} = (\rho r_X^i)^{d-2} \cdot \# \CC_g^{(i+1)} \leq (\rho r_X^i)^{d-2} \cdot \left(\frac{10}{\rho} \right)^{d-2} = C_0 (r_X^i)^{d-2}.
	\end{equation}
	Using this estimate we get
	\[ \sum_{X\in \CC} (r_X)^{d-2} = \sum_{X\in \CC_{\varnothing}} (r_X)^{d-2} + \sum_{X\in \CC_{sd}} (r_X)^{d-2} + \sum_{X\in \CC_0} (r_X)^{d-2} \lesssim C_0^{j_0-j_*} (r_*)^{d-2},  \]
	but the constant $C_0^{j_0-j_*} $ grows polynomially with the ratio $r_*/r_0$.
	
	Instead, we prove \eqref{eq:pkestp} inductively. Let $ \CC'_{t} =\{ X\in \CC': \tilde{r}_X \leq t\}$, where the relevant cases for us are $t=r_0, r_0/\rho, \cdots, r_*/6$.
	Here we denote $\tilde{r}_X = r_0$ if $X\in \CC_0$. We also let 
	\[ \mu = \omega_{d-2} \sum_{X\in \CC'} (\tilde{r}_X)^{d-2} \delta_{X}\quad  \text{ and } \quad \mu_t = \omega_{d-2} \sum_{X\in \CC'_t} (\tilde{r}_X)^{d-2} \delta_{X} \leq \mu_{r_*} = \mu . \]
	We will prove the following estimate inductively on the level of $t= r_0, r_0/\rho, \cdots, r_*/6$\footnote{If $r_*/6 \neq r_0/\rho^{i}$ for any integer $i\in \mathbb{N}$, we just replace it by $r_0/\rho^{i_*}$ where $i_*$ is the largest integer such $r_0/\rho^i \leq r_*/6$.}: there exists a universal constant 
	\[ C'_V = \max\{11^d \omega_{d-2}, 2^{d-2} C_{dr} \omega_{d-2}\}>0 \] such that
	\[ \mu_t(B_t(X)) \leq C'_V \cdot t^{d-2}, \quad \text{ for every } X\in B_{r_*}(0). \]
	Here $C_{dr}$ is the dimensional constant in the Discrete Reifenberg Theorem \ref{thm:Reifenberg}.
	It is easy to see the above statement follows from
	\begin{equation}\label{claim:pkest}
		\mu_t(B_{2t}(X)) \leq C'_V \cdot t^{d-2}, \quad \text{ for every } X\in B_{r_*}(0) \cap \spt \mu_t. 
	\end{equation} 
	
	We start from the base case $t=r_0$. In this case $\CC'_{r_0}$ consists entirely of points in $\CC_0$, and they satisfy
	\[ B_{r_0/5 }(Y) \cap B_{r_0/5}(Y') = \emptyset \quad \text{ for every distinct } Y, Y' \in \CC_0. \]
	Hence
	\begin{align*}
		\mu_{r_0}(B_{2r_0}(X)) & \leq \omega_{d-2}(r_0)^{d-2} \cdot \# \text{ of } \CC_0 \text{ inside }B_{2r_0}(X) \\
		& \leq 11^d \omega_{d-2} (r_0)^{d-2}.
	\end{align*}
	
	Now assuming we have proven the claim \eqref{claim:pkest} for $t=r_0, \cdots, r_0/\rho^i$, and we shall prove it for $t=r_0/\rho^{i+1}$. Let $\bar t = r_0/\rho^{i+1}$. We can split the measure as follows
	\[ \mu_{\bar t} = \mu_{\rho \bar t} + \tilde{\mu}_{\bar t} := \omega_{d-2} \sum_{X\in \CC'_{\rho \bar t}} (\tilde{r}_X)^{d-2} \delta_X + \omega_{d-2} \sum_{X\in \CC': \rho \bar t < \tilde{r}_X \leq \bar t} (\tilde{r}_X)^{d-2} \delta_X.  \]
	We first have a rough estimate
	\begin{align}
		\mu_{\bar t}(B_{2\bar t}(X)) & = \mu_{\rho \bar t}(B_{2\bar t}(X)) + \tilde{\mu}_{\bar t} (B_{2\bar t}(X)) \nonumber \\
		& \lesssim \rho^{-d} \cdot C'_V \cdot (\rho\bar t)^{d-2} + \rho^{-d} \cdot (\bar t)^{d-2} \nonumber \\
		& = C_2  C'_V \cdot (\bar t)^{d-2},\label{eq:roughest}
	\end{align}
	where the constant $C_2$ depends on the dimension and the value of $\rho$.
	The same rough estimate in fact holds for all $\rho \bar t < t \leq \bar t$.
	
	Next we use the Discrete Reifenberg Theorem \ref{thm:Reifenberg} to give the desired bound in \eqref{claim:pkest}.
	To simplify the notation we let 
	\[ \bar{\mu} : = \mu_{\bar t} \res B_{2\bar t}(X), \quad \text{ where } X\in \spt\mu_{\bar t} \text{ is fixed}. \]
	Now let $q\in \spt \bar\mu$ and $0<s\leq 2\bar t$ be arbitrary, and let $p\in B_s(q) \cap \spt \bar{\mu}$ and $0<r\leq s$. Since 
	\[ \spt \bar\mu \subset \CC' \subset \St \subset \mathcal{N}(u),\footnote{Except perhapst the top level, the origin.} \] by Theorem \ref{thm:L2appx} we can bound the $\beta$-number as follows
	\begin{align*}
		\left|\beta_{\bar\mu}^{d-2}(p,r) \right|^2 & \leq C_b \left[ \frac{1}{r^{d-2}} \int_{B_r(p)} \left( \widetilde{W}_Y(r) + \delta_{in} \cdot \chi_{r_{cs}(Y)/6 < r \leq \frac16 r_{cs}(Y) } \right) ~d\bar{\mu}(Y) \right. \\
		& \qquad \qquad + \left. \frac{\bar\mu(B_r(p)}{r^{d-2}} \left( r + \theta(24r) \right) \right] \\
		& \leq C_b \left[ \frac{1}{r^{d-2}} \int_{B_r(p)} \left( \widetilde{W}_Y(r) + \delta_{in} \cdot \chi_{ r_{cs}(Y)/6 < r \leq \frac16 r_{cs}(Y) } \right) ~d\bar{\mu}(Y) \right. \\
		& \qquad \qquad + \left. C_2 C'_V \cdot \left(  r + \theta(24r) \right) \right].
	\end{align*} 
	Here we also use the rough estimate \eqref{eq:roughest} in the second inequality. 
	Therefore
	\begin{align*}
		 & \int_{B_s(q)} \int_0^s \left|\beta_{\bar\mu}^{d-2}(p,r) \right|^2 \frac{dr}{r} ~d\bar\mu(p) \\
		 & = \int_{B_s(q)} \int_{\tilde{r}_p/5}^s \left|\beta_{\bar\mu}^{d-2}(p,r) \right|^2 \frac{dr}{r} ~d\bar\mu(p) \\
		& \lesssim \int_{B_s(q)} \int_{\tilde{r}_p/5}^s \frac{1}{r^{d-2}} \int_{B_r(p)} \widetilde{W}_Y(r) ~d\bar{\mu}(Y) \frac{dr}{r} ~d\bar\mu(p) 
		\\
		& \qquad + \int_{B_s(q)} \int_0^s  \theta(24r) \frac{dr}{r} ~d\bar\mu(p) + s~\bar\mu(B_s(q)) + \delta_{in}~ \bar\mu(B_{s}(q)) \\
		& =: \I_1 + \I_2 + \I_3 + \I_4.
	\end{align*}
	By Fubini theorem, we get
	\begin{align*}
		\I_1 \lesssim \int_{B_{2s}(q)} \int_0^s \left( \frac{1}{r^{d-2}} \int_{B_r(Y)}\chi_{\tilde{r}_p/5\leq r } ~d\bar\mu(p)\right) \widetilde{W}_Y(r) \frac{dr}{r}  ~d\bar\mu(Y).
	\end{align*}
	Let $Y\in B_{2s}(q) \cap \spt\bar\mu$ be fixed. If the radius $r< \tilde{r}_Y/5$, then by almost-disjointness (as in \eqref{eq:disjoint}) we have
	\[ B_r(Y) \cap \spt\bar\mu =\{Y\}, \]
	and thus
	\[ r\geq \tilde{r}_p/5 = \tilde{r}_Y/5. \]
	This contradicts the assumption $r< \tilde{r}_Y/5$. Therefore we always have $r \geq \tilde{r}_Y/5$. It follows that
	\begin{align*}
		\I_1 & \lesssim \int_{B_{2s}(q)} \int_{\tilde{r}_Y/5}^s \left( \frac{1}{r^{d-2}} \int_{B_r(Y)} ~d\bar\mu(p)\right) \widetilde{W}_Y(r) \frac{dr}{r}  ~d\bar\mu(Y) \\
		& \lesssim\int_{B_{2s}(q)} \int_{\tilde{r}_Y/5}^s \left( \frac{\bar\mu(B_r(Y))}{r^{d-2}} \right) \widetilde{W}_Y(r) \frac{dr}{r}  ~d\bar\mu(Y) \\
		& \lesssim \int_{B_{2s}(q)} \int_{\tilde{r}_Y/5}^s \widetilde{W}_Y(r) \frac{dr}{r}  ~d\bar\mu(Y) \quad \text{ by the rough estimate } \eqref{eq:roughest} \\
		& \lesssim \int_{B_{2s}(q)} N_Y(6s) - N_Y(\tilde{r}_Y/10) ~d\bar\mu(Y) \quad \text{ by telescoping on } \widetilde{W}_Y(r) \\
		& \lesssim \delta_0 (2s)^{d-2} \quad \text{by } 6s\leq r_*, \eqref{eq:sfd}, \eqref{eq:roughest} \text{ and a simple covering} \\
		& =: C_4 \delta_0 s^{d-2}.
	\end{align*}
	Besides we have
	\begin{align*}
		\I_2 \lesssim \left(\int_0^{24s} \frac{\theta(r)}{r} ~dr \right) \bar\mu(B_s(q)) =: C_5 \left( \int_0^{24s}\frac{\theta(r)}{r} ~dr \right) s^{d-2}.
	\end{align*}
	Clearly by the rough estimate \eqref{eq:roughest} and $s\leq r_*/6$, we have
	\[ \I_3 \lesssim r_* s^{d-2} =: C_6 r_* s^{d-2}, \quad \I_4 \lesssim \delta_{in} s^{d-2} =: C_7 \delta_{in} s^{d-2}.  \]
	
	Let $\eta_{dr}$ be the constant in Theorem \ref{thm:Reifenberg}. Let
	\begin{equation}\label{def:tauc}
		\tau_c := \sup\left\{\tau>0: \int_0^{24\tau } \frac{\theta(r)}{r} ~dr \leq \eta_{dr}/5C_5 \right\}, 
	\end{equation} 
	and we choose 
	\begin{equation}\label{def:rc}
		r_c = \min\left\{\tilde{r}_c, 4\tau_c, \frac{\eta_{dr}}{5C_6} \right\}. 
	\end{equation} We also choose $\delta_0$ (depending on $d, \Lambda, \epsilon, \rho$) so that 
	\begin{equation}\label{def:delta0}
		\delta_0 = \min\left\{ \frac{\eta_{dr}}{5C_3}, \frac{\eta_{dr}}{5C_4} \right\} 
	\end{equation}  and choose 
	\begin{equation}\label{def:deltain}
		\delta_{in} = \min \left\{ \frac{\delta_0}{2}, \frac{\eta_{dr}}{5C_7} \right\}. 
	\end{equation} 
	This way, we guarantee
	\[ \I_j \leq \frac{\eta_{dr}}{5} s^{d-2}, \quad \text{ for every } j=1, \cdots, 5. \] 
	Therefore
	\[		
		\int_{B_s(q)} \int_0^s \left|\beta_{\bar\mu}^{d-2}(p,r) \right|^2 \frac{dr}{r} ~d\bar\mu(p) \leq 
		\eta_{dr} s^{d-2}.
	\]
	Therefore by applying the Discrete Reifenberg Theorem \ref{thm:Reifenberg} to the ball $B_{2\bar t}(X)$ and the Radon measure $\bar\mu$, we obtain
	\[ \mu_{\bar t}(B_{2\bar t}(X)) = \omega_{d-2} \sum_{Y\in \spt\bar\mu} (\tilde{r}_Y)^{d-2} \leq \omega_{d-2} C_{dr} (2\bar t)^{d-2} \leq C'_V \cdot (\bar t)^{d-2}. \]
	This finishes the proof of the claim \eqref{claim:pkest}. This in turn finishes the proof of the packing estimate \eqref{eq:pkestp} with $C_V = c'_d C'_V$, where $c'_d$ is some dimensional constant to account for a covering of $B_{2r_*}(0)$ by almost-disjoint balls of radius $r_*/6$.
	
\end{proof}

By Lemma \ref{lm:covering} and a purely geometric argument, we get
\begin{lemma}\label{lm:covering2}
	Let $R, \Lambda, \epsilon >0 $ be fixed. There exist $\delta, r_{c}>0$ such that the following holds. 
	For any $(u, D) \in \mathfrak{H}(R, \Lambda) $, any radii $r_0, r_*$ satisfying $0<r_0<r_* \leq r_{c}$, and any subset $\St \subset \widetilde{\Ct}_{r_0}(u)\cap \mathcal{N}(u) \cap B_{r_c \tilde{\theta}(r_c) }(\pD)$, let $\Lambda_* := \sup_{X\in \St \cap B_{2r_*}(0)} N_X(r_*)$. There exists a finite covering of $\St \cap B_{r_*}(0)$ such that
	\[
		\St \cap B_{r_*}(0) \subset \bigcup_{X\in \mathscr{C}} B_{r_X}(X), \quad \text{ with } r_X \geq r_0; 
	\]
	\begin{equation}
		B_{r_X/5}(X) \cap B_{r_{X'}/5}(X') = \emptyset, \quad \text{ for every } X, X' \in \CC \text{ distinct};
	\end{equation}
	\begin{equation}\label{eq:pkest2}
		\sum_{X\in \mathscr{C}} r_X^{d-2} \leq C_1^2 C_V \cdot r_*^{d-2}.
	\end{equation}
	Here both $C_1, C_V>0$ are dimensional constants.
	Moreover, for each center point $X\in \mathscr{C}$, one of the following holds:
	\begin{enumerate}[label=(\roman*)]
		\item \underline{terminal ball:} $r_X= r_0$;
		\item \label{alt:freqdrop} \underline{definite frequency drop}: $r_X> r_0$, and $N_Y(r_X/10) < \Lambda_* -\delta$ for any $Y \in \St \cap B_{2r_X}(X)$.
	\end{enumerate}
\end{lemma}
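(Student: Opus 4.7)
The plan is to derive Lemma~\ref{lm:covering2} from Lemma~\ref{lm:covering} by iteratively eliminating the ``small-dimension'' alternative. Fix $\rho \in (0,1/2)$ to be a dimensional constant (for instance $\rho = 1/4$), so that the constant $C_1$ from Lemma~\ref{lm:covering} becomes purely dimensional. Apply Lemma~\ref{lm:covering} to $\St \cap B_{r_*}(0)$ with this fixed $\rho$ and the given $\epsilon$, producing a covering $\CC^{(0)} = \CC_\varnothing^{(0)} \cup \CC_{sd}^{(0)} \cup \CC_0^{(0)}$ with packing $\sum r_X^{d-2} \leq C_1 C_V r_*^{d-2}$. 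The balls in $\CC_\varnothing^{(0)}$ (definite frequency drop) and $\CC_0^{(0)}$ (terminal) already satisfy the two desired alternatives of the present lemma, so we retain them in the final covering.

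For each $X \in \CC_{sd}^{(0)}$ with parent radius $r_X^{(0)}$, the defining feature is that the high-frequency set
\[
F_X = \{Y \in \St \cap B_{2r_X^{(0)}}(X) : N_Y(\rho r_X^{(0)}/10) \geq \Lambda_* - \delta\}
\]
is contained in $B_{\rho r_X^{(0)}/5}(V_X)$ for some $(d{-}3)$-dimensional affine subspace $V_X$. I would re-apply Lemma~\ref{lm:covering} to $\St \cap B_{r_X^{(0)}}(X)$, with $r_X^{(0)}$ playing the role of $r_*$ and the sub-sup-frequency $\Lambda_*^X := \sup_{Y \in \St \cap B_{2r_X^{(0)}}(X)} N_Y(r_X^{(0)}) \leq \Lambda_*$. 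Because $\Lambda_*^X \leq \Lambda_*$, any definite-drop ball in this sub-covering automatically satisfies the definite-drop condition relative to the original $\Lambda_*$, so those balls may be added to the final collection. Any new $\CC_{sd}$ balls produced in this sub-covering are again refined by the same procedure; this recursion terminates in finitely many steps because each level shrinks radii by at least a factor of $\rho$ and the radii are bounded below by $r_0$.

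The central issue is the packing bound. Naively, each sub-application of Lemma~\ref{lm:covering} multiplies the $\CC_{sd}$-mass by $C_1 C_V$, which would blow up under iteration. The rescue is the $(d{-}3)$-dimensional tube structure: in the sub-application to $B_{r_X^{(0)}}(X)$, the high-frequency sets at sub-scales must (up to enlarged-tube error) still lie in $B_{\rho r_X^{(0)}/5}(V_X)$, so the sub-$\CC_{sd}$ balls are forced to have their centers in this $(d{-}3)$-dimensional tube. A volume estimate for tubular neighborhoods of $(d{-}3)$-dimensional planes then gives that the total $(d{-}2)$-packing of these sub-$\CC_{sd}$ balls is bounded by $\rho \cdot (r_X^{(0)})^{d-2}$ (instead of the trivial $(r_X^{(0)})^{d-2}$). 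Summing the resulting geometric series in $\rho$ across recursion levels gives a total $\CC_{sd}$-contribution bounded by $C_1 C_V r_*^{d-2}/(1-\rho)$; combining with the contributions from the $\CC_\varnothing$ and $\CC_0$ balls produced at each level yields the claimed bound $\sum_{X \in \CC} r_X^{d-2} \leq C_1^2 C_V r_*^{d-2}$ (with $1/(1-\rho)$ absorbed into $C_1$ since $\rho$ is dimensional). The near-disjointness \eqref{eq:disjoint} is then enforced by removing redundant balls via a standard Vitali-type culling.

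The main obstacle will be to rigorously verify the geometric decay of the sub-$\CC_{sd}$ packing across levels. This requires tracking how the tube constraint $F_X \subset B_{\rho r_X^{(0)}/5}(V_X)$ at the parent scale translates into constraints on the high-frequency sets at the smaller scales $\rho^j r_X^{(0)}$ examined in the sub-application's internal iteration. The frequency monotonicity from Proposition~\ref{prop:monotonicity} (and its interior analog Proposition~\ref{prop:intmonotonicity}) together with the spatial variation estimates of Section~\ref{sec:spvar} should suffice to ensure that the high-frequency sets at sub-scales cannot escape a slightly enlarged tube around $V_X$, which is exactly what is needed for the volume argument to close.
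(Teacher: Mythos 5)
Your high-level plan---iterate to eliminate the small-dimension alternative, using the $(d{-}3)$-tube to manufacture a geometric decay factor---is the correct idea and matches the paper's, but the mechanism you propose, namely re-applying Lemma~\ref{lm:covering} inside each small-dimension ball, does not produce the decay, and the gap you flag in your last paragraph is genuine.

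Concretely, when you re-apply Lemma~\ref{lm:covering} to $\St\cap B_{r_X^{(0)}}(X)$, the sub-application recomputes the frequency supremum as $\Lambda_*^X:=\sup_{Y\in\St\cap B_{2r_X^{(0)}}(X)}N_Y(r_X^{(0)})$, which may be strictly less than $\Lambda_*$. The high-frequency sets manipulated inside that sub-application are of the form $\{W: N_W(\cdot)\geq\Lambda_*^X-\delta\}$; when $\Lambda_*^X<\Lambda_*$ these contain points with frequency in the gap $[\Lambda_*^X-\delta,\ \Lambda_*-\delta)$, which are not in $F_X$ and need not lie in or near $B_{\rho r_X^{(0)}/5}(V_X)$. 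Monotonicity of $N$ only says that a point that is high-frequency at a finer scale \emph{relative to the fixed threshold} $\Lambda_*-\delta$ stays in $F_X$; it gives no control on the strictly larger sub-$F$ sets built with the smaller threshold. (Moreover, even when $\Lambda_*^X=\Lambda_*$, the centers of the good balls in the internal induction of Lemma~\ref{lm:covering}---from which the sub-$\CC_{sd}$ balls are drawn---only satisfy the weaker pinch $N_Y\geq\Lambda_*-\delta_0$ with $\delta_0>\delta$, so they too are not confined to $F_X$.) Without the $\rho$-decay the recursion multiplies the $\CC_{sd}$-mass by roughly $C_1C_V$ at each level and the series diverges.

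The paper's proof does not re-invoke Lemma~\ref{lm:covering} on a small-dimension ball. It covers $\St\cap B_{r_X}(X)$ directly in two pieces: the part outside $B_{\rho r_X}(F_X)$ consists of points with $N_Y(\rho r_X/10)<\Lambda_*-\delta$ \emph{by the very definition of $F_X$}, so balls of comparable radius centered there already realize alternative~\ref{alt:freqdrop} and are never revisited; the part inside $B_{\rho r_X}(F_X)\subset B_{6\rho r_X/5}(V_X)$ is covered by balls of radius $\rho r_X$, and a volume count in the $(d{-}3)$-tube bounds their number by $\lesssim\rho^{3-d}$, hence their $(d{-}2)$-packing by $\leq C'_d\,\rho\,r_X^{d-2}$. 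Only these in-tube balls are iterated, and $\rho$ is chosen \emph{depending on the dimension} so that $C'_d\rho<1$---so one cannot simply set $\rho=1/4$; with $\rho$ fixed, $C_1=10^d\rho^{-2}$ becomes a dimensional constant and the geometric series closes. The device you are missing is that the complement of $F_X$ yields the definite frequency drop for free, with no secondary covering lemma needed there.
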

\begin{proof}
	Since the proof is a standard argument, we only sketch the idea here and in particular, state how we choose the value $\rho$.
	It is clear that we only need to improve on balls $B_{r_X}(X)$ which fall the alternative \ref{item:smalldim} of Lemma \ref{lm:covering}. Firstly, we cover $\St \cap B_{r_X}(X) \setminus B_{\rho r_X}(F_X)$ by balls of radius $\rho r_X$, then each ball falls into alternative \ref{item:deffreqdrop} in Lemma \ref{lm:covering}, i.e. we have a $\delta$-frequency drop. Secondly we cover the set $\St \cap B_{r_X}(X) \cap B_{\rho r_X}(F_X)$ by balls of radius $\hat{r}_Y:= \rho r_X$:
	\[ \St \cap B_{r_X}(X) \cap B_{\rho r_X}(F_X) \subset \bigcup_{Y \in \CC_s(X) } B_{\hat{r}_Y}(Y), \quad \text{ with } Y\in \St \cap B_{r_X}(X) \cap B_{\rho r_X}(F_X) \subset \St \cap B_{\frac{6\rho r_X}{5}}(V_X) \]
	where $V_X$ is a $(d-3)$-dimensional affine subspace. Moreover
	\begin{equation}
		B_{\hat{r}_Y/5}(Y) \cap B_{\hat{r}_{Y'}/5}(Y') = \emptyset, \quad \text{ for every } Y, Y' \in \CC_s(X) \text{ distinct}.
	\end{equation} 
	Hence
	\[ \# \CC_s(X) \leq C_d \frac{\left(\frac{6\rho r_X}{5} \right)^3 \cdot (r_X)^{d-3} }{\left(\frac{\rho r_X}{5} \right)^d } \leq C'_d \cdot \rho^{3-d}. \]
	Therefore
	\begin{align*}
		\sum_{Y\in \CC_s(X)} (\hat{r}_Y)^{d-2} = (\rho r_X)^{d-2} \cdot \# \CC_s(X) \leq C'_d \cdot \rho (r_X)^{d-2}.
	\end{align*}
	By choosing $\rho$ so that 
	\begin{equation}\label{eq:choicerho}
		C'_d \cdot \rho <1,
	\end{equation}  this subdivision do not increase the total packing measure, and we repeat this argument until all the balls are either terminal (i.e. it has radius $r_0$) or have definite frequency drop.
	Eventually, we get the packing estimate \eqref{eq:pkest2} with constant $C_1 \cdot C_1 C_V = C_1^2 C_V$, where $C_V$ is the constant in Lemma \ref{lm:covering} and $C_1 = 10^d \rho^{-2}$ for the $\rho$ already chosen.
\end{proof}

We summarize the order in which we choose the constants. We always fix the dimension $d$, the constants $R, \Lambda>0$ and $\epsilon>0$.
\begin{itemize}
	\item We determine the value of the purely dimensional constant $C_V$ in Lemma \ref{lm:covering}, and the value $C_b$ in Theorem \ref{thm:L2appx} (recall that its value is independent of $\delta_{in}$);
	\item we fix the value of $\rho$ to satisfy \eqref{eq:choicerho} in Lemma \ref{lm:covering2};
	\item with $\rho$ and $C_b$ fixed, we determine the values of $C_1$ and $\tau_c, \delta_0, \delta_{in}$ in Lemma \ref{lm:covering}, by \eqref{eq:countep1}, \eqref{def:tauc}, \eqref{def:delta0} and \eqref{def:deltain}, respectively;
	\item with $\rho$ and $\delta_0$ fixed, we determine the values of $r_{tn}$, $\delta$ and $\beta$ in Proposition \ref{prop:tn} (using $\tau = \rho/10$);
	\item with $\rho$ and $\delta_{in}$ fixed, we determine the value of $r_{in}$ in Lemma \ref{lm:spvarin_far}, and determine the value of $r_b$ in Theorem \ref{thm:L2appx};
	\item with $\beta$ fixed, by the proof of \eqref{eq:tn} as well as Lemma \ref{lm:sym}, we determine the value of $\alpha_0$ in the definition of $\Ct_r(u)$ in \eqref{def:Cru};
	\item finally, with $r_{tn}, r_{in}, r_b, \tau_c$ chosen, we determine the value of $r_c$ by \eqref{def:rct} and \eqref{def:rc}.
\end{itemize}

\section{Proof of Theorem \ref{thm:main}}
The proof of the main theorem \ref{thm:main} follows easily from the covering lemma (Lemma \ref{lm:covering2}) in Section \ref{sec:covering}.
Let $\St:= \widetilde{\Ct}_{r_0}(u) \cap \mathcal{N}(u) \cap B_{r_c \tilde{\theta}(r_c)}(\pD)$, $X_0 \in \overline{D} \cap B_{\frac{R}{10}}(0) $, and 
\[ \Lambda_* := \sup_{X \in \St \cap B_{2r_*}(X_0) } N_X(r_*). \]
By Lemma \ref{lm:covering2}, we can cover $\St \cap B_{r_*}(X_0)$ by a collection of balls $\{ B_{r_X}(X)\}_{X\in \mathcal{C}^{(1)}}$, such that
\begin{itemize}
	\item $X\in \St \cap B_{r_*}(X_0)$ and $B_{r_X/5}(X) \cap B_{r_{X'}/5}(X') = \emptyset $ for every $X, X' \in \mathcal{C}^{(1)}$;
	\item either $r_X = r_0$, or $r_X \in ( r_0, r_*]$ and
		\begin{equation}\label{eq:maindrop}
			N_Y(r_X/10) < \Lambda_* - \delta, \quad \text{ for every } Y \in \St \cap B_{2r_X}(X); 
		\end{equation} 
	\item and
		\begin{equation}\label{eq:mainpacking}
			\sum_{X\in \mathcal{C}^{(1)}} r_X^{d-2} \leq C_1^2 C_V \cdot r_*^{d-2}. 
		\end{equation} 
\end{itemize} 
For each ball $B_{r_X}(X)$ in the covering such that $r_X>r_0$, we first cover $\St \cap B_{r_X}(X)$ by finitely many balls centered at $Y\in \St \cap B_{r_X}(X)$ with radius $r_Y = r_X/10$, and label the collection of centers $\mathcal{C}_0^{(1)}(X)$. Clearly
\begin{equation}\label{tmp:packing2}
	\sum_{Y\in \mathcal{C}_0^{(1)}(X)} r_Y^{d-2} \leq \#(\mathcal{C}(X)) \cdot \left( \frac{r_X}{10} \right)^{d-2} \leq C_d r_X^{d-2}. 
\end{equation} 
For each $Z\in \St \cap B_{2r_Y}(Y) \subset \St \cap B_{2r_X}(X)$, by \eqref{eq:maindrop} we have
\[ N_Z(r_Y) = N_Z(r_X/10) < \Lambda_* - \delta. \]
Hence
\[ \sup_{Z\in \St \cap B_{2r_Y}(Y)} N_Z(r_Y) \leq \Lambda_* - \delta.  \]
We apply Lemma \ref{lm:covering2} again, with $\Lambda_*-\delta$ in place of $\Lambda_*$, to find a covering of the ball $B_{r_Y}(Y)$
\[ \St \cap B_{r_Y}(Y) \subset \bigcup_{Z \in \mathcal{C}^{(2)}(Y) } B_{r_Z }(Z), \]
The covering satisfies either $r_Z = r_0$, or $r_Z> r_0$ and
\begin{equation}
	N_W(r_Z/10) < (\Lambda_*- \delta) - \delta, \quad \text{ for every } W\in \St \cap B_{2r_Z}(Z);
\end{equation}
and we have the packing estimate
\begin{equation}\label{tmp:packing3}
	\sum_{Z\in \mathcal{C}^{(2)}(Y)} r_Z^{d-2} \leq C_1^2 C_V \cdot r_Y^{d-2}. 
\end{equation} 
Denote 
\[ \mathcal{C}^{(2)} := \left\{ Z\in \mathcal{C}^{(2)}(Y): Y\in \mathcal{C}_0^{(1)}(X) \text{ and } X\in \mathcal{C}^{(1)} \right\}. \]
Combining \eqref{tmp:packing3}, \eqref{tmp:packing2}, and \eqref{eq:mainpacking}, we get
\begin{align*}
	\sum_{Z\in \mathcal{C}^{(2)}} r_Z^{d-2} \leq \sum_{X\in \mathcal{C}^{(1)}} \sum_{Y\in \mathcal{C}_0^{(1)}(X)} \sum_{Z\in \mathcal{C}^{(2)}(Y)} r_Z^{d-2} & \leq C_1^2 C_V \cdot \sum_{X\in \mathcal{C}^{(1)}} \sum_{Y\in \mathcal{C}_0^{(1)}(X)} r_Y^{d-2} \\
	& \leq C_d C_1^2 C_V \cdot \sum_{X\in \mathcal{C}^{(1)}} r_X^{d-2} \\
	& \leq C_d \left( C_1^2 C_V \right)^2 r_*^{d-2}.
\end{align*}

For each $j=1, \cdots, j_* = \left\lceil \frac{\Lambda_*}{\delta} \right\rceil-1$, we apply the above argument inductively to balls $B_{r_Z}(Z)$ with $r_Z>r_0$, with uniform frequency bound $\Lambda_*-j\delta$. This induction stops after $j_*$
 many steps, since for $j=j_*+1$ the alternative \ref{alt:freqdrop} of Lemma \ref{lm:covering2} can not happen because 
\[ \Lambda_* - \left\lceil \frac{\Lambda_*}{\delta} \right\rceil \delta \leq 0. \]
Moreover, for each $j$ we have the packing estimate
\[ \sum_{Z\in \mathcal{C}^{(j+1)}} r_Z^{d-2} \leq C_d^j \left( C_1^2 C_V \right)^{j+1} r_*^{d-2}. \]
Finally let
\[ \mathcal{C} := \bigcup_{j=1}^{j_*} \mathcal{C}^{(j)}, \]
and we have the packing estimate
\begin{align*}
	\sum_{Z\in \mathcal{C}} r_Z^{d-2} \leq \sum_{j=0}^{j_*} \sum_{Z\in \mathcal{C}^{(j+1)}} r_Z^{d-2} & \leq \sum_{j=0}^{j_*} C_d^j \left( C_1^2 C_V \right)^{j+1} r_*^{d-2} \\
	& \leq C_d^{\left\lceil \frac{\Lambda_*}{\delta} \right\rceil} \left( C_1^2 C_V \right)^{\left\lceil \frac{\Lambda_*}{\delta} \right\rceil + 1} r_*^{d-2} \\
	& \leq: C_p r_*^{d-2},
\end{align*} 
where the constant $C_p$ only depends on $d, \Lambda$ (since $\Lambda_* \leq C(\Lambda)$) and $\delta$ (which in turn depends on $R, \Lambda$ and $\epsilon$). We remark that since we do not know how $\delta$ depends on $\Lambda$, we can not say that $C_p$ depends on $\Lambda$ exponentially.

Since all balls in the covering have the same radius $r_0$, the above packing estimate implies that
\[ \#(\mathcal{C}) r_0^{d-2} \leq \sum_{Z\in \mathcal{C}} r_Z^{d-2} \leq C_p r_*^{d-2}, \]
and thus
\[ \#(\mathcal{C}) \leq C_p \left( \frac{r_*}{r_0} \right)^{d-2}. \]
Therefore for any $X_0 \in \overline{D} \cap B_{\frac{R}{10}}(0)$ and any $r_0 < r_* \leq r_c$, we have the volume estimate
\[ \left|B_{r_0} \left(\widetilde{\Ct}_{r_0}(u) \cap \mathcal{N}(u) \cap B_{r_c \tilde{\theta}(r_c)}(\pD) \cap B_{r_*}(X_0) \right) \right| \leq \#(\mathcal{C}) \cdot r_0^d \leq C_p r_*^{d-2} r_0^{2}. \]
On the other hand, for any interior point $X$ such that $\dist(X, \pD) \geq r_c \tilde{\theta}(r_c)$, its critical scale $r_{cs}(X) \geq r_c$. In other words, the frequency function $N(X, \cdot)$ is monotone increasing on the interval $[0, r_c]$. Therefore the same proof applies to this much simpler case. (This case is similar to that of \cite{NVCS}: Even though the balls in the covering may still intersect the boundary, they are nonetheless well in the monotonic interval and should be considered \textit{interior balls} in the application of the argument.) Finally we conclude that
\[ \left|B_{r_0} \left(\widetilde{\Ct}_{r_0}(u) \cap \mathcal{N}(u) \cap B_{r_*}(X_0) \right) \right| \leq C'_p ~r_*^{d-2} r_0^{2}. \]
In particular, by the definition of $(d-2)$-dimensional Minkowski content, we have
\[ \mathcal{M}^{d-2, *}\left(\widetilde{\Ct}_{r_0}(u) \cap \mathcal{N}(u) \cap B_{r_*}(X_0) \right) \leq C r_*^{d-2}, \]
and
\[ \mathcal{M}^{d-2, *}\left(\widetilde{\Ct}_{r_0}(u) \cap \mathcal{N}(u) \cap B_{\frac{R}{10}}(0) \right) \leq C R^d r_c^{-2} = C(d, R, \Lambda, \epsilon). \]
Since the above bound is independent of $r_0$, we conclude that the singular set satisfies the Minkowski bound
\[ \mathcal{M}^{d-2, *}\left(\mathcal{S}(u) \cap B_{\frac{R}{10}}(0) \right) \leq \mathcal{M}^{d-2, *}\left(\widetilde{\Ct}_{r_0}(u) \cap \mathcal{N}(u) \cap B_{\frac{R}{10}}(0) \right) \leq C(d, R, \Lambda, \epsilon). \]

To prove the rectifiability, we apply a similar argument as in the proof of the packing estimate in Lemma \ref{lm:covering}. But this time we appeal to Theorem \ref{thm:RReifenberg}, in place of Theorem \ref{thm:Reifenberg}, to the $(d-2)$-measurable set $\widetilde{\Ct}_{r_0}(u) \cap \mathcal{N}(u) \cap B_{r_c \tilde{\theta}(r_c)}(\pD) \cap B_{r_*}(X_0) $. The rectifiability of $\mathcal{S}(u)$ follows from \eqref{incl:Ct} and the countable additivity of rectifiable sets.

\end{document}